\DeclareSymbolFont{bbold}{U}{bbold}{m}{n}
\DeclareSymbolFontAlphabet{\mathbbm}{bbold}
\title[]{The model-companionship spectrum of set theory, generic absoluteness, and the Continuum problem}
\author{Matteo Viale}
\thanks{The author acknowledge support from INDAM through GNSAGA and from the project:
\emph{PRIN 2017-2017NWTM8R
Mathematical Logic: models, sets, computability.
\textbf{MSC:} \emph{03C10 03E57.}
}}
\theoremstyle{plain}
	\newtheorem{Theorem}{Theorem}
	\newtheorem{Fact}{Fact}
	\newtheorem{Corollary}{Corollary}
	\newtheorem{Lemma}{Lemma}
	\newtheorem{Notation}{Notation}
	\newtheorem{Remark}{Remark}
	\newtheorem{theorem}{Theorem}[section]
	\newtheorem{proposition}[theorem]{Proposition}
	\newtheorem{lemma}[theorem]{Lemma}
	\newtheorem{corollary}[theorem]{Corollary}
	\newtheorem{fact}[theorem]{Fact}
	\newtheorem{claim}{Claim}
	\newtheorem{subclaim}{Subclaim}
\theoremstyle{definition}
	\newtheorem{Definition}{Definition}
	\newtheorem{definition}[theorem]{Definition}
	\newtheorem{notation}[theorem]{Notation}
	\newtheorem{example}[theorem]{Example}
\theoremstyle{remark}
	\newtheorem{remark}[theorem]{Remark}
\newcommand{\ZFC}{\ensuremath{\mathsf{ZFC}}}
\newcommand{\ZF}{\ensuremath{\mathsf{ZF}}}
\newcommand{\WFE}{\ensuremath{\mathsf{WFE}}}
\DeclareMathOperator{\dom}{dom}
\DeclareMathOperator{\ran}{ran}
\DeclareMathOperator{\crit}{crit}
\DeclareMathOperator{\otp}{otp}
\DeclareMathOperator{\Ult}{Ult}
\DeclareMathOperator{\Coll}{Coll}
\newcommand{\maxUB}{\ensuremath{
{\mathbf{MAX}(\mathsf{UB})}}}
\newcommand{\Pmax}{\ensuremath{\mathbb{P}_{\mathrm{max}}}}
\newcommand{\NS}{\ensuremath{\mathbf{NS}}} 
\newcommand{\stUB}{\ensuremath{(*)\text{-}\mathsf{UB}}}
\newcommand{\stA}{\ensuremath{(*)\text{-}\mathcal{A}}}
\newcommand{\bool}[1]{\mathsf{#1}}
\newcommand{\tow}[1]{\mathcal{#1}}
\newcommand{\pow}[1]{\mathcal{P}\left(#1\right)}
\newcommand{\qp}[1]{\left[ #1 \right]}
\newcommand{\ap}[1]{\langle #1 \rangle}
\newcommand{\bp}[1]{\left\lbrace #1 \right\rbrace}
\newcommand{\Cod}{\ensuremath{\text{{\rm Cod}}}}
\newcommand{\ST}{\ensuremath{\text{{\sf ST}}}}
\newcommand{\UB}{\ensuremath{\text{{\sf UB}}}}
\newcommand{\lUB}{\ensuremath{\text{{\rm l-UB}}}}
\newcommand{\MM}{\ensuremath{\text{{\sf MM}}}} 
\newcommand{\AX}{\ensuremath{\text{{\sf AX}}}} 
\newcommand{\CH}{\ensuremath{\text{{\sf CH}}}} 
\newcommand{\SSP}{\ensuremath{\text{{\sf SSP}}}}
\begin{document}

\begin{abstract}
We show that for $\Pi_2$-properties of second or 
third order arithmetic as formalized in appropriate natural signatures the apparently weaker notion of \emph{forcibility} 
overlaps with the standard notion of \emph{consistency} (assuming large cardinal axioms). 

Among such $\Pi_2$-properties we mention: the negation of the continuum hypothesis, 
Souslin Hypothesis, the negation of Whitehead's conjecture on free groups, the non-existence of outer automorphisms for the Calkin algebra, etc...
In particular this gives an a posteriori explanation of the success forcing (and forcing axioms) met in producing models of such properties.

Our main results relate generic absoluteness theorems for second order arithmetic, Woodin's axiom $(*)$ and forcing axioms to Robinson's notion of model companionship (as applied to set theory).
We also briefly outline in which ways these results provide an argument to refute $\CH$.

 \end{abstract}

\maketitle




\section*{Introduction}

\subsection*{Model completeness, model companionship, and the model companionship spectrum of a theory}
Model companionship and model completeness are model theoretic notions introduced
by Robinson which give a simple first order characterization of the way algebraically closed fields sits inside the class of rings with no zero-divisors. 
We start this paper rushing through the main properties of model completess and model companionship
(we will later on analyze carefully all these concepts in Section \ref{sec:modth}). 
Our aim is to show in a few paragraphs how we can use these notions to reformulate in a simple model-theoretic terminology 
deep generic absoluteness results for second order arithmetic by Woodin and others, 
as well as other major results on forcing axioms and 
Woodin's Axiom $(*)$. 

The key model-theoretic concept we are interested in is that of existentially closed model of a first order theory\footnote{We adopt the following notational conventions: $\sqsubseteq$ denotes the substructure relation between structures;
$\mathcal{M}\prec_n\mathcal{N}$ indicates that $\mathcal{M}$ is a $\Sigma_n$-elementary substructure of $\mathcal{N}$, we omit the $n$ to denote full-elementarity;
given a first order theory $T$, $T_\forall$ denotes the universal sentences which are consequences of $T$,
likewise we interpret $T_\exists, T_{\forall\exists},\dots$.} 
$T$:
\begin{Definition}
Let $\tau$ be a signature and $T$ be a first order theory.
$\mathcal{M}$ is $T$-existentialy closed ($T$-ec) if for 
any $\tau$-structure $\mathcal{N}\sqsupseteq\mathcal{M}$ which is a model of $T$ we have that
\[
\mathcal{M}\prec_1 \mathcal{N}.
\]
\end{Definition}

A key non-trivial fact is that $\mathcal{M}$ is $T$-ec if and only if it is $T_\forall$-ec.

It doesn't take long to realize that in signature 
$\tau=\bp{+,\cdot,0,1}$ the $\tau$-theory $T$ 
of fields has as its class of 
existentially closed models exactly the algebraically closed fields.
Note also that if we let $S$ be the class of rings with no zero-divisors which are not fields, we still have that
the $S$-existentially closed structures are the algebraically closed fields (even if no field is a model of $S$).

Model completeness and model companionship allow to generalize these features of the class of 
rings with no zero divisors to arbitrary first order theories.

\begin{Definition}
Let $\tau$ be a first order signature. 
\begin{itemize}
\item
A $\tau$-theory $T$ is \emph{model complete} if any model of $T$ is $T$-ec.
\item
$T$ is the \emph{model companion} of a $\tau$-theory $S$ if: 
\begin{itemize}
\item
any model of $S$ embeds into a model of $T$ and conversely, 
\item
$T$ is model complete.
\end{itemize}
\end{itemize}
\end{Definition}

In particular in signature $\tau=\bp{+,\cdot,0,1}$, the theory of algebraically closed fields is model complete
and is the model companion both of the theory of fields and of the theory of rings with no zero-divisors which are not fields.

We will also need here the following equivalent characterization of model completeness: 
$T$ is model complete whenever 
\begin{quote}
\emph{
For $\mathcal{M},\mathcal{N}$ models of $T$, $\mathcal{M}\prec\mathcal{N}$ if and only if $\mathcal{M}\prec_1\mathcal{N}$
if and only if $\mathcal{M}\sqsubseteq\mathcal{N}$}.
\end{quote}

Note also that: 
\begin{itemize}
\item
any theory $T$ admitting quantifier elimination is model complete;
\item
any model complete theory $T$ is the model companion of itself;
\item
two $\tau$-theories $T$ and $S$ which have no model in common can have the same model companion, but the model companion
of a theory $T$ if it exists is unique;
\item
if $T^*$ is the model companion of $T$ it can be the case that no model of $T$ is a model of $T^*$ and conversely;
\item
there are $\tau$-theories $T$ which do not admit a model companion (for example this is the case
for the theory of groups in signature $\tau=\bp{\cdot,1}$).
\end{itemize}
Much in the same way as
the algebraic closure of a ring $R$ with no zero-divisors closes off $R$ with respect to 
solutions to polynomial equations with coefficients in $R$ and which exist in some superring of $R$ which has no zero-divisors (and which does not have to be algebraically closed), for a theory $T$ with model companion $T^*$ any model 
$\mathcal{M}$ of $T$ brings to a supermodel $\mathcal{N}$ of $T^*$ which is obtained by adding (at least) the solutions to the existential formulae with 
parameters in $\mathcal{M}$ which are consistent with the universal fragment of $T$ (in the case of ring with no zero-divisors the key universal property one has to maintain is the non-existence of zero-divisors along with the ring axioms).

A key property of model companionship which brought our attention to this notion is the following (see Section \ref{sec:modth} for details):
\begin{Fact}\label{fac:maxpropmodcompcompl}
Let $\tau$ be a first order signature and $T$ be a \emph{complete} 
$\tau$-theory with model companion $T^*$.
Then $T^*$ is axiomatized by $T^*_{\forall\exists}$ and
TFAE for a $\Pi_2$-sentence $\psi$ for $\tau$:
\begin{itemize}
\item $T_\forall+\psi$ is consistent.
\item $\psi\in T^*$.
\end{itemize}
\end{Fact}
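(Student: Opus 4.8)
The plan is to reduce the statement to two structural facts about $T^*$ and then to concentrate all the real work into the nontrivial implication. First I would record that $T_\forall=T^*_\forall$: since every model of $T$ embeds into a model of $T^*$ and conversely, and universal sentences pass to substructures, each universal consequence of one theory is a universal consequence of the other. Granting the quoted equivalence ``$T$-ec $=$ $T_\forall$-ec'' and model-completeness of $T^*$, this already identifies the models of $T^*$ with the $T$-ec structures. For the axiomatization by $T^*_{\forall\exists}$ I would show $T^*$ is preserved under unions of chains: if $\langle\mathcal M_i\rangle$ is a chain of models of $T^*$, model-completeness makes each inclusion $\mathcal M_i\sqsubseteq\mathcal M_j$ elementary, so by the elementary chain lemma the union is an elementary extension of each $\mathcal M_i$ and hence a model of $T^*$; the Chang--\L o\'s--Suszko theorem then gives that $T^*$ is axiomatized by its $\forall\exists$-consequences, i.e.\ by $T^*_{\forall\exists}$.

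For the equivalence, the implication $\psi\in T^*\Rightarrow T_\forall+\psi$ consistent is immediate: $T$ is consistent (being complete), hence so is $T^*$, and any model of $T^*$ satisfies $T^*_\forall=T_\forall$ together with $\psi$. For the converse I would first rewrite membership as a statement about ec-structures. Writing $\psi=\forall\bar x\,\exists\bar y\,\varphi(\bar x,\bar y)$ with $\varphi$ quantifier-free, the point is that $\Sigma_1$-elementarity of ec-extensions controls the existential matrix: if $\mathcal N$ is $T_\forall$-ec and $\mathcal N\sqsubseteq\mathcal N'\models T_\forall$, then $\mathcal N\prec_1\mathcal N'$, so every requirement $\exists\bar y\,\varphi(\bar a,\bar y)$ with $\bar a\in\mathcal N$ that is met above $\mathcal N$ already reflects down to $\mathcal N$. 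Thus ``$\psi\in T^*$'' is equivalent to ``$\psi$ holds in every $T_\forall$-ec model'', which is what I must extract from the mere consistency of $T_\forall+\psi$.

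The heart of the matter is the contrapositive, and this is where completeness of $T$ is indispensable. Suppose $\psi\notin T^*$, and fix a model $\mathcal N\models T^*$ (a $T_\forall$-ec structure) with $\mathcal N\models\neg\psi$, witnessed by some $\bar a\in\mathcal N$ admitting no solution of $\varphi(\bar a,\bar y)$. By ec-ness no model of $T$ extending $\mathcal N$ admits such a solution either, so $T\cup\mathrm{Diag}(\mathcal N)\cup\{\exists\bar y\,\varphi(\bar a,\bar y)\}$ is inconsistent; compactness then yields a quantifier-free $\delta(\bar a,\bar c)$ true in $\mathcal N$ with $T\vdash\forall\bar x\,\bar z\,\bar y\,(\delta(\bar x,\bar z)\to\neg\varphi(\bar x,\bar y))$. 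Now completeness enters decisively: since $\exists\bar x\,\bar z\,\delta$ holds in some model of $T$ (extend $\mathcal N$ to one), it belongs to $T$, and combining it with the universal consequence just displayed gives $T\vdash\neg\psi$. The main obstacle I anticipate is the final transfer from ``$T\vdash\neg\psi$'' to the inconsistency of $T_\forall+\psi$: a $\Pi_2$-sentence is not preserved under passage to arbitrary superstructures, so bridging the gap between $T$ and its universal fragment $T_\forall$ is precisely the delicate step, and I expect it to rest on the full strength of completeness (all models of $T$ are elementarily equivalent, hence elementarily amalgamable) together with the model-companion hypothesis, so as to carry the witnesses of $\psi$ across an amalgam without disturbing the universal theory that $T_\forall$ already pins down.
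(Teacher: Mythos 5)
Your easy direction, your identification of the models of $T^*$ with the $T_\forall$-ec structures, and your derivation of $T\vdash\neg\psi$ from $\psi\notin T^*$ (atomic diagram, compactness, upward persistence of the quantifier-free $\delta$, completeness of $T$) are all correct; so is the Chang--\L{}o\'{s}--Suszko argument for axiomatizability by $T^*_{\forall\exists}$, a legitimate alternative to the paper's route through Robinson's test in Theorem \ref{thm:uniqmodcompan}. The problem is the one step you explicitly leave as ``anticipated'': the passage from $T\vdash\neg\psi$ to the inconsistency of $T_\forall+\psi$. That step is not merely delicate --- it fails, and with it the stated equivalence. Take $T=T^*$ to be the theory of algebraically closed fields of characteristic $0$ (complete, model complete, hence its own model companion) and $\psi$ the sentence $\forall x\,\neg(x\cdot x=1+1)$, which is $\Pi_1$ and hence, after padding with a dummy existential quantifier, $\Pi_2$. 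Then $T_\forall+\psi$ is consistent, since $(\mathbb{Z},+,\cdot,0,1)$ is a substructure of $\overline{\mathbb{Q}}$ and therefore models $T_\forall$, while $2$ has no square root in $\mathbb{Z}$; yet $T^*\vdash\neg\psi$. The obstruction is exactly the one your own argument exposes: a model of $T_\forall+\psi$ need not amalgamate over anything with a $T_\forall$-ec structure while preserving $\psi$, because $\psi$ may carry universal consequences that $T_\forall$ does not prove.

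For comparison, the paper's own justification of this Fact (Proposition \ref{prop:pi2satKaiserhull}(\ref{rmk:itm4}) feeding into Fact \ref{fac:charKaihull}) stumbles at the same point: the appeal to Lemma \ref{lem:keylemembed} for the theory $S_\forall+\psi$ needs $\mathcal{M}\models(S_\forall+\psi)_\forall$, which is exactly what fails when $\psi$ contributes new universal consequences. The version that is actually correct is the criterion of Lemma \ref{fac:proofthm1-2}(\ref{fac:proofthm1-2-c}): $\psi\in T^*$ iff for every universal $\theta$, $T_\forall+\theta+\psi$ is consistent exactly when $T+\theta$ is. Under that stronger hypothesis your computation does close the argument: with $\delta$ as you produced it, the universal sentence $\theta:=\forall\bar x\,\bar z\,\neg\delta$ is inconsistent with $T$, so the criterion forces $T_\forall+\psi\vdash\exists\bar x\,\bar z\,\delta$, and since $\forall\bar x\,\bar z\,\bar y\,(\delta\to\neg\varphi)$ lies in $T_\forall$ this gives $T_\forall+\psi\vdash\neg\psi$. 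So your skeleton is the right one for the corrected statement; it cannot prove the statement as written, because no argument can.
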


In case $T$ is a companionable non-complete theory, further weak hypothesis on $T$ (which are satisfied by set theory) allow to characterize its model companion $T^*$ as the unique theory axiomatized by the $\Pi_2$-sentences which
are consistent with the universal fragment of any completion of $T$ (see Lemma \ref{fac:proofthm1-2}).

Unlike other notions of complexity (such as stability, NIP, simplicity) model companionship and model completeness are very sensitive to the signature in which one formalizes a first order theory $T$.

\begin{Notation}\label{not:keynotation0}
For a given signature $\tau$, $\tau^*$ is the signature extending $\tau$
with new function symbols\footnote{As usual we confuse $0$-ary function symbols with constants.} $f_\phi$ and new
relation symbols $R_\phi$ for any $\tau$-formula $\phi(x_0,x_1,\dots,x_n)$.  
$T_\tau$ is the $\tau^*$-theory with axioms
\[
\AX^0_\phi:= \forall\vec{x}[\phi(\vec{x})\leftrightarrow R_\phi(\vec{x})]
\]
\[
\AX^1_\phi:= \forall x_1,\dots,x_n[\exists  y\phi(y,x_1,\dots,x_n)\rightarrow \phi(f_\phi(x_1,\dots,x_n),x_1,\dots,x_n)],
\]
as $\phi$ ranges over the $\tau$-formulae.
\end{Notation}

It is clear that any $\tau$-structure admits a unique extension to a $\tau^*$-model of $T_\tau$ and 
any $\tau$-theory
$T$ is such $T\cup T_\tau$ admits quantifier elimination, hence is model complete and is its own 
model companion relative to signature $\tau^*$. 
This holds regardless of whether the $\tau$-theory $T$ is model complete or admits
a model companion in signature $\tau$ (cfr. $T$ being the theory of groups in signature $\bp{\cdot,1}$).
On the other hand $T$ is stable (simple, NIP) if and only if so is $T_\tau$.
This is a serious drawback if one wishes to use model companionship to gauge the complexity of a mathematical theory $T$, since
model companionship of $T$ is very much dependent on the signature in which we formalize it: 
$T$ can trivially be model complete if we formalize it in a rich enough signature.

We now introduce a simple trick to render model companionship a useful classification tool for mathematical theories regardless of the signature 
in which we give their first order axiomatization. Roughly the idea is to consider all possible signatures in which a theory can be formalized and pay attention only to those for which the theory admits a model companion.

\begin{Definition}\label{def:compspectrum}
Let $\tau$ be a  signature and $F_\tau$ denote the set of $\tau$-formulae. 

Given $A\subseteq F_\tau\times 2$, let $\tau_A$ be the signature 
$\tau\cup\bp{R_\phi:(\phi,0)\in A}\cup\bp{f_\phi:(\phi,1)\in A}$. 
A $\tau$-theory 
$T$ is \emph{$(A,\tau)$-companionable} if 
\[
T_A=T\cup\bp{\AX^i_\phi,:(\phi,i)\in A}
\]
admits a model companion for the signature $\tau_A$.

Given a $\tau$-theory $T$ its \emph{$\tau$-companionship spectrum} is given by those
$A\subseteq F_\tau\times\bp{0,1}$ such that $T$ is $(A,\tau)$-companionable.
\end{Definition}

Note that $F_\tau\times\bp{i}$ is always in the companionship spectrum of $T$, but proving that some 
$\bar{A}\subsetneq F_\tau$ is such that some $A\subseteq \bar{A}\times 2$
is in the companionship spectrum of $T$ is a (possibly highly) non-trivial and 
informative result on $T$; model-companionability for $T$ amounts to say that $T$ is 
$(\emptyset,\tau)$-companionable.
The $\tau$-companionship spectrum of $T$ is non-informative if $T$ is model complete in signature $\tau$: in this case the $\tau$-companionship spectrum of $T$ is $\pow{F_\tau\times 2}$. 

Note also that even if $T$ is $(\emptyset,\tau)$-companionable there could be  many 
$A\subseteq F_\tau\times 2$ such that $T$ is $A$-companionable and many 
$B\subseteq F_\tau\times 2$ such that $T$ is not $B$-companionable;
in principle nothing prevents the families of such $A$s and $B$s to be both of size $2^{|F_\tau|}$ 
and to produce a complex ordering of the $\tau$-companionship spectrum of $T$ with respect to 
$\subseteq$.

To better grasp the above considerations, let for a $\tau$-theory $T$ $C_T$ be the category whose objects 
are the $\tau$-models of $T$ and whose arrows are the $\tau$-morphisms.
NIP, stability, simplicity are properties which consider only the objects in this category, model completeness 
and model companionship pay also attention to the arrows of this category. 
We get a much deeper insight on the properties of $C_T$ if we are able to detect for which 
$A\subseteq F_\tau\times 2$
$T_A$ is model companionable: for any $A\subseteq F_\tau\times 2$ in the passage from $C_T$ to 
$C_{T_A}$ we maintain the same class of objects, but the $\tau_A$-morphisms 
(i.e the arrows of $C_{T_A}$) are just the 
$\tau$-morphisms between models of $T$ which preseve the formulae in $A$, 
hence we are possibly destroying many arrows.

Our definition of $\tau$-companionship spectrum of a mathematical theory 
is apparently dependent on the signature $\tau$ in which 
we formalize it. We may argue that this is not the case, but to uncover why would bring 
us far afield and we defer this task to another paper. We will in this paper confine our attention to use 
this notion to analyze first order axiomatizations of set theory enriched with large cardinal axioms.
In this case we can certainly say that proving that some $A\subsetneq F_{\bp{\in}}\times 2$
is in the $\in$-companionship spectrum of set theory is an informative result: $\bp{\in}$ is a minimal 
signature in which set theory can be formalized (in the empty signature we certainly cannot formalize it), 
hence any 
$A\subseteq F_{\bp{\in}}\times 2$ for which set theory is $A$-companionable gives non-trivial information 
on set theory.
Moreover we can easily verify that any reasonable $\in$-axiomatization of set theory is not model complete for 
the $\in$-signature, hence the  $\in$-companionship spectrum of set theory is certainly non-trivial.

\subsection*{Some of our main results}
We can now state in an informative way key parts of our main results.

The first non-trivial result states that for any definable cardinal $\kappa$ there is
at least one signature admitting a constant for the cardinal $\kappa$ such that set theory is companionable for this signature. 

It is convenient from now on to adopt the following short-hand notation for structures:
\begin{Notation}
Given a signature $\tau$, $\mathcal{M}=(M,\tau^{\mathcal{M}})$ is a shorthand for the $\tau$-structure 
$(M,R^{\mathcal{M}}:R\in\tau)$.
\end{Notation}

\begin{Theorem}\label{thm:mainthm0}
Let $T\supseteq\ZFC$ be a $\in$-theory, and
$\kappa$ be a $T$-definable cardinal (i.e. such that for some
$\in$-formula $\phi_\kappa(x)$ 
$T$ proves $\exists!x[\phi_\kappa(x)\wedge x$\emph{ is a cardinal}$]$).

Then there is at least one  $A_\kappa\subsetneq F_\in$
such that letting $\bar{A}_\kappa=A_\kappa\times 2$:
\begin{enumerate}
\item\label{thm:mainthm0.0}
For all models $(V,\in)$ of $T$ 
$(H_{\kappa^+}^V,\bp{\in}_{\bar{A}_\kappa}^V)\prec_1 (V,\bp{\in}_{\bar{A}_\kappa}^V)$.
\item\label{thm:mainthm0.1}
$T$ is $\bar{A}_\kappa$-companionable.
\item\label{thm:mainthm0.2}
The model companion $T^*_\kappa$ of $T_{\bar{A}_\kappa}$ for signature $\bp{\in}_{\bar{A}_\kappa}$ is the
$\bp{\in}_{\bar{A}_\kappa}$-theory common to $H_{\kappa^+}^V$ as $(V,\in)$ 
ranges over $\in$-models of $T$
and $\kappa$ is the constant of $\bp{\in}^*$ given by the formula $\exists!x[\phi_\kappa(x)\wedge x$\emph{ is a cardinal}$]$.
\item\label{thm:mainthm0.3} 
$T^*_\kappa$ is also axiomatized by the $\Pi_2$-sentences for
$\bp{\in}_{\bar{A}_\kappa}$ which are consistent with 
$S_\forall$ for any 
$\bp{\in}_{\bar{A}_\kappa}$-theory
$S$ which is a complete extension of 
$T_{\bar{A}_\kappa}$.
\end{enumerate}
\end{Theorem}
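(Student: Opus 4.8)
The plan is to prove item \eqref{thm:mainthm0.0} first and to read off \eqref{thm:mainthm0.1}--\eqref{thm:mainthm0.3} from it using the general theory of Section \ref{sec:modth}, in particular Fact \ref{fac:maxpropmodcompcompl} and Lemma \ref{fac:proofthm1-2}. Since $\kappa$ is $T$-definable through $\phi_\kappa$, both $\kappa^+$ and the class $H_{\kappa^+}$ are $T$-definable by $\in$-formulae; I fix an $\in$-formula $\eta(x)$ which $T$-provably defines $H_{\kappa^+}$ and adjoin to $\bp{\in}^*$ the definable constant $\kappa$. I would take $A_\kappa$ to be the set of those $\in$-formulae $\phi(y,\vec{x})$ for which $T$ proves that the witnessing problem for $\exists y\,\phi(y,\vec{x})$ \emph{localizes} to $H_{\kappa^+}$, i.e.
\[
T\vdash \forall \vec{x}\,[\eta(\vec{x})\wedge \exists y\,\phi(y,\vec{x})\rightarrow \exists y\,(\eta(y)\wedge \phi(y,\vec{x}))],
\]
closed off under the finitely many syntactic operations (Booleans, pairing of variables, and elimination of the $R_\phi$ and of the $f_\phi$-terms in favour of their $\in$-definitions) needed to turn a quantifier-free $\bp{\in}_{\bar{A}_\kappa}$-formula back into an $\in$-formula. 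For such $\phi$ the relation $R_\phi$ is interpreted in any $(V,\in)\models T$ as $\phi^V$, while $f_\phi$ is interpreted as a Skolem function for $\exists y\,\phi(y,\vec x)$ chosen, by the localization property, so that its restriction to tuples from $H_{\kappa^+}$ takes values in $H_{\kappa^+}$. Thus $(H_{\kappa^+}^V,\bp{\in}_{\bar{A}_\kappa}^V)\sqsubseteq (V,\bp{\in}_{\bar{A}_\kappa}^V)$. Properness is clear: the $\in$-formula expressing ``$y=\pow{\vec{x}}$'' (or ``$y$ is the least cardinal above $\vec x$'') does not localize and so is excluded, whence $A_\kappa\subsetneq F_\in$.

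With this in place, item \eqref{thm:mainthm0.0} is a Tarski--Vaught argument. The upward direction for $\Sigma_1$-formulae is automatic from $\sqsubseteq$, so I only need downward reflection of witnesses: given a quantifier-free $\bp{\in}_{\bar{A}_\kappa}$-formula $\theta(\vec{y},\vec{a})$ with $\vec{a}\in H_{\kappa^+}^V$ and $(V,\bp{\in}_{\bar{A}_\kappa}^V)\models \exists \vec{y}\,\theta$, I unfold the finitely many occurrences of the $R_\phi$ and of the $f_\phi$-terms into their $\in$-definitions (possible because each such $\phi$ lies in $A_\kappa$, which was closed under exactly these operations) to obtain an $\in$-formula $\exists\vec{y}\,\theta^{*}(\vec y,\vec a)$ again in $A_\kappa$; its localization property then furnishes a witness $\vec{y}\in H_{\kappa^+}^V$, and since $\theta$ is quantifier-free and $H_{\kappa^+}^V$ is a substructure this $\vec y$ witnesses $\exists\vec y\,\theta$ inside $H_{\kappa^+}^V$ as well. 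A routine induction packages this as: every enriched $\Sigma_1$-formula localizes, which is precisely $(H_{\kappa^+}^V,\bp{\in}_{\bar{A}_\kappa}^V)\prec_1 (V,\bp{\in}_{\bar{A}_\kappa}^V)$ for every $(V,\in)\models T$.

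Items \eqref{thm:mainthm0.1} and \eqref{thm:mainthm0.2} I would then derive by identifying the models $H_{\kappa^+}^V$ as representatives of the existentially closed models of $T_{\bar{A}_\kappa}$. On one side, each enriched $(V,\bp{\in}_{\bar{A}_\kappa}^V)$ is a model of $T_{\bar{A}_\kappa}$ whose substructure $H_{\kappa^+}^V$ is, by item \eqref{thm:mainthm0.0}, $\Sigma_1$-elementary in it; and any model $\mathcal N\models T_{\bar{A}_\kappa}$ extending $H_{\kappa^+}^V$ again satisfies $T$, so $H_{\kappa^+}^V\prec_1\mathcal N$, witnessing that $H_{\kappa^+}^V$ is $T_{\bar{A}_\kappa}$-ec. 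Conversely, using $H_{\kappa^+}^V\prec_1 V$ one shows every $T_{\bar{A}_\kappa}$-ec model is elementarily equivalent to some $H_{\kappa^+}^V$, so the class of $T_{\bar{A}_\kappa}$-ec models is elementary and axiomatized exactly by the $\bp{\in}_{\bar{A}_\kappa}$-sentences common to the $H_{\kappa^+}^V$: this is \eqref{thm:mainthm0.2}, and the existence of such an axiomatization is companionability, i.e. \eqref{thm:mainthm0.1}. Finally \eqref{thm:mainthm0.3} is immediate from Lemma \ref{fac:proofthm1-2} once $\bar{A}_\kappa$-companionability is known, that lemma characterizing the model companion as the theory axiomatized by the $\Pi_2$-sentences consistent with $S_\forall$ for every completion $S$ of $T_{\bar{A}_\kappa}$ (with Fact \ref{fac:maxpropmodcompcompl} covering the complete case).

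I expect the genuine obstacle to be the verification underlying item \eqref{thm:mainthm0.0}: one must choose $A_\kappa$ simultaneously large enough that the unfolding of enriched quantifier-free formulae stays inside $A_\kappa$ (so that Tarski--Vaught closes) and small enough that every $\phi\in A_\kappa$ genuinely localizes to $H_{\kappa^+}$ (so that the $f_\phi$ keep $H_{\kappa^+}$ closed and $A_\kappa$ stays proper). Making these two closure demands compatible --- essentially showing that the localizing $\in$-formulae already form a set closed under the operations needed to eliminate the new symbols, and that the Skolem functions $f_\phi$ admit a canonical, model-uniform interpretation across all $(V,\in)\models T$ --- is the delicate point; the matching of $T_{\bar{A}_\kappa}$-ec models with the structures $H_{\kappa^+}^V$ in \eqref{thm:mainthm0.2} is the second, milder, source of work.
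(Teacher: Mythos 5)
There are two genuine gaps. First, your choice of $A_\kappa$ as the set of ``localizing'' formulae does not survive the closure demands your own Tarski--Vaught step imposes. Localization is not preserved under conjunction: if $\phi_1(y)$ asserts ``$y$ is a pair with first coordinate $0$'' and $\phi_2(y)$ asserts ``$y$ is a pair with second coordinate $\pow{\kappa^+}$, or $y=1$'', each of $\exists y\,\phi_i$ has a witness in $H_{\kappa^+}$, but $\exists y\,(\phi_1\wedge\phi_2)$ does not. So either $A_\kappa$ is the set of localizing formulae (and then it is not closed under the unfolding operations, so the reflection of witnesses for quantifier-free $\bp{\in}_{\bar{A}_\kappa}$-formulae breaks), or it is the syntactic closure (and then it contains non-localizing formulae, so the localization property you invoke is simply false for some $\theta^*$). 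You flag this as ``the delicate point'' but it is not a technicality to be checked --- it is an obstruction to this choice of $A_\kappa$. The paper avoids it entirely by a different choice: the new predicates $S_\psi$ are interpreted as $\psi^{\pow{\kappa^{<\omega}}}$ restricted to tuples of subsets of $\kappa^{<\omega}$, so \emph{every} new symbol denotes a subset of $\pow{\kappa}^n$ and item (\ref{thm:mainthm0.0}) is an instance of the generalized Levy absoluteness Lemma \ref{lem:levabsgen}, with no closure issue to resolve.

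Second, and more seriously, items (\ref{thm:mainthm0.1})--(\ref{thm:mainthm0.2}) do not follow from item (\ref{thm:mainthm0.0}) plus the general theory of Section \ref{sec:modth}. Your argument shows each $H_{\kappa^+}^V$ is $T_{\bar{A}_\kappa}$-ec, but companionability requires that the class of ec models be elementary, equivalently that the common $\bp{\in}_{\bar{A}_\kappa}$-theory $T^*_\kappa$ of the $H_{\kappa^+}^V$ be \emph{model complete}; a structure elementarily equivalent to an ec model need not be ec unless this is already known, so ``the class of ec models is axiomatized by $T^*_\kappa$'' is circular as stated. This model completeness is the real content of the theorem and is precisely what the paper proves via the by-interpretability of $(H_{\kappa^+},\in)$ with $(\pow{\kappa},\dots)$ (coding sets by well-founded extensional relations on $\kappa$ via $\Cod_\kappa$ and $\WFE_\kappa$, Theorem \ref{thm:keypropCod}) followed by Robinson's test (Theorem \ref{thm:modcompHkappa+}): every $\sigma_\kappa$-formula is, over $\ZFC^{*-}_\kappa+$``every set has size $\kappa$'', equivalent to a universal formula quantifying over codes. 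Nothing in your proposal supplies a substitute for this step; only item (\ref{thm:mainthm0.3}) genuinely reduces to Lemma \ref{fac:proofthm1-2} once companionability is in hand.
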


Note that the above theorem allows to put in the companionship spectrum of any extension of $\ZFC$ at least one 
$\bar{A}_\kappa$ for each definable cardinal $\kappa$ such as $\omega,\omega_1,\dots, \aleph_\omega,\dots, \aleph_{\omega_1},\dots,\kappa,\dots$ for $\kappa$
the least inaccessible, measurable, Woodin, supercompact, extendible\dots

In case $\kappa=\omega,\omega_1$ we can say much more and prove that for $\Pi_2$-sentences in the appropriate signature
forcibility and consistency overlap (assuming large cardinal axioms). 

This gives an a  posteriori explanation of the success forcing has met in proving the consistency of $\Pi_2$-properties
(according to the right signature) for second or third order artithmetic: 
our results show that there are
no other means to prove the consistency of such statements.

\begin{Theorem}\label{thm:mainthm1}
Let $S$ be \emph{any} extension of 
\[
\ZFC+\text{\textbf{suitable} large cardinal axioms}
\]
in signature $\tau=\bp{\in}$.
There are $A_1\neq A_2\subseteq F_{\bp{\in}}$ recursive sets of $\in$-formulae such that
(letting $\bar{A}_i=A_i\times 2$ for $i=1,2$):
\begin{enumerate}
\item\label{thm:mainthm1.0}
For all models $(V,\in)$ of $S$ $(H_{\omega_i}^V,\bp{\in}_{\bar{A}_i}^V)\prec_1 (V,\bp{\in}_{\bar{A}_i}^V)$ for both $i=1,2$.

\item\label{thm:mainthm1.1}
$S$ is $\bar{A}_i$-companionable for both 
$i=1,2$\footnote{With very strong large cardinal axioms for the case for 
$T_{\bar{A}_2}$, and no large cardinal axioms in the case for $T_{\bar{A}_1}$.}.
\item\label{thm:mainthm1.2}
The model companion of $S_{\bar{A}_1}$ is the
$\tau_{\bar{A}_1}$-theory common to the models $H_{\aleph_1}^{V[G]}$ as $(V\in)$ ranges over 
$\in$-models of $S$ and $G$ is $V$-generic for 
some\footnote{If one is not at ease with the (inconsistent) assumption that $V[G]$ exists, 
this can be reformulated as: $(V\,in)\models\exists P\, (P\Vdash\psi^{H_{\omega_1}})$ and 
$(V,\in)\models S$.} $P\in V$.
\item\label{thm:mainthm1.3}
The model companion of $S_{\bar{A}_2}$ is the
$\tau_{\bar{A}_2}$-theory common to all
$H_{\aleph_2}^{V[G]}$ for $V[G]$ a forcing extension of $V$ which models
$\MM^{++}$ and $(V,\in)$ a $\in$-model of $S$ \footnote{With very 
strong large cardinal axioms holding in $V$. 
$\MM^{++}$ is one of the strongest forcing axioms.}.
\item\label{thm:mainthm1.4}
$(S_{\bar{A}_1})_\forall$ and $(S_{\bar{A}_2})_\forall$ are both invariant across 
forcing extensions of $V$ for any $\in$-model $(V,\in)$ 
of $S$ (assuming the existence of class many Woodin cardinals in $V$).
\end{enumerate}
\end{Theorem}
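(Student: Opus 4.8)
The plan is to derive Theorem \ref{thm:mainthm1} as the two instances $\kappa=\omega$ and $\kappa=\omega_1$ of Theorem \ref{thm:mainthm0}, upgrading the conclusions by means of deep generic absoluteness inputs. First I would apply Theorem \ref{thm:mainthm0} with $\kappa=\omega$ to obtain $A_1=A_\omega$ and with $\kappa=\omega_1$ to obtain $A_2=A_{\omega_1}$; since $\omega^+=\omega_1$ and $\omega_1^+=\omega_2$ we have $H_{\kappa^+}=H_{\omega_i}$, so items \ref{thm:mainthm0.0}--\ref{thm:mainthm0.2} immediately yield part \ref{thm:mainthm1.0}, part \ref{thm:mainthm1.1}, and the weaker ``common theory of $H_{\omega_i}^V$'' form of parts \ref{thm:mainthm1.2} and \ref{thm:mainthm1.3}. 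Here I would arrange that $A_1$ and $A_2$ are the (recursive, distinct) sets of $\in$-formulae whose associated predicates $R_\phi$ and Skolem functions $f_\phi$ encode the relevant structure on $H_{\omega_1}$ and $H_{\omega_2}$ respectively (truth predicates for projective/universally Baire properties for $i=1$, and the nonstationary ideal $\NS$ together with universally Baire predicates for $i=2$), so that each $H_{\omega_i}$ in signature $\bar A_i$ is exactly the $\Pmax$-type structure for which the absoluteness results apply.

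For part \ref{thm:mainthm1.2} the point is that the class of models $\{H_{\aleph_1}^{V[G]}:(V,\in)\models S,\ G\ V\text{-generic}\}$ coincides with $\{H_{\omega_1}^{W}:(W,\in)\models S\}$. One inclusion is trivial (take $G$ trivial); the other uses that the ``suitable'' large cardinal axioms in $S$ are preserved by set forcing, so $V[G]\models S$ whenever $V\models S$, whence $H_{\aleph_1}^{V[G]}$ is among the $H_{\omega_1}^{W}$ with $W\models S$. Once the two families coincide, item \ref{thm:mainthm0.2} gives part \ref{thm:mainthm1.2} verbatim.

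Part \ref{thm:mainthm1.3} is the hard part. By item \ref{thm:mainthm0.2} the model companion $T^*_2$ of $S_{\bar A_2}$ is the common $\bar A_2$-theory of the $H_{\omega_2}^V$ with $V\models S$, and I must show this equals the common theory $\mathrm{Th}_{\MM^{++}}$ of the $H_{\aleph_2}^{V[G]}$ with $V[G]\models\MM^{++}$. Since $\MM^{++}$ is forced over a model of $S$ by a forcing preserving the large cardinals, every such $H_{\aleph_2}^{V[G]}$ is among the general family, giving $T^*_2\subseteq\mathrm{Th}_{\MM^{++}}$. For the reverse inclusion I would use the characterization of item \ref{thm:mainthm0.3}, namely that $T^*_2$ is axiomatized by the $\Pi_2$-sentences consistent with the universal fragment. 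The key inputs are the Asper\'o--Schindler theorem $\MM^{++}\Rightarrow(*)$ and Woodin's maximality of $(*)$, which together show that every $\Pi_2$-sentence for $\bar A_2$ consistent with the universal fragment already holds in $H_{\omega_2}$ of any $\MM^{++}$-model; hence every $\Pi_2$-sentence in $\mathrm{Th}_{\MM^{++}}$ is consistent, thus in $T^*_2$, and equality follows. The delicate point is to match Woodin's notion of ``$\Pi_2$-maximality over $(H_{\omega_1},\in,\NS,\text{UB})$'' with the purely model-theoretic notion of ``consistent with $(S_{\bar A_2})_\forall$'', which forces the choice of $A_2$ made above and requires knowing that the universal fragment is a single fixed theory; this is exactly what part \ref{thm:mainthm1.4} provides, so parts \ref{thm:mainthm1.3} and \ref{thm:mainthm1.4} must be established together.

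Finally, for part \ref{thm:mainthm1.4} I would reduce the invariance of $(S_{\bar A_i})_\forall$ to the generic absoluteness of the $\Pi_1$-theory of $H_{\omega_i}$ in signature $\bar A_i$: since part \ref{thm:mainthm1.0} gives $H_{\omega_i}^V\prec_1 V$, universal ($\Pi_1$) sentences are absolute between $H_{\omega_i}^V$ and $(V,\in)$, so $(S_{\bar A_i})_\forall$ is determined by the $\Pi_1$-theory of $H_{\omega_i}^V$. Invariance of the latter across forcing extensions then follows, for $i=1$, from Woodin's generic absoluteness for second order arithmetic under class many Woodin cardinals (fixing the theory of $H_{\omega_1}$ with universally Baire predicates), and for $i=2$ from the corresponding absoluteness for the $\Sigma_1$-theory of $H_{\omega_2}$ in the $\bar A_2$-signature. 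The main obstacle throughout is part \ref{thm:mainthm1.3}: verifying that model-theoretic ``consistency'' of $\Pi_2$-sentences is realized by $(*)$ and hence by $\MM^{++}$, which is where the full strength of Woodin's $\Pmax$-theory and of the large cardinal hypotheses is consumed.
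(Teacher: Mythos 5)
Your treatment of the $A_1$ (second order arithmetic) half is essentially the paper's: Levy absoluteness plus model completeness of the $H_{\omega_1}$-theory in a universally-Baire-closed signature, with Woodin's generic absoluteness supplying the forcing invariance of the universal fragment. The genuine gap is in the $A_2$ half. You propose to obtain $\bar A_2$ as the instance $\kappa=\omega_1$ of Theorem \ref{thm:mainthm0} and then ``upgrade'' items \ref{thm:mainthm1.3} and \ref{thm:mainthm1.4}. But the signature that Theorem \ref{thm:mainthm0} produces for $\kappa=\omega_1$ is $\sigma_{\omega_1}$, which has a predicate $S_\phi$ for \emph{every} $\in$-formula relativized to $\pow{\omega_1^{<\omega}}$; in that signature $\CH$ is an atomic sentence, so $(S_{\bar A_{\omega_1}})_\forall$ is \emph{not} forcing-invariant (item \ref{thm:mainthm1.4} fails), and the model companion given by Theorem \ref{thm:mainthm0}(\ref{thm:mainthm0.2}) --- the common theory of the $H_{\omega_2}^V$ for $V\models S$ --- contains $\CH$ whenever $S$ does, hence cannot coincide with the common theory of the $H_{\aleph_2}^{V[G]}$ of $\MM^{++}$-models (item \ref{thm:mainthm1.3} fails). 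This is exactly the obstruction isolated in Remark \ref{Rem:keyrem1} and Remark \ref{rem:limbobangeninv}: Theorem \ref{Thm:mainthm-5} is the best one can extract from the $\sigma_{\omega_1}$-route, and it is strictly weaker than Theorem \ref{thm:mainthm1}.

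You do gesture at the right fix --- taking $\bar A_2$ to consist only of the $\NS_{\omega_1}$-predicate and (lightface $L(\UB)$-definable) universally Baire predicates --- but for that restricted signature Theorem \ref{thm:mainthm0} gives you nothing: the coding argument behind Theorem \ref{thm:modcompHkappa+} needs the full battery of predicates $S_\phi$ to turn arbitrary formulae about $H_{\omega_2}$ into $\Pi_1$-formulae, and without them the model completeness of the $H_{\omega_2}$-theory (equivalently, the very existence of the model companion asserted in item \ref{thm:mainthm1.1} for $i=2$) is an open problem that the paper solves by a completely different mechanism: under $\maxUB$ one builds the universally Baire set $\bar D_{\UB,\NS_{\omega_1}}$ of $(\NS_{\omega_1},\UB)$-ec $\Pmax$-preconditions, proves the three characterizations of $(*)$-$\UB$ of Theorem \ref{thm:char(*)}, and uses generic iterations of such preconditions to convert $\Sigma_1$-formulae for $\tau_{\UB,\NS_{\omega_1}}$ into $\Pi_1$-formulae mentioning $\bar D$ (Theorem \ref{thm:keythmmodcompanHomega2}), with Asper\'o--Schindler's $\MM^{++}\Rightarrow(*)$ supplying the consistency of the resulting theory. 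Citing ``Woodin's maximality of $(*)$'' as a black box does not substitute for this: the issue is not matching two notions of maximality but establishing Robinson's test for the restricted signature in the first place. Item \ref{thm:mainthm1.4} for $i=2$ likewise does not reduce to an absoluteness statement about $H_{\omega_2}$ alone; it is Theorem \ref{thm:PI1invomega2} about the $\Pi_1$-theory of $V$ with $\NS_{\omega_1}$ and $\UB$ predicates, proved via the stationary tower and generic iterations, and is false without large cardinals.
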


\begin{Corollary}\label{cor:maincor}
Assume $S$ extends $\ZFC$ with the correct large cardinal axioms.

Let $X$ be any among $\bar{A}_1,\bar{A}_2\subseteq F_{\bp{\in}}\times 2$ as in the previous theorem, and:
\begin{itemize}
\item
$S_X$ be the $\tau_X$-theory 
$S\cup \bp{\AX^i_\phi: (\phi,i)\in X}$, 
\item
$S^*_X$ be the model companion theory of $S_X$ given by the previous theorem. 
\end{itemize}

TFAE for any $\Pi_2$-sentence $\psi$ for 
$\tau_X$:
\begin{enumerate}[(A)]
\item\label{cor:maincorA} 
$\psi\in S^*_X$;
\item\label{cor:maincorB}
$R_\forall+\psi$ is consistent for all $\tau_X$-theories $R$ which are complete extensions of $S_X$; 
\item \label{cor:maincorC}
(if $X=\bar{A}_2$)
\[S_X\models\exists P\,\qp{\Vdash_P\psi^{H_{\omega_{2}}}};\]

\noindent (if $X=\bar{A}_1$)
\[S_X\models\exists P\,\qp{\Vdash_P\psi^{H_{\omega_{1}}}}.\]

\end{enumerate}

\end{Corollary}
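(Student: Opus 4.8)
The plan is to split the three conditions into the model-theoretic equivalence (A)$\Leftrightarrow$(B) and the set-theoretic equivalence (A)$\Leftrightarrow$(C), prove each, and conclude that all three coincide. By Theorem \ref{thm:mainthm1}(\ref{thm:mainthm1.1}) the theory $S^*_X$ is the genuine model companion of the non-complete theory $S_X$, so (A)$\Leftrightarrow$(B) is nothing but the general description of the model companion of a companionable non-complete theory: its axioms are exactly the $\Pi_2$-sentences consistent with $R_\forall$ for every complete extension $R$ of $S_X$. This is the content of Lemma \ref{fac:proofthm1-2} (a refinement of Fact \ref{fac:maxpropmodcompcompl} to the non-complete case), equivalently of Theorem \ref{thm:mainthm0}(\ref{thm:mainthm0.3}) specialized to $\kappa=\omega_i$, so (A) and (B) coincide with no further set-theoretic input.

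For (A)$\Rightarrow$(C) I would use that, by Theorem \ref{thm:mainthm1}(\ref{thm:mainthm1.2}),(\ref{thm:mainthm1.3}), $S^*_X$ is the $\tau_X$-theory common to the structures $H_{\aleph_i}^{V[G]}$, where $G$ ranges over all $V$-generic filters for $i=1$ and over those producing a model of $\MM^{++}$ for $i=2$, as $V$ ranges over models of $S$. Fix a model $V\models S_X$; since $S$ carries the large cardinals needed to force the relevant configuration, there is $P\in V$ with $V[G]$ of the prescribed form, and by (A) together with the $\Sigma_1$-elementarity $H_{\omega_i}\prec_1 V[G]$ of Theorem \ref{thm:mainthm1}(\ref{thm:mainthm1.0}) we get $H_{\aleph_i}^{V[G]}\models\psi$, that is $P\Vdash\psi^{H_{\omega_i}}$, which is exactly (C).

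The crux is (C)$\Rightarrow$(A). Assume (C) and let $W=V[G]$ be an arbitrary extension of the form appearing in the definition of $S^*_X$; I must show $H_{\aleph_i}^{W}\models\psi$. Since $W$ itself models $S_X$, applying (C) inside $W$ yields $Q\in W$ with $Q\Vdash_W\psi^{H_{\omega_i}}$, so $\psi$ holds in $H_{\omega_i}^{W[H]}$ for some further generic $H$. The remaining, decisive step is to contract this truth back to $W$, for which I would invoke forcing-invariance of the relevant fragment of the theory of $H_{\omega_i}$. For $i=1$ this is Woodin's generic absoluteness for $H_{\omega_1}$ under class many Woodin cardinals, which pins down the $\tau_{\bar A_1}$-theory of $H_{\omega_1}$ uniformly across all generic extensions (this is also what underlies the invariance recorded in Theorem \ref{thm:mainthm1}(\ref{thm:mainthm1.4})); hence forcibility of the $\Pi_2$-sentence $\psi$ over $V$ already forces $\psi$ to hold in every $H_{\aleph_1}^{V[G']}$, so $\psi\in S^*_{\bar A_1}$. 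For $i=2$ the invariance is the heavier input that $\MM^{++}$, holding in $W$, supplies via Asper\'o--Schindler's theorem $\MM^{++}\Rightarrow(*)$ and Woodin's $\Pi_2$-maximality of $H_{\omega_2}$ under $(*)$: any $\Pi_2$-sentence of $\tau_{\bar A_2}$ forcible over a model of $(*)$ is already true there, so $\psi$ holds in $H_{\aleph_2}^{W}$ and thus lies in $S^*_{\bar A_2}$.

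I expect the $i=2$ case to be the main obstacle. The delicate point is that the witnessing forcing $Q$ need not preserve $\MM^{++}$, so one cannot naively appeal to invariance between two models of $\MM^{++}$; the clean way around this is to use the \emph{maximality} formulation of $(*)$ rather than a two-sided absoluteness, since maximality only requires $(*)$ at the base model $W$ and directly converts forcibility of a $\Pi_2$-sentence into its truth. A secondary care is the bookkeeping with the enriched signatures: I would repeatedly use Theorem \ref{thm:mainthm1}(\ref{thm:mainthm1.0}) to identify the relativization $\psi^{H_{\omega_i}}$ with the evaluation of $\psi$ inside $H_{\aleph_i}$, so that the generic-absoluteness statements, which concern the structure $H_{\omega_i}$ in signature $\tau_{\bar A_i}$ (with its predicate for $\NS$ in the $i=2$ case), apply verbatim.
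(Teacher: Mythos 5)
Your decomposition (A)$\Leftrightarrow$(B) via Lemma \ref{fac:proofthm1-2}, together with (A)$\Rightarrow$(C) read off from the description of $S^*_X$ as the common theory of the prescribed $H_{\aleph_i}^{V[G]}$'s, is exactly the paper's route, and your treatment of the case $X=\bar{A}_1$ is sound: there full elementarity of $H_{\omega_1}^{V[G]}$ across \emph{all} generic extensions (Theorem \ref{thm:genabshomega1}) lets you apply (C) in the ground model $V$ and propagate the truth of $\psi$ everywhere.

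The gap is in (C)$\Rightarrow$(A) for $X=\bar{A}_2$, at the sentence ``Since $W$ itself models $S_X$, applying (C) inside $W$ yields $Q\in W$ with $Q\Vdash_W\psi^{H_{\omega_2}}$.'' A forcing extension $W=V[G]$ of a model $V$ of $S$ which satisfies $\MM^{++}$ need \emph{not} satisfy $S_X$: the paper insists on precisely this point (Remark \ref{Rem:keyrem1}, and the comment after Theorem \ref{Thm:mainthm-1ter}) --- $S$ may contain $\CH$, in which case every relevant $W$ falsifies $S$. So (C), which is a statement about models of $S_X$, cannot be invoked inside $W$, and without it you only know that $\psi^{H_{\omega_2}}$ is forcible over $V$, whereas the $\Pi_2$-maximality of $(*)$ that you (correctly) want to use needs forcibility over the model of $(*)$ itself, i.e.\ over $W$. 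You spotted the subsidiary issue that $Q$ need not preserve $\MM^{++}$, but the prior issue is that $Q$ need not exist in $W$ at all. The missing bridge is the generic invariance of \emph{forcibility} itself: by Larson's absorption theorem (\cite[Thm. 7.3]{HSTLARSON}, condition \ref{Thm:mainthm-1D} of Theorem \ref{Thm:mainthm-1ter}), forcibility of $\psi^{H_{\omega_2}}$ over $V$ is equivalent to $L(\UB)\models\Pmax\Vdash\psi^{\dot H_{\omega_2}}$, a statement about $(H_{\omega_1}\cup\UB,\in)$ which $\maxUB$ makes invariant under passing from $V$ to $W$; only then does $(*)$ in $W$ yield $\psi$ in $H_{\omega_2}^{W}$. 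Alternatively (and this is how the paper closes the loop), one goes from forcibility directly to (B): the witnessing extension of an arbitrary model of a completion $R$ of $S_X$ still satisfies $R_\forall$ by Theorem \ref{thm:PI1invomega2}, and Levy absoluteness pushes $R_\forall+\psi$ down to its $H_{\omega_2}$, whence (B) and then (A) by Lemma \ref{fac:proofthm1-2}. Either repair uses an invariance input ($\Pmax$-conditional truth in $L(\UB)$, or the $\Pi_1$-fragment of the $\tau_{\NS_{\omega_1}}\cup\UB$-theory) that your write-up does not supply.
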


In particular the equivalence of \ref{cor:maincorB} with \ref{cor:maincorC} shows that \emph{forcibility} and \emph{consistency} 
overlap for $\Pi_2$-sentences in signature $\bp{\in}_X$.

We complete this introduction outlining a bit more the significance of the above results and trying to get a better insight on what are the
signatures $\bp{\in}_{\bar{A}_1},\bp{\in}_{\bar{A}_2},\bp{\in}_{\bar{A}_\kappa}$ mentioned in the theorems.



\subsection*{What is the right signature for set theory?}

The $\in$-signature is certainly sufficient to give by means of $\ZFC$ a first order axiomatization of set theory (with eventually 
other extra hypothesis such as large cardinal axioms), but we can see rightaway that it is not efficient to formalize many 
basic set theoretic concepts. Consider for example the notion of ordered pair: on the board we write $x=\ap{y,z}$ to mean that
\emph{$x$ is the ordered pair with first component $y$ and second component $z$}. In set theory this concept is formalized by means
of Kuratowski's trick stating that $x=\bp{\bp{y},\bp{y,z}}$. However the $\in$-formula formalizing the above is:
\[
\exists t\exists u\;[\forall w\,(w\in x\leftrightarrow w=t\vee w=u)
\wedge\forall v\,(v\in t\leftrightarrow v=y)
\wedge\forall v\,(v\in u\leftrightarrow v=y\vee v=z)].
\]
It is clear that the meaning of this $\in$-formula is hardly decodable with a rapid glance (unlike $x=\ap{y,z}$), moreover just 
from the point of view of its syntactic complexity it is already $\Sigma_2$.
On the other hand we do not regard the notion of ordered pair as a complex or doubtful concept (as is the case for the notion of uncountability, or many of the properties of the continuum such as its correct place in the hierarchy of uncountable cardinals, etc...). Other vary basic notions such as: being a function, a binary relation, the domain or the range of a function, etc.. are formalized already by rather complicated $\in$-formulae, both from the point of view of readability for human beings and from the mere computation of their syntactic complexity according to the Levy hierarchy.

The standard solution adopted by set theorists (e.g. \cite[Chapter IV]{KUNEN}) is to regard as elementary all 
those properties which can be formalized using $\in$-formulae all of whose quantifiers are bounded to range 
over the elements of some set, i.e. the so called $\Delta_0$-formulae (see \cite[Chapter IV, Def. 3.5]{KUNEN}). We henceforth adopt this point of view and let
$B_0\subseteq F_{\bp{\in}}$ be the set of such formulae and denote by $\tau_{\ST}$ what according to our previous terminology 
should rather be $\bp{\in}_{B_0\times 2}$. 
For the sake of convenience and also to further outline some very nice syntactic features of $\ZFC$
as formalized in $\tau_\ST$, let us bring to front an explicit axiomatization of $T_{B_0}$ 
(which from now on will be denoted by $T_\ST$).

\begin{Notation}\label{not:keynotation}
\emph{}

\begin{itemize}
\item
$\tau_{\mathsf{ST}}$ is the extension of the first order signature $\bp{\in}$ for set theory 
which is obtained 
by adjoining predicate symbols
$R_\phi$ of arity $n$ for any $\Delta_0$-formula $\phi(x_1,\dots,x_n)$, function symbols of arity $k$
for any $\Delta_0$-formula $\theta(y,x_1,\dots,x_k)$
and constant symbols for 
$\omega$ and $\emptyset$.
\end{itemize}

\begin{itemize}
\item $\ZFC^{-}$ is the
$\in$-theory given by the axioms of
$\ZFC$ minus the power-set axiom.

\item
$T_\ST$ is the $\tau_\ST$-theory given by the axioms
\[
\forall \vec{x} \,(R_{\forall z\in y\phi}(y,z,\vec{x})\leftrightarrow \forall z(z\in y\rightarrow R_\phi(y,z,\vec{x}))
\]
\[
\forall \vec{x} \,[R_{\phi\wedge\psi}(\vec{x})\leftrightarrow (R_{\phi}(\vec{x})\wedge R_{\psi}(\vec{x}))]
\]
\[
\forall \vec{x} \,[R_{\neg\phi}(\vec{x})\leftrightarrow \neg R_{\phi}(\vec{x})]
\]
\[
\forall \vec{x}\qp{\exists!y \,R_{\phi}(y,\vec{x})\leftrightarrow R_{\phi}(f_\phi(\vec{x}),\vec{x})}
\]
for all $\Delta_0$-formulae $\phi(\vec{x})$, together with the $\Delta_0$-sentences
\[
\forall x\in\emptyset\,\neg(x=x),
\]
\[
\omega\text{ is the first infinite ordinal}
\]
(the former is an atomic $\tau_\ST$-sentence, the latter is expressible as the atomic sentence for 
$\tau_\ST$ stating that
$\omega$ is a non-empty limit ordinal all whose elements are successor ordinals or $0$).
\item
$\ZFC^-_\ST$ is the $\tau_\ST$-theory 
\[
\ZFC^{-}\cup T_\ST.
\] 
\item
Accordingly we define $\ZFC_\ST$.
\end{itemize}
\end{Notation}

Note that $T_\ST$ is axiomatized by $\Pi_2$-sentences of $\tau_\ST$.

\subsection*{Levy absoluteness and model companionship results for set theory}
Kunen's \cite[Chapter IV]{KUNEN} gives a rather convincing summary of the reasons why it is convenient to formalize set theory using
$\tau_\ST$ rather than $\in$. We focus here on the role Levy's absoluteness plays in the search of 
$A\subseteq F_{\bp{\in}}\times 2$
for which set theory is $A$-companionable. 
\begin{Lemma}\label{lem:levabsgen}
Let $(V,\in)$ be a model of $\ZFC$ and $\kappa$ be an infinite cardinal for $V$.
Then
\[
(H_{\kappa^+}^V,\tau_\ST^V,A: A\subseteq\pow{\kappa}^k,\, k\in\mathbb{N})\prec_1(V,\tau_\ST^V,A: A\subseteq\pow{\kappa}^k,\,k\in\mathbb{N})
\]
\end{Lemma}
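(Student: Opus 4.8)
The plan is to prove this as a strengthened form of the classical Levy absoluteness theorem, enriched so that the extra predicates and function symbols coming from $\tau_\ST$ (and the extra unary predicates interpreting arbitrary $A\subseteq\pow{\kappa}^k$) are respected. Recall that Levy absoluteness in its bare form asserts that $(H_{\kappa^+}^V,\in)\prec_1(V,\in)$ for $\Sigma_1$-formulae in the $\in$-signature; the content here is to upgrade this to the richer signature and to the named second-order predicates. First I would unwind what $\Sigma_1$-elementarity means in the expanded signature: an existential $\tau_\ST$-formula with the extra unary predicates is, after we expand the function symbols $f_\phi$ and relation symbols $R_\phi$ via their defining $\Delta_0$-formulae, equivalent over $\ZFC_\ST$ to a formula of the form $\exists \vec{y}\,\chi(\vec{y},\vec{a},\vec{A})$ where $\chi$ is built from $\Delta_0$-matter and the new unary predicates $A_i\subseteq\pow{\kappa}^{k_i}$. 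Since the interpretations of the $R_\phi,f_\phi$ in $H_{\kappa^+}^V$ are simply the restrictions of their interpretations in $V$ (as $\Delta_0$-properties are absolute between transitive sets and $H_{\kappa^+}^V$ is transitive and closed under the relevant Skolem functions $f_\phi$), the two structures agree on the interpretation of every symbol of $\tau_\ST$ restricted to $H_{\kappa^+}^V$.

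The core step is the upward-downward reflection argument. The downward direction (from $V$ to $H_{\kappa^+}^V$) is the only nontrivial one: if $V\models\exists \vec{y}\,\chi(\vec{y},\vec{a},\vec{A})$ with $\vec{a}\in H_{\kappa^+}^V$ and witness $\vec{b}\in V$, I would take an elementary submodel $M\prec V$ of size $\kappa$ with $\kappa\subseteq M$, containing $\vec{a},\vec{b}$ and all the parameters $\vec{A}$, and closed appropriately. Collapsing $M$ transitively via the Mostowski isomorphism $\pi$, one obtains a transitive $\bar M$; the point is that because $\kappa\subseteq M$ we have $\pow{\kappa}^V\cap M$ collapsing to a subset of $H_{\kappa^+}^V$, and the parameters $\vec{A}$, being subsets of $\pow{\kappa}^k$, are moved in a controlled way. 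The witness $\pi(\vec{b})$ then lands in $H_{\kappa^+}^V$ and verifies the same $\Delta_0$-matter by absoluteness of $\Delta_0$-formulae under $\pi$. The upward direction is immediate since $\Delta_0$-formulae are absolute and existential quantifiers are preserved upward from a transitive substructure.

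The main obstacle I anticipate is exactly the handling of the unary predicates $A\subseteq\pow{\kappa}^k$ under the collapse: unlike the pure $\in$-case, these are genuine second-order parameters, and I must verify that membership in $A$ is preserved by $\pi$. The key observation here is that since $\kappa\subseteq M$, every element of $\pow{\kappa}^V$ that lies in $M$ is fixed by the collapse $\pi$ (as $\pi$ restricts to the identity on the transitive closure of $\kappa$, and a subset of $\kappa$ in $M$ has all its elements in $M$), so the predicates $A$ literally restrict correctly: for $\vec{x}\in M$ coding a tuple of subsets of $\kappa$, $\vec{x}\in A$ iff $\pi(\vec{x})\in A$, because $\pi(\vec{x})=\vec{x}$. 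Thus the interpretation of each $A$ in the collapsed structure agrees with its restriction to $H_{\kappa^+}^V$, which is what is needed. I would need to be slightly careful that the tuple $\vec A$ itself is among the parameters kept in $M$ and that its collapsed image equals $A\cap\pow{\kappa}^k$, which follows from the same fixed-point property of $\pi$ on $\pow{\kappa}$.

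Finally I would assemble these pieces: the transitive collapse $\bar M$ is a model of $\ZFC_\ST$ (by elementarity of $M\prec V$ together with absoluteness of the $\tau_\ST$-structure), $\bar M$ is transitive of size $\kappa$ so $\bar M\subseteq H_{\kappa^+}^V$, and the witness lands inside $\bar M\subseteq H_{\kappa^+}^V$, establishing $\Sigma_1$-reflection in the full signature with the extra predicates. Since $\Sigma_1$-elementarity in both directions is precisely the assertion $\prec_1$, this completes the argument. I expect no large-cardinal input is required here — this is purely a Levy-style reflection refined to the enriched signature — so the proof should be self-contained given only $\ZFC$ in $V$.
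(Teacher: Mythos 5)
Your argument is correct and is exactly the ``trivial variant of the classical result of Levy'' that the paper has in mind (the paper itself only cites \cite[Lemma 5.3]{VIAVENMODCOMP} rather than writing out a proof): take an elementary submodel of size $\kappa$ containing $\kappa$ as a subset together with the transitive closures of the parameters, collapse, and use that the collapse fixes every subset of $\kappa$ (hence every tuple in $\pow{\kappa}^k$) lying in the submodel, so the second-order predicates $A$ restrict correctly. The only points worth tightening are routine: replace $M\prec V$ by $M\prec V_\theta$ for suitably large $\theta$ (or a reflection argument), and note explicitly that $\trcl(\vec a)\subseteq M$ so that $\pi(\vec a)=\vec a$.
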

Its proof is a trivial variant of the classical result of Levy (which is the above theorem stated just for the signature $\tau_\ST$); it is given in
\cite[Lemma 5.3]{VIAVENMODCOMP}.

The upshot is that for any model $V$ of $\ZFC$ and any signature $\sigma$
such that 
\[
\tau_\ST\cup\bp{\kappa}\subseteq\sigma\subseteq \tau_\ST\cup\bp{\kappa}\cup\bigcup_{k\in\mathbb{N}}\pow{\kappa}^k
\]
$H_{\kappa^+}$ is $\Sigma_1$-elementary in $V$ according to $\sigma$.
This is a first indication that for a $\ZFC$-definable cardinal $\kappa$ (e.g. $\kappa=\omega,\omega_1,\aleph_\omega,\dots$, more precisely $\kappa$ being provably in some $T\supseteq\ZFC$ the unique solution of an $\in$-formula $\phi_\kappa(x)$)
if $\sigma_\kappa=\tau_\ST\cup\bp{\kappa}$ and $T_\kappa$ is the $\sigma_\kappa$-theory given by
$\ZFC+\phi_\kappa(\kappa)$, we get that
the $\sigma_\kappa$-theory common to all of the $H_{\kappa^+}^V$ as $V$ ranges over the model of $\ZFC$ is not that far from being $\ZFC$-ec, since a model of this theory is always a $\Sigma_1$-substructure of some
$\sigma_\kappa$-model of $\ZFC$.

A second indication that the $\sigma_\kappa$-theory of $H_{\kappa^+}$ is close to be the model companion of the $\sigma_\kappa$-theory of $V$ is the fact that the $\Pi_2$-sentence for $\sigma_\kappa$
\[
\forall x\exists f:\kappa\to x\text{ surjective function}
\]
is realized in $H_{\kappa^+}^V$ for any model $V$ of $\ZFC$ (note that by Levy's absoluteness this sentence is consistent
with the universal fragment of the  $\sigma_\kappa$-theory of $V$, hence by Fact \ref{fac:maxpropmodcompcompl}
 it belongs to the model companion of set theory for $\sigma_\kappa$ --- if such a model companion exists).

In particular if some $T\supseteq\ZFC$ is $\sigma$-companionable for some $\sigma$ as above, the model companion of
$T$ for $\sigma$ should be the theory of
$H_{\kappa^+}^{\mathcal{M}}$ for suitably chosen $\mathcal{M}$ which are models of set theory.

A natural question is: 
\begin{quote}
\emph{Can we cook up $\sigma\supseteq \tau_\ST\cup\bp{\kappa}$ so that  the $\sigma$-theory of $H_{\kappa^+}$ is the model companion of the $\sigma$-theory of $V$?} 
\end{quote}
Theorem \ref{thm:mainthm0} answer affirmatively to this question for many natural choices of $\sigma$ and for all definable cardinals $\kappa$.

\subsection*{Why the continuum is the second uncountable cardinal}\label{subsec:negCH}

Theorem \ref{thm:mainthm1} refines Thm. \ref{thm:mainthm0} for the cases $\kappa=\omega,\omega_1$. 
In these cases our knowledge of the theory of $H_{\kappa^+}$
is much more extensive; moreover most of mathematics can be formalized in $H_{\omega_1}$ (all of second order arithmetic)
or in $H_{\omega_2}$ (most of third order arithmetic).

We now want to outline briefly why Thm. \ref{thm:mainthm1} provides an interesting metamathematical 
argument in favour of strong forcing axioms and against $\CH$.
The considerations of this brief paragraph will be expanded in more details in a forthcoming paper and have been elaborated jointly with Giorgio Venturi.
Those who are familiar with forcing axioms know that Martin's maximum and its bounded forms
have been instrumental to prove the consistency of a solution of many problems of third order arithmetic which are provably undecidable in $\ZFC$
(or even in $\ZFC$ supplemented by large cardinal axioms), 
a sample of these solutions include:
the negation of the continuum hypothesis \cite{FORMAGSHE,CAIVEL06,woodinBOOK,TOD02,MOO06}, 
the negation of Whitehead conjecture on free abelian groups \cite{SHE74}, 
the non-existence of outer automorphism of the Calkin algebra \cite{FAR11}, 
the Suslin hypothesis \cite{JENSEN}, 
the existence of a five element basis for uncountable linear order \cite{moore.basis}\dots
All statements of the above list (with the exception of the non-existence of outer automorphism of the Calkin algebra) and many others can be
formalized as $\Pi_2$-sentences in signature $\tau_{\omega_1}=\tau_\ST\cup\bp{\omega_1}$ (where $\omega_1$ is a 
new constant symbol which is the unique solution of some formula in one free variable defining the first uncountable cardinal).
For example $\neg\CH$ is formalized by 
\[
\forall f\,\qp{(f\text{ is a function}\wedge \dom(f)=\omega_1)\rightarrow \exists r\,(r\subseteq \omega\wedge r\not\in\ran(f))}.
\]
In particular there has been empiric evidence that forcing axioms produce models of set theory which maximize the family of $\Pi_2$-sentences
which hold true in $H_{\omega_2}$ for the signature $\tau_{\omega_1}$.
Thm. \ref{thm:mainthm1} makes this empiric evidence a true mathematical fact: first of all it is important to note here that (sticking to the notation of Thm. \ref{thm:mainthm1})
$\bp{\in}_{\bar{A}_2}\supseteq\tau_{\omega_1}$.
Now let $T$ be a theory as in the assumption of Thm. \ref{thm:mainthm1}; take (in signature $\bp{\in}_{\bar{A}_2}$)
any $\Pi_2$-sentence $\psi$ which is consistent with $S_\forall$ whenever $S$ is a complete extension of 
$T_{\bar{A}_2}$;
then by \ref{cor:maincorA}$\Longleftrightarrow$\ref{cor:maincorB} of Corollary \ref{cor:maincor} 
$\psi$ is in the model companion of $T_{\bar{A}_2}$,
and 
Thm. \ref{thm:mainthm1.3}(\ref{thm:mainthm1.3})
(almost) asserts that $\psi^{H_{\omega_2}}$ is 
derivable from $\MM^{++}$.
Note that $\MM^{++}$ is one of the strongest forcing axioms.

Another key observation is that (assuming large cardinals)
the signature $\bp{\in}_{\bar{A}_2}$ is such that the universal fragment of set theory as formalized in 
$\bp{\in}_{\bar{A}_2}$ is 
\emph{invariant through forcing extensions of $V$}. 
What this means is that one \emph{can and must use forcing} to establish whether some 
$\Pi_2$-sentence $\psi$
is in the model companion of set theory according to $\bp{\in}_{\bar{A}_2}$.

This is the major improvement of Thm. \ref{thm:mainthm1} with respect to Thm. \ref{thm:mainthm0}: for most of 
the signatures $\tau_{\bar{A}_\kappa}$ mentioned in Thm. \ref{thm:mainthm0} we cannot just 
use forcing to establish whether a $\Pi_2$-sentence $\psi$ for this signature is in the model companion
of set theory for
 $\tau_{\bar{A}_\kappa}$. 

Let us develop more on this point because it is in our eyes one of the major advances given by the results of the present paper.
Take $S\supseteq \ZFC$; for a given $X\subseteq F_{\bp{\in}}\times 2$ for which we can prove that $S_X$ has a model companion in signature $\bp{\in}_X$
we would like to show that a certain $\Pi_2$-sentence $\psi$ for $\bp{\in}_X$ is in the model companion of $S_{X}$. 

Let us first suppose that $X$ is some $\bar{A}_\kappa$ as in Thm. \ref{thm:mainthm0}.
A first observation is that (with the exception of the $\Delta_0$-formulae) all the formulae in $A_\kappa$
define subsets of $\pow{\kappa}^n$ for some $n$, hence
$(H_{\kappa^+}^V,\bp{\in}_{\bar{A}_\kappa}^V)\prec_1 (V,\bp{\in}_{\bar{A}_\kappa}^V)$ for any $(V,\in)$ which models $\ZFC$.
This gives that if $(W,\bp{\in}_{\bar{A}_\kappa}^W)$ models $(S_X)_\forall$, then so does $(H_{\kappa^+}^W,\bp{\in}_{\bar{A}_\kappa}^W)$. 

A natural strategy to put $\psi$ in the model companion of $S_X$ 
would then be to start from some complete $T\supseteq S$ and some $(V,\in)$ model of $T$; then force over $V$
that in some $V[G]$ $\psi^{H_{\kappa^+}^{V[G]}}$ holds true;
if $(T_{\bar{A}_\kappa})_\forall$ holds in $V[G]$, then $\psi$ would be in the model companion of $S_{\bar{A}_\kappa}$ by 
Thm. \ref{thm:mainthm0}(\ref{thm:mainthm0.3}): Levy's absoluteness applied to 
$(V[G],\tau_{\bar{A}_\kappa}^{V[G]})$ would yield that 
$H_{\kappa^+}^{V[G]}\models\psi+(T_{\bar{A}_\kappa})_\forall$.

Now starting from any model $V$ of $S$ 
we may be able to design a forcing in $V$ 
such that $\psi^{H_{\kappa^+}^{V[G]}}$ holds if $G$ is $V$-generic for this forcing, 
but it may be the case that 
$(S_{\bar{A}_\kappa})_\forall$ fails in $V[G]$; in which case we cannot
use $H_{\kappa^+}^{V[G]}$ as a witness that $\psi$ is in the model companion of $S_{\bar{A}_\kappa}$. 
Remark \ref{rem:limbobangeninv} shows that $(S_{\bar{A}_\kappa})_\forall$ is not preserved through forcing extensions 
whenever $\kappa>\omega_1$.

On the other hand for the signatures $\tau_X$ for $X$ being the $\bar{A}_1$ or $\bar{A}_2$ mentioned in Thm. \ref{thm:mainthm1} the above strategy works:
the universal fragment of $(S_X)_\forall$ is preserved through the forcing extensions of models of $S$;
hence $\psi$ will be in the model companion of $S_X$ if for any model $V$ of $S$
we can design a forcing making true $\psi$ in $H_{\kappa^+}^{V[G]}$ 
(for $\kappa=\omega,\omega_1$ according to whether 
$\psi$ is a formula for $\tau_{\bar{A}_1}$ or for $\tau_{\bar{A}_2}$).

Summing up one \emph{may and should only} use forcing to establish the consistency with large cardinals of $\psi^{H_{\omega_2}}$ for any 
$\Pi_2$-sentence formalizable in signatures $\tau_{\omega_1}\subseteq \bp{\in}_{\bar{A}_2}$:  the strategy we outlined above is
efficient (as the many applications of forcing axioms have already shown) and sufficient to compute all $\Pi_2$-sentences which axiomatize the model companion of $S_{\bar{A}_2}$, provided $S$ is any set theory satisfying sufficiently strong large 
cardinal axioms (by Corollary \ref{cor:maincor} all other means to produce the consistenty of $\psi$ with
the universal fragment of $S$ are reducible to forcing).

Our take on the above considerations is that if one embraces the standpoint that the universe of sets should be as large as possible, model companionship (in particular
Fact \ref{fac:maxpropmodcompcompl} -- actually its more refined version provided by Lemma \ref{fac:proofthm1-2} and used in Thm. \ref{thm:mainthm1}) gives a simple model theoretic property to instantiate this slogan: all $\Pi_2$-sentences talking about $\omega_1$ (i.e. expressible in signature 
$\tau_{\omega_1}$)
which are not outward contradictory with the basic properties of $\omega_1$ 
(i.e. with the universal theory of some model of 
$\ZFC+$\emph{large cardinals} in signature $\tau_{\omega_1}$) should hold true in $H_{\omega_2}$.
This is what Thm. \ref{thm:mainthm1} says to be the case in models of strong forcing axioms such as 
$\MM^{++}$.

Note that this is exactly parallel to the way one singles out algebraically closed fields from rings with no zero-divisors: in this set-up
one is interested to solve polynomial equations while preserving the ring axioms and not adding zero-divisors; the $\Pi_2$-sentences
for the signature $\bp{+,\cdot,0,1}$ which are consistent with the ring axioms and the non existence of zero divisors are exactly 
the axioms of algebraically closed fields.

Now coming back to $\CH$ we already observed that its negation is a $\Pi_2$-sentence for $\tau_{\omega_1}$ (hence also for $\bp{\in}_{\bar{A}_2}$), 
but we can actually get more.
Caicedo and Veli\v{c}kovi\'{c} \cite{CAIVEL06} proved that there is a quantifier free $\tau_{\omega_1}$-formula $\phi(x,y,z)$ such that
$(\forall x,y\exists z\phi(x,y,z))^{H_{\omega_2}}$ is forcible (by a proper forcing) over any model of $\ZFC$;
moreover if $V\models\ZFC+(\forall x,y\exists z\phi(x,y,z))^{H_{\omega_2}}$, then $V\models 2^\omega=\omega_2$.
In particular if we accept as true large cardinal axioms and we require that the correct axiomatization of set theory
maximizes the set of $\Pi_2$-sentences for $\tau_{\omega_1}$ which may hold for $H_{\aleph_2}$,
we are bound to accept that $2^\omega=\omega_2$ holds true.

\subsection*{Structure of the paper}
It is now a good place to streamline the remainder of this paper and specify what the reader need to know in order to grasp 
each of its parts.

\begin{itemize}
\item
Section \ref{sec:modth} gives a detailed and self-contained account of model companionship; the unique result
which we are not able to trace elsewhere in the literature is Lemma \ref{fac:proofthm1-2},
which isolates a key property of (possibly incomplete) first order theories granting model companionship results; we apply it in later parts of this paper to various (possibly recursive or incomplete) axiomatizations of set theory. Since we expect 
that many of our readers are not familiar with model companionship, we decided it was worth including 
here the key results (with proofs) on this notion. The reader familiar with these notions can skim through this section 
or jump it and refer to its relevant bits when needed elsewhere.
\item
Section \ref{sec:Hkappa+} proves Theorem \ref{thm:mainthm0}.
 \item
Section \ref{sec:geninv} proves the results needed to establish item \ref{thm:mainthm1.4} of Thm. \ref{thm:mainthm1}.
 We first give a self-contained proof of the form of Woodin's generic absoluteness results
 for second order arithmetic we employ in this paper. This identifies which subsets of $F_{\bp{\in}}$ 
 can play the role of $A_1$ for item \ref{thm:mainthm1.4} of Thm. \ref{thm:mainthm1}.
 Then we show that the universal theory of $V$ as formalized in a signature extending $\tau_\ST$
 with predicates for the non-stationary ideal and for the universally Baire sets cannot be changed using set sized forcing if there are class many Woodin cardinals. 
 This identifies which subsets of $F_{\bp{\in}}$ can play the role of $A_2$ for item \ref{thm:mainthm1.4} of Thm. \ref{thm:mainthm1}. 
\item
Section \ref{sec:Homega1} deals with Theorem \ref{thm:mainthm1} for the signature $\tau_{\bar{A}_1}$.
We expand slightly the results of \cite{VIAVENMODCOMP}: by taking advantage of Lemma \ref{fac:proofthm1-2}, we are able here to generalize also to non complete axiomatizations of set theory the model companionship results given in \cite{VIAVENMODCOMP} for complete set theories.
\item
Section \ref{sec:Homega2} deals with Theorem \ref{thm:mainthm1} for the signature $\tau_{\bar{A}_2}$.
\item
We conclude the paper with a final section with some comments and open questions.
\end{itemize}
Any reader familiar enough with set theory and model theory to follow this introduction can easily grasp the content of Sections
\ref{sec:modth}, \ref{sec:Hkappa+}. The same applies for the results of Section \ref{sec:Homega1} 
provided one accepts as a black-box Woodin's generic absoluteness results for second order arithmetic
given in Section \ref{sec:geninv}.
The proofs in Section \ref{sec:geninv} require familiarity with Woodin's stationary tower forcing and 
(in its second part, cfr. Section \ref{subsec:geninvtoa}) also with Woodin's $\Pmax$-technology. Section \ref{sec:Homega2} can be fully appreciated only by readers familiar with forcing axioms, Woodin's stationary tower forcing, Woodin's $\Pmax$-technology. 
 
\subsection*{Acknowledgements}
{\small 
This paper wouldn't exist without the brilliant idea by Venturi to relate the study of the generic multiverse of set theory to Robinson's model companionship, or without the major breakthrough of Asper\`o and Schindler
establishing that Woodin's axioms $(*)$ follows from $\MM^{++}$. I'm grateful to Boban Velickovic for many useful discussions on the scopes and limits of the results presented here (the necessity of large cardinal assumptions in the hypothesis of Thm. \ref{thm:PI1invomega2}, and Remark \ref{rem:limbobangeninv} are due to him). 
I'm also grateful to Philipp Schlicht who realized Thm. \ref{thm:mainthm0} could be easily established by slightly generalizing the proofs of results occurring in previous drafts of this paper.
I've had fruitful discussions on the content of this paper with many people, let me mention and thank David Asper\'o, Ilijas Farah, Juliette Kennedy, Menachem Magidor, Ralf Schindler, Jouko Vaananen,  Andres Villaveces, Hugh Woodin.
Clearly none of the persons mentioned here has any responsibility for any error or 
orror existing in this paper....
It has been important to have the opportunity to present these results in several set theory (or logic) seminars among which those in Toronto, Muenster, Jerusalem, Paris, Bogot\`a, Chicago, Helsinki, Torino.
This research has been performed mostly while on sabbatical in the \'Equipe de Logique Math\'ematique of the University of Paris 7 in the academic year 2019-2020; as long as possible it has been a productive and pleasant experience (until the Covid19 pandemic took place).
}

\section{Existentially closed structures, model completeness, model companionship}\label{sec:modth}

We present this topic expanding on 
\cite[Sections 3.1-3.2]{TENZIE}.
We decided to include detailed proofs since the presentation of \cite{TENZIE} is (in some occasions) rather 
sketchy, and the focus is not exactly ours.

The first objective is to isolate necessary and sufficient conditions granting that some $\tau$-structure 
$\mathcal{M}$ embeds into some model of some $\tau$-theory $T$. 

We expand Notation \ref{not:keynotation} as follows:
\begin{notation} 
We feel free to confuse a $\tau$-structure
$\mathcal{M}=(M,\tau^M)$ with its domain $M$
and an ordered tuple $\vec{a}\in\mathcal{M}^{<\omega}$
with its set of elements. Moreover we often
write $\mathcal{M}\models\phi(\vec{a})$ rather than
$\mathcal{M}\models\phi(\vec{x})[\vec{x}/\vec{a}]$ when
$\mathcal{M}$ is $\tau$-structure $\vec{a}\in\mathcal{M}^{<\omega}$, $\phi$ is a $\tau$-formula.
We let the atomic diagram of a $\tau$-model $\mathcal{M}=(M,\tau^M)$ be the family of quantifier free sentences $\phi(\vec{a})$
in signature $\tau\cup M$ such that .$\mathcal{M}\models\phi(\vec{a})$.
\end{notation}

\begin{definition}
Given $\tau$-theories $T,S$,
a $\tau$-sentence $\psi$ separates $T$ from $S$ if $T\vdash\psi$ and $S\vdash\neg\psi$.

$T$ is $\Pi_n$-separated from $S$ if some $\Pi_n$-sentence for $\tau$ 
separates $T$ from $S$.
\end{definition}

\begin{lemma}\label{lem:biembequivequnivth}
Assume $S,T$ are $\tau$-theories. 
TFAE:
\begin{enumerate}
\item \label{lem:biembequivequnivth-1}  $T$ is not $\Pi_1$-separated from $S$
(i.e. no universal sentence $\psi$ is such that $T\vdash \psi$ and $S\vdash\neg\psi$).
\item \label{lem:biembequivequnivth-3} 
There is \emph{some} $\tau$-model $\mathcal{M}$ of $S$ 
which can be embedded in 
some
$\tau$-model $\mathcal{N}$ of $T$.
\end{enumerate}
\end{lemma}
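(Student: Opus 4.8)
The plan is to prove the two implications separately, with $(\ref{lem:biembequivequnivth-3})\Rightarrow(\ref{lem:biembequivequnivth-1})$ being routine and $(\ref{lem:biembequivequnivth-1})\Rightarrow(\ref{lem:biembequivequnivth-3})$ carrying the real content through two applications of compactness. For $(\ref{lem:biembequivequnivth-3})\Rightarrow(\ref{lem:biembequivequnivth-1})$ I would argue by contraposition. Suppose some universal $\tau$-sentence $\psi$ separates $T$ from $S$, so that $T\vdash\psi$ and $S\vdash\neg\psi$. Given any $\mathcal{M}\models S$ embedded (without loss of generality as a substructure) into some $\mathcal{N}\models T$, the key fact is that universal sentences pass downward to substructures: since $\mathcal{N}\models\psi$ and $\mathcal{M}\sqsubseteq\mathcal{N}$ we get $\mathcal{M}\models\psi$, while $\mathcal{M}\models S\vdash\neg\psi$ forces $\mathcal{M}\models\neg\psi$, a contradiction. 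Hence no such $\mathcal{M},\mathcal{N}$ can exist, which is exactly the negation of $(\ref{lem:biembequivequnivth-3})$.

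For $(\ref{lem:biembequivequnivth-1})\Rightarrow(\ref{lem:biembequivequnivth-3})$ I would first reduce the syntactic non-separation hypothesis to a semantic one, namely that $S\cup T_\forall$ is consistent. Indeed, were $S\cup T_\forall$ inconsistent, compactness would yield finitely many $\psi_1,\dots,\psi_n\in T_\forall$ with $S\vdash\neg(\psi_1\wedge\cdots\wedge\psi_n)$; since a finite conjunction of universal sentences is (equivalent to) a universal sentence proved by $T$, this would exhibit a $\Pi_1$-sentence separating $T$ from $S$, against $(\ref{lem:biembequivequnivth-1})$. So I would fix a model $\mathcal{M}\models S\cup T_\forall$, and it remains only to embed $\mathcal{M}$ into a model of $T$.

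Here I would invoke the diagram method: $\mathcal{M}$ embeds into a model of $T$ precisely when $T$ together with the atomic diagram $\mathrm{Diag}(\mathcal{M})$ of $\mathcal{M}$, formulated in the signature $\tau\cup M$, is consistent. To establish this consistency I would suppose toward a contradiction that it fails; by compactness $T$ then proves the negation of a single quantifier-free $\tau\cup M$-sentence $\delta(\vec{a})$ taken from the diagram, where $\vec{a}$ enumerates the relevant parameters of $M$. As these parameters are constants not occurring in $T$, this means $T\vdash\forall\vec{x}\,\neg\delta(\vec{x})$, a universal $\tau$-sentence and hence a member of $T_\forall$. But $\mathcal{M}\models T_\forall$ then gives $\mathcal{M}\models\neg\delta(\vec{a})$, contradicting the fact that $\delta(\vec{a})$ belongs to the atomic diagram of $\mathcal{M}$. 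Thus $T\cup\mathrm{Diag}(\mathcal{M})$ is consistent; any model of it reduces to a $\tau$-structure $\mathcal{N}\models T$ into which $\mathcal{M}$ embeds via the interpretation of the constants of $M$, and since $\mathcal{M}\models S$ this is the configuration required by $(\ref{lem:biembequivequnivth-3})$.

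The routine ingredients are the preservation of universal sentences under substructures and the standard diagram lemma identifying embeddings into models of $T$ with consistency of $T\cup\mathrm{Diag}(\mathcal{M})$. The main obstacle is not a deep one but rather a matter of careful bookkeeping in the double compactness argument: first converting non-separability into the existence of a common model of $S$ and $T_\forall$, and then converting membership in $T_\forall$ into consistency of $T$ with the atomic diagram, while correctly treating the parameters $\vec{a}$ of $\mathcal{M}$ as fresh constants so that a proof of $\neg\delta(\vec{a})$ from $T$ universalizes to a genuine member of $T_\forall$.
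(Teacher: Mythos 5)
Your proof is correct, but the direction $(\ref{lem:biembequivequnivth-1})\Rightarrow(\ref{lem:biembequivequnivth-3})$ takes a genuinely different route from the paper's. The paper argues by contraposition: assuming no model of $S$ embeds into any model of $T$, it extracts for \emph{each} model $\mathcal{M}\models S$ a quantifier-free $\psi_{\mathcal{M}}$ with $T\vdash\forall\vec{x}\,\neg\psi_{\mathcal{M}}(\vec{x})$ and $\mathcal{M}\models\exists\vec{x}\,\psi_{\mathcal{M}}(\vec{x})$, and then runs a second compactness argument over the whole class of models of $S$ to produce a finite disjunction $\bigvee_i\exists\vec{x}_i\,\psi_{\mathcal{M}_i}(\vec{x}_i)$ provable from $S$ whose (universal) negation is provable from $T$ — this is the separating sentence. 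You instead argue directly: one compactness argument converts non-separation into consistency of $S\cup T_\forall$, and a second (the diagram method applied to a single fixed $\mathcal{M}\models S\cup T_\forall$) embeds that model into a model of $T$. Your second step is precisely the content of the paper's Lemma~\ref{lem:keylemembed}, which the paper states \emph{after} this lemma and deduces \emph{from} it; you have in effect proved that lemma en route and derived the present one as a corollary, reversing the paper's logical order. Both arguments are sound; yours is the more standard textbook decomposition and isolates the reusable diagram lemma, while the paper's contrapositive argument produces the separating $\Pi_1$-sentence more explicitly. (One trivial point worth a sentence in a final write-up: the equivalence degenerates correctly when $S$ or $T$ is inconsistent, since then both items fail.)
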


See also \cite[Lemma 3.1.1, Lemma 3.1.2, Thm. 3.1.3]{TENZIE}
\begin{proof}
We assume $T,S$ are closed under logical consequences.

\begin{description}
\item[(\ref{lem:biembequivequnivth-3}) implies (\ref{lem:biembequivequnivth-1})]
By contraposition we prove 
$\neg$(\ref{lem:biembequivequnivth-1})$\to\neg$(\ref{lem:biembequivequnivth-3}).

Assume some universal sentence 
$\psi$ separates $T$ from $S$. Then for any model of $T$, all its substructures model
$\psi$, therefore they cannot be models of $S$.
\item[(\ref{lem:biembequivequnivth-1}) implies (\ref{lem:biembequivequnivth-3})]
By contraposition we prove 
$\neg$(\ref{lem:biembequivequnivth-3})$\to\neg$(\ref{lem:biembequivequnivth-1}).

Assume that for any model $\mathcal{M}$ of $S$ and $\mathcal{N}$ of $T$ 
$\mathcal{M}\not\sqsubseteq\mathcal{N}$. We must show that $T$ is $\Pi_1$-separated from $S$.

Given a $\tau$-structure $\mathcal{M}=(M,\tau^M)$ which models $S$,
let $\Delta_0(\mathcal{M})$ be the atomic diagram\footnote{We let the atomic diagram of a $\tau$-model $\mathcal{M}=(M,\tau^M)$ be the family of quantifier free formulae
in signature $\tau\cup M$ which holds in the natural expansion of 
$\mathcal{M}$ to $\tau\cup M$.} of $\mathcal{M}$ in the signature
$\tau\cup M$.

The theory $T\cup\Delta_0(\mathcal{M})$ is inconsistent,
otherwise $\mathcal{M}$ embeds into some model of $T$:
let $\bar{\mathcal{Q}}$ be a $\tau\cup\mathcal{M}$-model 
of $\Delta_0(\mathcal{M})\cup T$ and
$\mathcal{Q}$ be the $\tau$-structure obtained from $\bar{\mathcal{Q}}$ omitting the interpretation of the constants not in $\tau$. Clearly $\mathcal{Q}$ models $T$.
The interpretation of the constants in $\tau\cup\mathcal{M}$ inside 
$\bar{\mathcal{Q}}$ defines a $\tau$-substructure of $\mathcal{Q}$ isomorphic to $\mathcal{M}$.

By compactness (since $\Delta_0(\mathcal{M})$ is closed under finite conjunctions)
 there is a quantifier free $\tau$-formula 
$\psi_{\mathcal{M}}(\vec{x})$ and $\vec{a}\in\mathcal{M}^{<\omega}$ such that
$T+\psi_{\mathcal{M}}(\vec{a})$ is inconsistent.
This gives that $T\vdash\neg\psi_{\mathcal{M}}(\vec{a})$.
Since $\vec{a}$ is a family of constants never occurring in $T$, we get that
$T\vdash\forall\vec{x}\neg\psi_{\mathcal{M}}(\vec{x})$ and 
$\mathcal{M}\models \exists \vec{x}\psi_{\mathcal{M}}(\vec{x})$.

The theory 
\[
S\cup\bp{\neg\exists \vec{x}\psi_{\mathcal{M}}(\vec{x}):\mathcal{M}\models S}
\]
is inconsistent,
since $\neg\exists \vec{x}\psi_{\mathcal{M}}(\vec{x})$ fails
 in any model $\mathcal{M}$ of $S$.

By compactness 
there is a finite set of formulae $\psi_{\mathcal{M}_1}\dots\psi_{\mathcal{M}_k}$ such that
\[
S+ \bigwedge\bp{\neg\exists \vec{x}_i\psi_{\mathcal{M}_i}(\vec{x}_i):i=1,\dots,k}
\]
is inconsistent. This gives that
\[
S\vdash \bigvee_{i=1}^k\exists \vec{x}_i\psi_{\mathcal{M}_i}(\vec{x}_i).
\]
The $\tau$-sentence $\psi:=\bigvee_{i=1}^k\exists \vec{x}_i\psi_{\mathcal{M}_i}(\vec{x}_i)$ holds 
in all models of $S$ and its negation
\[
\bigwedge\bp{\neg\exists \vec{x}_i\psi_{\mathcal{M}_i}(\vec{x}_i):i=1,\dots,k}
\]
is a conjunction of universal sentences (hence ---modulo logical equivalence--- universal) derivable from $T$. Hence $\neg\psi$ separates $T$ from $S$.
\end{description}
\end{proof}

The following Lemma shows that models of $T_\forall$ can always be extended
to superstructures which model $T$.

\begin{lemma}\label{lem:keylemembed}
Let $T$ be a $\tau$-theory and $\mathcal{M}$ be a $\tau$-structure.
TFAE:
\begin{enumerate}
\item \label{lem:keylemembed-1}
$\mathcal{M}$ is a $\tau$-model of $T_\forall$.
\item \label{lem:keylemembed-2}
There exists $\mathcal{N}\sqsupseteq \mathcal{M}$
which models $T$.
\end{enumerate}
\end{lemma}
\begin{proof}
(\ref{lem:keylemembed-2}) implies (\ref{lem:keylemembed-1}) is trivial.

Conversely:
\begin{claim}
$T$ is not $\Pi_1$-separated from $\Delta_0(\mathcal{M})$ (in the signature $\tau\cup\mathcal{M}$).
\end{claim}
\begin{proof}
If not there are $\vec{a}\in \mathcal{M}^{<\omega}$,
and a quantifier free 
$\tau$-formula
$\phi(\vec{x},\vec{z})$ such that
\[
T\vdash \forall\vec{z}\phi(\vec{a},\vec{z}),
\]
while
\[
\Delta_0(\mathcal{M})\vdash\neg\forall\vec{z}\phi(\vec{a},\vec{z}). 
\]
The latter yields that
\[
\Delta_0(\mathcal{M})\vdash\exists\vec{x}\exists\vec{z}\neg\phi(\vec{x},\vec{z}),
\]
and therefore also that
\[
\mathcal{M}\models\exists\vec{x}\exists\vec{z}\neg\phi(\vec{x},\vec{z}).
\]

On the other hand, since the constants $\vec{a}$ do not appear in any of the sentences in $T$, we also get that
\[
T\vdash \forall\vec{x}\forall\vec{z}\phi(\vec{x},\vec{z}).
\]
This is a contradiction since $\mathcal{M}$ models $T_\forall$.
\end{proof}
By the Claim and Lemma \ref{lem:biembequivequnivth}
some $\tau\cup\mathcal{M}$-model $\bar{\mathcal{P}}$
of $\Delta_0(\mathcal{M})$ embeds into some $\tau\cup\mathcal{M}$-model 
$\bar{\mathcal{Q}}$ of 
$T$. Let $\mathcal{Q}$ be the $\tau$-structure obtained from $\bar{\mathcal{Q}}$ omitting the interpretation of the constants not in $\tau$. Then $\mathcal{Q}$ models $T$ and 
contains a substructure isomorphic to $\mathcal{M}$.
\end{proof}

\begin{corollary}[Resurrection Lemma]\label{cor.resurrlemma}
Assume $\mathcal{M}\prec_1\mathcal{N}$ are $\tau$-structures. 
Then there is $\mathcal{Q}\sqsupseteq\mathcal{N}$ which is an elementary extension of $\mathcal{M}$.
\end{corollary}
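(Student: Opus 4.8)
The plan is to reduce the statement to Lemma \ref{lem:keylemembed} applied to the \emph{elementary} diagram of $\mathcal{M}$. Since $\mathcal{M}\prec_1\mathcal{N}$ entails $\mathcal{M}\sqsubseteq\mathcal{N}$, we have $M\subseteq N$, and we may regard $\mathcal{N}$ as a structure in the expanded signature $\tau\cup M$ by interpreting each constant $a\in M$ as $a$ itself. Let $D=\mathrm{ElDiag}(\mathcal{M})$ be the $(\tau\cup M)$-theory consisting of all $(\tau\cup M)$-sentences true in the natural expansion of $\mathcal{M}$. Recall the standard fact that a $\tau$-structure $\mathcal{Q}$ is an elementary extension of $\mathcal{M}$ precisely when it admits an expansion to a $(\tau\cup M)$-model of $D$; thus it suffices to produce a $(\tau\cup M)$-model of $D$ extending $\mathcal{N}$.

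First I would verify that $\mathcal{N}$, viewed as a $(\tau\cup M)$-structure, is a model of $D_\forall$. A universal $(\tau\cup M)$-sentence has the form $\forall\vec{x}\,\chi(\vec{x},\vec{a})$ with $\chi$ a quantifier-free $\tau$-formula and $\vec{a}\in M^{<\omega}$, so I must show that whenever $\mathcal{M}\models\forall\vec{x}\,\chi(\vec{x},\vec{a})$ then $\mathcal{N}\models\forall\vec{x}\,\chi(\vec{x},\vec{a})$. Arguing by contraposition, if $\mathcal{N}\not\models\forall\vec{x}\,\chi(\vec{x},\vec{a})$ then $\mathcal{N}\models\exists\vec{x}\,\neg\chi(\vec{x},\vec{a})$, and this latter sentence is $\Sigma_1$ with parameters in $M$; the downward direction of $\mathcal{M}\prec_1\mathcal{N}$ then yields $\mathcal{M}\models\exists\vec{x}\,\neg\chi(\vec{x},\vec{a})$, contradicting $\mathcal{M}\models\forall\vec{x}\,\chi(\vec{x},\vec{a})$. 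Hence $\mathcal{N}\models D_\forall$.

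Having established this, I would apply Lemma \ref{lem:keylemembed} with the theory $D$ (in signature $\tau\cup M$) and the structure $\mathcal{N}$: since $\mathcal{N}\models D_\forall$, there is a $(\tau\cup M)$-structure $\bar{\mathcal{Q}}\sqsupseteq\mathcal{N}$ which models $D$. Let $\mathcal{Q}$ be the $\tau$-reduct of $\bar{\mathcal{Q}}$. Passing to the reduct preserves the substructure relation, so $\mathcal{Q}\sqsupseteq\mathcal{N}$; and because $\bar{\mathcal{Q}}\models D=\mathrm{ElDiag}(\mathcal{M})$, the map sending each $a\in M$ to its interpretation in $\bar{\mathcal{Q}}$ is elementary, i.e. $\mathcal{M}\prec\mathcal{Q}$. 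This $\mathcal{Q}$ is the required structure.

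The only delicate point is the verification that $\mathcal{N}\models D_\forall$, where one must invoke the correct (downward) direction of $\Sigma_1$-elementarity and check that the universal consequences of the elementary diagram are exactly the universally quantified quantifier-free $\tau$-formulae with parameters from $M$. Everything else is routine bookkeeping about reducts and the equivalence between satisfying the elementary diagram and being an elementary extension.
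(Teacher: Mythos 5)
Your proposal is correct and follows exactly the paper's own argument: take the elementary diagram of $\mathcal{M}$, use $\mathcal{M}\prec_1\mathcal{N}$ to check that $\mathcal{N}$ (expanded to $\tau\cup M$) satisfies its universal fragment, and then apply Lemma \ref{lem:keylemembed} to extend $\mathcal{N}$ to a model of the full diagram. The only difference is that you spell out the verification of $\mathcal{N}\models D_\forall$, which the paper leaves implicit.
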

\begin{proof}
Let $T$ be the elementary diagram $\Delta_\omega(\mathcal{M})$  of 
$\mathcal{M}$ in the signature $\tau\cup\mathcal{M}$.
It is easy to check that any model of $T$ when restricted to the signature $\tau$ is an elementay extension
of $\mathcal{M}$.
Since $\mathcal{M}\prec_1\mathcal{N}$, the natural extension of $\mathcal{N}$ to a 
$\tau\cup\mathcal{M}$-structure
realizes the $\Pi_1$-fragment of $T$
in the signature $\tau\cup\mathcal{M}$.
Now apply the previous Lemma.
\end{proof}
The Resurrection Lemma motivates the resurrection axioms introduced by Hamkins and Johnstone in 
\cite{HAMJOH13}, and their iterated versions introduced by the author and Audrito in \cite{VIAAUD14}.

\subsection{Existentially closed structures}

The objective is now to isolate the ``generic'' models of some universal theory
$T$ (i.e. all axioms of $T$ are universal sentences). These are described by the 
$T$-existentially closed models.

\begin{definition}
Given a first order signature $\tau$,
let $T$ be any consistent $\tau$-theory.
A $\tau$-structure $\mathcal{M}$ is $T$-existentially closed ($T$-ec) if
\begin{enumerate}
\item
$\mathcal{M}$ can be embedded in a model of $T$.
\item
$\mathcal{M}\prec_{\Sigma_1}\mathcal{N}$ for all 
$\mathcal{N}\sqsupseteq\mathcal{M}$ which are models of $T$.
\end{enumerate}
\end{definition}

In general $T$-ec models need not be 
models\footnote{For example let 
$T$ be the theory of commutative rings with no zero divisors which are not fields
in the signature $(+,\cdot,0,1)$. Then 
the $T$-ec structures are exactly all the algebraically closed fields, and no $T$-ec model is a model of $T$. By Thm. \ref{Thm:mainthm-1}
$(H_{\omega_1},\sigma_{\omega}^V)$ is $S$-ec for $S$ the $\sigma_{\omega}$-theory of $V$, 
but it is not a model of $S$: the $\Pi_2$-sentence asserting that every set has countable transitive closure is true in $(H_{\omega_1},\sigma_{\omega}^V)$ but denied by $S$.} of $T$,
but only of their universal fragment. 
A standard diagonalization argument shows that for any theory $T$ there are $T$-ec
models, see Lemma \ref{lem:exTecmod} below or \cite[Lemma 3.2.11]{TENZIE}.

A trivial observation which will come handy in the sequel is the following:
\begin{fact}\label{fac:presECmod}
Assume $\mathcal{M}$ is a $T$-ec model and $S\supseteq T$ is such that
some $\mathcal{N}\sqsupseteq\mathcal{M}$ models $S$.
Then $\mathcal{M}$ is $S$-ec.
\end{fact}

\begin{proposition}\label{prop:pi2satKaiserhull}
Assume a $\tau$-structure $\mathcal{M}$ is $T$-ec. 
Then:
\begin{enumerate}
\item \label{rmk:itm1} $\mathcal{M}\models T_\forall$.
\item \label{rmk:itm0} $\mathcal{M}$ is also $T_\forall$-ec.
\item \label{rmk:itm2} If $\mathcal{N}\prec_{\Sigma_1}\mathcal{M}$, then $\mathcal{N}$ is also $T$-ec.
\item \label{rmk:itm3} Let $\forall\vec{x}\exists\vec{y}\psi(\vec{x},\vec{y},\vec{a})$ be a
$\Pi_2$-sentence with 
$\psi(\vec{x},\vec{y},\vec{z})$ quantifier free $\tau$-formula and
parameters $\vec{a}$ in $\mathcal{M}^{<\omega}$. Assume it holds in some  
$\mathcal{N}\sqsupseteq\mathcal{M}$ which models $T_\forall$, then it holds in
$\mathcal{M}$.
\item \label{rmk:itm4} 
Let $S$ be the $\tau$-theory of $\mathcal{M}$.
For any $\Pi_2$-sentence $\psi$ in the signature $\tau$ TFAE:
\begin{itemize}
\item $\psi$ holds in some model of $S_\forall$.
\item  $\psi$ holds in $\mathcal{M}$.
\end{itemize}
\end{enumerate}
\end{proposition}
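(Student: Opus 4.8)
The plan is to derive everything from the two structural facts already in hand: Lemma~\ref{lem:keylemembed} (a $\tau$-structure models $T_\forall$ exactly when it extends to a model of $T$) and the trivial preservation rules, namely that existential formulae pass \emph{upwards} from a substructure to a superstructure while universal (quantifier-free) formulae pass \emph{downwards}. For item~(1) I would note that $T$-ec-ness gives an embedding of $\mathcal M$ into some $\mathcal N\models T$; identifying $\mathcal M$ with its image we get $\mathcal M\sqsubseteq\mathcal N\models T_\forall$, and since universal sentences descend to substructures, $\mathcal M\models T_\forall$ (equivalently, invoke Lemma~\ref{lem:keylemembed} directly). For item~(2), condition~(1) of ec-ness holds because $\mathcal M\models T_\forall$ by~(1); for condition~(2) I take any $\mathcal N\sqsupseteq\mathcal M$ with $\mathcal N\models T_\forall$ and use Lemma~\ref{lem:keylemembed} to find $\mathcal Q\sqsupseteq\mathcal N$ with $\mathcal Q\models T$. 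This yields a sandwich $\mathcal M\sqsubseteq\mathcal N\sqsubseteq\mathcal Q$ with $\mathcal M\prec_{\Sigma_1}\mathcal Q$ (the last because $\mathcal M$ is $T$-ec). A two-line chase then gives $\mathcal M\prec_{\Sigma_1}\mathcal N$: an existential formula with parameters in $\mathcal M$ true in $\mathcal M$ rises to $\mathcal N$, and one true in $\mathcal N$ rises to $\mathcal Q$ and descends to $\mathcal M$ through $\mathcal M\prec_{\Sigma_1}\mathcal Q$.

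Item~(3) is the step I expect to be the main obstacle, because I must show $\Sigma_1$-elementarity of $\mathcal N$ into an \emph{arbitrary} $\mathcal P\models T$ with $\mathcal P\sqsupseteq\mathcal N$, and $\mathcal P$ need not be comparable with $\mathcal M$. The nontrivial half is that an existential $\exists\vec y\,\phi(\vec y,\vec a)$ with $\vec a\in\mathcal N$ realized in $\mathcal P$ is realized in $\mathcal N$; since $\mathcal N\prec_{\Sigma_1}\mathcal M$ it suffices to realize it in $\mathcal M$. My plan is to amalgamate $\mathcal M$ and $\mathcal P$ over $\mathcal N$ inside a model of $T_\forall$: in the signature $\tau\cup\mathcal M\cup\mathcal P$ with the constants for $\mathcal N$ shared, I would prove $T_\forall\cup\Delta_0(\mathcal M)\cup\Delta_0(\mathcal P)$ consistent. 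By compactness it is enough to realize one quantifier-free chunk $\beta(\vec m,\vec n)$ of $\Delta_0(\mathcal M)$ together with one chunk $\gamma(\vec p,\vec n')$ of $\Delta_0(\mathcal P)$ and $T_\forall$ simultaneously; the key is that $\mathcal M\models\exists\vec u\,\beta(\vec u,\vec n)$ is $\Sigma_1$ with parameters $\vec n\in\mathcal N$, so by $\mathcal N\prec_{\Sigma_1}\mathcal M$ it has a witness $\vec n^\ast$ already in $\mathcal N\sqsubseteq\mathcal P$, whence interpreting $\vec m$ by $\vec n^\ast$ realizes both chunks inside $\mathcal P$ (which models $T_\forall$). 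A model of the amalgam provides $\mathcal R\models T_\forall$ with $\mathcal M\sqsubseteq\mathcal R\sqsupseteq\mathcal P$ agreeing on $\mathcal N$; extending $\mathcal R$ to $\mathcal R'\models T$ via Lemma~\ref{lem:keylemembed} and using $T$-ec-ness of $\mathcal M$ gives $\mathcal M\prec_{\Sigma_1}\mathcal R'$. Finally $\exists\vec y\,\phi(\vec y,\vec a)$ rises from $\mathcal P$ to $\mathcal R'$, descends to $\mathcal M$, and then to $\mathcal N$ by $\mathcal N\prec_{\Sigma_1}\mathcal M$. Condition~(1) of ec-ness for $\mathcal N$ is immediate, since $\mathcal N\sqsubseteq\mathcal M$ and $\mathcal M$ embeds in a model of $T$.

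Item~(4) I would obtain from~(2): $\mathcal M$ is $T_\forall$-ec, so given $\forall\vec x\exists\vec y\,\psi(\vec x,\vec y,\vec a)$ true in some $\mathcal N\sqsupseteq\mathcal M$ with $\mathcal N\models T_\forall$, I fix an arbitrary $\vec b\in\mathcal M^{<\omega}$; then $\mathcal N\models\exists\vec y\,\psi(\vec b,\vec y,\vec a)$, a $\Sigma_1$-formula with parameters in $\mathcal M$, so $\mathcal M\prec_{\Sigma_1}\mathcal N$ forces $\mathcal M\models\exists\vec y\,\psi(\vec b,\vec y,\vec a)$; as $\vec b$ is arbitrary, $\mathcal M\models\forall\vec x\exists\vec y\,\psi(\vec x,\vec y,\vec a)$. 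Item~(5) then follows from~(4): writing $S=\mathrm{Th}(\mathcal M)$ one has $T_\forall\subseteq S_\forall$ by~(1), so any model of $S_\forall$ extending $\mathcal M$ is a model of $T_\forall$ extending $\mathcal M$; the direction from ``$\mathcal M\models\psi$'' to ``$\psi$ holds in a model of $S_\forall$'' is witnessed by $\mathcal M$ itself, and the converse is exactly item~(4) applied to such an extension. The one delicate point I would flag is that the witnessing model of $S_\forall$ must be taken to \emph{extend} $\mathcal M$ for the reduction to~(4) to go through: since $\Pi_2$-sentences are not preserved upward to arbitrary superstructures, the superstructure hypothesis supplied by~(4) is genuinely needed, and an unrelated model of $S_\forall$ would not suffice (e.g.\ a $\Pi_2$-sentence may hold in a proper submodel of $\mathcal M$'s universal class yet fail in the existentially closed $\mathcal M$).
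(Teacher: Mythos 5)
Your proofs of items (1)--(4) are correct and follow essentially the paper's own route: (1) and (4) are the paper's arguments verbatim (the paper even commits the small slip of invoking ``$T$-ec'' where $T_\forall$-ec is what is needed in (4); you avoid it), (2) is the same sandwich $\mathcal M\sqsubseteq\mathcal P\sqsubseteq\mathcal Q$ with $\mathcal Q\models T$ supplied by Lemma~\ref{lem:keylemembed}, and (3) is the same amalgamation of $\mathcal M$ and $\mathcal P$ over $\mathcal N$ in the signature $\tau\cup\mathcal M\cup\mathcal P$. The only substantive difference is in how the consistency of $T_\forall\cup\Delta_0(\mathcal M)\cup\Delta_0(\mathcal P)$ is established: the paper argues by contradiction (an inconsistency yields $T\vdash\forall\vec z\,[(\forall\vec x\,\neg\psi_0(\vec x,\vec z))\vee(\forall\vec y\,\neg\psi_1(\vec y,\vec z))]$, which is refuted by pushing $\psi_1$ from $\mathcal M$ down to $\mathcal N$ and up through $\mathcal P$), whereas you realize each finite fragment directly inside $\mathcal P$ by pulling the witnesses of the $\mathcal M$-chunk into $\mathcal N$ via $\mathcal N\prec_1\mathcal M$; both are sound and yours is arguably cleaner. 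The paper also skips your final detour through a model of $T$: once the amalgam $\mathcal Q\models T_\forall$ exists it invokes item (2) to get $\mathcal M\prec_1\mathcal Q$ directly.

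Item (5) is where you stop short of the stated equivalence, and the point you flag as delicate is precisely the step the paper supplies. The hypothesis is only that $\psi$ holds in \emph{some} model of $S_\forall$, with no relation to $\mathcal M$, while your reduction to (4) needs a model of $S_\forall+\psi$ \emph{above} $\mathcal M$. The paper's bridge is a second application of Lemma~\ref{lem:keylemembed}, this time to the theory $S_\forall+\psi$ and the structure $\mathcal M$: granted that $\mathcal M\models(S_\forall+\psi)_\forall$, the consistency of $S_\forall+\psi$ yields some $\mathcal N\sqsupseteq\mathcal M$ with $\mathcal N\models S_\forall+\psi$, and then (4) finishes. So the missing idea is not a new amalgamation but reusing the embedding lemma with $S_\forall+\psi$ in the role of $T$. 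That said, your hesitation points at the genuinely load-bearing hypothesis of that step, namely $\mathcal M\models(S_\forall+\psi)_\forall$, i.e.\ that $S_\forall+\psi$ has no universal consequences beyond those already true in $\mathcal M$; this is where the completeness of $S=\mathrm{Th}(\mathcal M)$ is meant to enter, and it is exactly what your parenthetical example probes (a $\Pi_2$-sentence that is equivalent modulo $S_\forall$ to a universal sentence true in some substructure-model of $S_\forall$ but false in $\mathcal M$ would defeat it). To match the paper you should spell out this application of Lemma~\ref{lem:keylemembed} and verify $\mathcal M\models(S_\forall+\psi)_\forall$ explicitly, rather than silently strengthening the hypothesis to ``some model of $S_\forall$ extending $\mathcal M$''.
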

\begin{proof}
\emph{}

\begin{description}

\item[(\ref{rmk:itm1})]
There is at least one super-structure of $\mathcal{M}$ which models
$T$, and any
$\psi\in T_\forall$ holds in this superstructure, hence in $\mathcal{M}$.

\item[(\ref{rmk:itm0})]
Assume $\mathcal{M}\sqsubseteq\mathcal{P}$ for some model $\mathcal{P}$ of $T_\forall$.
We must argue that $\mathcal{M}\prec_1\mathcal{P}$.

By Lemma~\ref{lem:keylemembed}, there is $\mathcal{Q}\sqsupseteq\mathcal{P}$ which models $T$.

Since $\mathcal{M}$ and $\mathcal{Q}$ are both models of $T$ and 
$\mathcal{M}$ is $T$-ec, we get the following diagram:
\[
		\begin{tikzpicture}[xscale=0.8,yscale=-0.4]
				\node (A0_0) at (0, 0) {$\mathcal{M}$};
				\node (A0_2) at (6, 0) {$\mathcal{Q}$};
				\node (A1_1) at (3, 3) {$\mathcal{P}$};
				\path (A0_0) edge [->]node [auto] {$\scriptstyle{\Sigma_1}$} (A0_2);
				\path (A1_1) edge [->]node [auto,swap] {$\scriptstyle{\sqsubseteq}$} (A0_2);
				\path (A0_0) edge [->]node [auto,swap] {$\scriptstyle{\sqsubseteq}$} (A1_1);
			\end{tikzpicture}
		\]
Then any 
$\Sigma_1$-formula $\psi(\vec{a})$ with $\vec{a}\in\mathcal{M}^{<\omega}$ realized in
$\mathcal{P}$ holds in $\mathcal{Q}$, and is therefore reflected to $\mathcal{M}$.
We are done by Tarski-Vaught's criterion.

\item[(\ref{rmk:itm2})]
Assume $\mathcal{N}\sqsubseteq\mathcal{P}$ for some model of $T_\forall$ $\mathcal{P}$.
Let $\Delta_0(\mathcal{P})$ be the atomic diagram
of $\mathcal{P}$ in the signature $\tau\cup\mathcal{P}\cup\mathcal{M}$ and 
$\Delta_0(\mathcal{M})$ be the atomic diagram
of $\mathcal{M}$ in the same signature\footnote{We are considering $\mathcal{P}\cup\mathcal{M}$ as the union of the domains of the structure 
$\mathcal{P},\mathcal{M}$ amalgamated over $\mathcal{N}$;
in particular we add a new constant for each element of  
$\mathcal{P}\setminus\mathcal{N}$, a new constant for each element of 
$\mathcal{M}\setminus\mathcal{N}$, a new constant for each element of $\mathcal{N}$.}.

\begin{claim}
$T_\forall\cup\Delta_0(\mathcal{P})\cup\Delta_0(\mathcal{M})$ is a consistent 
$\tau\cup\mathcal{M}\cup\mathcal{P}$-theory.
\end{claim}
\begin{proof}
Assume not. Find $\vec{a}\in (\mathcal{P}\setminus\mathcal{N})^{<\omega}$,  
$\vec{b}\in (\mathcal{M}\setminus\mathcal{N})^{<\omega}$,
$\vec{c}\in \mathcal{N}^{<\omega}$
 and $\tau$-formulae $\psi_0(\vec{x},\vec{z})$, 
$\psi_1(\vec{y},\vec{z})$ such that: 
\begin{itemize}
\item
$\psi_0(\vec{a},\vec{c})\in \Delta_0(\mathcal{P})$,
\item
$\psi_1(\vec{b},\vec{c})\in \Delta_0(\mathcal{M})$,
\item
$T\cup\bp{\psi_0(\vec{a},\vec{c}),\psi_1(\vec{b},\vec{c})}$
is inconsistent.
\end{itemize} 
Then
\[
T\vdash\neg\psi_0(\vec{a},\vec{c})\vee\neg\psi_1(\vec{b},\vec{c}).
\]
Since the constants appearing in $\vec{a},\vec{b},\vec{c}$ 
are never appearing in sentences of $T$,
we get that
\[
T\vdash\forall\vec{z}\,(\forall\vec{x}\neg\psi_0(\vec{x},\vec{z}))\vee
(\forall\vec{y}\neg\psi_1(\vec{y},\vec{z})).
\]
Since $\mathcal{P}$ models $T_\forall$, and 
\[
\mathcal{P}\models\psi_0(\vec{x},\vec{z})[\vec{x}/\vec{a},\vec{z}/\vec{c}],
\]
we get that
\[
\mathcal{P}\models\forall\vec{y}\neg\psi_1(\vec{y},\vec{c}).
\]
Therefore 
\[
\mathcal{N}\models\forall\vec{y}\neg\psi_1(\vec{y},\vec{c})
\]
being a substructure of $\mathcal{P}$, and so does $\mathcal{M}$
since $\mathcal{N}\prec_1\mathcal{M}$.
This contradicts $\psi_1(\vec{b},\vec{c})\in \Delta_0(\mathcal{M})$.
\end{proof}

If $\bar{\mathcal{Q}}$ is a model realizing 
$T_\forall\cup\Delta_0(\mathcal{P})\cup\Delta_0(\mathcal{M})$, and $\mathcal{Q}$ is the 
$\tau$-structure obtained forgetting the constant symbols not in $\tau$,
we get that:
\begin{itemize} 
\item
$\mathcal{P}$ and $\mathcal{M}$ are both substructures of $\mathcal{Q}$
containing $\mathcal{N}$ as a common substructure;
\item 
$\mathcal{N}\prec_1\mathcal{M}\prec_1\mathcal{Q}$, 
since $\mathcal{Q}$ realizes $T_\forall$ and $\mathcal{M}$ is $T_\forall$-ec.
\end{itemize}
We can now conclude that if a $\Sigma_1$-formula $\psi(\vec{c})$ for $\tau\cup\mathcal{N}$
with parameters in 
$\mathcal{N}$ holds in $\mathcal{P}$, it holds in $\mathcal{Q}$ as well (since
$\mathcal{Q}\sqsupseteq\mathcal{P}$), and therefore also in 
$\mathcal{N}$ (since $\mathcal{N}\prec_1\mathcal{Q}$).

\item[(\ref{rmk:itm3})] Observe that
for all $\vec{b}\in\mathcal{M}^{<\omega}$, $\exists \vec{y}\,\psi(\vec{b},\vec{y},\vec{a})$ holds in $\mathcal{N}$, and therefore in
$\mathcal{M}$, since $\mathcal{M}$ is $T$-ec; hence 
$\mathcal{M}\models \forall \vec{x}\exists \vec{y}\psi(\vec{x},\vec{y},\vec{a})$.

\item[(\ref{rmk:itm4})] 
First of all note that $\mathcal{M}$ is $S$-ec since $S\supseteq T$ 
(by  Fact \ref{fac:presECmod}).
By Lemma~\ref{lem:keylemembed} (applied to $S_\forall+\psi$ and $\mathcal{M}$)
any
$\Pi_2$-sentence $\psi$ for $\tau$ which holds in some model of $S_\forall$
holds in some model of $S_\forall$ which is a superstructure of
$\mathcal{M}$. Now apply \ref{rmk:itm3}.
\end{description}
\end{proof}

In particular a structure  is $T$-ec if and only if it is $T_\forall$-ec, and 
a $T$-ec structure realizes all $\Pi_2$-sentences which are consistent with its 
$\Pi_1$-theory.

We now show that any structure $\mathcal{M}$ 
can always be extended to a $T$-ec structure for 
any $T$ which is not separated from the $\Pi_1$-theory of $\mathcal{M}$.

\begin{lemma}\label{lem:exTecmod}\cite[Lemma 3.2.11]{TENZIE}
Given a first order $\tau$-theory $T$, 
any model of $T_\forall$
can be extended to a $\tau$-superstructure 
which is $T$-ec.
\end{lemma}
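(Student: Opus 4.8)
The plan is an $\omega$-length diagonalization producing the desired $T$-ec superstructure as the union of a chain of models of $T_\forall$. Fix a model $\mathcal{M}_0\models T_\forall$. I build a chain $\mathcal{M}_0\sqsubseteq\mathcal{M}_1\sqsubseteq\cdots$ of $\tau$-models of $T_\forall$, where each $\mathcal{M}_{n+1}$ arises from $\mathcal{M}_n$ by a single \emph{enrichment round}, and set $\mathcal{M}_\omega=\bigcup_n\mathcal{M}_n$. The round $\mathcal{M}_n\mapsto\mathcal{M}_{n+1}$ proceeds thus: enumerate as $\langle\exists\vec{y}\,\phi_i(\vec{y},\vec{a}_i):i<\mu_n\rangle$ all existential $\tau$-formulae (with $\phi_i$ quantifier free) and parameters $\vec{a}_i\in\mathcal{M}_n$, so $\mu_n\le|\mathcal{M}_n|+|\tau|+\aleph_0$, and build an internal chain $\mathcal{P}_0=\mathcal{M}_n\sqsubseteq\mathcal{P}_1\sqsubseteq\cdots$ of models of $T_\forall$. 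At stage $i+1$, if the $\tau\cup\mathcal{P}_i$-theory $T\cup\Delta_0(\mathcal{P}_i)\cup\{\exists\vec{y}\,\phi_i(\vec{y},\vec{a}_i)\}$ is consistent, let $\mathcal{P}_{i+1}$ be the $\tau$-reduct of one of its models (a model of $T$ extending $\mathcal{P}_i$ in which the formula is realized); otherwise set $\mathcal{P}_{i+1}=\mathcal{P}_i$. Take unions at limit stages and put $\mathcal{M}_{n+1}=\bigcup_{i<\mu_n}\mathcal{P}_i$. Since universal sentences are preserved under unions of chains, every $\mathcal{M}_n$, and hence $\mathcal{M}_\omega$, models $T_\forall$; by Lemma~\ref{lem:keylemembed} this is already clause~(1) of the definition of $T$-ec, namely that $\mathcal{M}_\omega$ embeds into a model of $T$. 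Note also that each $\mathcal{P}_i\models T_\forall$ extends to a model of $T$ by Lemma~\ref{lem:keylemembed}, so $T\cup\Delta_0(\mathcal{P}_i)$ is itself consistent and the construction never stalls (a routine downward L\"owenheim--Skolem argument bounds all cardinalities by $|\mathcal{M}_0|+|\tau|+\aleph_0$, though this is irrelevant to mere existence).

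It remains to verify clause~(2): $\mathcal{M}_\omega\prec_1\mathcal{N}$ for every model $\mathcal{N}\sqsupseteq\mathcal{M}_\omega$ of $T$. As existential formulae are upward absolute, by Tarski--Vaught it suffices to show that any $\exists\vec{y}\,\phi(\vec{y},\vec{a})$ with $\vec{a}\in\mathcal{M}_\omega$ holding in such an $\mathcal{N}$ already holds in $\mathcal{M}_\omega$. Choose $n$ with $\vec{a}\in\mathcal{M}_n$; then this formula occurs, say at position $i_0$, in the enumeration driving the round $\mathcal{M}_n\mapsto\mathcal{M}_{n+1}$. At that step one has $\mathcal{P}_{i_0}\sqsubseteq\mathcal{M}_{n+1}\sqsubseteq\mathcal{M}_\omega\sqsubseteq\mathcal{N}$, and $\mathcal{N}$ is a model of $T$ extending $\mathcal{P}_{i_0}$ in which $\exists\vec{y}\,\phi(\vec{y},\vec{a})$ holds. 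Hence $T\cup\Delta_0(\mathcal{P}_{i_0})\cup\{\exists\vec{y}\,\phi(\vec{y},\vec{a})\}$ is consistent, witnessed by $\mathcal{N}$ itself, so the construction \emph{did} realize the formula: $\mathcal{P}_{i_0+1}\models\exists\vec{y}\,\phi(\vec{y},\vec{a})$, whence so do $\mathcal{M}_{n+1}$ and $\mathcal{M}_\omega$. Thus $\mathcal{M}_\omega$ is $T$-ec.

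The one delicate point, which I expect to be the real obstacle, is the apparent circularity of the consistency test: when realizing $\exists\vec{y}\,\phi_i(\vec{y},\vec{a}_i)$ we test consistency over the current substage $\mathcal{P}_i$ rather than over $\mathcal{M}_n$, and since no amalgamation is available in general, realizing an earlier existential formula could a priori destroy the realizability of a later one. The resolution, which is the heart of the argument, is that one need not settle this during the construction: the model $\mathcal{N}$ that appears only in the verification of clause~(2) simultaneously extends the substage $\mathcal{P}_{i_0}$, and so certifies \emph{on the spot} that the consistency test succeeded at exactly the stage where $\vec{a}$'s formula was processed. This is precisely why the construction is iterated over $\omega$ rounds rather than performed once: every parameter tuple from $\mathcal{M}_\omega$ lies in some finite $\mathcal{M}_n$ and is therefore processed in the following round, against a substage that any eventual $T$-model extension of $\mathcal{M}_\omega$ still extends. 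The only bookkeeping I would double-check is that $\Delta_0(\mathcal{P}_{i_0})$ is read in the signature $\tau\cup\mathcal{P}_{i_0}$, so that $\mathcal{N}$, containing a copy of $\mathcal{P}_{i_0}$, is genuinely a model of it; this is routine.
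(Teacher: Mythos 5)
Your proof is correct and follows essentially the same route as the paper's: an increasing chain of $T_\forall$-models in which each existential formula with parameters from an earlier stage is realized whenever it is consistent with $T$ plus the current atomic diagram, with existential closure certified a posteriori because any $T$-superstructure $\mathcal{N}$ of the union also extends the stage at which a given formula was processed, so $\mathcal{N}$ itself witnesses that the consistency test succeeded there. The only difference is bookkeeping: you run $\omega$ single-pass rounds, while the paper runs one transfinite chain of length $\kappa$ revisiting each (formula, parameter) pair cofinally often; both devices ensure every parameter tuple is handled at a stage that the eventual $\mathcal{N}$ still extends.
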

\begin{proof}
Given a model $\mathcal{M}$ of $T$, 
we construct an ascending chain of $T_\forall$-models as follows.
Enumerate all quantifier free $\tau$-formulae as
$\bp{\phi_\alpha(y,\vec{x}_\alpha):\alpha<|\tau|}$. 
Let $\mathcal{M}_0=\mathcal{M}$ have size $\kappa\geq|\tau|+\aleph_0$. 
Fix also some enumeration 
\begin{align*}
\pi:&\kappa\to|\tau|\times\kappa^2\\
&\alpha\mapsto(\pi_0(\alpha),\pi_1(\alpha),\pi_2(\alpha))
\end{align*} 
such that $\pi_2(\alpha)\leq\alpha$ for all $\alpha<\kappa$
and for each $\xi<|\tau|$, and $\eta,\beta<\kappa$
there are unboundedly many $\alpha<\kappa$ such that $\pi(\alpha)=(\xi,\eta,\beta)$.

Let now 
$\mathcal{M}_\eta$ with enumeration   
$\bp{\vec{m}^\xi_\eta:\xi<\kappa}$ of $\mathcal{M}_\eta^{<\omega}$ be given for all $\eta\leq\beta$.
If $\mathcal{M}_\beta$ is $T$-ec, stop the construction.
Else check whether $T_\forall\cup\Delta_0(\mathcal{M}_\beta)\cup\bp{\exists y\phi_{\pi_0(\alpha)}(y,\vec{m}^{\pi_1(\alpha)}_{\pi_2(\alpha)})}$
is a consistent $\tau\cup\mathcal{M}_\beta$-theory; if so let $\mathcal{M}_{\beta+1}$ have size $\kappa$ and
realize this theory.
At limit stages $\gamma$, let $\mathcal{M}_\gamma$ be the direct limit of the chain of $\tau$-structures 
$\bp{\mathcal{M}_\beta:\beta<\gamma}$. Then all $\mathcal{M}_\xi$ are models of $T_\forall$, and at some stage $\beta\leq\kappa$
$\mathcal{M}_\beta$ is $T_\forall$-ec (hence also $T$-ec), since all existential $\tau$-formulae with parameters in some 
$\mathcal{M}_\eta$
will be considered along the construction, and realized along the way if this is possible, 
and all $\mathcal{M}_\eta$ are always models of 
$T_\forall$ (at limit stages the ascending chain of $T_\forall$-models remains a $T_\forall$-model).
\end{proof}

Compare the above construction with the standard consistency proofs of bounded forcing axioms as given for example in
\cite[Section 2]{ASPBAGAPAL2001}.
In the latter case to preserve $T_\forall$ at limit stages we use iteration theorems\footnote{Assume $G$ is $V$-generic for a forcing
which is a 
limit of an iteration of length $\omega$ of forcings 
$\bp{P_n:n<\omega}$. In general
$H_{\omega_2}^{V[G]}$ is not given by the union of
 $H_{\omega_2}^{V[G\cap P_n]}$,
hence a subtler argument is needed to maintain that 
$H_{\omega_2}^{V[G]}$ preserves $T_\forall$.}.

\subsection{The Kaiser hull of a first order theory}

The Kaiser Hull of a theory $T$ describes the smallest elementary class containing all the
 ``generic'' structures for $T$. For most theories $T$ the models of the respective
 Kaiser hulls  realize 
exactly all  $\Pi_2$-sentences which are consistent with the
universal fragment of any extension of
$T$.

\begin{definition}\cite[Lemma 3.2.12, Lemma 3.2.13]{TENZIE}
Given a theory $T$ in a signature $\tau$, its Kaiser hull 
$\mathrm{KH}(T)$ is given by the
$\Pi_2$-sentences of $\tau$ which holds in all $T$-ec structures.
\end{definition}

\begin{definition}
A $\tau$-theory $T$ is $\Pi_n$-complete, if it is consistent and
for any $\Pi_n$-sentence 
either $\phi\in T$ or 
$\neg\phi\in T$.
\end{definition}

By Proposition \ref{prop:pi2satKaiserhull}.\ref{rmk:itm4} we get:

\begin{fact}\label{fac:charKaihull}
Given a $\Pi_1$-complete first order $\tau$-theory $T$, its Kaiser Hull is
a $\Pi_2$-complete $\tau$-theory defined by the request that
for any $\Pi_2$-sentence $\psi$
\[
\psi\in \mathrm{KH}(T)\quad \text{ if and only if }\quad \bp{\psi}\cup T_\forall\text{ is consistent}.
\] 
\end{fact}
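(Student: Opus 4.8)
The plan is to unwind the definition of the Kaiser hull and apply Proposition \ref{prop:pi2satKaiserhull}.\ref{rmk:itm4} uniformly across all $T$-ec structures. First I would observe that by definition $\mathrm{KH}(T)$ consists exactly of those $\Pi_2$-sentences $\psi$ that hold in every $T$-ec structure, so to prove the claimed biconditional it suffices to show, for a fixed $\Pi_2$-sentence $\psi$, that $\psi$ holds in all $T$-ec structures if and only if $\bp{\psi}\cup T_\forall$ is consistent. The key leverage is the hypothesis that $T$ is $\Pi_1$-complete, which means all $T$-ec structures agree on their $\Pi_1$-theory: indeed, by Proposition \ref{prop:pi2satKaiserhull}.\ref{rmk:itm1} every $T$-ec structure models $T_\forall$, and $\Pi_1$-completeness of $T$ forces $T_\forall$ to be a complete $\Pi_1$-theory, so any $T$-ec structure $\mathcal{M}$ has $\mathcal{M}$-theory whose $\Pi_1$-part coincides with $T_\forall$ up to logical equivalence.

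The forward direction of the displayed biconditional is the easy one. Suppose $\psi\in\mathrm{KH}(T)$; then $\psi$ holds in some (in fact every) $T$-ec structure $\mathcal{M}$, and such $\mathcal{M}$ exists by Lemma \ref{lem:exTecmod}. Since $\mathcal{M}\models T_\forall$ by Proposition \ref{prop:pi2satKaiserhull}.\ref{rmk:itm1}, the structure $\mathcal{M}$ witnesses that $\bp{\psi}\cup T_\forall$ is consistent. For the converse, assume $\bp{\psi}\cup T_\forall$ is consistent and let $\mathcal{M}$ be an arbitrary $T$-ec structure; I must show $\mathcal{M}\models\psi$. Let $S$ be the full $\tau$-theory of $\mathcal{M}$. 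By Proposition \ref{prop:pi2satKaiserhull}.\ref{rmk:itm4}, $\mathcal{M}\models\psi$ if and only if $\psi$ holds in some model of $S_\forall$. Here is where $\Pi_1$-completeness of $T$ is essential: since $\mathcal{M}\models T_\forall$ and $T_\forall$ is a complete $\Pi_1$-theory, the universal consequences $S_\forall$ of $S=\mathrm{Th}(\mathcal{M})$ coincide (modulo logical equivalence) with $T_\forall$. Thus the consistency of $\bp{\psi}\cup T_\forall$ hands us a model of $S_\forall+\psi$, and Proposition \ref{prop:pi2satKaiserhull}.\ref{rmk:itm4} then yields $\mathcal{M}\models\psi$, as required.

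It remains to justify the two structural assertions packaged into the statement of Fact \ref{fac:charKaihull}: that $\mathrm{KH}(T)$ is a $\tau$-theory (closed under consequence) and that it is $\Pi_2$-complete. The $\Pi_2$-completeness follows immediately from the biconditional just established together with $\Pi_1$-completeness of $T$: for any $\Pi_2$-sentence $\psi$, if $\psi\notin\mathrm{KH}(T)$ then $\bp{\psi}\cup T_\forall$ is inconsistent, i.e. $T_\forall\vdash\neg\psi$; since $\neg\psi$ is logically equivalent to a $\Sigma_2$-sentence this does not directly give $\neg\psi\in\mathrm{KH}(T)$, so the cleaner route is to argue via the common $\Pi_1$-theory — because all $T$-ec structures share the same $\Pi_1$-theory, by the argument in Proposition \ref{prop:pi2satKaiserhull}.\ref{rmk:itm4} they all agree on every $\Pi_2$-sentence as well, so for each $\Pi_2$-sentence $\psi$ either all $T$-ec structures satisfy $\psi$ (whence $\psi\in\mathrm{KH}(T)$) or all satisfy $\neg\psi$ (whence $\neg\psi\in\mathrm{KH}(T)$).

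The main obstacle, and the point deserving the most care, is precisely this last uniformity claim: that $\Pi_1$-completeness of $T$ propagates to a shared $\Pi_2$-theory among all $T$-ec structures. The delicate step is confirming that $S_\forall$ and $T_\forall$ genuinely coincide for $S=\mathrm{Th}(\mathcal{M})$, since a priori $S_\forall$ could be strictly larger than $T_\forall$; here $\Pi_1$-completeness of $T$ rules this out because any universal sentence is decided by $T$, and $\mathcal{M}\models T_\forall$ forces $\mathcal{M}$ to realize exactly the universal sentences in $T$. Once this is in hand, everything else is a direct bookkeeping application of Proposition \ref{prop:pi2satKaiserhull}.\ref{rmk:itm4}, and the resulting theory $\mathrm{KH}(T)$ is automatically deductively closed as an intersection of complete theories of the various $T$-ec structures.
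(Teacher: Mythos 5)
Your overall strategy is exactly the one the paper intends: Fact \ref{fac:charKaihull} is presented there as an immediate consequence of Proposition \ref{prop:pi2satKaiserhull}(\ref{rmk:itm4}), and your proof is the natural elaboration of that citation --- reduce the displayed biconditional to the identity $S_\forall\equiv T_\forall$ for $S$ the theory of an arbitrary $T$-ec structure $\mathcal{M}$, then apply item (\ref{rmk:itm4}). The forward direction and the derivation of $\Pi_2$-completeness are fine (modulo the harmless convention that $\neg\psi\in \mathrm{KH}(T)$ means $\mathrm{KH}(T)\vdash\neg\psi$, which holds because $T_\forall\subseteq \mathrm{KH}(T)$).

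The one genuine flaw is your justification of the crucial identity $S_\forall\equiv T_\forall$. You assert that $\Pi_1$-completeness of $T$ makes $T_\forall$ a complete $\Pi_1$-theory, and that ``$\mathcal{M}\models T_\forall$ forces $\mathcal{M}$ to realize exactly the universal sentences in $T$.'' Both claims are false for arbitrary models of $T_\forall$: take $T=\mathrm{Th}(\mathbb{Z},+,0)$ (complete, hence $\Pi_1$-complete) and $\mathcal{M}$ the trivial group. Then $\mathcal{M}\models T_\forall$, being a substructure of $\mathbb{Z}$, yet $\mathcal{M}\models\forall x\,(x=0)$, a universal sentence whose negation lies in $T$; and $T_\forall$ decides neither $\forall x\,(x=0)$ nor its negation. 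What actually carries the step is not $\mathcal{M}\models T_\forall$ but the $T$-ec property: $\mathcal{M}\prec_1\mathcal{N}$ for some $\mathcal{N}\models T$, and since $\prec_1$ preserves the truth value of every $\Sigma_1$ (hence every $\Pi_1$) sentence in both directions, any universal $\phi$ true in $\mathcal{M}$ is true in $\mathcal{N}$, so $T+\phi$ is consistent, and only now does $\Pi_1$-completeness yield $T\vdash\phi$, i.e.\ $\phi\in T_\forall$. (The trivial group above is of course not $\mathrm{Th}(\mathbb{Z},+,0)$-ec, so there is no contradiction.) With this repair the argument is complete; as written, however, the pivotal step rests on a false general principle rather than on the $\Sigma_1$-elementarity that your hypotheses actually provide.
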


In particular any model of the Kaiser hull of a $\Pi_1$-complete
$T$ realizes simultaneously 
all $\Pi_2$-sentences which are individually consistent with $T_\forall$.

For theories $T$ of interests to us their Kaiser hull can be described in the same terms, 
but the proof is much more delicate.
We start with the following weaker property which holds for arbitrary theories:
\begin{fact}\label{fac:charkaihullnonpi1comp}
Given a $\tau$-theory $T$, its Kaiser hull $\mathrm{KH}(T)$ contains
the set of $\Pi_2$-sentences $\psi$ for $\tau$ such that for all complete $S\supseteq T$,
$S_\forall\cup\bp{\psi}$ is consistent.
\end{fact}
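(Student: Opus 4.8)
The plan is to argue directly from the definition of the Kaiser hull: I will show that \emph{every} $T$-ec structure $\mathcal{M}$ satisfies any $\Pi_2$-sentence $\psi$ with the stated property, whence $\psi\in\mathrm{KH}(T)$. The engine of the proof is Proposition~\ref{prop:pi2satKaiserhull}.\ref{rmk:itm4}: taking $S=\mathrm{Th}(\mathcal{M})$ to be the full first order theory of $\mathcal{M}$, that proposition reduces the goal $\mathcal{M}\models\psi$ to the apparently weaker assertion that $\psi$ holds in \emph{some} model of $(\mathrm{Th}(\mathcal{M}))_\forall$. So the entire task is to manufacture, for a given $T$-ec $\mathcal{M}$, a model of $(\mathrm{Th}(\mathcal{M}))_\forall+\psi$.

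The delicate point — and the reason the hypothesis is phrased via complete extensions $S\supseteq T$ rather than via $T$ itself — is that a $T$-ec structure $\mathcal{M}$ need not be a model of $T$ (as the algebraically closed field example makes clear), so $\mathrm{Th}(\mathcal{M})$ need not extend $T$ and the hypothesis cannot be applied to it. I would first repair this. Since $\mathcal{M}$ is $T$-ec it models $T_\forall$ by Proposition~\ref{prop:pi2satKaiserhull}.\ref{rmk:itm1}, so Lemma~\ref{lem:keylemembed} yields a superstructure $\mathcal{N}\sqsupseteq\mathcal{M}$ with $\mathcal{N}\models T$; and by the defining property of $T$-ec structures applied to this $\mathcal{N}$ we get $\mathcal{M}\prec_1\mathcal{N}$.

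Now set $S'=\mathrm{Th}(\mathcal{N})$: this is a \emph{complete} $\tau$-theory extending $T$, so the hypothesis applies and $S'_\forall\cup\bp{\psi}$ is consistent. The crucial observation is that $\mathcal{M}\prec_1\mathcal{N}$ forces $\mathcal{M}$ and $\mathcal{N}$ to agree on all $\Sigma_1$-sentences and hence on all $\Pi_1$-sentences; since $S'$ and $\mathrm{Th}(\mathcal{M})$ are complete, their universal fragments are exactly the universal sentences true in $\mathcal{N}$ and in $\mathcal{M}$ respectively, so $S'_\forall=(\mathrm{Th}(\mathcal{M}))_\forall$. Therefore any model of $S'_\forall\cup\bp{\psi}$ is a model of $(\mathrm{Th}(\mathcal{M}))_\forall+\psi$, which is precisely what Proposition~\ref{prop:pi2satKaiserhull}.\ref{rmk:itm4} requires to conclude $\mathcal{M}\models\psi$. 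As $\mathcal{M}$ was an arbitrary $T$-ec structure, $\psi$ holds in all of them and hence $\psi\in\mathrm{KH}(T)$.

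I do not expect a serious obstacle: once one notices that ec-models need not model $T$ and therefore routes through a superstructure $\mathcal{N}\models T$, the rest is bookkeeping on universal fragments. The one place needing care is the equality $S'_\forall=(\mathrm{Th}(\mathcal{M}))_\forall$, which genuinely uses both completeness of the two theories and the $\Sigma_1$-elementarity $\mathcal{M}\prec_1\mathcal{N}$; the mere containment $S'_\forall\subseteq(\mathrm{Th}(\mathcal{M}))_\forall$ that already follows from $\mathcal{M}\sqsubseteq\mathcal{N}$ would not be enough, since a model of $S'_\forall$ need not model the larger theory $(\mathrm{Th}(\mathcal{M}))_\forall$.
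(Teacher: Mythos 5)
Your proof is correct and follows essentially the same route as the paper's: fix a $T$-ec model $\mathcal{M}$, embed it into a model $\mathcal{N}$ of $T$, apply the hypothesis to the complete theory of $\mathcal{N}$, and conclude via Proposition~\ref{prop:pi2satKaiserhull}.\ref{rmk:itm4}. The only cosmetic difference is that the paper records ``$\mathcal{M}$ is $\mathrm{Th}(\mathcal{N})_\forall$-ec'' and invokes Fact~\ref{fac:charKaihull}, whereas you record the identity $\mathrm{Th}(\mathcal{N})_\forall=(\mathrm{Th}(\mathcal{M}))_\forall$ and invoke the proposition directly --- the same underlying fact either way.
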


\begin{proof}
Assume $\psi$ is a $\Pi_2$-sentence such that 
for all complete $S\supseteq T$,
$S_\forall\cup\bp{\psi}$ is consistent. We must show that
$\psi$ holds in all $T$-ec models.

Fix $\mathcal{M}$ an existentially closed model for $T$ (it exists by Lemma~\ref{lem:exTecmod}); 
we must show that
$\mathcal{M}\models \psi$.
Let $\mathcal{N}\sqsupseteq\mathcal{M}$ be a model of $T$ and $S$ be the 
$\tau$-theory of $\mathcal{N}$. Then $S$ is a complete theory and
$\mathcal{M}\models S_{\forall}$ since 
$\mathcal{M}\prec_1\mathcal{N}$ (being $T$-ec).
Since $S\supseteq T$, $\mathcal{M}$ is also $S$-ec (by Fact~\ref{fac:presECmod}).
Since  $S_\forall\cup\bp{\psi}$ is consistent, and $S_\forall$ is 
$\Pi_1$-complete, we obtain that
$\mathcal{M}$ models $\psi$, being an $S_\forall$-ec model, and using Fact \ref{fac:charKaihull}. 
\end{proof}

We will show in Lemma~\ref{fac:proofthm1-2}
that the set of $\Pi_2$-sentences described in the Fact provides an equivalent characterization of the Kaiser
hull for many theories admitting a model companion, among which the axiomatizations of set theory considered in this paper. 


\subsection{Model completeness}

It is possible (depending on the choice of the 
theory $T$)
that there are models of the Kaiser hull of $T$ which are not 
$T$-ec.
Robinson has come up with two model theoretic properties 
(model completeness and model companionship)
which describe the case in which the models of the Kaiser hull of 
$T$ are exactly the class of 
$T$-ec models (even in case $T$ is not a complete theory).

\begin{definition}\label{def:modcompl}
A $\tau$-theory $T$ is \emph{model complete} if for all $\tau$-models $\mathcal{M}$ and $\mathcal{N}$ of $T$ we have that 
$\mathcal{M} \sqsubseteq \mathcal{N}$ implies $\mathcal{M} \prec \mathcal{N}$. 
\end{definition}

Remark that theories admitting quantifier elimination are automatically model complete.
On the other hand model complete theories need not be complete\footnote{For example the theory of 
algebraically closed fields is model complete, but algebraically closed fields of different characteristics are 
elementarily inequivalent.}.
However for theories $T$ which are $\Pi_1$-complete,
model completeness entails completeness:
any two models of a $\Pi_1$-complete, model complete $T$ share the same 
$\Pi_1$-theory, therefore 
if $T_1\supseteq T$ and $T_2\supseteq T$ with $\mathcal{M}_i$ a model of $T_i$, we can suppose (by Lemma 
\ref{lem:biembequivequnivth}) that $\mathcal{M}_1\sqsubseteq\mathcal{M}_2$. Since they are both models of $T$,
model completeness entails that $\mathcal{M}_1\prec \mathcal{M}_2$.

\begin{lemma}\cite[Lemma 3.2.7]{TENZIE}\label{lem:robtest}
(Robinson's test) Let $T$ be a $\tau$-theory. The following are equivalent:
\begin{enumerate}[(a)]
\item \label{lem:robtest-1} $T$ is model complete. 
\item \label{lem:robtest-2} Any model of $T$ is $T$-ec.
\item \label{lem:robtest-4} Each \emph{existential} $\tau$-formula $\phi(\vec{x})$ in free variables $\vec{x}$ 
is $T$-equivalent to a universal $\tau$-formula $\psi(\vec{x})$ in the same free variables. 
\item \label{lem:robtest-3} Each $\tau$-formula $\phi(\vec{x})$ in free variables $\vec{x}$ 
is $T$-equivalent to a universal $\tau$-formula $\psi(\vec{x})$ in the same free variables. 
\end{enumerate}
\end{lemma}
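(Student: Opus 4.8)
The plan is to prove Robinson's test by establishing the cycle of implications
$(\ref{lem:robtest-3})\Rightarrow(\ref{lem:robtest-4})\Rightarrow(\ref{lem:robtest-1})\Rightarrow(\ref{lem:robtest-2})\Rightarrow(\ref{lem:robtest-3})$, which keeps each individual step as light as possible. The implication $(\ref{lem:robtest-3})\Rightarrow(\ref{lem:robtest-4})$ is immediate, since existential formulae are a special case of arbitrary formulae. The equivalence of $(\ref{lem:robtest-1})$ and $(\ref{lem:robtest-2})$ is almost definitional: a theory is model complete exactly when every inclusion $\mathcal{M}\sqsubseteq\mathcal{N}$ of models of $T$ is elementary, and a model $\mathcal{M}$ of $T$ is $T$-ec exactly when every such inclusion is $\Sigma_1$-elementary. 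So the two genuinely substantive steps are $(\ref{lem:robtest-4})\Rightarrow(\ref{lem:robtest-1})$ and $(\ref{lem:robtest-2})\Rightarrow(\ref{lem:robtest-3})$.

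For $(\ref{lem:robtest-4})\Rightarrow(\ref{lem:robtest-1})$, I would first upgrade the hypothesis: if every existential formula is $T$-equivalent to a universal one, then by taking negations every universal formula is $T$-equivalent to an existential one, and an easy induction on the quantifier structure of $\phi$ then shows that \emph{every} $\tau$-formula is $T$-equivalent to a $\Sigma_1$-formula (and dually to a $\Pi_1$-formula). Granting this, let $\mathcal{M}\sqsubseteq\mathcal{N}$ be models of $T$ and let $\phi(\vec{x})$ be any formula with $\vec{a}\in\mathcal{M}^{<\omega}$. Fix an existential $\psi_1$ and a universal $\psi_2$, both $T$-equivalent to $\phi$. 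If $\mathcal{M}\models\phi(\vec{a})$ then $\mathcal{M}\models\psi_1(\vec{a})$, and existential formulae are upward absolute under substructure inclusion, so $\mathcal{N}\models\psi_1(\vec{a})$, whence $\mathcal{N}\models\phi(\vec{a})$. Conversely, if $\mathcal{N}\models\phi(\vec{a})$ then $\mathcal{N}\models\psi_2(\vec{a})$, and universal formulae are downward absolute, so $\mathcal{M}\models\psi_2(\vec{a})$, giving $\mathcal{M}\models\phi(\vec{a})$. Thus $\mathcal{M}\prec\mathcal{N}$ and $T$ is model complete.

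The hardest step is $(\ref{lem:robtest-2})\Rightarrow(\ref{lem:robtest-3})$, producing an explicit universal formula from the mere assumption that models are $T$-ec. Here I would use a compactness-plus-diagram argument. Given $\phi(\vec{x})$, consider the set $\Gamma(\vec{x})$ of all universal $\tau$-formulae $\psi(\vec{x})$ with $T\vdash\forall\vec{x}(\phi(\vec{x})\to\psi(\vec{x}))$; it suffices to show $T\vdash\forall\vec{x}(\bigwedge\Gamma(\vec{x})\to\phi(\vec{x}))$, since then compactness yields a finite conjunction from $\Gamma$ doing the job. Suppose not: introduce fresh constants $\vec{c}$ and find a model $\mathcal{N}\models T\cup\Gamma(\vec{c})\cup\{\neg\phi(\vec{c})\}$. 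Let $\mathcal{M}$ be the $\tau$-substructure generated by $\vec{c}$ together with witnesses needed to realize, over $\mathcal{N}$, the universal theory; more precisely, I would show that $T\cup\Delta_0(\mathcal{N})\!\restriction\!\vec{c}\cup\{\exists\vec{x}\,\phi(\vec{x})\text{-type data}\}$ forces a substructure $\mathcal{M}\sqsubseteq\mathcal{M}'$ of some model $\mathcal{M}'$ of $T$ with $\mathcal{M}'\models\phi(\vec{c})$ but $\mathcal{M}\sqsubseteq\mathcal{N}$ — the obstruction being exactly to arrange both inclusions simultaneously. Then $T$-ec-ness of $\mathcal{M}$ (via Proposition~\ref{prop:pi2satKaiserhull} and Lemma~\ref{lem:keylemembed}) makes the existential statement $\phi(\vec{c})$ reflect down from $\mathcal{M}'$ to $\mathcal{M}$ and hence up into $\mathcal{N}$, contradicting $\mathcal{N}\models\neg\phi(\vec{c})$.

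I expect the genuine obstacle to be this last amalgamation: one must glue together a model witnessing $\phi$ and the given model $\mathcal{N}$ witnessing $\neg\phi$ over a common substructure, and then invoke existential closedness to derive the contradiction. The right bookkeeping is to pass to the universal fragment $T_\forall$ (using Proposition~\ref{prop:pi2satKaiserhull}(\ref{rmk:itm0}), which tells us $T$-ec and $T_\forall$-ec coincide) so that Lemma~\ref{lem:keylemembed} supplies the needed superstructure models of $T$, and to keep careful track of which constants lie in $\tau$ versus the added diagram constants exactly as in the proof of Lemma~\ref{lem:biembequivequnivth}. Once the amalgam is in place, the $\Sigma_1$-elementarity guaranteed by $T$-ec-ness closes the argument mechanically.
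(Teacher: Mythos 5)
Your cycle $(\ref{lem:robtest-3})\Rightarrow(\ref{lem:robtest-4})\Rightarrow(\ref{lem:robtest-1})\Rightarrow(\ref{lem:robtest-2})\Rightarrow(\ref{lem:robtest-3})$ is fine in its first three links, and your treatment of $(\ref{lem:robtest-4})\Rightarrow(\ref{lem:robtest-1})$ (upgrading to ``every formula is $T$-equivalent both to a $\Sigma_1$ and to a $\Pi_1$ formula'' by negations and induction, then using upward/downward absoluteness) is exactly the paper's route through $(\ref{lem:robtest-4})\Rightarrow(\ref{lem:robtest-3})\Rightarrow(\ref{lem:robtest-1})$. The genuine gap is in the last link, $(\ref{lem:robtest-2})\Rightarrow(\ref{lem:robtest-3})$. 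Your compactness argument with $\Gamma$ the set of universal $T$-consequences of $\phi$ does produce, from a model $\mathcal{N}\models T\cup\Gamma(\vec{c})\cup\{\neg\phi(\vec{c})\}$, an extension $\mathcal{M}'\sqsupseteq\mathcal{N}$ with $\mathcal{M}'\models T+\phi(\vec{c})$: the finite satisfiability of $T\cup\Delta_0(\mathcal{N})\cup\{\phi(\vec{c})\}$ uses only the maximality of $\Gamma$, and no amalgamation over a generated substructure is needed (indeed the ``generated'' $\mathcal{M}$ you introduce need not model $T$, so $(\ref{lem:robtest-2})$ cannot be applied to it). But to reach a contradiction you must pull $\phi(\vec{c})$ back from $\mathcal{M}'$ to $\mathcal{N}$, and $(\ref{lem:robtest-2})$ only gives $\mathcal{N}\prec_1\mathcal{M}'$, which reflects \emph{existential} formulae only; you even call $\phi(\vec{c})$ ``the existential statement'' at precisely this point, whereas in $(\ref{lem:robtest-3})$ the formula $\phi$ is arbitrary. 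The assumption that would rescue the step is $(\ref{lem:robtest-2})\Rightarrow(\ref{lem:robtest-1})$ (so that $\mathcal{N}\prec\mathcal{M}'$), which you dismiss as ``almost definitional''; it is not --- that direction is essentially the content of the theorem.

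The repair is what the paper does: run your $\Gamma$-plus-diagram argument to prove $(\ref{lem:robtest-2})\Rightarrow(\ref{lem:robtest-4})$, i.e.\ only for \emph{existential} $\phi$, where $\Sigma_1$-reflection along $\mathcal{N}\prec_1\mathcal{M}'$ does close the loop; then obtain $(\ref{lem:robtest-3})$ from $(\ref{lem:robtest-4})$ by the same negation-plus-induction on quantifier complexity that you already wrote down inside your $(\ref{lem:robtest-4})\Rightarrow(\ref{lem:robtest-1})$ step. With that reordering your argument becomes the paper's proof.
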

Remark that \ref{lem:robtest-3} (or \ref{lem:robtest-4}) shows that being a model complete $\tau$-theory $T$ is expressible by
a $\Delta_0(\tau,T)$-property in any model of $\ZFC$, hence it is absolute with respect 
to forcing.

\begin{proof}
\emph{}

\begin{description}
\item[\ref{lem:robtest-1} implies \ref{lem:robtest-2}]
Immediate.
\item[\ref{lem:robtest-2} implies \ref{lem:robtest-4}]
Fix an existential formula $\phi(\vec{x})$ in free variables $x_1,\dots,x_n$.
If $\phi(\vec{x})$ is not consistent with $T$ it is $T$-equivalent to the 
trivial formula $\forall y(y\neq y)$ in free variables $\vec{x}$.
Hence we may assume that $T\cup\phi(\vec{x})$ is a consistent theory.
Let $\vec{c}=(c_1,\dots,c_n)$ be a finite set of new constant symbols.
Then $T\cup\phi(\vec{c})$ is a consistent $\tau\cup\bp{c_1,\dots,c_n}$-theory.

Let $\Gamma$ be the set of universal $\tau$-formulae $\theta(\vec{x})$ such that
\[
T\vdash\forall\vec{x}\,(\phi(\vec{x})\rightarrow \theta(\vec{x})).
\]
Note that $\Gamma$ is closed under finite conjunctions and disjunctions.
Let $\Gamma(\vec{c})=\bp{\theta(\vec{c}):\, \theta(\vec{x})\in \Gamma}$.
Note that $T\cup\Gamma(\vec{c})$ is a consistent $\tau\cup\bp{c_1,\dots,c_n}$-theory,
since it holds in any $\tau\cup\bp{c_1,\dots,c_n}$-model of $T\cup\phi(\vec{c})$.

It suffices to prove 
\begin{equation}\label{eqn:keyeq2-->3}
T\cup\Gamma(\vec{c})\models\phi(\vec{c});
\end{equation}
if this is the case, by compactness, a finite subset $\Gamma_0(\vec{c})$ of $\Gamma(\vec{c})$ is such that
\[
T\cup\Gamma_0(\vec{c})\models\phi(\vec{c});
\]
letting $\bar{\theta}(\vec{x}):=\bigwedge\bp{\psi(\vec{x}): \psi(\vec{c})\in\Gamma_0(\vec{c})}$,
the latter 
gives that 
\[
T\models \forall\vec{x}\,(\bar{\theta}(\vec{x})\rightarrow\phi(\vec{x}))
\]
(since the constants $\vec{c}$ do not appear in $T$). 

$\bar{\theta}(\vec{x})\in \Gamma$ is a universal formula witnessing \ref{lem:robtest-4} for $\phi(\vec{x})$.

So we prove (\ref{eqn:keyeq2-->3}):
\begin{proof}
Let $\mathcal{M}$ be a $\tau\cup\bp{c_1,\dots,c_n}$-model of 
$T\cup\Gamma(\vec{c})$. We must show that
$\mathcal{M}$ models $\phi(\vec{c})$.

The key step is to prove the following:

\begin{claim}
$T\cup\Delta_0(\mathcal{M})\cup\bp{\phi(\vec{c})}$
is consistent (where $\Delta_0(\mathcal{M})$ is the $\tau\cup\bp{c_1,\dots,c_n}$-atomic 
diagram of $\mathcal{M}$ in signature $\tau\cup\bp{c_1,\dots,c_n}\cup\mathcal{M}$).
\end{claim}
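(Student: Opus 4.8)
The plan is to prove the Claim by contradiction, turning a hypothetical inconsistency into a universal $\tau$-formula forced to lie in $\Gamma$, which then clashes with the atomic diagram of $\mathcal{M}$. So suppose $T\cup\Delta_0(\mathcal{M})\cup\bp{\phi(\vec{c})}$ were inconsistent. Since $\Delta_0(\mathcal{M})$ is closed under finite conjunctions, compactness would hand me a single quantifier free sentence $\delta\in\Delta_0(\mathcal{M})$ such that $T\cup\bp{\phi(\vec{c}),\delta}$ is already inconsistent. I would write $\delta$ as $\delta(\vec{c},\vec{m})$, where $\vec{m}$ are the constants naming elements of $\mathcal{M}$ occurring in $\delta$ (kept syntactically distinct from the $c_i$). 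The inconsistency then reads $T\vdash\phi(\vec{c})\to\neg\delta(\vec{c},\vec{m})$.

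The key move is to generalize away the constants that never occur in the axioms of $T$. Since $\vec{m}$ appears neither in $T$ nor in $\phi(\vec{c})$, I may universally quantify it out of the implication, obtaining $T\vdash\phi(\vec{c})\to\forall\vec{y}\,\neg\delta(\vec{c},\vec{y})$; and since the $\vec{c}$ likewise do not occur in $T$, a second generalization gives $T\vdash\forall\vec{x}\,\bp{\phi(\vec{x})\to\forall\vec{y}\,\neg\delta(\vec{x},\vec{y})}$, where $\delta(\vec{x},\vec{y})$ is obtained from $\delta$ by replacing $\vec{c}$ by $\vec{x}$ and $\vec{m}$ by $\vec{y}$. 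Setting $\theta(\vec{x}):=\forall\vec{y}\,\neg\delta(\vec{x},\vec{y})$, this is a genuine universal $\tau$-formula in the free variables $\vec{x}$ (recall $\delta$ is quantifier free), and the last derivation says precisely that $\theta\in\Gamma$.

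By definition of $\Gamma(\vec{c})$ I would then have $\theta(\vec{c})\in\Gamma(\vec{c})$, whence $\mathcal{M}\models\theta(\vec{c})$, i.e. $\mathcal{M}\models\forall\vec{y}\,\neg\delta(\vec{c},\vec{y})$. But $\delta(\vec{c},\vec{m})\in\Delta_0(\mathcal{M})$ means $\mathcal{M}\models\delta(\vec{c},\vec{m})$ under the intended interpretation of the constants $\vec{m}$, a contradiction; this refutes the assumption and establishes the Claim. The only delicate point, and the step I expect to require care, is the bookkeeping of constants: one must keep the $c_i$ separate from the element-naming constants $\vec{m}$ so that, after generalizing the latter and then the former, $\theta$ emerges as a $\tau$-formula whose free variables are exactly $\vec{x}$ — which is exactly what membership in $\Gamma$ demands. (Once the Claim is in hand, a model of $T\cup\Delta_0(\mathcal{M})\cup\bp{\phi(\vec{c})}$ restricts to a $\tau$-model of $T$ extending $\mathcal{M}$; since $\mathcal{M}$ is $T$-ec it is $\prec_1$ in this extension, and as $\phi$ is existential we conclude $\mathcal{M}\models\phi(\vec{c})$, which is the goal of the surrounding argument.)
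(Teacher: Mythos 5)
Your proof is correct and is essentially the paper's argument run in contrapositive form: the paper directly shows that each $T\cup\{\phi(\vec{c}),\psi(\vec{c},\vec{a})\}$ is consistent by observing that $\forall\vec{y}\,\neg\psi(\vec{x},\vec{y})$ cannot lie in $\Gamma$ (since $\mathcal{M}$ models both $\Gamma(\vec{c})$ and $\exists\vec{y}\,\psi(\vec{c},\vec{y})$), and then concludes by compactness, whereas you assume inconsistency, extract a single $\delta$ by the same compactness/conjunction-closure step, and derive that $\forall\vec{y}\,\neg\delta(\vec{x},\vec{y})$ \emph{would} lie in $\Gamma$, contradicting the atomic diagram. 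The constant-bookkeeping you flag as delicate is handled exactly as in the paper, by generalizing first on the element-naming constants and then on the $c_i$.
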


Assume the Claim holds and let $\mathcal{N}$ realize the above theory.
Then 
\[
\mathcal{M}\sqsubseteq\mathcal{N}\restriction(\tau\cup\bp{c_1,\dots,c_n}).
\]
Hence 
\[
\mathcal{M}\restriction\tau\sqsubseteq\mathcal{N}\restriction\tau.
\]
By \ref{lem:robtest-2} 
\[
\mathcal{M}\restriction\tau\prec_1\mathcal{N}\restriction\tau.
\]

Now let $b_1,\dots,b_n\in\mathcal{M}$ be the interpretations of $c_1,\dots,c_n$
in the $\tau\cup\bp{c_1,\dots,c_n}$-structure $\mathcal{M}$.
Then 
\[
\mathcal{N}\restriction\tau\models\phi(x_1,\dots,x_n)[b_1,\dots,b_n].
\]
Since $\phi(\vec{x})$ is $\Sigma_1$ for $\tau$ and $b_1,\dots,b_n\in \mathcal{M}$, we get that
\[
\mathcal{M}\restriction\tau\models\phi(x_1,\dots,x_n)[b_1,\dots,b_n],
\]
hence
\[
\mathcal{M}\models\phi(c_1,\dots,c_n),
\]
and we are done.

So we are left with the proof of the Claim.
\begin{proof}
Let $\psi(\vec{x},\vec{y})$ be a quantifier free $\tau$-formula such that
$\psi(\vec{c},\vec{a})\in\Delta_0(\mathcal{M})$ for some $\vec{a}\in\mathcal{M}$.

Clearly $\mathcal{M}$ models
$\exists \vec{y}\psi(\vec{c},\vec{y})$.

Then the universal formula $\neg\exists \vec{y}\psi(\vec{c},\vec{y})\not\in\Gamma(\vec{c})$, since
$\mathcal{M}$ models its negation and $\Gamma(\vec{c})$ at the same time.

This gives that 
\[
T\not\vdash\forall\vec{x}\,(\phi(\vec{x})\rightarrow \neg\exists \vec{y}\psi(\vec{x},\vec{y})),
\]
i.e.
\[
T\cup \bp{\exists \vec{x}\,[\phi(\vec{x})\wedge\exists \vec{y}\psi(\vec{x},\vec{y})]}
\]
is consistent.

We conclude that
\[
T\cup\bp{\phi(\vec{c})\wedge\psi(\vec{c},\vec{a})}
\]
is consistent for any tuple $a_1,\dots,a_k\in\mathcal{M}$ and formula $\psi$ such that
$\mathcal{M}$ models $\psi(\vec{c},\vec{a})$
(since $\vec{c},\vec{a}$ are constants never appearing in the formulae of $T$).

This shows that $T\cup\Delta_0(\mathcal{M})\cup\bp{\phi(\vec{c})}$ is consistent.
\end{proof}

(\ref{eqn:keyeq2-->3}) is proved.
\end{proof}

\item[\ref{lem:robtest-4} implies \ref{lem:robtest-3}]
We prove by induction on $n$ that $\Pi_n$-formulae and $\Sigma_n$-formulae are $T$-equivalent to a $\Pi_1$-formula.

\ref{lem:robtest-4} gives the base case $n=1$ of the induction for $\Sigma_1$-formulae
and (trivially) for $\Pi_1$-formulae. 

Assuming we have proved the implication for all $\Sigma_{n}$ formulae for some fixed 
$n>0$, we obtain it  for $\Pi_{n+1}$-formulae $\forall\vec{x}\psi(\vec{x},\vec{y})$ (with $\psi(\vec{x},\vec{y})$
$\Sigma_n$)
applying the inductive assumptions to $\psi(\vec{x},\vec{y})$; 
next we 
observe that a $\Sigma_{n+1}$-formula is equivalent to the negation of a
$\Pi_{n+1}$-formula, which is in turn equivalent to the negation of a universal formula (by what we already argued),
which is equivalent to an existential formula, and thus equivalent to a universal formula (by \ref{lem:robtest-4}).

\item[\ref{lem:robtest-3} implies \ref{lem:robtest-1}]
By \ref{lem:robtest-3} every formula is $T$-equivalent both to a universal formula and to an existential formula (since its negation is $T$-equivalent to a universal formula).

This gives that $\mathcal{M}\prec \mathcal{N}$ whenever $\mathcal{M}\sqsubseteq\mathcal{N}$ are models of $T$, since truth of universal formulae is inherited by substructures, while truth of existential formulae pass to superstructures. 
\end{description}
\end{proof}

We will also need the following:

\begin{fact}\label{fac:proofthm1}
Let $\tau$ be a signature and 
$T$ a model complete $\tau$-theory. 
Let $\sigma\supseteq \tau$ be a signature and 
$T^*\supseteq T$ a $\sigma$-theory such that every
$\sigma$-formula is $T^*$-equivalent to a $\tau$-formula. Then $T^*$ is model complete.
\end{fact}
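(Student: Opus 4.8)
We must show: if $T$ is a model complete $\tau$-theory, $\sigma\supseteq\tau$, and $T^*\supseteq T$ is a $\sigma$-theory in which every $\sigma$-formula is $T^*$-equivalent to a $\tau$-formula, then $T^*$ is model complete.

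The plan is to verify Robinson's test (Lemma \ref{lem:robtest}, condition \ref{lem:robtest-3}) for $T^*$: I will show that every $\sigma$-formula is $T^*$-equivalent to a \emph{universal} $\sigma$-formula. The strategy is to chain together the two hypotheses. First, given an arbitrary $\sigma$-formula $\chi(\vec{x})$, the hypothesis on $T^*$ supplies a $\tau$-formula $\phi(\vec{x})$ with $T^*\vdash\forall\vec{x}\,(\chi(\vec{x})\leftrightarrow\phi(\vec{x}))$. Next, since $T$ is model complete, Robinson's test applied to $T$ gives a \emph{universal} $\tau$-formula $\psi(\vec{x})$ with $T\vdash\forall\vec{x}\,(\phi(\vec{x})\leftrightarrow\psi(\vec{x}))$. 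Because $T^*\supseteq T$, this equivalence is also a theorem of $T^*$, so composing the two equivalences yields $T^*\vdash\forall\vec{x}\,(\chi(\vec{x})\leftrightarrow\psi(\vec{x}))$ with $\psi$ universal.

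\begin{proof}
By Robinson's test (Lemma \ref{lem:robtest}, equivalence of \ref{lem:robtest-1} and \ref{lem:robtest-3}), it suffices to show that every $\sigma$-formula $\chi(\vec{x})$ is $T^*$-equivalent to a universal $\sigma$-formula. Fix such a $\chi(\vec{x})$. By hypothesis there is a $\tau$-formula $\phi(\vec{x})$ (in the same free variables) such that
\[
T^*\vdash\forall\vec{x}\,(\chi(\vec{x})\leftrightarrow\phi(\vec{x})).
\]
Since $T$ is model complete and $\phi$ is a $\tau$-formula, Lemma \ref{lem:robtest} (condition \ref{lem:robtest-3} applied to $T$) yields a universal $\tau$-formula $\psi(\vec{x})$, in the same free variables, with
\[
T\vdash\forall\vec{x}\,(\phi(\vec{x})\leftrightarrow\psi(\vec{x})).
\]
As $T^*\supseteq T$, the latter is also a consequence of $T^*$. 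Combining the two equivalences gives
\[
T^*\vdash\forall\vec{x}\,(\chi(\vec{x})\leftrightarrow\psi(\vec{x})),
\]
where $\psi$ is a universal $\tau$-formula, hence in particular a universal $\sigma$-formula. Thus every $\sigma$-formula is $T^*$-equivalent to a universal one, and by Robinson's test $T^*$ is model complete.
\end{proof}

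I expect no serious obstacle here: the result is essentially a transitivity argument, and the only point requiring minimal care is that the free variables are preserved at each step (Robinson's test is stated with the rewritten formula in the \emph{same} free variables, which is exactly what is needed to compose the equivalences). The one conceptual subtlety worth flagging is the asymmetry in the roles of $\tau$ and $\sigma$: we only assume that $\sigma$-formulae collapse to $\tau$-formulae modulo $T^*$, and we only need the conclusion of Robinson's test for $\tau$-formulae modulo the smaller theory $T$; the inclusion $T^*\supseteq T$ transports the $\tau$-level equivalence up to $\sigma$-level for free.
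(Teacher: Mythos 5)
Your proof is correct and follows exactly the same route as the paper's (one-line) argument: reduce each $\sigma$-formula to a $\tau$-formula via the hypothesis on $T^*$, then to a universal formula via Robinson's test for the model complete $T$, and conclude by Robinson's test for $T^*$. The only difference is that you spell out the composition of equivalences and the role of $T^*\supseteq T$, which the paper leaves implicit.
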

\begin{proof}
By the model completeness of $T$ and the assumptions on $T^*$ we get that every $\sigma$-formula is equivalent
to a $\Pi_1$-formula for $\tau\subseteq\sigma$. We conclude by Robinson's test.
\end{proof}

Later on we will show that in most cases model complete theories maximize the family of $\Pi_2$-sentences compatible with any $\Pi_1$-completion of their universal fragment.
This will be part of a broad family of 
properties for first order theories which require a new concept in order to be properly 
formulated, that of model companionship.

\subsection{Model companionship}
Model completeness comes in pairs with another fundamental concept which generalizes to arbitrary first order 
theories the relation existing between algebraically closed fields and commutative rings without zero-divisors. As a matter of fact, the case described below
occurs when $T^*$ is the theory of algebraically closed fields
and 
$T$ is the theory of commutative rings with no zero divisors.

\begin{definition}\label{def:dmodcompship}
Given two theories $T$ and $T^*$ in the same language $\tau$, 
$T^*$ is the \emph{model companion} of $T$ if the following conditions holds:
\begin{enumerate}
\item Each model of $T$ can be extended to a model of $T^*$.
\item Each model of $T^*$ can be extended to a model of $T$.
\item $T^*$ is model complete. 
\end{enumerate}
\end{definition}

Different theories can have the same model companion, for example the theory of fields 
and the theory of commutative rings with 
no zero-divisors which are not fields both have the theory of algebraically closed fields  
as their model companion.

\begin{theorem}\cite[Thm 3.2.14]{TENZIE}
\label{thm:modcompletionchar}
Let $T$ be a first order theory. If its model companion $T^*$ exists, then
\begin{enumerate}
\item \label{thm:modcompletionchar-1} $T_{\forall} = T^*_{\forall}$.
\item \label{thm:modcompletionchar-2} $T^*$ is the theory of the existentially closed models of $T_{\forall}$. 
\end{enumerate}
\end{theorem}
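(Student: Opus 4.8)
The plan is to settle (\ref{thm:modcompletionchar-1}) first, and then deduce (\ref{thm:modcompletionchar-2}) by proving that the class of $T_\forall$-ec structures coincides with the class of models of $T^*$; once the two classes agree, item (\ref{thm:modcompletionchar-2}) is immediate, since a deductively closed first order theory equals the theory of its class of models.

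For (\ref{thm:modcompletionchar-1}): by Lemma \ref{lem:keylemembed} the models of $T_\forall$ are exactly the substructures of models of $T$, and likewise the models of $T^*_\forall$ are the substructures of models of $T^*$. Since every model of $T$ embeds into a model of $T^*$ and conversely (Definition \ref{def:dmodcompship}(1)--(2)), these two classes of substructures coincide; hence $T_\forall$ and $T^*_\forall$ have the same models and therefore $T_\forall=T^*_\forall$. Equivalently one may argue through Lemma \ref{lem:biembequivequnivth}: no universal sentence separates $T$ from $T^*$ in either direction, precisely because each of the two theories has a model embedding into a model of the other.

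For (\ref{thm:modcompletionchar-2}) I would treat the two inclusions separately. The easy one is that every model of $T^*$ is $T$-ec. Let $\mathcal{M}\models T^*$; it embeds into a model of $T$ by Definition \ref{def:dmodcompship}(2). Given any $\mathcal{N}\sqsupseteq\mathcal{M}$ with $\mathcal{N}\models T$, we have $\mathcal{N}\models T_\forall=T^*_\forall$, so by Lemma \ref{lem:keylemembed} there is $\mathcal{Q}\sqsupseteq\mathcal{N}$ with $\mathcal{Q}\models T^*$. Model completeness of $T^*$ gives $\mathcal{M}\prec\mathcal{Q}$; since an existential formula with parameters in $\mathcal{M}$ true in $\mathcal{N}$ stays true in the superstructure $\mathcal{Q}$ and then reflects down to $\mathcal{M}$ (and conversely existential truth passes up from $\mathcal{M}$ to $\mathcal{N}$), we obtain $\mathcal{M}\prec_1\mathcal{N}$. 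Thus $\mathcal{M}$ is $T$-ec, equivalently $T_\forall$-ec by Proposition \ref{prop:pi2satKaiserhull}.\ref{rmk:itm0}. The substantial inclusion is the converse: every $T_\forall$-ec structure $\mathcal{M}$ is a model of $T^*$. Since $\mathcal{M}\models T^*_\forall$, Lemma \ref{lem:keylemembed} yields $\mathcal{M}\sqsubseteq\mathcal{N}\models T^*$, and as $\mathcal{M}$ is $T_\forall$-ec with $\mathcal{N}\models T_\forall$ we get $\mathcal{M}\prec_1\mathcal{N}$. The aim is to upgrade this $\Sigma_1$-elementarity to a genuine elementary extension of $\mathcal{M}$ sitting inside a model of $T^*$; being elementarily equivalent to $\mathcal{M}$, such a model would force $\mathcal{M}\models T^*$. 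Concretely I would build an interleaving chain $\mathcal{M}=\mathcal{A}_0\sqsubseteq\mathcal{B}_0\sqsubseteq\mathcal{A}_1\sqsubseteq\mathcal{B}_1\sqsubseteq\cdots$ in which each $\mathcal{A}_i$ is $T_\forall$-ec (produced by Lemma \ref{lem:exTecmod}) and each $\mathcal{B}_i$ is a model of $T^*$ (produced by Lemma \ref{lem:keylemembed}), with every consecutive inclusion $\Sigma_1$-elementary. Model completeness turns $\{\mathcal{B}_i\}$ into an elementary chain, so the common union $\mathcal{U}$ models $T^*$; and one checks easily that the union of an increasing chain of $T_\forall$-ec structures is again $T_\forall$-ec. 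Applying the Resurrection Lemma (Corollary \ref{cor.resurrlemma}) to convert the $\Sigma_1$-elementarity on the $\mathcal{M}$-side into full elementarity then gives $\mathcal{M}\prec\mathcal{U}\models T^*$, whence $\mathcal{M}\models T^*$.

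The main obstacle is exactly this final upgrade. A naive interleaving delivers only $\Sigma_1$-elementary links, and $\Sigma_1$-elementary chains need not have elementary unions, so one cannot simply read off $\mathcal{M}\prec\mathcal{U}$. The delicate point is to arrange the construction so that the successive elementary extensions of $\mathcal{M}$ generated by Resurrection remain $T_\forall$-ec --- that is, that in the presence of a model companion elementary extensions of $T_\forall$-ec structures stay $T_\forall$-ec --- so that Resurrection can be reapplied at every stage and the two chains can be kept in step. This is where model completeness of $T^*$ is indispensable, and it is the technical heart of Tent--Ziegler's Theorem 3.2.14; the remaining ingredients are routine bookkeeping with the embedding lemmas and Robinson's test (Lemma \ref{lem:robtest}) recorded above.
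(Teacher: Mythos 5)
Your item (\ref{thm:modcompletionchar-1}) and the inclusion ``every model of $T^*$ is $T_\forall$-ec'' are correct and agree with the paper's (much terser) argument, which just invokes Lemma \ref{lem:keylemembed}, Robinson's test and Proposition \ref{prop:pi2satKaiserhull}(\ref{rmk:itm0}). The gap is in the converse inclusion, which you rightly call the substantial one and then leave open. The obstacle you flag is genuine and is not routine bookkeeping: the question whether an elementary extension of a $T_\forall$-ec structure is again $T_\forall$-ec is, once the model companion exists, essentially the theorem itself (the $T_\forall$-ec structures are exactly the models of $T^*$, and $\mathrm{Mod}(T^*)$ is trivially closed under elementary extensions), so assuming it in order to keep reapplying the Resurrection Lemma would be circular. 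Without it, your interleaved chain only has $\Sigma_1$-elementary links on the $\mathcal{M}$-side, and a $\Sigma_1$-elementary chain yields only $\mathcal{M}\prec_1\mathcal{U}$ for the union $\mathcal{U}\models T^*$; that gives $\mathcal{M}\models T^*_{\forall\exists}$, not $\mathcal{M}\models T^*$ --- unless you know that $T^*$ is $\Pi_2$-axiomatizable, which is precisely the ingredient your write-up never supplies.

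The missing idea is that $\Pi_2$-axiomatizability is available directly from Robinson's test, after which neither the chain nor the Resurrection Lemma is needed. By Lemma \ref{lem:robtest}(\ref{lem:robtest-4}), for each existential $\phi(\vec{x})$ there is a universal $\theta_\phi(\vec{x})$ with $T^*\vdash\forall\vec{x}\,(\phi(\vec{x})\leftrightarrow\theta_\phi(\vec{x}))$, and each such biconditional prenexes to a $\Pi_2$-sentence (as observed at the start of the proof of Theorem \ref{thm:uniqmodcompan}). Now let $\mathcal{M}$ be $T_\forall$-ec; since $T_\forall=T^*_\forall$, Lemma \ref{lem:keylemembed} embeds $\mathcal{M}$ into some $\mathcal{N}\models T^*$ with $\mathcal{M}\prec_1\mathcal{N}$, and Proposition \ref{prop:pi2satKaiserhull}(\ref{rmk:itm3}) transfers every $\Pi_2$-consequence of $T^*$ from $\mathcal{N}$ down to $\mathcal{M}$. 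So $\mathcal{M}$ and $\mathcal{N}$ both satisfy all the biconditionals; the induction in ``(\ref{lem:robtest-4}) implies (\ref{lem:robtest-3})'' then makes every formula equivalent, simultaneously in $\mathcal{M}$ and $\mathcal{N}$, to a universal and to an existential formula, which upgrades $\mathcal{M}\sqsubseteq\mathcal{N}$ to $\mathcal{M}\prec\mathcal{N}$ and gives $\mathcal{M}\models T^*$. This is the content compressed into the paper's one-line proof that the models of $T^*$ are exactly the $T^*$-ec, hence (by Proposition \ref{prop:pi2satKaiserhull}(\ref{rmk:itm0}) and item (\ref{thm:modcompletionchar-1})) the $T_\forall$-ec, structures.
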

\begin{proof}
\emph{}

\begin{enumerate}
\item By Lemma~\ref{lem:keylemembed}.
\item By  Robinson's test \ref{lem:robtest} $T^*$ is the theory realized exactly by the $T^*$-ec models; 
by Proposition \ref{prop:pi2satKaiserhull}(\ref{rmk:itm0}) $\mathcal{M}$ is $T^*$-ec if and only if it is
$T^*_\forall$-ec;
by (\ref{thm:modcompletionchar-1}) $T^*_\forall=T_\forall$.
\end{enumerate}
\end{proof}

An immediate by-product of the above Theorem is that
the model companion of a theory does not necessarily exist, but, if it does, it is unique
and is its Kaiser hull.

\begin{theorem} \cite[Thm. 3.2.9]{TENZIE}\label{thm:uniqmodcompan}
Assume $T$ has a model companion $T^*$. Then
$T^*$ is axiomatized by its $\Pi_2$-consequences and is the Kaiser hull of $T_\forall$.

Moreover $T^*$ is the unique model companion of $T$ and is characterized by the property of 
being the unique model complete theory $S$ such that $S_\forall=T_\forall$.
\end{theorem}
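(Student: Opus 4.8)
The plan is to reduce everything to Theorem \ref{thm:modcompletionchar}, whose two conclusions already tell us that $T^*_\forall=T_\forall$ and that $T^*$ is exactly the theory of the $T_\forall$-ec structures; by Proposition \ref{prop:pi2satKaiserhull}(\ref{rmk:itm0}) these coincide with the $T$-ec structures, so $\mathrm{KH}(T_\forall)$ is precisely the set of $\Pi_2$-sentences belonging to $T^*$. Consequently the two assertions ``$T^*$ is axiomatized by its $\Pi_2$-consequences'' and ``$T^*$ is the Kaiser hull of $T_\forall$'' amount to the single equation $T^*=\mathrm{Cn}(\mathrm{KH}(T_\forall))$, and this is the one nontrivial point I would isolate. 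The inclusion $\mathrm{KH}(T_\forall)\subseteq T^*$ is immediate from the previous sentence, so everything rests on proving that every model of $\mathrm{KH}(T_\forall)$ is $T$-ec, hence a model of $T^*$.

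For this main step I would fix $\mathcal{M}\models\mathrm{KH}(T_\forall)$. Since $T_\forall\subseteq\mathrm{KH}(T_\forall)$ (each axiom of $T_\forall$ is universal, hence $\Pi_2$, and holds in every $T$-ec structure), $\mathcal{M}\models T_\forall$, so by Lemma \ref{lem:keylemembed} $\mathcal{M}$ embeds into a model of $T$; this gives clause (1) of the definition of $T$-ec. For clause (2), let $\mathcal{N}\sqsupseteq\mathcal{M}$ with $\mathcal{N}\models T$, and let $\exists\vec y\,\psi(\vec y,\vec a)$ be an existential formula with $\vec a\in\mathcal{M}^{<\omega}$ holding in $\mathcal{N}$; I must reflect it to $\mathcal{M}$. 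Robinson's test (Lemma \ref{lem:robtest}(\ref{lem:robtest-4})) supplies a universal $\chi(\vec x)$ with $T^*\vdash\forall\vec x\,(\exists\vec y\,\psi(\vec y,\vec x)\leftrightarrow\chi(\vec x))$; the key observation is that this biconditional is logically equivalent to a $\Pi_2$-sentence (its first half is universal, and the implication $\chi\rightarrow\exists\vec y\,\psi$ prenexes to $\forall\exists$), so being a theorem of $T^*$ it lies in $\mathrm{KH}(T_\forall)$ and therefore holds in $\mathcal{M}$. Now extend $\mathcal{N}$ to $\mathcal{N}^*\models T^*$ (possible by Lemma \ref{lem:keylemembed}, since $\mathcal{N}\models T_\forall=T^*_\forall$). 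Running the chain $\mathcal{M}\sqsubseteq\mathcal{N}\sqsubseteq\mathcal{N}^*$: the existential formula passes up to $\mathcal{N}^*$, there the biconditional gives $\mathcal{N}^*\models\chi(\vec a)$, the universal $\chi(\vec a)$ passes back down to $\mathcal{M}$, and finally the biconditional inside $\mathcal{M}$ yields $\mathcal{M}\models\exists\vec y\,\psi(\vec y,\vec a)$. By Tarski--Vaught this gives $\mathcal{M}\prec_1\mathcal{N}$, so $\mathcal{M}$ is $T$-ec, as desired.

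With $T^*=\mathrm{Cn}(\mathrm{KH}(T_\forall))$ in hand, uniqueness of the model companion is immediate: any model companion of $T$ is, by Theorem \ref{thm:modcompletionchar}(\ref{thm:modcompletionchar-2}), the theory of the $T_\forall$-ec structures, a class that does not depend on the chosen companion, so two companions coincide. Finally, to prove that $T^*$ is the unique model complete $S$ with $S_\forall=T_\forall$, note first that $T^*$ is such an $S$ by its model completeness together with Theorem \ref{thm:modcompletionchar}(\ref{thm:modcompletionchar-1}). Conversely, given any model complete $S$ with $S_\forall=T_\forall$, I would verify that $S$ is a model companion of $T$: clauses (1) and (2) of Definition \ref{def:dmodcompship} follow from Lemma \ref{lem:keylemembed} applied in both directions (a model of $T$ models $T_\forall=S_\forall$ and hence extends to a model of $S$, and symmetrically a model of $S$ extends to a model of $T$), while clause (3) is the hypothesis; then $S=T^*$ by the uniqueness just established. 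The main obstacle is the $\Pi_2$-computation and chain argument of the second paragraph; the remaining items are bookkeeping over Theorem \ref{thm:modcompletionchar} and Lemma \ref{lem:keylemembed}.
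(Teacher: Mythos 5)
Your proof is correct, and it reaches the conclusion by a genuinely different (though closely related) route from the paper's. Both arguments hinge on the same key observation: the biconditionals $\forall\vec{x}\,(\exists\vec{y}\,\psi\leftrightarrow\chi)$ supplied by Robinson's test for the model complete $T^*$ are, up to logical equivalence, $\Pi_2$-sentences, so they already live in the $\Pi_2$-fragment of $T^*$. The paper exploits this syntactically: it forms the theory $T^{**}$ of all $\Pi_2$-consequences of $T^*$, notes that $T^{**}$ contains these biconditionals and is therefore itself model complete by a second application of Robinson's test, and then concludes $T^{**}=T^*$ from the characterization of $T^*$ as the unique model complete theory with universal fragment $T_\forall$ (proved by identifying the models of any such theory with the $T_\forall$-ec structures). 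You instead argue semantically, showing directly that every model $\mathcal{M}$ of $\mathrm{KH}(T_\forall)$ is $T$-ec by running the chain $\mathcal{M}\sqsubseteq\mathcal{N}\sqsubseteq\mathcal{N}^*$: the biconditional, being in the Kaiser hull, holds in $\mathcal{M}$; the existential witness is pushed up to $\mathcal{N}^*\models T^*$, converted there to a universal formula, and pulled back down to $\mathcal{M}$. What your version buys is an explicit verification of a step the paper compresses into the reuse of Robinson's test, plus a cleaner derivation of uniqueness and of the ``unique model complete $S$ with $S_\forall=T_\forall$'' characterization via Lemma \ref{lem:keylemembed}; what it costs is a longer argument. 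Note that both proofs lean equally on the nontrivial direction of Theorem \ref{thm:modcompletionchar}(\ref{thm:modcompletionchar-2}), namely that every $T_\forall$-ec structure is a model of $T^*$, which you (legitimately, as does the paper) take as a black box.
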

\begin{proof}
For quantifier free formulae $\psi(\vec{x},\vec{y})$ and $\phi(\vec{x},\vec{z})$
the assertion
\[
\forall\vec{x}\,[\exists\vec{y}\psi(\vec{x},\vec{y})\leftrightarrow\forall\vec{z}\phi(\vec{x},\vec{z})]
\]
is a $\Pi_2$-sentence.

Let $T^{**}$ be the theory given by the $\Pi_2$-consequences of $T^*$.

Since $T^*$ is model complete, by Robinson's test \ref{lem:robtest}\ref{lem:robtest-4},  
for any $\Sigma_1$-formula $\exists\vec{y}\psi(\vec{x},\vec{y})$
there is a universal formula $\forall\vec{z}\phi(\vec{x},\vec{z})$ such that
\[
\forall\vec{x}\,[\exists\vec{y}\psi(\vec{x},\vec{y})\leftrightarrow\forall\vec{z}\phi(\vec{x},\vec{z})]
\]
is in $T^{**}$.

Again by Robinson's test \ref{lem:robtest}\ref{lem:robtest-4} $T^{**}$ is model complete.

Now assume $S$ is a model complete theory such that 
$S_\forall=T_\forall$.
Clearly $T^{*}_\forall=T_\forall=S_\forall$. 
By Robinson's test \ref{lem:robtest}\ref{lem:robtest-2} and Proposition \ref{prop:pi2satKaiserhull}(\ref{rmk:itm0}),
$S_\forall$ holds exactly in the $T_\forall$-ec models, but these are exactly the models of $T^*$.
Hence $T^*=S$.

This shows that any model complete theory is axiomatized by its $\Pi_2$-consequences, 
that the model companion $T^*$ of $T$ is unique, that $T^*$ is also the Kaiser hull of $T$
(being axiomatized by the $\Pi_2$-sentences which hold in all
$T$-ec-models), and is characterized by the property of being the unique model complete
theory $S$ such that $T_\forall=S_\forall$.
\end{proof}

Thm. \ref{thm:uniqmodcompan} provides an
equivalent characterization of model companion theories
(which is expressible by a $\Delta_0$-property in parameters $T$ and $T^*$, hence absolute for transitive models of $\ZFC$). 

Note also that Robinson's test \ref{lem:robtest}\ref{lem:robtest-3} 
gives an explicit axiomatization of a model complete theory $T$:
\begin{fact}\label{fact:charmodcompl*}
Assume $T$ is a model complete $\tau$-theory. 
Let $\psi\mapsto\theta_\psi^T$ be a function assigning to each
$\Sigma_1$-formula $\psi(\vec{x})$ for $\tau$ a $\Pi_1$-formula
$\theta_\psi^T(\vec{x})$ which is $T$-equivalent to $\psi(\vec{x})$.

Then $T$ is axiomatized by $T_\forall$ and the $\Pi_2$-sentences
\[
\AX_\psi^T\equiv \forall\vec{x}(\psi(\vec{x})\leftrightarrow \theta_\psi^T(\vec{x}))
\]
as $\psi(\vec{x})$ ranges over the $\Sigma_1$-formulae for $\tau$.
\end{fact}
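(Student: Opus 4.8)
The plan is to show that $T$ and the theory
\[
T':=T_\forall\cup\bp{\AX_\psi^T:\psi(\vec{x})\text{ a }\Sigma_1\text{-formula for }\tau}
\]
have exactly the same consequences. One inclusion is immediate: treating $T$ as closed under logical consequence, we have $T_\forall\subseteq T$ by definition, and each axiom $\AX_\psi^T$ merely records the $T$-provable equivalence of $\psi$ with $\theta_\psi^T$ that was assumed; hence $T\vdash T'$. The content is the converse, $T'\vdash T$, which I would establish by showing that every $\tau$-model $\mathcal{M}$ of $T'$ is already a model of $T$.

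The key reduction is Thm.~\ref{thm:uniqmodcompan}: since $T$ is model complete it is axiomatized by its $\Pi_2$-consequences, so it suffices to verify that $\mathcal{M}$ satisfies each such consequence. I would write an arbitrary $\Pi_2$-consequence of $T$ as $\chi=\forall\vec{x}\,\sigma(\vec{x})$, where $\sigma(\vec{x}):=\exists\vec{y}\,\rho(\vec{x},\vec{y})$ is $\Sigma_1$ with $\rho$ quantifier free (the degenerate cases where the universal or the existential block is empty are absorbed by allowing $\vec{x}$ or $\vec{y}$ to be empty, so $\chi$ may a priori be $\Pi_1$ or even $\Sigma_1$). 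For such $\sigma$ the axiom $\AX_\sigma^T$ is available among the axioms of $T'$.

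The crucial step is to notice that the universal closure of the $\Pi_1$-formula $\theta_\sigma^T$ already lands in $T_\forall$. Indeed, by hypothesis $T\vdash\forall\vec{x}\,(\sigma(\vec{x})\leftrightarrow\theta_\sigma^T(\vec{x}))$, and $T\vdash\chi=\forall\vec{x}\,\sigma(\vec{x})$, so $T\vdash\forall\vec{x}\,\theta_\sigma^T(\vec{x})$; as $\theta_\sigma^T$ is $\Pi_1$, this sentence is universal and therefore belongs to $T_\forall$. Now $\mathcal{M}\models T_\forall$ gives $\mathcal{M}\models\forall\vec{x}\,\theta_\sigma^T(\vec{x})$, while $\mathcal{M}\models\AX_\sigma^T$ gives $\mathcal{M}\models\forall\vec{x}\,(\sigma(\vec{x})\leftrightarrow\theta_\sigma^T(\vec{x}))$; combining the two yields $\mathcal{M}\models\forall\vec{x}\,\sigma(\vec{x})$, i.e.\ $\mathcal{M}\models\chi$. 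Since $\chi$ was an arbitrary $\Pi_2$-consequence of $T$, this shows $\mathcal{M}\models T$, completing the converse inclusion.

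I expect no serious obstacle: once Thm.~\ref{thm:uniqmodcompan} reduces the problem to the $\Pi_2$-consequences of $T$, the argument is a direct syntactic manipulation. The only point demanding care is the bookkeeping that presents a $\Pi_2$-consequence with a $\Sigma_1$ matrix $\sigma$, so that its $T$-equivalent universal form $\theta_\sigma^T$ has a universal closure lying in $T_\forall$; it is precisely this observation that lets $T_\forall$ together with the single equivalence $\AX_\sigma^T$ force $\chi$ in $\mathcal{M}$, without appealing to the full strength of $T$.
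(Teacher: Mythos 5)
Your proof is correct, but it follows a genuinely different route from the paper's. The paper first observes that $\bp{\AX_\psi^T:\psi\ \Sigma_1}$ is itself a model complete theory by Robinson's test (Lemma~\ref{lem:robtest}), sets $S=T_\forall\cup\bp{\AX_\psi^T:\psi\ \Sigma_1}$, checks $S\subseteq T$ and $S_\forall=T_\forall$, and then invokes the clause of Thm.~\ref{thm:uniqmodcompan} saying the model companion is the \emph{unique} model complete theory with the same universal fragment, so $S=T$. You instead use the other clause of Thm.~\ref{thm:uniqmodcompan} --- that a model complete theory is axiomatized by its $\Pi_2$-consequences --- and then verify model-by-model that each such consequence $\forall\vec{x}\,\sigma(\vec{x})$ follows from $T_\forall$ together with the single axiom $\AX_\sigma^T$, via the observation that $\forall\vec{x}\,\theta_\sigma^T(\vec{x})$ is a universal sentence already lying in $T_\forall$. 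Your argument is more local and elementary: it never needs to know that the axiom set $\bp{\AX_\psi^T}$ is model complete, nor any uniqueness statement, and it isolates exactly which fragment of $T'$ is responsible for each $\Pi_2$-consequence. The paper's route is heavier but yields as a by-product that $T_\forall+\bp{\AX_\psi^T}$ is model complete and equals the model companion of $T$, information the author reuses elsewhere. The only point in your write-up deserving a word is the degenerate bookkeeping you already flag (empty quantifier blocks, quantifier-free matrices counted as $\Sigma_1$), which is harmless under the usual conventions.
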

\begin{proof}
First of all 
\[
T^*=\bp{\AX_\psi^T:\psi\text{ a $\tau$-formula}}
\]
is a model complete theory, since $T^*$ satisfies Robinson's test \ref{lem:robtest}\ref{lem:robtest-3}.
Let $S=T^*+T_\forall$. Note that $S$ is also model complete (by Robinson's test \ref{lem:robtest}\ref{lem:robtest-3}).
Moreover $S\subseteq T$ (since $\AX_\psi^T\in T$ for all $\Sigma_1$-formulae $\psi$), and
$S_\forall\supseteq T_\forall$ (since $T_\forall$ is certainly among the universal consequences of $S$).
We conclude that $S_\forall=T_\forall$. Therefore $S$ is the model companion of $T$.
$S=T$ by uniqueness of the model companion.
\end{proof}

We use the following criteria for model companionship in the proofs of Theorems \ref{Thm:mainthm-1}, \ref{thm:modcompanHomega1}, \ref{Thm:mainthm-1ter}.

\begin{lemma}\label{fac:proofthm1-2}
Let $T,T_0$ be $\tau$-theories 
with $T_0$ model complete.
Assume that for every $\Pi_1$-sentence $\theta$ for $\tau$
$T+\theta$ is consistent if and only if so is $T_0+\theta$.
%
Then:
\begin{enumerate}
\item \label{fac:proofthm1-2-a}
$T^*=T_0+T_\forall$ is the model companion of $T$.
\item \label{fac:proofthm1-2-b}
$T^*$ is axiomatized by the the set of $\Pi_2$-sentences $\psi$ for $\tau$ such that 
$S_\forall\cup\bp{\psi}$ is consistent for all $\Pi_1$-complete $S\supseteq T$.
\item \label{fac:proofthm1-2-c}
$T^*$ is axiomatized by the the set of $\Pi_2$-sentences $\psi$ for $\tau$ such that for all 
universal $\tau$-sentences $\theta$
$T_\forall+\theta+\psi$ is consistent if and only if so is $T+\theta$.
\end{enumerate}
\end{lemma}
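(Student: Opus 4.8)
The plan is to prove the three items essentially in the order (\ref{fac:proofthm1-2-a}), then (\ref{fac:proofthm1-2-c}), then (\ref{fac:proofthm1-2-b}), since item (\ref{fac:proofthm1-2-a}) supplies the key identification $T^*_\forall = T_\forall$ which drives the other two. First I would establish that $T^* = T_0 + T_\forall$ is indeed the model companion of $T$. The central claim is that $T^*_\forall = T_\forall$. One inclusion is immediate: $T_\forall \subseteq T^*_\forall$ since $T_\forall \subseteq T^*$. For the reverse, suppose $\theta$ is a $\Pi_1$-sentence with $T^* \vdash \theta$; I want $T_\forall \vdash \theta$, equivalently that $T_\forall + \neg\theta$ is inconsistent. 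Since $\neg\theta$ is (logically equivalent to) a $\Sigma_1$-sentence, I would instead work with the contrapositive phrased in terms of $\Pi_1$-consistency and invoke the hypothesis: for a $\Pi_1$-sentence $\theta$, the hypothesis gives that $T + \theta$ is consistent iff $T_0 + \theta$ is. The trick is to note that $T_\forall$ and $T$ are consistent with exactly the same $\Pi_1$-sentences (a $\Pi_1$-sentence $\theta$ is consistent with $T_\forall$ iff it holds in some model of $T_\forall$, iff --- by Lemma \ref{lem:keylemembed} --- it holds in a substructure of some model of $T$, iff $T + \theta$ is consistent, as truth of $\Pi_1$-sentences is inherited by substructures). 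Combining, $T_0$ and $T_\forall$ are consistent with the same $\Pi_1$-sentences, which forces $(T_0)_\forall = (T_\forall)_\forall = T_\forall$. Since $T_0$ is model complete, $(T_0)_\forall = (T_0)^*_\forall$, and by Thm. \ref{thm:uniqmodcompan} $T_0$ is the model companion of $T_0$ itself, hence is axiomatized by its $\Pi_2$-consequences and its universal fragment equals $T_\forall$. I would then verify the three clauses of Definition \ref{def:dmodcompship} for $T^* = T_0 + T_\forall$: model completeness of $T^*$ follows from Robinson's test since $T^* \supseteq T_0$ and $T_0$ already makes every formula $\Pi_1$-equivalent (here I use Fact \ref{fac:proofthm1} with $\tau = \sigma$); that every model of $T$ extends to a model of $T^*$ and conversely follows from $T^*_\forall = T_\forall$ via Lemma \ref{lem:keylemembed} applied in both directions.

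Once (\ref{fac:proofthm1-2-a}) is in hand, Thm. \ref{thm:uniqmodcompan} tells me $T^*$ is the Kaiser hull of $T$ and is axiomatized by its $\Pi_2$-consequences. For item (\ref{fac:proofthm1-2-b}) I would show the set of $\Pi_2$-sentences $\psi$ such that $S_\forall \cup \{\psi\}$ is consistent for every $\Pi_1$-complete $S \supseteq T$ coincides with $\mathrm{KH}(T)$. One containment is exactly Fact \ref{fac:charkaihullnonpi1comp} (note that $\Pi_1$-complete extensions are cofinal among completions in the relevant sense, so the quantification over $\Pi_1$-complete $S$ matches what is needed). For the reverse containment, suppose $\psi \in T^* = \mathrm{KH}(T)$ and let $S \supseteq T$ be $\Pi_1$-complete; I must produce a model of $S_\forall + \psi$. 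Here I would take any $T$-ec model $\mathcal{M}$ extending a model of $S_\forall$ --- using Lemma \ref{lem:exTecmod} to find a $T$-ec structure above a model of $S_\forall$, noting $S_\forall \supseteq T_\forall$ so such models embed appropriately --- and observe $\mathcal{M} \models \psi$ because $\psi$ holds in all $T$-ec models; that $\mathcal{M} \models S_\forall$ follows from $\Pi_1$-completeness of $S$ together with the fact that $\mathcal{M}$ and its extension share the same $\Pi_1$-theory.

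For item (\ref{fac:proofthm1-2-c}) the strategy is to show the stated condition on $\psi$ --- that for all universal $\theta$, $T_\forall + \theta + \psi$ is consistent iff $T + \theta$ is --- is equivalent to the condition of (\ref{fac:proofthm1-2-b}). The forward direction is routine: given the (\ref{fac:proofthm1-2-c})-condition and a $\Pi_1$-complete $S \supseteq T$, I unwind $S_\forall \cup \{\psi\}$ by compactness into finitely many universal sentences whose conjunction $\theta$ is consistent with $T$ (by $\Pi_1$-completeness of $S$ and $S \supseteq T$), and apply the condition to conclude consistency of $T_\forall + \theta + \psi$, hence of $S_\forall + \psi$. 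For the converse I would contrapose: if the (\ref{fac:proofthm1-2-c})-condition fails, there is a universal $\theta$ with $T + \theta$ consistent but $T_\forall + \theta + \psi$ inconsistent; extending $T + \theta$ to a $\Pi_1$-complete $S$ with $\theta \in S_\forall$ then violates the (\ref{fac:proofthm1-2-b})-condition. Throughout I repeatedly use that $T$ and $T_\forall$ agree on consistency with $\Pi_1$-sentences (established in the proof of (\ref{fac:proofthm1-2-a})).

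\medskip

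\textbf{Expected main obstacle.} I expect the delicate point to be the bookkeeping around $\Pi_1$-completeness versus completeness in item (\ref{fac:proofthm1-2-b}): Fact \ref{fac:charkaihullnonpi1comp} is phrased with complete $S$, whereas here the quantifier ranges over $\Pi_1$-complete $S$, so I must check that restricting to $\Pi_1$-complete extensions does not change the resulting set of $\Pi_2$-sentences --- this hinges on the observation that for deciding consistency of a $\Pi_2$-sentence with $S_\forall$ only the $\Pi_1$-theory of $S$ matters, so any completion and any $\Pi_1$-completion extending the same universal theory behave identically. Getting this equivalence cleanly, and confirming that the hypothesis linking $T$ and $T_0$ is genuinely used (it enters only in (\ref{fac:proofthm1-2-a}) to pin down $T^* = T_0 + T_\forall$ as \emph{the} model companion, while (\ref{fac:proofthm1-2-b}) and (\ref{fac:proofthm1-2-c}) are then intrinsic characterizations of $T^*$), is where the real care is needed.
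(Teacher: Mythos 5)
Your plan hinges on establishing $T^*_\forall=T_\forall$ up front, and the justification you give for that step is false. You claim that $T$ and $T_\forall$ are consistent with exactly the same $\Pi_1$-sentences, ``as truth of $\Pi_1$-sentences is inherited by substructures''. Inheritance by substructures gives only the trivial direction (if $T+\theta$ is consistent then so is $T_\forall+\theta$); the converse fails, because a $\Pi_1$-sentence true in a model of $T_\forall$ need not survive the passage to the supermodel of $T$ supplied by Lemma \ref{lem:keylemembed}. Concretely, for $T$ the theory of algebraically closed fields and $\theta$ the sentence $\forall x\,(x\cdot x\neq 1+1)$, the theory $T_\forall+\theta$ has $\mathbb{Z}$ as a model while $T+\theta$ is inconsistent. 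What Lemma \ref{lem:keylemembed} actually yields is that $T$ and $T_\forall$ are consistent with the same $\Sigma_1$-sentences, since existential sentences persist upward along embeddings. Moreover, even granting ``same $\Pi_1$-consistency'', that property amounts to having the same $\Sigma_1$-consequences and does \emph{not} entail equal universal fragments for incomplete theories (in the signature $\{P\}$ with $P$ unary, $\{\forall x P(x)\}$ and $\{\exists x P(x)\}$ are consistent with the same $\Pi_1$-sentences yet have different universal fragments); and even $(T_0)_\forall=T_\forall$ would not by itself give $(T_0+T_\forall)_\forall=T_\forall$. So the chain ``$T_0$ and $T_\forall$ are consistent with the same $\Pi_1$-sentences, which forces $(T_0)_\forall=(T_\forall)_\forall$'' breaks at both links, and with it the derivation of the two embedding conditions from Lemma \ref{lem:keylemembed}.

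The missing idea is to work one \emph{complete} theory at a time, which is what the paper's proof does. Given $\mathcal{M}\models T$ with complete theory $S$, every finite subset of $S_\forall$ is (up to conjunction) a $\Pi_1$-sentence consistent with $T$, hence by the hypothesis and compactness $T_0+S_\forall$ has a model $\mathcal{N}$; since $S_\forall$ is $\Pi_1$-complete and $\mathrm{Th}(\mathcal{N})_\forall\supseteq S_\forall$, one gets $\mathrm{Th}(\mathcal{N})_\forall=S_\forall$ \emph{exactly}, and a diagram argument (showing $\mathrm{Th}(\mathcal{N})\cup\Delta_0(\mathcal{M})$ is consistent) then embeds $\mathcal{M}$ into a model of $T^*$. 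The $\Pi_1$-completeness of the individual theories $S_\forall$ is the engine of the proof, and it is precisely what is unavailable at the level of the incomplete theories $T$, $T_\forall$, $T_0$ that your argument manipulates. A related secondary slip: once (\ref{fac:proofthm1-2-a}) is known, items (\ref{fac:proofthm1-2-b}) and (\ref{fac:proofthm1-2-c}) are \emph{not} intrinsic consequences of model companionship (cf.\ Remark \ref{rmk:keyrmkcharkaihull}); their proofs again invoke the hypothesis, through the consistency of $T_0+S_\forall$ for $\Pi_1$-complete $S\supseteq T$, rather than following from Fact \ref{fac:charkaihullnonpi1comp} and uniqueness of the model companion alone.
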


\begin{proof}
By assumption $T_0$ is consistent with any finite subset of $T_\forall$; hence, by compactness,
$T^*=T_0+T_\forall$ is consistent. 
By Fact \ref{fac:proofthm1} $T^*$ is model complete. 
\begin{enumerate}
\item
We need to show that any model of $T^*$ embeds into a model of $T$ and conversely.

Assume $\mathcal{N}$ models $T^*$.
Then $\mathcal{N}$ models $T_\forall$. By Lemma \ref{lem:keylemembed}
there exists $\mathcal{M}\sqsupseteq \mathcal{N}$
which models $T$.

Conversely let $\mathcal{M}$ model $T$ and
$S$ be the $\tau$-theory of $\mathcal{M}$.
By 
assumption (and compactness) there is $\mathcal{N}$ which models $T_0+S_\forall$
(but this $\mathcal{N}$ may not be a superstructure of $\mathcal{M}$).
Let $S^*$ be the $\tau$-theory of $\mathcal{N}$.
Then $S^*_\forall=S_\forall$, since $S_\forall$ and $S^*_\forall$ 
are $\Pi_1$-complete theories with $S^*_\forall\supseteq S_\forall$.
Moreover $S^*\supseteq T^*$, since $S_\forall\supseteq T_\forall$.

\begin{claim}
The $\tau\cup\mathcal{M}$-theory 
$S^*\cup\Delta_0(\mathcal{M})$
is consistent. 
\end{claim}

Assume the Claim holds, then $\mathcal{M}$ is a $\tau$-substructure of a model of 
$S^*\supseteq T^*$ and we are done.

\begin{proof}
If not there is $\psi(\vec{a})\in \Delta_0(\mathcal{M})$ such that 
$S^*\cup\bp{\psi(\vec{a})}$ is inconsistent.
This gives that
\[
S^*\vdash \neg \psi(\vec{a}).
\]
Since none of the constant in $\vec{a}$ occurs in $\tau$, we get that
\[
S^*\vdash \forall\vec{x}\neg \psi(\vec{x}),
\]
i.e. $\forall\vec{x}\neg \psi(\vec{x})\in S^*_\forall=S_\forall$.
But $\mathcal{M}$  models $S_\forall$ and
$\forall\vec{x}\neg \psi(\vec{x})$ fails in $\mathcal{M}$; a contradiction.
\end{proof}
\item
Assume $\psi\in T^*$ and $S$ is a $\Pi_1$-complete extension of $T$, 
we must show that
$S_\forall+\psi$ is consistent:
by assumption there is $\mathcal{N}$ which models
$T_0+S_\forall=T_0+T_\forall+S_\forall=T^*+S_\forall$, and we are done.
Conversely assume $R_\forall+\psi$ is consistent whenever 
$R$ is a $\Pi_1$-complete extension of $T$. We must show that $\psi\in T^*$:
pick $\mathcal{M}$ model of $T^*$ and let $S$ be its theory.
The assumptions of the Lemma (and compactness) grant that $T+S_\forall$ is consistent.
Since $S$ is complete $S_\forall$ is the $\Pi_1$-fragment of $T+S_\forall$.
Hence $S_\forall+\psi$ is consistent, by our assumption on $\psi$.
Therefore $\mathcal{M}\models\psi$ by Proposition \ref{prop:pi2satKaiserhull}.
\item Left to the reader (as the previous item, modulo compactness arguments).
\end{enumerate}
\end{proof}

\begin{remark}\label{rmk:keyrmkcharkaihull}
We do not know whether the characterization of the model companion of $T$ given in 
Lemma~\ref{fac:proofthm1-2}(\ref{fac:proofthm1-2-c})
can be proved for \emph{all} theories $T$ admitting a model companion: following the notation of 
the Lemma, it is conceivable that some $\tau$-theory $T$ has a model companion $T^*$, but there is 
some universal $\tau$-sentence $\theta$ such that for any 
model $\mathcal{M}$ of $T+\theta$ any superstructure of $\mathcal{M}$ which models
$T^*$ kills the truth of $\theta$.
In this case some $\Pi_2$-sentence in the Kaiser
hull of $T$ is inconsistent with the universal fragment of $T+\theta$.

Note also that if $T^*$ is the model companion of $T$ and $\theta$ is a universal sentence
such that $T^*+\theta$ is consistent, so is $T+\theta$: if $\mathcal{M}\models T^*+\theta$
there is a superstructure $\mathcal{N}$ of $\mathcal{M}$ which models $T$ (since 
$T^*$ is the model companion of $T$).
Now $\mathcal{M}\prec _1\mathcal{N}$, since $\mathcal{M}$ is $T$-ec.
Hence $\mathcal{N}\models \theta$.

\end{remark}

\subsection{Is model companionship a tameness notion?}\label{subsec:tameness-modcompan}

As we already outlined in the introduction
model completeness and model companionship are ``tameness'' notion for first order theories
which must be handled with care. We spell out the details in this small section.

\begin{proposition}\label{prop:quantelimallthe}
Given a signature $\tau$ consider the signature $\tau^*$ which adds an $n$-ary predicate
symbol $R_\phi$ for any $\tau$-formula $\phi(x_1,\dots,x_n)$ with displayed free variables.

Let $T_{\tau}$ be the following $\tau^*$-theory:
\begin{itemize}
\item
$\forall\vec x\,(\phi(\vec{x})\leftrightarrow R_\phi(\vec{x}))$ for all quantifier free $\tau$-formulae $\phi(\vec{x})$,
\item
$\forall\vec x\,[R_{\phi\wedge\psi}(\vec{x})\leftrightarrow (R_\psi(\vec{x})\wedge R_\phi(\vec{x}))]$
for all $\tau$-formulae $\phi(\vec{x}),\psi(\vec{x})$,
\item
$\forall\vec x\,[R_{\neg\phi}(\vec{x})\leftrightarrow \neg R_\phi(\vec{x})]$
for all $\tau$-formulae $\phi(\vec{x})$,
\item
$\forall\vec x\,[\exists yR_{\phi}(y,\vec{x})\leftrightarrow R_{\exists y\phi}(\vec{x})]$
for all $\tau$-formulae $\phi(y,\vec{x})$.
\end{itemize}

Then any $\tau$-structure $\mathcal{N}$ admits a unique extension to 
a $\tau^*$-structure $\mathcal{N}^*$ which models $T_\tau$.
Moreover every $\tau^*$-formula is $T_\tau$-equivalent to 
an atomic $\tau^*$-formula. In particular for any $\tau$-model $\mathcal{N}$, the algebras of
its $\tau$-definable subsets and of the $\tau^*$-definable subsets of $\mathcal{N}^*$ are the same.

Therefore for any consistent $\tau$-theory $T$, $T\cup T_{\tau}$ is consistent and 
admits quantifier elemination, hence
is model complete.
\end{proposition}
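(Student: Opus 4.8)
The plan is to prove the four assertions in turn, with everything ultimately resting on a single induction on formula complexity. Throughout I take $\tau$-formulas (and $\tau^*$-formulas) to be built from atomic formulas using only $\neg$, $\wedge$, $\exists$, treating $\vee,\to,\forall$ as the usual abbreviations, so that the available connectives match exactly the four families of axioms of $T_\tau$.

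First I would establish existence and uniqueness of $\mathcal{N}^*$. For existence, given a $\tau$-structure $\mathcal{N}$ I set $R_\phi^{\mathcal{N}^*}:=\{\vec a:\mathcal N\models\phi(\vec a)\}$ for every $\tau$-formula $\phi(\vec x)$, leaving the $\tau$-symbols interpreted as in $\mathcal{N}$. Each axiom of $T_\tau$ then holds by the Tarski truth clauses: the quantifier-free axiom holds because $\mathcal N^*\!\restriction\!\tau=\mathcal N$ computes quantifier-free truth, while the axioms for $\wedge$, $\neg$, and $\exists$ reduce respectively to the set defined by $\phi\wedge\psi$ being the intersection of those defined by $\phi$ and $\psi$, the complement clause for $\neg$, and the projection clause for $\exists$. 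For uniqueness, if $\mathcal N^*\models T_\tau$ extends $\mathcal N$ I would show $R_\phi^{\mathcal N^*}=\{\vec a:\mathcal N\models\phi(\vec a)\}$ by induction on $\phi$: the quantifier-free axiom forces the base case, and the three remaining axioms propagate the identity through $\wedge$, $\neg$, $\exists$.

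Next comes the heart of the matter, the reduction of every $\tau^*$-formula to an atomic one. I would first prove the auxiliary claim (Step A) that every $\tau$-formula $\phi(\vec x)$ satisfies $T_\tau\vdash\forall\vec x\,(\phi(\vec x)\leftrightarrow R_\phi(\vec x))$, again by induction on $\phi$ using exactly the four axioms; what makes this go through is that these axioms are stated for \emph{all} $\tau$-formulas, not merely quantifier-free ones. With Step A in hand I would then show, by induction on an arbitrary $\tau^*$-formula $\Phi(\vec x)$, that $\Phi$ is $T_\tau$-equivalent to $R_\psi(\vec x)$ for some $\tau$-formula $\psi$. The inductive cases are immediate from the $\wedge$-, $\neg$- and $\exists$-axioms (e.g. $\exists y\,R_{\psi}(y,\vec x)\equiv_{T_\tau}R_{\exists y\,\psi}(\vec x)$); the $\tau$-atomic base cases are immediate from Step A; and the only delicate base case is an atomic $\tau^*$-formula $R_\phi(\vec t)$ with $\tau$-terms $\vec t$, where I would set $\psi:=\phi(\vec t)$ and verify $R_\phi(\vec t)\equiv_{T_\tau}R_{\phi(\vec t)}$ by substituting $\vec t$ into the Step-A equivalence $\phi\leftrightarrow R_\phi$ and reapplying Step A to $\phi(\vec t)$.

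Finally the remaining claims fall out. Since by Part 2 every $\tau^*$-formula defines over $\mathcal N^*\models T_\tau$ the same set as some $R_\psi$, and $R_\psi^{\mathcal N^*}$ is by construction the $\tau$-definable set $\{\vec a:\mathcal N\models\psi(\vec a)\}$, the two algebras of definable subsets coincide. Consistency of $T\cup T_\tau$ follows by extending any model of the consistent $T$ to its canonical $\mathcal N^*$; quantifier elimination is precisely the content of Part 2, since an atomic formula is in particular quantifier free; and model completeness then follows from quantifier elimination, as already recorded in the introduction (equivalently via Robinson's test, Lemma \ref{lem:robtest}, since every formula is $T_\tau$-equivalent to a formula that is both universal and existential). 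I expect the main obstacle to be bookkeeping rather than conceptual: correctly tracking free variables and, above all, handling the term-substitution in the atomic case $R_\phi(\vec t)$, the one place where the slogan ``commute the axioms through the connectives'' does not apply directly and one must invoke Step A together with the invariance of $T_\tau$-provable equivalence under substitution.
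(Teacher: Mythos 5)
Your proposal is correct and follows essentially the same route as the paper: an induction showing every $\tau$-formula $\phi$ is $T_\tau$-equivalent to the atomic formula $R_\phi$, combined with an induction over $\tau^*$-formulae reducing them back to (formulae equivalent to) such atoms, from which uniqueness of $\mathcal{N}^*$, quantifier elimination, and model completeness all follow. The only difference is presentational — you carry the witnessing $\tau$-formula $\psi$ directly through the second induction (and spell out the term-substitution case $R_\phi(\vec t)$, which the paper leaves implicit), whereas the paper factors the same argument through the intermediate step of replacing each $R_\psi$ by $\psi$ to obtain an equivalent $\tau$-formula.
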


\begin{proof}
By an easy induction one can prove that any $\tau$-formula $\phi(\vec{x})$
is $T_\tau$-equivalent to the atomic $\tau^*$-formula $R_{\phi}(\vec{x})$.

Another simple inductive argument brings that any
$\tau^*$-formula $\phi(\vec{x})$ is $T_\tau$-equivalent to the 
$\tau$-formula obtained by replacing all symbols $R_\psi(\vec{x})$ occurring in
$\phi$ by the $\tau$-formula $\psi(\vec{x})$. Combining these observations together
we get that any $\tau^*$-formula is equivalent to an atomic 
$\tau^*$-formula. 

$T_{\tau}$ forces the $\mathcal{M}^*$-interpretation of any relation symbol $R_\phi(\vec{x})$
in $\tau^*\setminus\tau$ to
be the $\mathcal{M}$-interpretation of the $\tau$-formula $\phi(\vec{x})$ to which it is $T_{\tau}$-equivalent.
\end{proof}

Observe that the expansion of the language from $\tau$ to $\tau^*$ 
behaves well with respect to several model theoretic notions of tameness distinct 
from model completeness: for example $T$ is a \emph{stable} $\tau$-theory if and only if 
so is the $\tau^*$-theory $T\cup T_{\tau}$, the same holds for 
NIP-theories, or for $o$-minimal theories, or for $\kappa$-categorical theories.

The passage from $\tau$-structures to $\tau^*$-structures which model 
$T_{\tau}$ can have effects
on the embeddability relation; for example assume $\mathcal{M}\sqsubseteq\mathcal{N}$ is a non-elementary embedding of 
$\tau$-structures; then $\mathcal{M}^*\not\sqsubseteq\mathcal{N}^*$: if the non-atomic $\tau$-formula
$\phi(\vec{a})$ in parameter $\vec{a}\in \mathcal{M}^{<\omega}$ 
holds in $\mathcal{M}$ and does not hold in
$\mathcal{N}$, the atomic $\tau^*$-formula $R_\phi(\vec{a})$ holds in $\mathcal{M}^*$ and does not hold in
$\mathcal{N}^*$.

However if $T$ is a model complete $\tau$-theory, then for 
$\mathcal{M}\sqsubseteq\mathcal{N}$ $\tau$-models of $T$, we get that
$\mathcal{M}\prec\mathcal{N}$; this entails that $\mathcal{M}^*\sqsubseteq\mathcal{N}^*$, which (by the quantifier
elimination of $T\cup T_{\tau}$) gives that $\mathcal{M}^*\prec\mathcal{N}^*$. 
In particular for a model complete $\tau$-theory $T$ and $\mathcal{M},\mathcal{N}$ 
$\tau$-models of $T$,
$\mathcal{M}\sqsubseteq\mathcal{N}$ if and only if 
$\mathcal{M}^*\sqsubseteq\mathcal{N}^*$.

Let us now investigate the case of model companionship.
If $T$ is the model companion of $S$ with $S\neq T$ in the signature $\tau$, 
$T\cup T_\tau$ and $S\cup T_\tau$ are both model complete theories in the signature $\tau^*$. 
But $T\cup T_\tau$ cannot be the model companion of $S\cup T_\tau$, by uniqueness of the model companion,
since each of these theories is the model companion of itself and they are distinct.
Moreover if $T$ and $S$ are also complete, no $\tau^*$-model of $S\cup T_\tau$ can embed into a 
$\tau^*$-model of $T\cup T_\tau$:
since $T$ is the model companion of 
$S$ and $S\neq T$, $T_\forall=S_\forall$ and there is some $\Pi_2$-sentence $\psi$
$\forall x\exists y\phi(x,y)$ with $\phi$-quantifer free in $T\setminus S$.
Therefore $\forall x\,R_{\exists y\phi}(x) \in (T\cup T_\tau)_\forall \setminus (S\cup T_\tau)_\forall$; 
we conclude by Lemma \ref{lem:biembequivequnivth}, since $T\cup T_\tau$ and $S\cup T_\tau$ are complete, 
hence the above sentence separates $(T\cup T_\tau)_\forall$ from
$(S\cup T_\tau)_\forall$.

\subsection{Summing up}
The results of this section gives that for any $\tau$-theory $T$:
\begin{itemize}
\item The universal fragment of $T$ describes the family of substructures
of models of $T$, and (in most cases, e.g. if $T$ is $\Pi_1$-complete) 
the $T$-ec models realize all $\Pi_2$-sentences which are ``absolutely'' consistent with 
$T_\forall$ 
(i.e. consistent with the universal fragment of any extension of $T$).
\item Model companionship and model completeness describe (almost all) the cases 
in which the family of $\Pi_2$-sentences which are ``absolutely'' consistent with $T$ (as defined in the previous item)
describes the elementary class given by the $T$-ec structures.
\item One can always extend $\tau$ to a signature $\tau^*$ so that $T$ has a 
conservative extension to a $\tau^*$-theory $T^*$ which is model complete, but this process may be completely 
uninformative since it may completely destroy the
substructure relation existing between $\tau$-models of $T$ (unless $T$ is already model complete).
\item On the other hand 
for certain theories $T$ (as the axiomatizations of set theory 
considered in the present paper),
 one can unfold their ``tameness'' 
by carefully extending $\tau$ to a signature $\tau^*$ in which only certain $\tau$-formulae 
are made equivalent to atomic $\tau^*$-formulae. In the new signature $T$ can be extended to a conservative extension $T^*$ which has a model companion $\bar{T}$, while
this process has mild consequences on the $\tau^*$-substructure relation for models of $T^*_\forall$ 
(i.e. for the pairs of interest of $\tau$-models $\mathcal{M}_0\sqsubseteq\mathcal{M}_1$ of a suitable fragment of $T$, 
their unique extensions to $\tau^*$-models $\mathcal{M}^*_i$ are still models of $T^*_\forall$
and maintain that $\mathcal{M}^*_0\sqsubseteq\mathcal{M}^*_1$ also for $\tau^*$).
This gives useful structural information on the web of relations existing between $\tau^*$-models of $T^*_\forall$
(as outlined by Theorems 
\ref{Thm:mainthm-1}, \ref{thm:modcompanHomega1}, \ref{Thm:mainthm-1ter}).
\item
Our conclusion is that model completeness and model companionship are tameness properties of elementary classes 
$\mathcal{E}$ defined by a theory $T$ rather than of the theory $T$ itself: 
these model-theoretic notions outline certain regularity patterns for the substructure relation on
models of $\mathcal{E}$, patterns which may be unfolded only when passing to a signature distinct 
from the one in which
$\mathcal{E}$ is first axiomatized (much the same way as it occurs for Birkhoff's 
characterization of algebraic varieties in terms of universal theories).

%
\item The results of the present paper shows that if we consider set theory together with large cardinal axioms as formalized in the signature $\sigma_\omega,\sigma_{\omega,\NS_{\omega_1}},\sigma_{\omega_1}$, we obtain (until now unexpected) tameness properties for this first order theory, properties 
which couple perfectly with well 
known (or at least published) generic absoluteness results.
The notion of companionship spectrum gives a model theoretic criterium for selecting these signatures out of the 
continuum many
signatures which produce definable extensions of $\ZFC$. 
Moreover the common practice of set theory (independently of our results)  
motivate the choice of signatures for set theory made in the present paper 
(signatures which belong to the companionship spectrum of set theory), 
and our results validate it.
\end{itemize}

\section{The theory of $H_{\kappa^+}$ is the model companion of set theory}\label{sec:Hkappa+}

In this section we prove Thm. \ref{thm:mainthm0}
The following piece of notation will be used all along this section and supplements Notations \ref{not:keynotation0}, \ref{not:keynotation}:
\begin{notation}\label{not:keynotation1}
\emph{}

\begin{itemize}
\item
$\sigma_\ST$ is the signature containing a 
predicate symbol 
$S_\phi$ of arity $n$ for any $\in$-formula $\phi$ with $n$-many free variables.
\item
$\sigma_\kappa=\sigma_\ST\cup\tau_\ST\cup\bp{\kappa}$ with $\kappa$ a constant symbol.
\end{itemize}

\begin{itemize}
%
\item
$T_\kappa$ is the $\sigma_\ST\cup\bp{\kappa}$-theory given by the axioms
\begin{equation}\label{eqn:keytau*kappa}
\forall x_1\dots x_n\,[S_\psi(x_1,\dots,x_n)\leftrightarrow 
(\bigwedge_{i=1}^n x_i\subseteq \kappa^{<\omega}\wedge \psi^{\pow{\kappa^{<\omega}}}(x_1,\dots,x_n))]
\end{equation}
as $\psi$ ranges over the $\in$-formulae.
\item
$\ZFC^-_\kappa$ is the $\tau_\ST\cup\bp{\kappa}$-theory 
\[
\ZFC^-_\ST\cup\bp{\kappa\text{ is an infinite cardinal}};
\]
\item
$\ZFC^{*-}_\kappa$ is the $\sigma_\kappa$-theory 
\[
\ZFC^-_\kappa\cup T_\kappa;
\]
\item
Accordingly we define 
$\ZFC_\kappa$, 
$\ZFC^*_\kappa$.
\end{itemize}
\end{notation}

\begin{notation}\label{not:modthnot2}
Given a $\in$-structure $(M,E)$ and $\tau$ a signature
extending $\tau_\ST$,
from now we let
$(M,\tau^M)$ be the unique extension of 
$(M,E)$ defined in accordance with 
Notation \ref{not:keynotation} which satisfies $T_\tau$.
In particular $(M,\tau^M)$ is a shorthand for 
$(M,S^M:S\in\tau)$.
If $(N,E)$ is a substructure of $(M,E)$ we also write
$(N,\tau^M)$ as a shorthand for 
$(N,S^M\restriction N:S\in\tau)$.
\end{notation}

\subsection{By-interpretability of the first order theory of $H_{\kappa^+}$
with the first order theory of $\pow{\kappa}$}
\label{subsec:secordequiv}

Let's compare the first order theory of the structure
\[
(\pow{\kappa},S_\phi^V:\phi \text{ an atomic $\tau_\ST$-formula})
\]
with 
that of the $\tau_\ST$-theory of $H_{\kappa^+}$ in models of $\ZFC_{\ST}$. 
We will show that they
are $\ZFC_{\tau_{\ST}}$-provably by-interpretable with a by-interpetation translating $H_{\kappa^+}$ in a $\Pi_1$-definable subset of  $\pow{\kappa^2}$ and
atomic predicates into 
$\Sigma_1$-relations over this set. 
This result is the key to the proof of 
Thm. \ref{thm:mainthm0} and is just outlining the model theoretic consequences of the well-known fact that sets can be coded by well-founded extensional graphs.

\begin{definition}\label{def:codkappa}
Given $a\in H_{\kappa^+}$, $R\in \pow{\kappa^2}$ codes $a$, if
$R$ codes a well-founded extensional relation on 
some $\alpha\leq\kappa$ with top element $0$
so that the transitive collapse mapping of $(\alpha,R)$ maps $0$ to $a$.

\begin{itemize}
\item
$\WFE_\kappa$ is the set of $R\in \pow{\kappa}$ which 
are a well founded extensional relation with domain 
$\alpha\leq\kappa$ and top element $0$.
\item
 $\Cod_\kappa:\WFE_\kappa\to H_{\kappa^+}$ is the map assigning $a$ to $R$ if and only if 
 $R$ codes $a$.
\end{itemize}
\end{definition}

The following theorem shows that the structure $(H_{\kappa^+},\in)$ is interpreted by means of ``imaginaries'' in the structure
$(\pow{\kappa},\tau_{\ST}^V)$ by means of:
\begin{itemize}
    \item a universal $\tau_\ST\cup\bp{\kappa}$-formula (with quantifiers
    ranging over subsets of $\kappa^{<\omega}$)
    defining a set $\WFE_\kappa\subseteq\pow{\kappa^2}$.
    \item an equivalence relation $\cong_\kappa$ on $\WFE_\kappa$
    defined by an existential $\tau_\ST\cup\bp{\kappa}$-formula (with quantifiers
    ranging over subsets of $\kappa^{<\omega}$)
    \item A binary relation $E_\kappa$ on $\WFE_\kappa$
    invariant under $\cong_\kappa$ representing the $\in$-relation as the extension of 
an existential $\tau_\ST\cup\bp{\kappa}$-formula (with quantifiers
    ranging over subsets of $\kappa^{<\omega}$)\footnote{See \cite[Section 25]{JECHST} for proofs of the case $\kappa=\omega$; in particular the statement and proof of Lemma 25.25 and the proof of \cite[Thm. 13.28]{JECHST} contain all ideas on which one can elaborate to draw the conclusions of Thm.~\ref{thm:keypropCod}.}.
\end{itemize}
\begin{theorem}\label{thm:keypropCod}
Assume $\ZFC^{-}_{\kappa}$. The following holds\footnote{Many transitive supersets of $H_{\kappa^+}$ are 
$\tau_\ST\cup\bp{\kappa}$-model of $\ZFC^{-}_{\kappa}$ for $\kappa$ an infinite cardinal (see \cite[Section IV.6]{KUNEN}). 
To simplify notation we assume to have fixed a transitive 
$\tau_\ST\cup\bp{\kappa}$-model $\mathcal{N}$ of $\ZFC^{-}_\kappa$
with domain $N\supseteq H_{\kappa^+}$. The reader can easily realize that all these statements holds for an arbitrary model $\mathcal{N}$ of $\ZFC^-_\kappa$ replacing $H_{\kappa^+}$ with its version according to $\mathcal{N}$.}:
 \begin{enumerate}
\item
The map $\mathrm{Cod}_\kappa$ and $\WFE_\kappa$ are defined by $\ZFC^-_\kappa$-provably 
$\Delta_1$-properties  in parameter $\kappa$. Moreover $\Cod_\kappa:\WFE_\kappa\to H_{\kappa^+}$
is surjective (provably in $\ZFC^{-}_{\kappa}$), and
$\WFE_\kappa$ is defined by a universal 
$\tau_\ST\cup\bp{\kappa}$-formula with quantifiers
ranging over subsets of $\kappa^{<\omega}$.
\item 
There are existential $\tau_\ST\cup\bp{\kappa}$-formulae (with quantifiers
    ranging over subsets of $\kappa^{<\omega}$), $\phi_\in,\phi_=$ such that
for all $R,S\in \WFE_\kappa$, $\phi_=(R,S)$ if and only if $\Cod_\kappa(R)=\Cod_\kappa(S)$ and 
$\phi_\in(R,S)$  if and only if $\Cod_\kappa(R)\in\Cod_\kappa(S)$. In particular letting
\[
E_\kappa=\bp{(R,S)\in \WFE_\kappa: \phi_\in(R,S)},
\]
\[
\cong_\kappa=\bp{(R,S)\in \WFE_\kappa: \phi_=(R,S)},
\]
$\cong_\kappa$ is a $\ZFC^-_\kappa$-provably definable equivalence relation, $E_\kappa$ respects it, and
\[
(\WFE_\kappa/_{\cong_\kappa}, E_\kappa/_{\cong_\kappa})
\]
is 
isomorphic to $(H_{\kappa^+},\in)$ via the map $[R]\mapsto \Cod_\kappa(R)$.
\end{enumerate}
\end{theorem}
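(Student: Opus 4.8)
The plan is to reduce everything to the standard fact that a set in $H_{\kappa^+}$ is exactly what is coded by a well-founded extensional relation on an ordinal $\le\kappa$, while keeping careful track of syntactic complexity so that well-foundedness comes out \emph{universal} and the equality/membership relations on codes come out \emph{existential}, both with quantifiers confined to subsets of $\kappa^{<\omega}$. First I would fix, for $R\in\WFE_\kappa$ with field $\alpha\le\kappa$, the Mostowski collapse $\pi_R$ as the unique function with domain $\alpha$ satisfying $\pi_R(x)=\bp{\pi_R(y):y\mathbin{R}x}$, so that $\Cod_\kappa(R)=\pi_R(0)$; since $\alpha\le\kappa$ and $\ran(\pi_R)=\trcl(\bp{\Cod_\kappa(R)})\in H_{\kappa^+}$, the function $\pi_R$ itself lies in $H_{\kappa^+}$ and hence is available as a set in any model of $\ZFC^{-}_\kappa$ containing $H_{\kappa^+}$.

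For item (1), the $\Delta_1$-claim (Lévy sense, parameter $\kappa$) I would get by giving both a $\Sigma_1$- and a $\Pi_1$-definition and checking their $\ZFC^{-}_\kappa$-equivalence: extensionality, ``field $\alpha\le\kappa$'' and ``top element $0$'' are bounded; well-foundedness is $\Pi_1$ via the minimal-element schema $\forall X\,[X\ne\emptyset\to\exists y\in X\,\forall z\in X\,\neg(z\mathbin{R}y)]$, and it is also $\Sigma_1$ via ``there exists a function $\pi$ obeying the collapse recursion'' (whose existence is $\ZFC^{-}$-provably equivalent to well-foundedness and which is a set by the previous paragraph). The graph of $\Cod_\kappa$ is then $\Delta_1$ in the same way ($\Sigma_1$: some collapse sends $0$ to $a$; $\Pi_1$: every collapse does, by uniqueness). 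Surjectivity is witnessed, for $a\in H_{\kappa^+}$, by transporting $\in\!\restriction\!\trcl(\bp{a})$ along any bijection $\trcl(\bp{a})\to\alpha\le\kappa$ sending $a$ to $0$ (here $\ZFC^{-}_\kappa$ supplies choice and $|\trcl(\bp{a})|\le\kappa$). Separately, the \emph{universal} $\tau_\ST\cup\bp{\kappa}$-formula for $\WFE_\kappa$ is exactly the minimal-element schema above: its only unbounded quantifier is the leading $\forall X$ ranging over subsets of $\kappa\subseteq\kappa^{<\omega}$, all remaining quantifiers being bounded by $\kappa$ and hence atomic in $\tau_\ST$.

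For item (2) the key device is a single bounded notion of a \emph{collapse-isomorphism onto $u$}: for $R,S\in\WFE_\kappa$ and $u$ in the field of $S$, a map $F\subseteq\kappa^2$ with $F(0_R)=u$ that is injective, satisfies $x\mathbin{R}y\leftrightarrow F(x)\mathbin{S}F(y)$, and is \emph{full}, i.e. $\forall y\,\forall w\,[w\mathbin{S}F(y)\to\exists x\,(x\mathbin{R}y\wedge F(x)=w)]$. A short $R$-induction shows $\pi_R(x)=\pi_S(F(x))$ for all $x$, so the existence of such an $F$ is equivalent to $\Cod_\kappa(R)=\pi_S(u)$; conversely $F:=(\pi_S\!\restriction\!T_u)^{-1}\circ\pi_R$, with $T_u$ the $S$-transitive-closure of $u$, is such a map whenever $\Cod_\kappa(R)=\pi_S(u)$. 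I would therefore set $\phi_=(R,S)$ to assert the existence of a collapse-isomorphism onto $0_S$, and $\phi_\in(R,S)$ to assert $\exists u\,(u\mathbin{S}0_S\wedge$ there is a collapse-isomorphism onto $u)$. Both are existential, the only unbounded quantifier ranging over $F\subseteq\kappa^2\subseteq\kappa^{<\omega}$ (plus a bounded $\exists u<\kappa$), with matrices bounded by $\kappa$ and hence atomic in $\tau_\ST$. That they capture $\Cod_\kappa(R)=\Cod_\kappa(S)$ and $\Cod_\kappa(R)\in\Cod_\kappa(S)$ follows since $\Cod_\kappa(S)=\pi_S(0_S)=\bp{\pi_S(u):u\mathbin{S}0_S}$. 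Finally, well-definedness and injectivity of $[R]\mapsto\Cod_\kappa(R)$ come from $\phi_=$, surjectivity from item (1), and $\in$-preservation/reflection from $\phi_\in$, yielding $(\WFE_\kappa/_{\cong_\kappa},E_\kappa/_{\cong_\kappa})\cong(H_{\kappa^+},\in)$.

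The hard part will be the complexity bookkeeping rather than the set theory: I must take well-foundedness in its minimal-element form (so that it is genuinely $\Pi_1$/universal, using that $\ZFC^{-}_\kappa$ proves its equivalence with the no-descending-sequence form), locate each witness correctly — $\pi$ in $H_{\kappa^+}$ for the $\Delta_1$-claim, but $F$ as a subset of $\kappa^2$ for the existential $\tau_\ST$-claim — and verify that the ``full'' clause is exactly what forces the two collapses to agree, so that no stronger, non-existential isomorphism hypothesis is needed.
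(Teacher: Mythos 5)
Your proof is correct and follows essentially the same route as the paper: code sets by well-founded extensional relations on ordinals $\le\kappa$, use the Mostowski collapse function to get the $\Delta_1$-definability of $\Cod_\kappa$ (existence of a collapse for $\Sigma_1$, uniqueness for $\Pi_1$), and express equality/membership of codes as existential isomorphism statements — onto the whole structure for $\phi_=$, onto the downward closure of a predecessor of the top element for $\phi_\in$. The only cosmetic difference is that the paper takes the universal form of well-foundedness to be the absence of infinite descending sequences (quantifying over functions $f\subseteq\kappa^2$) rather than your minimal-element schema; both are $\ZFC^-_\kappa$-provably equivalent and universal with quantifiers ranging over subsets of $\kappa^{<\omega}$.
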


\begin{proof}
A detailed proof requires a careful examination of the syntactic properties of $\Delta_0$-formulae, in line with the one carried in Kunen's \cite[Chapter IV]{KUNEN}.
We outline the main ideas, 
following Kunen's book terminology for certain 
set theoretic operations on sets, functions and relations (such as $\dom(f),\ran(f)$, $\text{Ext}(R)$, etc). 
To simplify the notation, we prove the results
for a transitive model $(N,\in)$ which is then extended
to a structure $(N,\tau_\ST^N,\kappa^N)$ which 
models $\ZFC^-_\kappa$, and whose domain contains $H_{\kappa^+}$. 
The reader can verify by itself that the argument 
is modular and works for any other model of 
$\ZFC^-_\kappa$ 
(transitive or ill-founded, containing the ``true'' $H_{\kappa^+}$ or not). 
\begin{enumerate} 
\item This is proved in details in \cite[Chapter IV]{KUNEN}.
To define $\WFE_\kappa$ by a universal $\tau_\ST\cup\bp{\kappa}$-property over subsets of $\kappa$
and $\Cod_\kappa$ by a $\Delta_1$-property for $\tau_\ST\cup\bp{\kappa}$ over $H_{\kappa^+}$, we proceed
as follows:
\begin{itemize}
\item
\emph{$R$ is an extensional relation with domain contained in 
$\kappa$ and top element $0$}
is defined by the $\tau_\ST\cup\bp{\kappa}$-atomic formula 
$\psi_{\mathrm{EXT}}(R)$ $\ZFC^{-}_\kappa$-provably equivalent to the $\Delta_0(\kappa)$-formula:
\begin{align*}
(R\subseteq\kappa^2)\wedge \\
\wedge (\text{Ext}(R)\in\kappa\vee \text{Ext}(R)=\kappa)
\wedge\\
\wedge\forall \alpha,\beta\in\text{Ext}(R)\,
[\forall u\in\text{Ext}(R)\,(u\mathrel{R}\alpha\leftrightarrow u\mathrel{R}\beta)\rightarrow (\alpha=\beta)]\wedge\\
\wedge \forall \alpha\in\text{Ext}(R)\,\neg (0\mathrel{R}\alpha).
\end{align*}
\item 
$\WFE_\kappa$ is defined by the universal $\tau_\ST\cup\bp{\kappa}$-formula 
$\phi_{\WFE_\kappa}(R)$ (quantifying only over subsets of $\kappa^{<\omega}$)
\begin{align*}
\psi_{\mathrm{EXT}}(R)\wedge \\
\wedge [\forall f\subseteq \kappa^2\,(f\text{ is a function }\rightarrow\exists n\in\omega \,
\neg(\ap{f(n+1),f(n)}\in R))].
\end{align*}
Its interpretation is the subset of $\pow{\kappa^{<\omega}}$ of the $\sigma_\kappa$-symbol 
$S_{\phi_{\WFE_\kappa}}$. 

\item To define $\Cod_\kappa$,
consider the $\tau_\ST\cup\bp{\kappa}$-atomic formula 
$\psi_{\Cod}(G,R)$ provably equivalent to the $\tau_\ST\cup\bp{\kappa}$-formula:
\begin{align*}
\psi_{\mathrm{EXT}}(R)\wedge\\
\wedge (G\text{ is a function})\wedge\\
\wedge (\dom(G)=\text{Ext}(R))\wedge (\ran(G)\text{ is transitive})\wedge\\ \wedge\forall\alpha,\beta\in\text{Ext}(R)\,[\alpha\mathrel{R}\beta\leftrightarrow G(\alpha)\in G(\beta)].
\end{align*}

Then $\Cod_\kappa(R)=a$ can be defined either by the existential $\tau_\ST\cup\bp{\kappa}$-formula\footnote{Given an $R$ such that $\psi_{\mathrm{EXT}}(R)$ holds,
\emph{$R$ is a well founded relation} holds in a model of 
$\ZFC^-_\kappa$
if and only if 
$\Cod_\kappa$ is defined on $R$. In the theory $\ZFC^-_\kappa$, $\WFE_\kappa$ can be defined using a universal property 
by a $\tau_\ST\cup\bp{\kappa}$-formula quantifying only over subsets of $\kappa$. On the other hand if we allow arbitrary quantification over elements 
of $H_{\kappa^+}$, we can express the well-foundedness of $R$ also using the existential formula 
$\exists G\,\psi_{\Cod_\kappa}(G,R)$. This is why $\WFE_\kappa$ is defined by a universal 
$\tau_\ST\cup\bp{\kappa}$-property in the structure $(\pow{\kappa},\tau_\ST^V,\kappa)$,
while the graph of $\Cod_\kappa$ can be defined by a $\Delta_1$-property for $\tau_\ST\cup\bp{\kappa}$
in the structure $(H_{\kappa^+},\tau_\ST^V,\kappa^V)$.} 
\[
\exists G\,(\psi_{\Cod}(G,R)\wedge G(0)=a)
\]
or by the universal $\tau_\ST\cup\bp{\kappa}$-formula 
\[
\forall G\,(\psi_{\Cod}(G,R)\rightarrow G(0)=a).
\]
\end{itemize}

\item The equality relation in $H_{\kappa^+}$ is transferred to the isomorphism relation
between elements of $\WFE_\kappa$: if $R,S$ are well-founded extensional on $\kappa$ with a top-element,
the Mostowski collapsing theorem entails that $\Cod_\kappa(R)=\Cod_\kappa(S)$ if and only if 
$(\mathrm{Ext}(R),R)\cong(\mathrm{Ext}(S),S)$. 
Isomorphism of the two structures $(\mathrm{Ext}(R),R)\cong(\mathrm{Ext}(S),S)$ is expressed by the $\Sigma_1$-formula
for $\tau_{\kappa}$:
\[
\phi_=(R,S)\equiv 
\exists f\,(f \text{ is a bijection of $\kappa$ onto $\kappa$ and $\alpha R\beta$ if and only if 
$f(\alpha) S f(\beta)$}).
\]
In particular we get that $S_{\phi_=}(R,S)$ holds in $H_{\kappa^+}$ for $R,S\in \WFE_\kappa$
if and only if $\Cod_\kappa(R)=\Cod_\kappa(S)$.

Similarly one can express $\Cod_\kappa(R)\in\Cod_\kappa(S)$ by the $\Sigma_1$-property $\phi_\in$
in $\tau_{\kappa}$
stating that
$(\mathrm{Ext}(R),R)$ is isomorphic to $(\mathrm{pred}_S(\alpha),S)$ for some $\alpha\in\kappa$ with $\alpha \mathrel{S}0$, 
where $\mathrm{pred}_S(\alpha)$ is given by the elements
of $\mathrm{Ext}(S)$ which are connected by a finite path to $\alpha$. 

Moreover letting $\cong_\kappa\subseteq \WFE_\kappa^2$ denote the isomorphism relation between elements of $\WFE_\kappa$
and $E_\kappa\subseteq \WFE_\kappa^2$ denote the relation which translates into the $\in$-relation via $\Cod_\kappa$, 
it is clear that $\cong_\kappa$ is a congruence relation
over $E_\kappa$, i.e.: if $R_0 \cong_\kappa R_1$ and $S_0\cong_\kappa S_1$,
$R_0 \mathrel{E_\kappa} S_0$ if and only if $R_1 \mathrel{E_\kappa}  S_1$.

This gives that the structure $(\WFE_\kappa/_{\cong_\kappa}, E_\kappa/_{\cong_\kappa})$ is 
isomorphic to $(H_{\kappa^+},\in)$ via the map $[R]\mapsto \Cod_\kappa(R)$ 
(where $\WFE_\kappa/_{\cong_\kappa}$ is the set
of equivalence classes of $\cong_\kappa$ and the quotient relation $[R] \mathrel{E_\kappa/_{\cong_\kappa}} [S]$ holds 
if and only if $R \mathrel{E_\kappa}  S$).

This isomorphism is defined via the map $\Cod_\kappa$, which is by itself defined by 
a $\ZFC^-_\kappa$-provably $\Delta_1$-property for $\tau_\ST\cup\bp{\kappa}$.

The very definition of $\WFE_\kappa,\cong_\kappa,E_\kappa$ show that
\[
\WFE_\kappa=S_{\phi_{\WFE_\kappa}}^{N},
\]
\[
\cong_\kappa=S_{\phi_{\WFE_\kappa}(x)\wedge \phi_{\WFE_\kappa}(y)\wedge \phi_{=}(x,y)}^{N},
\]
\[
E_\kappa=S_{\phi_{\WFE_\kappa}(x)\wedge \phi_{\WFE_\kappa}(y)\wedge \phi_{\in}(x,y)}^{N}.
\]
\end{enumerate}
\end{proof}

\subsection{Model completeness for the theory of $H_{\kappa^+}$}

\begin{theorem}\label{thm:modcompHkappa+}
Any $\sigma_\kappa$-theory $T$ extending 
\[
\ZFC^{*-}_\kappa\cup\bp{\text{all sets have size $\kappa$}}
\]
is model complete.
\end{theorem}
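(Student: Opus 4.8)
The plan is to verify Robinson's test (Lemma \ref{lem:robtest}, condition \ref{lem:robtest-3}) by showing that every $\sigma_\kappa$-formula is $T$-equivalent to a $\Sigma_1$-formula; applied to a formula $\phi$ and to its negation simultaneously, this yields that every formula is $T$-equivalent both to an existential and to a universal $\sigma_\kappa$-formula, which is exactly condition \ref{lem:robtest-3}. The whole argument rests on Theorem \ref{thm:keypropCod} together with the defining axioms $T_\kappa$ of $\sigma_\ST$ (which are part of $\ZFC^{*-}_\kappa$): the former interprets the universe into $\pow{\kappa}$ via $\Cod_\kappa$, while the latter turns \emph{every} first-order property of $(\pow{\kappa^{<\omega}},\in)$ into an atomic $\sigma_\kappa$-predicate $S_\psi$. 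The first observation is that the axiom ``all sets have size $\kappa$'' guarantees that in any model of $T$ the map $\Cod_\kappa\colon\WFE_\kappa\to H_{\kappa^+}$ is surjective onto the \emph{entire} universe, so that every element of a model of $T$ is coded by some $R\in\WFE_\kappa\subseteq\pow{\kappa^2}$.

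Given a $\sigma_\kappa$-formula $\phi(x_1,\dots,x_n)$, I would produce its coded transcription $\Phi(R_1,\dots,R_n)$ by replacing each universe quantifier $\exists x$ (resp. $\forall x$) by a quantifier over $\WFE_\kappa$ guarded by the atomic predicate $S_{\phi_{\WFE_\kappa}}$, each atom $x\in y$ by $E_\kappa(R_x,R_y)$, each atom $x=y$ by $\cong_\kappa(R_x,R_y)$, and treating the $\tau_\ST$-function symbols and the $S_\psi$-predicates through their effect on the coded sets. By Theorem \ref{thm:keypropCod} the resulting $\Phi$ is a first-order $\in$-formula evaluated in $\pow{\kappa^{<\omega}}$ with the $R_i$ as second-order parameters; hence, by the axioms $T_\kappa$, it is $T$-equivalent to a single \emph{atomic} $\sigma_\kappa$-predicate $S_{\tilde\Phi}(R_1,\dots,R_n)$. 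In other words, all the quantifiers of $\phi$ that range over the universe collapse, once passed through the coding, into an atomic predicate on the codes.

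It then remains only to re-express $\phi(\vec x)$ through the codes of its free variables. Since $\Cod_\kappa(R)=a$ is $\ZFC^{-}_\kappa$-provably $\Delta_1$ (Theorem \ref{thm:keypropCod}), and in particular admits the existential definition $\exists G\,(\psi_{\Cod}(G,R)\wedge G(0)=a)$, I would set
\[
\phi(\vec x)\ \longleftrightarrow\ \exists R_1\dots\exists R_n\,\Bigl[\bigwedge_{i=1}^n\bigl(S_{\phi_{\WFE_\kappa}}(R_i)\wedge \Cod_\kappa(R_i)=x_i\bigr)\wedge S_{\tilde\Phi}(R_1,\dots,R_n)\Bigr],
\]
which, unwinding the existential definitions of the clauses $\Cod_\kappa(R_i)=x_i$, is a $\Sigma_1$-formula of $\sigma_\kappa$; that it is $T$-equivalent to $\phi$ follows because $\cong_\kappa$ is a congruence respected by $E_\kappa$, so the truth value of $\Phi$ does not depend on the chosen codes. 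Applying the same to $\neg\phi$ produces a $\Sigma_1$-formula for $\neg\phi$, whose negation is a $\Pi_1$-formula $T$-equivalent to $\phi$; Robinson's test then delivers model completeness.

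I expect the genuine work to lie entirely in the middle step: the careful, Kunen-style syntactic verification that the coded transcription $\Phi$ of an arbitrary $\sigma_\kappa$-formula really is (provably in $T$) an $\in$-formula interpreted over $\pow{\kappa^{<\omega}}$, and hence atomic in $\sigma_\kappa$. The delicate points are the correct handling of the $\tau_\ST$-function symbols under coding, and the double role of an element that is simultaneously a ``general set'' (represented by a code $R\in\WFE_\kappa$) and, when it happens to be a subset of $\kappa^{<\omega}$, a legitimate argument of the predicates $S_\psi$; one must check that passing between an element and its code stays at the required complexity. The surjectivity of $\Cod_\kappa$ onto the whole universe, secured by the axiom ``all sets have size $\kappa$'', is what makes the reduction to codes exhaustive and is therefore indispensable.
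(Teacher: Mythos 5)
Your proposal is correct and follows essentially the same route as the paper: Robinson's test, the $\Cod_\kappa$-coding of Theorem \ref{thm:keypropCod}, surjectivity of $\Cod_\kappa$ onto the whole model secured by ``all sets have size $\kappa$'', and the collapse of the transcribed formula into an atomic $\sigma_\kappa$-predicate via the axioms $T_\kappa$. The only cosmetic difference is that you exhibit the $\Sigma_1$ form $\exists\vec{R}\,[\bigwedge_i\Cod_\kappa(R_i)=x_i\wedge S_{\tilde\Phi}(\vec{R})]$ and then negate, whereas the paper directly writes the dual $\Pi_1$ form $\forall\vec{R}\,[\bigwedge_i\Cod_\kappa(R_i)=x_i\rightarrow S_{\theta_\phi}(\vec{R})]$, exploiting that $\Cod_\kappa(R)=x$ is existentially definable.
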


\begin{proof}
To simplify notation, 
we conform to the assumption of the previous theorem, 
i.e. we assume that the model $(N,\in)$ which is uniquely extended to a model of 
$\ZFC^{*-}_\kappa+$\emph{ every set has size $\kappa$}
on which we work is a transitive superstructure of 
$H_{\kappa^+}$.

The statement \emph{every set has size $\kappa$} is satisified by a
$\ZFC^-_\kappa$-model $(N,\tau_\ST^V,\kappa)$ with 
$N\supseteq H_\kappa^+$ if and only if $N=H_{\kappa^+}$.
From now on we proceed assuming this equality.

By Robinson's test \ref{lem:robtest} it suffices to show that
for all $\in$-formulae $\phi(\vec{x})$ 
\[
\ZFC^-_\kappa+\text{ every set has size $\kappa$}\vdash
\forall\vec{x} \,(\phi(\vec{x})\leftrightarrow\psi_\phi(\vec{x})),
\]
for some universal $\sigma_\kappa$-formula $\psi_\phi$.

We will first define a recursive map 
$\phi\to\theta_\phi$ which maps $\Sigma_n$-formulae $\phi$ for $\bp{\in,\kappa}$ 
quantifying over all elements of
$H_{\kappa^+}$ to $\Sigma_{n+1}$-formulae $\theta_\phi$ for $\tau_\ST\cup\bp{\kappa}$
whose quantifier range just over subsets of $\kappa^{<\omega}$.

The proof of the previous theorem gave $\tau_\ST\cup\bp{\kappa}$-formulae
$\theta_{x=y}$, $\theta_{x\in y}$ such that
\[
S_{\theta_{x=y}}^{H_{\kappa^+}}=\cong_\kappa=\bp{(R,S)\in (\WFE_\kappa)^2:\, \Cod_\kappa(R)=\Cod_\kappa(S)},
\]
\[
S_{\theta_{x\in y}}^{H_{\kappa^+}}=E_\kappa=\bp{(R,S)\in (\WFE_\kappa)^2:\, \Cod_\kappa(R)\in\Cod_\kappa(S)}.
\]
Specifically (following the notation of that proof)
\[
\theta_{x=y}=\phi_{\WFE_\kappa}(x)\wedge \phi_{\WFE_\kappa}(y)\wedge \phi_{=}(x,y),
\]
\[
\theta_{x\in y}=\phi_{\WFE_\kappa}(x)\wedge \phi_{\WFE_\kappa}(y)\wedge \phi_{\in}(x,y).
\]

Now for any $\bp{\in,\kappa}$-formula $\psi(\vec{x})$, 
we proceed to define the $\tau_\ST\cup\bp{\kappa}$-formula $\theta_\psi(\vec{x})$ letting:
\begin{itemize}
\item $\theta_{\psi\wedge\psi}(\vec{x})$ be $\theta_{\psi}(\vec{x})\wedge\theta_{\psi}(\vec{x})$,
\item $\theta_{\neg\psi}(\vec{x})$ be $\neg\theta_{\psi}(\vec{x})$,
\item $\theta_{\exists y\psi(y,\vec{x})}(\vec{x})$ be $\exists y\theta_{\psi}(y,\vec{x})\wedge \phi_{\WFE_\kappa}(y)$.
\end{itemize}
An easy induction on the complexity of the $\tau_\ST\cup\bp{\kappa}$-formulae $\theta_\phi(\vec{x})$
gives that for any $\bp{\in,\kappa}$-definable subset
$A$ of $(H_{\kappa^+})^n$ which is the extension of some $\bp{\in,\kappa}$-formula $\phi(x_1,\dots,x_n)$ 
\[
\bp{(R_1,\dots,R_n)\in (\WFE_\kappa)^n:\, (\Cod_\kappa(R_1),\dots,\Cod_\kappa(R_n))\in A}=
S_{\theta_\phi}^{H_{\kappa^+}},
\] 
with the further property that $S_{\theta_\phi}^{H_{\kappa^+}}\subseteq (\WFE_{\kappa})^n$ respects the
$\cong_\kappa$-relation\footnote{It is also clear from our argument that the map $\phi\mapsto\theta_\phi$ is recursive (and a careful inspection
reveals that it maps a $\Sigma_n$-formula
to a $\Sigma_{n+1}$-formula).}.

Now every $\sigma_\kappa$-formula is $\ZFC^{*-}_\kappa$-equivalent to a 
$\bp{\in,\kappa}$-formula\footnote{The map assigning to any $\sigma_\kappa$-formula a 
$\ZFC^{*-}_\kappa$-equivalent
$\bp{\in,\kappa}$-formula can also be chosen to be recursive.}.

Therefore we can extend $\phi\mapsto\theta_\phi$ 
assigning to any $\sigma_\kappa$-formula $\phi(\vec{x})$ the formula $\theta_\psi(\vec{x})$ for some 
$\bp{\in,\kappa}$-formula $\psi(\vec{x})$ which is $\ZFC^{*-}_\kappa$-equivalent to $\phi(\vec{x})$.

Then for any $\bp{\in,\kappa}$-formula $\phi(x_1,\dots,x_n)$
$H_{\kappa^+}\models \phi(a_1,\dots,a_n)$ if and only if 
\[
(\WFE_\kappa/_{\cong_\kappa}, E_\kappa/_{\cong_\kappa})\models \phi([R_1],\dots,[R_n])
\]
with $\Cod_\kappa(R_i)=a_i$ for $i=1,\dots,n$
if and only if 
\[
H_{\kappa^+}\models \forall R_1,\dots,R_n\, [(\bigwedge_{i=1}^n \Cod_\kappa(R_i)=a_i)\rightarrow\theta_\phi(R_1,\dots,R_n)]
\]
if and only if 
\[
H_{\kappa^+}\models \forall R_1,\dots,R_n\, [(\bigwedge_{i=1}^n \Cod_\kappa(R_i)=a_i)\rightarrow S_{\theta_\phi}(R_1,\dots,R_n)].
\]

Since this argument can be repeated verbatim for any model of 
$\ZFC^{*-}_\kappa$+\emph{every set has size $\kappa$}, and any $\sigma_\kappa$-formula
 is $\ZFC^{*-}_\kappa$-equivalent to a $\bp{\in,\kappa}$-formula,
we have proved the following:
\begin{claim}
For any $\sigma_\kappa$-formula $\phi(x_1,\dots,x_n)$,
$\ZFC^{*-}_\kappa+$\emph{every set has size $\kappa$} proves that
\[
\forall x_1,\dots,x_n\,[\phi(x_1,\dots,x_n)\leftrightarrow \forall y_1,\dots,y_n\,[(\bigwedge_{i=1}^n \Cod_\kappa(y_i)=x_i)\rightarrow S_{\theta_\phi}(y_1,\dots,y_n)]].
\]
\end{claim}
But $\Cod_\kappa(y)=x$ is expressible by an existential
$\tau_\ST\cup\bp{\kappa}$-formula 
provably in $\ZFC^-_\kappa\subseteq \ZFC^{*-}_\kappa$, therefore
\[
\forall y_1,\dots,y_n\,[(\bigwedge_{i=1}^n \Cod_\kappa(y_i)=x_i)\rightarrow S_{\theta_\phi}(y_1,\dots,y_n)]
\]
is a universal $\sigma_\kappa$-formula, and we are done. 
\end{proof}

\subsection{Proof of Thm.~\ref{thm:mainthm0}}

Conforming to the notation of Thm.~\ref{thm:mainthm0},
it is clear that $\sigma_\kappa$ is a signature of the form
$\bp{\in}_{\bar{A}_\kappa}$ whenever $\kappa$ is a $T$-definable cardinal for some $T$ extending $\ZFC$.
Therefore the following result completes the proof of Thm.~\ref{thm:mainthm0}.

\begin{theorem} \label{Thm:mainthm-1}
Assume $T\supseteq \ZFC^*_\kappa$
is a $\sigma_\kappa$-theory.
Then $T$ has a model companion $T^*$.
Moreover for any $\Pi_2$-sentence $\psi$ for $\sigma_\kappa$, TFAE:
\begin{enumerate}
\item $\psi\in T^*$;
\item
$T\vdash\psi^{H_{\kappa^+}}$;
\item
For all universal $\sigma_\kappa$-sentences $\theta$,
$T+\theta$ is consistent if and only if so is $T_\forall+\theta+\psi$.
\end{enumerate}
\end{theorem}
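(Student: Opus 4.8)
The plan is to deduce everything from the abstract model-companionship criterion of Lemma~\ref{fac:proofthm1-2}, taking as the auxiliary model-complete theory the common theory of the structures $H_{\kappa^+}$. Concretely, I would let $T_0$ be the $\sigma_\kappa$-theory common to all the structures $(H_{\kappa^+}^V,\sigma_\kappa^V)$ as $(V,\in)$ ranges over the models of $T$, with $\sigma_\kappa$ interpreted on $H_{\kappa^+}^V$ as induced by $V$ (per Notation~\ref{not:modthnot2}). Each such structure satisfies $\ZFC^{*-}_\kappa$ together with the $\Pi_2$-axiom asserting that every set has size $\leq\kappa$ (the pinning hypothesis of Theorem~\ref{thm:modcompHkappa+}; this is where $H_{\kappa^+}$, rather than a larger $H_\lambda$, is used); hence by Theorem~\ref{thm:modcompHkappa+} \emph{every} $\sigma_\kappa$-theory extending this base --- in particular $T_0$ --- is model complete. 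This $T_0$ is my candidate for the model companion.

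First I would verify the hypothesis of Lemma~\ref{fac:proofthm1-2}: for every $\Pi_1$-sentence $\theta$ for $\sigma_\kappa$, $T+\theta$ is consistent if and only if $T_0+\theta$ is consistent. The engine here is the $\Sigma_1$-elementarity $(H_{\kappa^+}^V,\sigma_\kappa^V)\prec_1(V,\sigma_\kappa^V)$ supplied by Theorem~\ref{thm:mainthm0}(\ref{thm:mainthm0.0}) (equivalently the Levy-type absoluteness of Lemma~\ref{lem:levabsgen}), which makes $\Sigma_1$- and $\Pi_1$-sentences absolute between $V$ and $H_{\kappa^+}^V$. If $T+\theta$ has a model $V$, then $H_{\kappa^+}^V\models\theta$ (downward, as $H_{\kappa^+}^V\sqsubseteq V$ and $\theta$ is universal) and $H_{\kappa^+}^V\models T_0$ by definition, so $T_0+\theta$ is consistent; conversely, if $T+\theta$ is inconsistent then $T\vdash\neg\theta$ with $\neg\theta$ a $\Sigma_1$-sentence, so every $V\models T$ satisfies $\neg\theta$, whence every $H_{\kappa^+}^V\models\neg\theta$ by $\Sigma_1$-absoluteness, i.e.\ $T_0\vdash\neg\theta$ and $T_0+\theta$ is inconsistent. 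With this in hand Lemma~\ref{fac:proofthm1-2}(\ref{fac:proofthm1-2-a}) yields that $T^*:=T_0+T_\forall$ is the model companion of $T$; and since any universal consequence of $T$ is inherited by the substructures $H_{\kappa^+}^V$, we have $T_\forall\subseteq T_0$, so in fact $T^*=T_0$ is exactly the common $\sigma_\kappa$-theory of the $H_{\kappa^+}^V$.

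The equivalence of (1) and (2) is then immediate from this description: for any sentence $\psi$, $\psi\in T^*$ iff $\psi$ holds in every $H_{\kappa^+}^V$ with $V\models T$, and since $H_{\kappa^+}^V\models\psi$ iff $V\models\psi^{H_{\kappa^+}}$, this says precisely that every model of $T$ satisfies $\psi^{H_{\kappa^+}}$, i.e.\ $T\vdash\psi^{H_{\kappa^+}}$. The equivalence of (1) and (3) is the specialization of Lemma~\ref{fac:proofthm1-2}(\ref{fac:proofthm1-2-c}) to our $T$ and $T_0$, whose hypothesis we have just checked. One half is soft: if $\psi\in T^*$ and $\theta$ is universal with $T+\theta$ consistent, then (by the just-proved $\Pi_1$-biconsistency) $T^*+\theta$ is consistent, and any of its models witnesses the consistency of $T_\forall+\theta+\psi$ since $\psi,T_\forall\subseteq T^*$; and assuming the condition in (3), one recovers $\psi\in T^*$ by taking a model $\mathcal{M}$ of $T^*$ with complete theory $S$, noting that $T+\theta'$ is consistent for each finite $\theta'\subseteq S_\forall$, deducing by compactness that $S_\forall+\psi$ is consistent, and concluding $\mathcal{M}\models\psi$ from Proposition~\ref{prop:pi2satKaiserhull}(\ref{rmk:itm4}) (as $\mathcal{M}$ is $T^*$-ec with $\Pi_1$-complete universal theory $S_\forall$).

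The hard part will be the remaining half of $(1)\Rightarrow(3)$: showing that a universal $\theta$ consistent with $T_\forall+\psi$ (for $\psi\in T^*$) is already consistent with $T$ --- equivalently, via the biconsistency step, consistent with $T^*$. This is exactly the delicate point isolated in Remark~\ref{rmk:keyrmkcharkaihull}, where in general a $\Pi_2$-member of the Kaiser hull can be inconsistent with the universal fragment of $T+\theta$; the role of the hypothesis of Lemma~\ref{fac:proofthm1-2} (which we verified through the strong absoluteness $H_{\kappa^+}^V\prec_1 V$) is precisely to exclude this pathology, and I would invoke Lemma~\ref{fac:proofthm1-2}(\ref{fac:proofthm1-2-c}) for this step rather than re-derive it. I expect this to be the only genuinely nontrivial point; everything else is bookkeeping with compactness and with the absoluteness of $H_{\kappa^+}$ inside $V$.
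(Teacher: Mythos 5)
Your proposal is correct and follows essentially the same route as the paper: the model companion is identified as the common $\sigma_\kappa$-theory of the structures $H_{\kappa^+}^V$, its model completeness comes from Theorem~\ref{thm:modcompHkappa+}, and the $\Pi_1$-biconsistency hypothesis of Lemma~\ref{fac:proofthm1-2} is verified exactly as you do, via the Levy-type $\Sigma_1$-elementarity $(H_{\kappa^+}^V,\sigma_\kappa^V)\prec_1(V,\sigma_\kappa^V)$. The extra unpacking you give of the $(1)\Leftrightarrow(3)$ equivalence is just the content of Lemma~\ref{fac:proofthm1-2}(\ref{fac:proofthm1-2-c}), which the paper also simply invokes.
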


\begin{proof}
By Thm. \ref{thm:modcompHkappa+}, any $\sigma_\kappa$-theory extending
\[
\ZFC^{*-}_\kappa+\emph{every set has size $\kappa$}
\] 
is model complete.
Therefore so is 
\[
T^*=\bp{\phi: H_{\kappa^+}^\mathcal{M}\models \phi,\,\mathcal{M}\models T},
\] 
since $H_{\kappa^+}^\mathcal{M}$ models $\ZFC^{*-}_\kappa$+\emph{every set has size $\kappa$}
for any  $\mathcal{M}$ which models $T$.

We must now show that $T^*_\forall=T_\forall$.
Assume $T^*\models\theta$ for some universal sentece $\theta$.
Then $H_{\kappa^+}^\mathcal{M}\models \theta$ for any model $\mathcal{M}$ of $T$.
Since $H_{\kappa^+}^\mathcal{M}\prec_1 \mathcal{M}$ for any such $\mathcal{M}$,
we get that any such $\mathcal{M}$ models $\theta$ as well. Therefore
$T^*_\forall\subseteq T_\forall$. Appealing again to Levy absoluteness, by a similar argument,
we get that $T_\forall\subseteq T^*_\forall$.

We now show that $T^*$ is the set of $\Pi_2$-sentences $\phi$ such that:
\begin{quote}
For all $\Pi_1$-sentences $\phi$ for $\tau$,
$T+\theta$ is consistent if and only if so is 
$T_\forall+\phi+\theta$.
 \end{quote}

We prove it establishing that $T$ and $T^*$ satisfy the assumption of 
Lemma \ref{fac:proofthm1-2} i.e. for any $\Pi_1$-sentence $\theta$ for $\sigma_\kappa$ $T+\theta$ 
is consistent if and only if so is
$T^*+\theta$.

So assume $T+\theta$ is consistent for some $\Pi_1$-sentence $\theta$, we must show that 
$T^*+\theta$ is also consistent, but this is immediate: by Levy absoluteness if
$\mathcal{M}$ models $\theta$, so does $H_{\kappa^+}^{\mathcal{M}}$.

Conversely assume $T+\theta$ is inconsistent for some $\Pi_1$-sentence $\theta$.
Then $T\models\neg\theta$. Again by Levy absoluteness if
$\mathcal{M}$ models $T$, $H_{\kappa^+}^{\mathcal{M}}\models\neg\theta$.
Hence $\neg\theta\in T^*$ by definition, and $\theta$ is inconsistent with $T^*$. 
\end{proof}

\begin{remark}
Note that the family of models 
$\bp{H_{\kappa^+}^\mathcal{M}:\,\mathcal{M}\models T}$ we used to define 
$T^*$ may not be an elementary class for $\sigma_\kappa$.

Thm. \ref{Thm:mainthm-1} can be proved for 
many other signatures other than $\sigma_\kappa$.
It suffices that the signature in question adds new predicates just for definable subsets of $\pow{\kappa}^n$,
and also that it adds family of predicates which are closed under definability (i.e. projections, complementation, finite unions, permutations) and under the map $\Cod_\kappa$.
Under these assumptions we can still use 
Lemma \ref{lem:levabsgen} and 
Fact \ref{fac:charkaihullnonpi1comp}  to argue for the evident variations of the proof of 
Thm. \ref{Thm:mainthm-1} to this set up.
However linking  these model companionship results to generic absoluteness as we do in Theorem \ref{thm:mainthm1}
requires much more care in the definition of the signature.
We will pursue this matter in more details in the next sections.  
\end{remark}

\subsection{A weak version of Theorem \ref{thm:mainthm1} for third order arithmetic}

We can prove a weak version of Thm. \ref{thm:mainthm1} for the theory of $H_{\aleph_2}$ 
appealing to the generic absoluteness results of \cite{VIAMM+++,VIAASP,VIAAUD14} 
which establish the invariance of the theory of $H_{\aleph_2}$ in models of strong forcing 
axioms with respect to stationary set preserving
forcings preserving these axioms.

Let $\ZFC^*_{\omega_1}\supseteq\ZFC_\ST$ be the 
$\sigma_{\omega_1}=\sigma_\omega\cup\bp{\kappa}$-theory
obtained adding axioms which force in each of its 
$\sigma_{\omega_1}$-models
$\kappa$ to be interpreted by the first uncountable cardinal, and
each predicate symbol $S_\phi$ to be interpreted as the subset of 
$\pow{\omega_1^{<\omega}}^n$ defined by $\phi^{\pow{\omega_1^{<\omega}}}(x_1,\dots,x_n)$.


\begin{theorem}\label{Thm:mainthm-5}
Let
$T$ be a $\sigma_{\omega_1}$-theory extending   
\[
\ZFC^*_{\omega_1}+\MM^{+++}+\emph{there are class many superhuge cardinals}.
\]

TFAE for any
$\Pi_2$-sentence $\psi$ for $\sigma_{\omega_1}$:
\begin{enumerate}
\item \label{Thm:mainthm-5-2}
$S_\forall+\psi$ is consistent for all complete $S$ extending $T$;
\item \label{Thm:mainthm-5-3}
$T$ proves that some stationary set preserving forcing notion $P$ forces 
$\psi^{\dot{H}_{\omega_2}}+\MM^{+++}$;
\item \label{Thm:mainthm-5-1}
$T\vdash \psi^{H_{\omega_2}}$.
\end{enumerate}
\end{theorem}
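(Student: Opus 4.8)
The plan is to break the three‑way equivalence into two independent pieces: the equivalence \ref{Thm:mainthm-5-2}$\,\Leftrightarrow\,$\ref{Thm:mainthm-5-1}, which is a direct instance of the general model‑companionship machinery already in place, and the equivalence \ref{Thm:mainthm-5-3}$\,\Leftrightarrow\,$\ref{Thm:mainthm-5-1}, which is where the generic absoluteness for $\MM^{+++}$ does the real work.

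For \ref{Thm:mainthm-5-2}$\,\Leftrightarrow\,$\ref{Thm:mainthm-5-1} I would simply invoke Theorem \ref{Thm:mainthm-1} with $\kappa=\omega_1$: since $T\supseteq\ZFC^*_{\omega_1}$ meets the hypothesis $T\supseteq\ZFC^*_\kappa$ for $\kappa=\omega_1$, that theorem produces the model companion $T^*$ of $T$ and shows that for a $\Pi_2$-sentence $\psi$ one has $\psi\in T^*$ if and only if $T\vdash\psi^{H_{\omega_2}}$, which is exactly \ref{Thm:mainthm-5-1}. It then remains to recognize \ref{Thm:mainthm-5-2} as the membership condition $\psi\in T^*$. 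This is precisely Lemma \ref{fac:proofthm1-2}(\ref{fac:proofthm1-2-b}), whose hypotheses are verified inside the proof of Theorem \ref{Thm:mainthm-1} via Levy absoluteness (Lemma \ref{lem:levabsgen}): $T^*$ is axiomatized by those $\Pi_2$-sentences $\psi$ such that $S_\forall\cup\{\psi\}$ is consistent for every $\Pi_1$-complete $S\supseteq T$. Since every completion of $T$ is $\Pi_1$-complete, and conversely any $\Pi_1$-complete consistent $S$ has $S_\forall=S'_\forall$ for any completion $S'\supseteq S$ (a consistent $\Pi_1$-complete set of universal sentences cannot be properly extended by further universal sentences), the quantification over complete $S$ and over $\Pi_1$-complete $S$ define the same set. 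Hence $\psi\in T^*$ iff \ref{Thm:mainthm-5-2} holds, giving \ref{Thm:mainthm-5-2}$\,\Leftrightarrow\,$\ref{Thm:mainthm-5-1}.

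The equivalence \ref{Thm:mainthm-5-3}$\,\Leftrightarrow\,$\ref{Thm:mainthm-5-1} is the substantive part. The direction \ref{Thm:mainthm-5-1}$\,\Rightarrow\,$\ref{Thm:mainthm-5-3} is immediate: if $T\vdash\psi^{H_{\omega_2}}$ then, as $T\vdash\MM^{+++}$, the trivial forcing --- which is stationary set preserving and changes nothing --- already forces $\psi^{\dot H_{\omega_2}}+\MM^{+++}$ over every model of $T$. For \ref{Thm:mainthm-5-3}$\,\Rightarrow\,$\ref{Thm:mainthm-5-1} I would argue model by model: let $(V,\in)\models T$ and let $P\in V$ be stationary set preserving with $V^P\models\psi^{H_{\omega_2}}+\MM^{+++}$, as furnished by \ref{Thm:mainthm-5-3}. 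Since $V\models\MM^{+++}+$\emph{there are class many superhuge cardinals} and $P$ is stationary set preserving with $V^P\models\MM^{+++}$, the generic invariance of the theory of $H_{\omega_2}$ under stationary set preserving forcings preserving $\MM^{+++}$ --- the results of \cite{VIAMM+++,VIAASP,VIAAUD14} --- yields that $H_{\omega_2}^V$ and $H_{\omega_2}^{V[G]}$ satisfy the same sentences. As $\psi$ holds in $H_{\omega_2}^{V[G]}$, it holds in $H_{\omega_2}^V$, i.e. $V\models\psi^{H_{\omega_2}}$; since $V$ was an arbitrary model of $T$, we conclude $T\vdash\psi^{H_{\omega_2}}$.

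The one delicate point --- and the main obstacle --- is matching signatures: the cited generic‑absoluteness theorems are stated for the first order theory of $H_{\omega_2}$ in a signature built on $\in$ (together with $\omega_1$ and possibly $\NS_{\omega_1}$), whereas \ref{Thm:mainthm-5-1} concerns the richer signature $\sigma_{\omega_1}$. I would dispose of this by observing that, by the very definition of $\ZFC^*_{\omega_1}$, each predicate $S_\phi$ is interpreted over $\pow{\omega_1^{<\omega}}$ by the fixed $\in$-formula $\phi$, and $\pow{\omega_1^{<\omega}}$ is a $\{\in,\omega_1\}$-definable class of $H_{\omega_2}$; hence the whole $\sigma_{\omega_1}$-structure on $H_{\omega_2}$ is uniformly $\{\in,\omega_1\}$-definable, so invariance of the $\{\in,\omega_1\}$-theory transfers verbatim to invariance of the $\sigma_{\omega_1}$-theory and the argument above goes through unchanged. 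The genuinely non‑elementary content is entirely absorbed into the cited generic absoluteness theorem, whose proof is where the large cardinal hypothesis (class many superhuge cardinals) and the $\Pmax$/stationary‑tower technology are actually needed; everything else is bookkeeping against Theorem \ref{Thm:mainthm-1} and Lemma \ref{fac:proofthm1-2}.
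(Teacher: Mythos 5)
Your proof is correct and follows essentially the same route as the paper, which disposes of the theorem in two lines: the equivalence of \ref{Thm:mainthm-5-1} and \ref{Thm:mainthm-5-3} is quoted from the generic absoluteness theorem \cite[Thm. 5.18]{VIAMM+++} (whose content you unpack, including the signature-transfer point and the trivial-forcing direction), and the equivalence of \ref{Thm:mainthm-5-1} and \ref{Thm:mainthm-5-2} is obtained from Theorem \ref{Thm:mainthm-1} exactly as you do via Lemma \ref{fac:proofthm1-2}. Your extra care in reconciling ``complete'' with ``$\Pi_1$-complete'' extensions is a detail the paper glosses over but is handled correctly.
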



See Remarks \ref{Rem:keyrem1}(\ref{Rem:keyrem1-4}) for some information on $\MM^{+++}$,
and \ref{Rem:keyrem1}(\ref{Rem:keyrem1-3}) for informations on superhugeness.

The proof of Theorem \ref{Thm:mainthm-5} is a trivial variation of the proof of Theorem \ref{Thm:mainthm-1}:
\begin{proof}
\cite[Thm. 5.18]{VIAMM+++} gives that
\ref{Thm:mainthm-5}(\ref{Thm:mainthm-5-1}) and \ref{Thm:mainthm-5}(\ref{Thm:mainthm-5-3}) 
are equivalent.
Theorem \ref{Thm:mainthm-1}  establishes the equivalence of \ref{Thm:mainthm-5}(\ref{Thm:mainthm-5-1}) and
\ref{Thm:mainthm-5}(\ref{Thm:mainthm-5-2}). 
\end{proof}

\begin{remark}\label{Rem:keyrem1}
\emph{}

\begin{enumerate}
\item\label{Rem:keyrem1-3}
$\delta$ is superhuge if it supercompact and this can be witnessed by huge embeddings.
A superhuge cardinal is consistent
relative to the existence of a $2$-huge cardinal.

\item\label{Rem:keyrem1-4}
For a definition of $\MM^{+++}$ see \cite[Def. 5.19]{VIAMM+++}. We just note that
$\MM^{+++}$ is a natural strengthening of Woodin's axiom 
$(*)$ (by the recent breakthrough of 
Asper\`o and Schindler \cite{ASPSCH(*)}) and of Martin's maximum 
(for example any
of the standard iterations to produce a model of Martin's maximum produce a model of 
$\MM^{+++}$
if the iteration has length a superhuge cardinal \cite[Thm 5.29]{VIAMM+++}).

\item\label{Rem:keyrem1-5}
We can prove exactly the same results of Thm. \ref{Thm:mainthm-5} replacing 
(verbatim in its statement)
$\MM^{+++}$ by any of the axioms $\mathsf{RA}_\omega(\Gamma)$ introduced in \cite{VIAAUD14} or the axioms 
$\mathsf{CFA}(\Gamma)$ and $\mathsf{BCFA}(\Gamma)$ introduced in \cite{VIAASP}, provided
in item \ref{Thm:mainthm-5}(\ref{Thm:mainthm-5-3})
\emph{stationary set preserving forcing notion $P$}
is replaced by $P\in\Gamma$.
\item We consider Thm. \ref{Thm:mainthm-5} weaker than Thm. \ref{thm:mainthm1} or Corollary
\ref{cor:maincor}, because
in Corollary \ref{cor:maincor} one can choose the theory $T$ to be inconsistent with 
$\MM^{++}$ without hampering its conclusion (for example $T$ could satisfy $\mathsf{CH}$,
a statement denied by $\MM^{++}$),
and because Corollary \ref{cor:maincor}\ref{cor:maincorC} holds for all forcing notions $P$
unlike Thm. \ref{Thm:mainthm-5}(\ref{Thm:mainthm-5-3}).
The key point separating these two results is that the signature $\sigma_{\omega_1}$ is too 
expressive and renders many statements incompatible with forcing axioms formalizable by existential 
(or even atomic)
$\sigma_{\omega_1}$-sentences
(for example such is the case for $\mathsf{CH}$).

%
%
\item\label{Rem:keyrem1-2}
A key distinction between the signature
$\sigma_{\omega_1}$ and the signature $\bp{\in}_{\bar{A}_2}$ considered in Thm. \ref{thm:mainthm1}
is that for any $T\supseteq\ZFC+$\emph{appropriate large cardinals} 
$\mathsf{CH}$ cannot be 
$T$-equivalent to a
$\Sigma_1$-sentence for $\bp{\in}_{\bar{A}_2}$ because $\CH$ is a statement 
which can change its truth value across
forcing extensions, while the universal $\bp{\in}_{\bar{A}_2}$-sentences 
maintain the same truth value across all forcing extensions of a model of $T$, by 
Thm. \ref{thm:mainthm1}(\ref{thm:mainthm1.4}).
On the other hand $\CH$ is $\ZFC_{\omega_1}$-equivalent to
an atomic $\sigma_{\omega_1}$-sentence.
$\neg\mathsf{CH}$  is the simplest example of the 
type of $\Pi_2$-sentences which exemplifies why
 Thm. \ref{Thm:mainthm-5}(\ref{Thm:mainthm-5-3}) is much weaker than
 Thm. \ref{thm:mainthm1}, and why Thm. \ref{thm:mainthm1} for the signature $\bp{\in}_{\bar{A}_2}$ needs a different 
 (and as we will see much more sophisticated) proof strategy than the one we use here to 
establish Theorems \ref{Thm:mainthm-1} and \ref{Thm:mainthm-5}.  
\end{enumerate}
\end{remark}

\section{Generic invariance results for signatures of second and third order arithmetic}\label{sec:geninv}

We collect here generic absoluteness results results needed to prove
Thm. \ref{thm:mainthm1}.
We prove all these results working in ``standard''
models of $\ZFC$, i.e. we assume the models are well-founded. This is a practice we already adopted
in Section \ref{sec:Hkappa+}. We leave to the reader to remove this unnecessary assumption.

\subsection{Universally Baire sets and generic absoluteness for second order number theory}\label{sec:genabssecordnumth}

We recall here the properties of universally Baire sets and the generic absoluteness results for second order 
number theory we need to prove Thm. \ref{thm:mainthm1}.

\begin{notation}
$\mathcal{A}\subseteq \bigcup_{n\in\omega}\pow{\kappa}^n$ is projectively closed
if it is closed under projections, finite unions, complementation, and permutations
(if $\sigma:n\to n$ is a permutation and $A\subseteq\pow{\kappa}^n$, 
$\hat{\sigma}[A]=\bp{(a_{\sigma(0)},\dots,a_{\sigma(n-1}):\, (a_0,\dots,a_{n-1})\in A}$).

Otherwise said, $\mathcal{A}$ 
is the class of lightface definable subsets of some signature on 
$\pow{\kappa}$.
\end{notation}

\subsection{Universally Baire sets}\label{subsec:univbaire}
Assuming large cardinals 
there is a very large sample of projectively closed families of subsets of $\pow{\omega}$ which are are ``simple'', 
hence it is natural to consider elements of these families
as atomic predicates. 

The exact definition of what is meant by a ``simple'' subset of $2^\omega$ 
is captured by the notion of universally Baire set.

Given a topological space $(X,\tau)$, $A\subseteq X$ is nowhere dense if its closure 
has a dense complement,
meager if it is the countable union of nowhere dense sets, with the Baire property if it 
has meager symmetric difference with
an open set.
Recall that $(X,\tau)$ is Polish if $\tau$ is a completely metrizable, separable topology on $X$.

\begin{definition}
(Feng, Magidor, Woodin) 
Given a Polish space $(X,\tau)$, $A\subseteq X$ is \emph{universally Baire} 
if for every compact Hausdorff space $(Y,\sigma)$ and
every continuous $f:Y\to X$ we have that $f^{-1}[A]$ has the Baire property in $Y$.

$\bool{UB}$ denotes the family of universally Baire subsets of $X$ for some Polish space $X$.
\end{definition}

We adopt the convention that $\mathsf{UB}$ denotes the class of universally Baire sets and of all elements of 
$\bigcup_{n\in\omega+1}(2^{\omega})^n$ (since the singleton of such elements are universally Baire sets).


The theorem below outlines three simple examples of projectively closed families of universally Baire sets
containing $2^\omega$.
\begin{theorem}\label{thm:UBsetsgenabs}
Let $T_0$ be the $\tau_\ST$-theory $\mathsf{ZFC_\ST}+$\emph{there are infinitely many 
Woodin cardinals and a measurable above}
and $T_1$ be the $\tau_\ST$-theory $\mathsf{ZFC_\ST}+$\emph{there are class many Woodin cardinals}.
\begin{enumerate}
\item \cite[Thm. 3.1.12, Thm. 3.1.19]{STATLARSON}
Assume $V$ models $T_0$. Then every projective subset of $2^\omega$ is universally Baire.
\item \cite[Thm. 3.3.3, Thm. 3.3.5, Thm. 3.3.6, Thm. 3.3.8, Thm. 3.3.13, Thm. 3.3.14]{STATLARSON}
Assume $V\models T_1$.
Then $\mathsf{UB}$ is projectively closed.
\end{enumerate}
\end{theorem}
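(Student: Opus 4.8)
The plan is to reduce both statements to the single nontrivial closure property, closure under \emph{projection}, since closure under complementation, finite unions, and coordinate permutations is immediate from the topological definition and needs no large cardinals. Indeed, if $f\colon Y\to X$ is continuous then $f^{-1}[X\setminus A]=Y\setminus f^{-1}[A]$ and $f^{-1}[A\cup B]=f^{-1}[A]\cup f^{-1}[B]$, while the sets with the Baire property in $Y$ form a $\sigma$-algebra; a permutation of coordinates is a homeomorphism of the ambient product space. Hence none of these three operations can take us outside $\mathsf{UB}$. The entire weight of the theorem therefore rests on showing that the projection $\exists^{\mathbb R}A=\{x:\exists y\,(x,y)\in A\}$ of a universally Baire set $A$ is again universally Baire.

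For this I would pass to the tree (Suslin) reformulation of universal Baireness due to Feng, Magidor, and Woodin: $A$ is universally Baire if and only if for every cardinal $\kappa$ there are trees $S,T$ with $A=p[S]$, $X\setminus A=p[T]$, and with $p[S],p[T]$ remaining complementary in every forcing extension by a poset of size at most $\kappa$. Projecting out the witnessing coordinate of $S$ at once yields a tree projecting to $\exists^{\mathbb R}A$, so the projection is always Suslin; the difficulty is exhibiting a tree for its complement $\forall^{\mathbb R}(X\setminus A)$ that still projects correctly in all generic extensions. This is exactly where the Woodin cardinals enter, through the Martin--Steel and Martin--Solovay analysis: a universally Baire set below a Woodin cardinal is weakly homogeneously Suslin, projections of weakly homogeneously Suslin sets stay weakly homogeneously Suslin, and the Martin--Solovay construction converts a weakly homogeneous representation into a homogeneous representation of the complement using a Woodin cardinal above. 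With \emph{class many} Woodin cardinals one always finds a Woodin beyond any given $\kappa$, so these representations can be manufactured uniformly at every level; this is precisely the content packaged in \cite[Thm. 3.3.3, Thm. 3.3.5, Thm. 3.3.6, Thm. 3.3.8, Thm. 3.3.13, Thm. 3.3.14]{STATLARSON}. Assembling them gives that under $T_1$ every universally Baire set is $\infty$-homogeneously Suslin and that this class is closed under projection, yielding item (2).

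For item (1) I would run the same engine as an induction up the projective hierarchy. Borel and analytic sets are universally Baire already in $\ZFC$, and a measurable cardinal makes $\Pi^1_1$ sets homogeneously Suslin. Granting $n$ Woodin cardinals with a measurable above, the Martin--Steel theorem propagates homogeneous Suslin-ness one projective level at a time: the projection of a homogeneously Suslin set is weakly homogeneously Suslin, hence Suslin, and an application of Martin--Solovay with a further Woodin converts its complement back into a homogeneously Suslin set, the measurable at the top supplying the countably complete measures that seed and sustain the tower. Since $T_0$ provides infinitely many Woodins beneath the measurable, the induction passes through every finite projective level, so every projective subset of $2^\omega$ is homogeneously Suslin and therefore universally Baire, which is \cite[Thm. 3.1.12, Thm. 3.1.19]{STATLARSON}.

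The main obstacle, in both parts, is the asymmetry between projection and co-projection: Suslin-ness is trivially preserved under projection but \emph{not} under complementation, and it is only the large-cardinal-driven Martin--Steel and Martin--Solovay machinery that restores the missing direction by producing homogeneous trees for complements. Everything else is bookkeeping with tree representations and the elementary stability of the Baire property under continuous preimages, so the proof ultimately amounts to citing the relevant theorems of \cite{STATLARSON} and recording that the three Boolean and permutation operations are handled directly by the definition.
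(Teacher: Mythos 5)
Your proposal is correct, and it matches the paper's treatment: the paper offers no proof of this theorem at all, delegating everything to the cited results of Larson's book, and your sketch is an accurate account of exactly what those citations contain --- the reduction of projective closure to closure under projection, the tree reformulation of universal Baireness, and the Martin--Steel/Martin--Solovay passage through (weakly) homogeneously Suslin representations with Woodin cardinals supplying trees for complements. Nothing further is needed.
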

%

To proceed further we now list the standard facts about universally Baire sets we will need:
 
 \begin{enumerate}
 \item\label{itm1:charUBsets} \cite[Thm. 32.22]{JECHST}
 $A\subseteq 2^{\omega}$ is universally Baire if and only if for each forcing notion $P$ there are 
 trees $T_A,S_A$ on $\omega\times\delta$ for some $\delta> |P|$
 such that $A=p[[T_A]]$ (where $p:(2\times\kappa)^\omega\to 2^{\omega}$ denotes the projection on the first component and $[T]$ denotes the body of the tree $T$), and
 \[
P\Vdash T_A\text{ and }S_A\text{ project to complements},
\]
by this meaning that for all $G$ $V$-generic for $P$
\[
V[G]\models (p[[T_A]]\cap p[[S_A]]=\emptyset)\wedge (p[[T_A]]\cup p[[S_A]]=(2^\omega)^{V[G]})
\]
\item 
Any two Polish spaces $X,Y$ of the same cardinality are Borel isomorphic \cite[Thm. 15.6]{kechris:descriptive}. 
\item 
Any Polish space is Borel isomorphic to a Borel subset of $[0;1]^\omega$ \cite[Thm. 4.14]{kechris:descriptive}, hence also to a Borel subset of $2^\omega$ (by the previous item).
 \item
Given $\phi:\mathbb{N}\to\mathbb{N}$, $\prod_{n\in\omega}2^{\phi(n)}$ is Polish
(it is actually homemomorphic to the union of $2^\omega$ with a countable Hausdorff space) 
\cite[Thm. 6.4, Thm. 7.4]{kechris:descriptive}.
\end{enumerate}

Hence it is not restrictive to focus just on universally Baire subsets of $2^\omega$ and of its
countable products, which is what we will do in the sequel.

\begin{notation}\label{not:notUBsetsVG}
Given $G$ a $V$-generic filter for some forcing $P\in V$, $A\in \UB^{V[G]}$ and
$H$ $V[G]$-generic filter for some forcing $Q\in V[G]$, 
\[
A^{V[G][H]}=\bp{r\in (2^\omega)^{V[G][H]}: V[G][H]\models r\in p[[T_A]]},
\]
where $(T_A,S_A)\in V[G]$ is any pair of trees as given in item \ref{itm1:charUBsets} above
such that $p[[T_A]]=A$ holds in $V[G]$,
and $(T_A,S_A)$ project to complements in $V[G][H]$.
\end{notation}

\subsection{Generic absoluteness for second order number theory}\label{subsec:genabssecnumth}

The following generic absoluteness result is the key to establish
Thm. \ref{thm:mainthm1}(\ref{thm:mainthm1.4}) for the signature $A_1$.



We decide to include a full proof of Woodin's generic absoluteness results
for second order number theory we use in this paper. The version we need
follows readily from \cite[Thm. 3.1.2]{STATLARSON} and the assumptions that there exists
class many Woodin limits of Woodin; here we reduce these large cardinal assumptions to the existence of class many Woodin cardinals, while providing an alternative approach
to the proof of some of these result.
The theorem below is an improvement of  \cite[Thm. 3.1]{VIAMMREV}.

\begin{theorem}\label{thm:genabshomega1}
Assume in $V$ there are class many Woodin cardinals. Let $\mathcal{A}\in V$ be a 
family of universally Baire sets of $V$ and $\tau_{\mathcal{A}}=\tau_{\ST}\cup\mathcal{A}$. Let $G$ be $V$-generic for some forcing notion $P\in V$.

Then 
\[
(H_{\omega_1},\tau_{\mathcal{A}}^V)\prec(H_{\omega_1}^{V[G]},\tau_{\ST}^{V[G]},A^{V[G]}:A\in\mathcal{A}).
\]
\end{theorem}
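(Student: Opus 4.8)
The plan is to prove full elementarity by a symmetric \emph{two embeddings into a common bridge model} argument built on Woodin's countable stationary tower forcing $\mathbb{P}_{<\delta}$. Since elementarity is checked one formula at a time and each formula mentions only finitely many predicates, it suffices to fix a finite tuple $\vec{A}=(A_1,\dots,A_k)$ from $\mathcal{A}$ and, using that there are class many Woodin cardinals, to pick a Woodin $\delta$ with $|P|<\delta$. For each $A_i$, being universally Baire already supplies (for the poset $\mathbb{P}_{<\delta}$) a pair of trees $T_{A_i},S_{A_i}$ projecting to complements in the sense of item \ref{itm1:charUBsets}, so the reinterpretation of Notation \ref{not:notUBsetsVG} is available. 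The point is that we never need a single embedding to handle all of $\mathsf{UB}$ at once: treating arbitrary $P$ and arbitrary finite parameter tuples forces us only to have Woodins cofinally in the ordinals, which is exactly what lets one replace the ``Woodin limits of Woodin'' assumption of \cite{STATLARSON} by plain class many Woodins.

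First I would isolate the key local lemma: if $K$ is $V$-generic for $\mathbb{P}_{<\delta}$ and $j\colon V\to M\subseteq V[K]$ is the induced generic elementary embedding, then (i) $\crit(j)=\omega_1^V$, so $j$ fixes $H_{\omega_1}^V$ pointwise and $H_{\omega_1}^V\subseteq M$; (ii) $M^{\omega}\cap V[K]\subseteq M$, whence $\mathbb{R}^M=\mathbb{R}^{V[K]}$ and $H_{\omega_1}^M=H_{\omega_1}^{V[K]}$; and (iii) for each $i$, $j(A_i)=A_i^{V[K]}$, the canonical reinterpretation of $A_i$ in $V[K]$ via its trees. Granting (i)--(iii), elementarity of $j$ sends $(H_{\omega_1}^V,\in,\vec{A})$ elementarily into $(H_{\omega_1}^{M},\in,j(\vec A))=(H_{\omega_1}^{V[K]},\in,\vec A^{V[K]})$ while fixing the parameters, so that for every formula $\phi$ and every $\vec{a}\in (H_{\omega_1}^V)^{<\omega}$,
\[
(H_{\omega_1}^V,\in,\vec{A})\models\phi(\vec{a})\iff (H_{\omega_1}^{V[K]},\in,\vec{A}^{V[K]})\models\phi(\vec{a}).
\]
The tree representation also yields $A_i^{V[K]}\cap\mathbb{R}^V=A_i$ by absoluteness of wellfoundedness, so $(H_{\omega_1}^V,\tau_{\mathcal A}^V)$ is genuinely a substructure, not merely elementarily equivalent.

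The second ingredient is the absorption property of the stationary tower (\cite{STATLARSON}): forcing with $P$ followed by the appropriate quotient is equivalent to forcing with $\mathbb{P}_{<\delta}$ directly. Concretely I would choose one $V$-generic $K$ for $\mathbb{P}_{<\delta}$ and factor $V[K]=V[G][K']$ with $K'$ a $V[G]$-generic for $\mathbb{P}_{<\delta}^{V[G]}$, using that $\delta$ stays Woodin in $V[G]$ because $|P|<\delta$ and that the $A_i$ stay universally Baire there. Applying the local lemma once over $V$ and once over $V[G]$, and noting that the reinterpretation operation composes so that $(A_i^{V[G]})^{V[K]}=A_i^{V[K]}$, produces the two equivalences
\[
(H_{\omega_1}^V,\in,\vec A)\equiv (H_{\omega_1}^{V[K]},\in,\vec A^{V[K]})\equiv (H_{\omega_1}^{V[G]},\in,\vec A^{V[G]}),
\]
with all parameters drawn from $H_{\omega_1}^V$; chaining them gives the theorem.

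The hard part will be clause (iii) of the local lemma, the \emph{$\mathsf{UB}$-correctness} of the generic embedding: that $j$ computes $A_i$ through its tree representation as the reinterpretation $A_i^{V[K]}$ rather than as some other set. This is where one must combine absoluteness of wellfoundedness for $T_{A_i},S_{A_i}$ with the fact that below a Woodin $\delta$ these trees continue to project to complements after $\mathbb{P}_{<\delta}$, so that $p[j(T_{A_i})]^M=p[T_{A_i}]^{V[K]}$ and the complementary tree captures its complement. Verifying that $j[T_{A_i}]$ and $j(T_{A_i})$ yield the same projection as $T_{A_i}$ computes in $V[K]$ is the genuinely delicate point, and the place where the argument of \cite{STATLARSON} must be adapted to run from class many Woodins alone.
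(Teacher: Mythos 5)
Your proposal is correct, and it takes a genuinely different route from the paper. The paper argues by induction on quantifier complexity: it proves $(H_{\omega_1}^{V[G]},\dots)\prec_n (H_{\omega_1}^{V[G][H]},\dots)$ for all pairs of successive extensions, and at the successor step it closes a triangle whose long edge $V[\bar G]\to V[\bar G][K]$ (with $K$ generic for the countable tower $\tow{T}^{\omega_1}_\gamma$ over $V[\bar G]$, chosen to absorb $\bar H$) is imported as a black box of \emph{full} elementarity from \cite[Thm.~2.7.7, Thm.~2.7.8]{STATLARSON}, while the two short edges are only $\Sigma_n$ by the inductive hypothesis. You instead open that black box: your local lemma derives the full elementarity of the tower edge directly from $\crit(j)=\omega_1^V$, the $\omega$-closure of the generic ultrapower (so $H_{\omega_1}^M=H_{\omega_1}^{V[K]}$), and the $\mathsf{UB}$-correctness $j(A_i)=A_i^{V[K]}$; you then apply it twice --- once over $V$ and once over $V[G]$, with the absorption theorem providing a common target --- and squeeze the substructure inclusion $H_{\omega_1}^{V}\subseteq H_{\omega_1}^{V[G]}$ between two fully elementary maps into the same $(H_{\omega_1}^{V[K]},\vec A^{V[K]})$, which eliminates the induction entirely. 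What you pay for is exactly the point you flag as delicate: verifying $j(A_i)=A_i^{V[K]}$ via $j[T_{A_i}]\subseteq j(T_{A_i})$, $j[S_{A_i}]\subseteq j(S_{A_i})$, and the fact that the images still project to complements --- but this is precisely the tree computation the paper itself carries out in full (for generic iterations rather than tower embeddings) in the proof of Lemma \ref{lem:UBcorr}, so it is available with no new ideas; note also that the paper's cited black box implicitly contains the same verification for the $\mathsf{UB}$ predicates. Two minor points: the countable tower is $\mathbb{Q}_{<\delta}$ in Larson's notation rather than $\mathbb{P}_{<\delta}$ (you need the countable tower to guarantee $\crit(j)=\omega_1^V$ unconditionally), and the absorption theorem is usually stated in the direction ``given $G\ast K'$ produce a $V$-generic $K$ with the same reals'' rather than as a literal factoring of a given $K$, but either formulation supplies the common top model your chaining requires.
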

\begin{proof}
%
%
We proceed by induction on $n$ to prove the following stronger assertion:

\begin{claim}
Whenever $G$ is $V$-generic for some forcing notion $P$ in $V$ 
and $H$ is $V[G]$-generic for some forcing notion $Q$ in $V[G]$
\[
(H_{\omega_1}^{V[G]},\tau_{\ST}^{V[G]},A^{V[G]}: A\in\mathcal{A})\prec_n 
(H_{\omega_1}^{V[G][H]},\tau_{\ST}^{V[G][H]},A^{V[G][H]}: A\in\mathcal{A}).
\]
\end{claim}
\begin{proof}
It is not hard to 
check that for all  $A\in \mathcal{A}$, $A^{V[G]}=A^{V[G][H]}\cap V[G]$ 
(choose in $V$ a pair of trees $(T,S)$ such that $A=p[[T]]$ and the pair $(T,S)$ projects to complements in 
$V[G][H]$, and therefore also in $V[G]$).
Therefore  
$(H_{\omega_1}^{V[G]},\tau_{\ST}^{V[G]},A^{V[G]}: A\in\mathcal{A})$ is a
$\tau_{\mathcal{A}}$-substructure of  $(H_{\omega_1}^{V[G][H]},\tau_{\ST}^{V[G][H]},A^{V[G][H]}: A\in\mathcal{A})$.

This proves the base case of the induction.

We prove the successor step.

Assume that for any $G$ $V$-generic for some forcing $P\in V$ and $H$ $V[G]$-generic for some forcing $Q\in V[G]$
\[
(H_{\omega_1}^{V[G]},\tau_{\ST}^{V[G]},A^{V[G]}: A\in\mathcal{A})\prec_n 
(H_{\omega_1}^{V[G][H]},\tau_{\ST}^{V[G][H]},A^{V[G][H]}: A\in\mathcal{A}).
\]	
Fix $\bar{G}$ and $\bar{H}$ as in the assumptions of the Claim as witnessed by forcings $\bar{P}\in V$ and 
$\bar{Q}\in V[\bar{G}]$. 

We want to show that 
\[
		 (H_{\omega_1}^{V[\bar{G}]},\tau_{\ST}^{V[\bar{G}]},A^{V[\bar{G}]}: A\in\mathcal{A})\prec_{n+1} 
		 (H_{\omega_1}^{V[\bar{G}][\bar{H}]},\tau_{\ST}^{V[\bar{G}][\bar{H}]},A^{V[\bar{G}][\bar{H}]}: A\in\mathcal{A}).
\]
	
Let $\gamma$ be a Woodin cardinal of $V$ such that $\bar{P}\ast\dot{\bar{Q}}\in V_\gamma$ 
(where $\dot{\bar{Q}}\in V^P$ is chosen so that $\dot{\bar{Q}}_G=\bar{Q}$).

Then $\gamma$ is Woodin also in $V[\bar{G}]$. Let $K$ be $V[\bar{G}]$-generic for\footnote{$\tow{T}^{\omega_1}_\gamma$ denotes here the countable tower of height $\gamma$ denoted as $\mathbb{Q}_{<\gamma}$ in \cite[Section 2.7]{STATLARSON}.}  
$(\tow{T}^{\omega_1}_\gamma)^{V[\bar{G}]}$
with $\bar{H}\in V[K]$, so that $V[\bar{G}][K]=V[\bar{G}][\bar{H}][\bar{K}]$ for some 
$\bar{K}\in V[\bar{G}][K]$.	
	
		 Hence we have the following diagram:
		\[
			\begin{tikzpicture}[xscale=1.3,yscale=-0.6]
				\node (A0_0) at (0, 0) {$(H_{\omega_1}^{V[\bar{G}]},\tau_{\ST}^{V[\bar{G}]},A^{V[\bar{G}]}: A\in\mathcal{A})$};
				\node (A0_2) at (6, 0) {$(H_{\omega_1}^{V[\bar{G}][K]},\tau_{\ST}^{V[\bar{G}][K]},
				A^{V[\bar{G}][K]}: A\in\mathcal{A})$};
				\node (A1_1) at (3, 3) {$(H_{\omega_1}^{V[\bar{G}][\bar{H}]},
				\tau_{\ST}^{V[\bar{G}][\bar{H}]},A^{V[\bar{G}][\bar{H}]}: A\in\mathcal{A})$};
				\path (A0_0) edge [->]node [auto] {$\scriptstyle{\Sigma_\omega}$} (A0_2);
				\path (A1_1) edge [->]node [auto,swap] {$\scriptstyle{\Sigma_{n}}$} (A0_2);
				\path (A0_0) edge [->]node [auto,swap] {$\scriptstyle{\Sigma_{n}}$} (A1_1);
			\end{tikzpicture}
		\]
		obtained by inductive hypothesis applied both on $V[\bar{G}]$, $V[\bar{G}][\bar{H}]$ and on 
		$V[\bar{G}][\bar{H}]$, $V[\bar{G}][\bar{H}][\bar{K}]$, and using the fact that 
		$(H_{\omega_1}^{V[\bar{G}][K]},\tau_{\UB^{V[\bar{G}]}}^{V[\bar{G}][K]})$ 
		is a fully elementary 
		superstructure of $(H_{\omega_1}^{V[\bar{G}]},\tau_{\UB^{V[\bar{G}]}}^{V[\bar{G}]})$ \cite[Thm. 2.7.7, Thm. 2.7.8]{STATLARSON}.

		Let $\phi \equiv \exists x \psi(x)$ be any $\Sigma_{n+1}$ formula for $\tau_{\mathcal{A}}$
		with parameters in $H_{\omega_1}^{V[\bar{G}]}$.
		First suppose that $\phi$ holds in $(H_{\omega_1}^{V[\bar{G}]},\tau_{\ST}^{V[\bar{G}]},A^{V[\bar{G}]}: A\in\mathcal{A})$, 
		and fix $\bar{a} \in V[\bar{G}]$ such that $\psi(\bar{a})$ holds
		in $(H_{\omega_1}^{V[\bar{G}]},\tau_{\ST}^{V[\bar{G}]},A^{V[\bar{G}]}: A\in\mathcal{A})$. 
		Since 
		\[
		(H_{\omega_1}^{V[\bar{G}]},\tau_{\ST}^{V[\bar{G}]},A^{V[\bar{G}]}: A\in\mathcal{A})\prec_n
		(H_{\omega_1}^{V[\bar{G}][\bar{H}]},\tau_{\ST}^{V[\bar{G}][\bar{H}]},A^{V[\bar{G}][\bar{H}]}: A\in\mathcal{A}),
		\]
		we conclude that $\psi(\bar{a})$ holds
		in $(H_{\omega_1}^{V[\bar{G}][\bar{H}]},\tau_{\ST}^{V[\bar{G}][\bar{H}]},A^{V[\bar{G}][\bar{H}]}: A\in\mathcal{A})$, 
		hence so does
		$\phi$.
		
		Now suppose that $\phi$ holds in 
		$(H_{\omega_1}^{V[\bar{G}][\bar{H}]},\tau_{\ST}^{V[\bar{G}][\bar{H}]},A^{V[\bar{G}][\bar{H}]}: A\in\mathcal{A})$
		 as witnessed by $\bar{a} \in H_{\omega_1}^{V[\bar{G}][\bar{H}]}$. 
		 
		 Since 
		 \[
		 (H_{\omega_1}^{V[\bar{G}][\bar{H}]},\tau_{\ST}^{V[\bar{G}][\bar{H}]},A^{V[\bar{G}][\bar{H}]}: A\in\mathcal{A})
		 \prec_n
		 (H_{\omega_1}^{V[\bar{G}][K]},\tau_{\ST}^{V[\bar{G}][K]},A^{V[\bar{G}][K]}: A\in\mathcal{A}),
		 \]
		 it follows that $\psi(\bar{a})$ holds in 
		 $(H_{\omega_1}^{V[\bar{G}][K]},\tau_{\ST}^{V[\bar{G}][K]},A^{V[\bar{G}][K]}: A\in\mathcal{A})$, 
		 hence so does $\phi$. 
		 Since 
		 \[
		 (H_{\omega_1}^{V[\bar{G}]},\tau_{\ST}^{V[\bar{G}]},A^{V[\bar{G}]}: A\in\mathcal{A})\prec
		 (H_{\omega_1}^{V[\bar{G}][K]},\tau_{\ST}^{V[\bar{G}][K]},A^{V[\bar{G}][K]}: A\in\mathcal{A}),
		 \]
		the formula $\phi$ holds also in $(H_{\omega_1}^{V[\bar{G}]},\tau_{\ST}^{V[\bar{G}]},A^{V[\bar{G}]}: A\in\mathcal{A})$.
		
		Since $\phi$ is arbitrary, this shows that 
		\[
		 (H_{\omega_1}^{V[\bar{G}][\bar{H}]},\tau_{\ST}^{V[\bar{G}]},A^{V[\bar{G}]}: A\in\mathcal{A})\prec_{n+1}
		 (H_{\omega_1}^{V[\bar{G}][\bar{H}]},\tau_{\ST}^{V[\bar{G}][\bar{H}]}, A^{V[\bar{G}][\bar{H}]}: A\in\mathcal{A}),
		 \]
		concluding the proof of the inductive step for $\bar{G}$ and $\bar{H}$.
		
		Since we have class many Woodin,
		this argument is modular in $\bar{G},\bar{H}$ as in the assumptions of the inductive step,
		because we can always find some Woodin cardinal $\gamma$
		of $V$ which remains Woodin in $V[\bar{G}]$ and is of size larger than the poset 
		in $V[\bar{G}]$ for which 
		$\bar{H}$ is $V[\bar{G}]$-generic.
		The proof of the inductive step is completed.
		\end{proof}
%

\end{proof}

\subsection{Generic invariance for the universal fragment of the theory of $V$ with predicates for the non-stationary ideal and for universally Baire sets}\label{subsec:geninvtoa}

The results of this section are the key to establish
Thm. \ref{thm:mainthm1}(\ref{thm:mainthm1.4}) for the signature $A_1$.
The proofs require some familiarity with the basics of the $\Pmax$-technology and 
with Woodin's stationary tower forcing.

\begin{notation}\label{not:keynotation-genabsthordar}
\emph{}

\begin{itemize}
\item
$\tau_{\NS_{\omega_1}}$ is the signature $\tau_\ST\cup\bp{\omega_1}\cup\bp{\NS_{\omega_1}}$ with $\omega_1$ 
a constant symbol, $\NS_{\omega_1}$ a unary predicate symbol.

%
\item
$T_{\NS_{\omega_1}}$ is the $\tau_{\NS_{\omega_1}}$-theory
given by $T_\ST$ together with the axioms
\[
\omega_1\text{ is the first uncountable cardinal},
\]
\[
\forall x\;[(x\subseteq\omega_1\text{ is non-stationary})\leftrightarrow\NS_{\omega_1}(x)].
\]

\item
$\ZFC^-_{\NS_{\omega_1}}$ is the $\tau_{\NS_{\omega_1}}$-theory 
\[
\ZFC^-_\ST+T_{\NS_{\omega_1}}.
\]
\item
Accordingly we define $\ZFC_{\NS_{\omega_1}}$.
\end{itemize}
\end{notation}

The following is the key to establish
Thm. \ref{thm:mainthm1}(\ref{thm:mainthm1.4}) for the signature $A_2$.

\begin{Theorem}\label{thm:PI1invomega2}
Assume $(V,\in)$ models $\ZFC+$\emph{ there are class many Woodin cardinals}.
Then the $\Pi_1$-theory of $V$ for the language 
$\tau_{\NS_{\omega_1}}\cup\UB^V$ is invariant under set sized forcings\footnote{Here we consider any $A\subseteq (2^\omega)^k$ in $\UB^V$ as a predicate symbol of arity $k$.}.
\end{Theorem}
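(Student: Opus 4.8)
We must show that if $V\models\ZFC+$ "there are class many Woodin cardinals", then the $\Pi_1$-theory of $V$ in the signature $\tau_{\NS_{\omega_1}}\cup\UB^V$ is invariant under set-sized forcing. A $\Pi_1$-sentence in this signature has the form $\forall \vec{x}\,\phi(\vec{x})$ with $\phi$ quantifier-free in the expanded signature; the atomic subformulae involve the $\Delta_0$-predicates/functions of $\tau_\ST$, the constant $\omega_1$, the unary predicate $\NS_{\omega_1}$ (interpreted as the non-stationary subsets of $\omega_1$), and the predicates $A\in\UB^V$ (each an assertion $\vec{r}\in A$ for reals $\vec r$). Since the free variables range over all of $V$, I need to understand how $\NS_{\omega_1}$ and the universally Baire predicates are reinterpreted in a forcing extension $V[G]$.

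**The plan.** The natural strategy is to pass through the generic absoluteness for second-order number theory already proved in Theorem~\ref{thm:genabshomega1}. First I would observe that a $\Pi_1$-sentence fails in $V$ iff its negation, an existential sentence $\exists\vec{x}\,\psi(\vec x)$, holds in $V$; so it suffices to show that the $\Sigma_1$-theory is invariant, i.e.\ each such existential sentence holds in $V$ iff it holds in $V[G]$. The key reduction is that any witness $\vec x$ to an existential statement in this signature is, modulo the coding machinery, captured by countably much information: the atomic predicates in $\UB^V$ only constrain the \emph{real} parameters appearing in $\vec x$, and the predicate $\NS_{\omega_1}$ together with $\Delta_0$-formulae about $\omega_1$ constrains only the hereditarily countable data and the stationary structure of $\omega_1$. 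I would argue that the truth of a quantifier-free formula in $\tau_{\NS_{\omega_1}}\cup\UB^V$ with parameters witnessing an existential statement can be reflected down to, and read off from, the structure $H_{\omega_2}$ together with $\UB^V$-predicates and the non-stationary ideal. Then the problem becomes one about the theory of $H_{\omega_2}^{V[G]}$ with these predicates.

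**Key steps, in order.** (1) Reduce $\Pi_1$-invariance to $\Sigma_1$-invariance, and reduce a $\Sigma_1$-assertion to one asserting the existence of a witness in $H_{\omega_2}$, using that the quantifier-free matrix only involves $\Delta_0$-predicates, $\omega_1$, $\NS_{\omega_1}$, and membership in universally Baire sets. (2) Since $\NS_{\omega_1}$-membership and $\Sigma_1$-statements over $H_{\omega_2}$ can be analyzed via $\Pmax$-style absoluteness, I would invoke Woodin's theorem that under class many Woodin cardinals the $\Sigma_1$-theory of $\langle H_{\omega_2},\in,\NS_{\omega_1},A:A\in\UB\rangle$ is forcing-invariant; the non-stationary predicate is handled by the stationary tower, which makes a given stationary set reflect its nonstationarity absolutely while the universally Baire predicates are handled by their tree representations as in Notation~\ref{not:notUBsetsVG} and the invariance of $A^{V[G]}\cap V = A$. (3) Combine Theorem~\ref{thm:genabshomega1} (full elementarity for $H_{\omega_1}$ with $\UB$-predicates) with the stationary-tower argument used in Theorem~\ref{thm:genabshomega1}'s proof to lift elementarity from $H_{\omega_1}$ to the $\Sigma_1$-level of $H_{\omega_2}$ in the presence of the $\NS_{\omega_1}$-predicate.

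**The main obstacle.** The hard part will be step (2): controlling the predicate $\NS_{\omega_1}$ across forcing. Universally Baire predicates behave well because their interpretations $A^{V[G]}$ are determined by absolutely-projecting trees, so $A^{V[G]}\cap V=A$ and membership is upward/downward absolute; this is essentially immediate from the base case in the proof of Theorem~\ref{thm:genabshomega1}. But $\NS_{\omega_1}$ is genuinely sensitive to forcing: a set can be stationary in $V$ and non-stationary in $V[G]$ if the forcing kills its stationarity. The resolution must be that a \emph{$\Pi_1$} sentence only asserts that \emph{all} $x$ satisfy $\phi$, and the problematic direction is preserving truth \emph{upward} into $V[G]$; here I expect to use that the stationary tower forcing used in Theorem~\ref{thm:genabshomega1} generically maps the $H_{\omega_1}$-structure elementarily while a further stationary-tower embedding absorbs any set-forcing $G$, so that stationarity assertions witnessing a $\Sigma_1$-failure can be traced through the generic elementary embedding $j$ whose critical point is above $\omega_1$, keeping $\NS_{\omega_1}$ correctly computed in the target. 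Making precise that the relevant existential witnesses survive the tower embedding, and that the tower embedding computes $\NS_{\omega_1}$ and the $\UB$-predicates correctly on both sides, is where the real care is needed, and it is exactly the point requiring fluency with the $\Pmax$- and stationary-tower technology flagged in the section preamble.
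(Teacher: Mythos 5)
Your reduction via Levy absoluteness is fine, and you correctly isolate the obstacle --- the predicate $\NS_{\omega_1}$ is not absolute under forcing --- but you do not actually overcome it. Step (2) invokes ``Woodin's theorem that under class many Woodin cardinals the $\Sigma_1$-theory of $(H_{\omega_2},\in,\NS_{\omega_1},A:A\in\UB)$ is forcing-invariant''; after your Levy-absoluteness reduction this is precisely the statement to be proved, so the argument is circular unless you supply an independent proof of that invariance. The closing gesture at the stationary tower is too vague to fill the gap and also misdescribes the mechanism: the countable tower embedding used here has critical point exactly $\omega_1^V$ (it sends $\omega_1^V$ to the Woodin cardinal), not ``above $\omega_1$'', and the tower embedding by itself does nothing to make the non-stationary ideal of a substructure agree with that of the ambient model. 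Note also that both directions are delicate, not just the upward one: stationarity is not upward absolute, so a $\Sigma_1$-witness involving $\neg\NS_{\omega_1}(x)$ can be destroyed by forcing.

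What is actually needed, and what the paper's proof supplies, is the following chain. (i) Given a $\Pi_1$-sentence $\phi$ true in $V$ but failing in $V[h]$, first force further with a stationary set preserving poset below a Woodin cardinal to reach $V[g]\supseteq V[h]$ in which $\NS_{\omega_1}$ is saturated (Shelah); $\neg\phi$, being $\Sigma_1$, survives to $V[g]$. (ii) Force over $V$ with the countable stationary tower $\mathbb{Q}_{<\gamma}$ for a larger Woodin $\gamma$, obtaining $j_G:V\to\Ult(V,G)$ with $g\in V[G]$, so that $V_\delta[g]$ becomes, inside $\Ult(V,G)$, an iterable structure: its $\NS_{\omega_1}$ is precipitous (by saturation) and it is $B^{V[G]}$-iterable for every $B\in\UB^V$. (iii) Generically iterate $V_\delta[g]$ of length $\omega_1^{\Ult(V,G)}$ (Lemma~2.8 of the $\Pmax$ book) so that the final iterate computes $\NS_{\omega_1}$ exactly as $\Ult(V,G)$ does; together with $B$-iterability this makes it a $\tau_{\NS_{\omega_1}}\cup\UB^V$-substructure of $\Ult(V,G)$ satisfying $\neg\phi$. (iv) Upward absoluteness of $\Sigma_1$-sentences plus elementarity of $j_G$ then contradicts $V\models\phi$. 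A separate, similar argument with the full stationary tower (critical point $\omega_1^V$, using $V_\gamma^{\Ult(V,g)}=V_\gamma^{V[g]}$) handles preservation of $\Sigma_1$-sentences. None of (i)--(iii) --- in particular the saturation step that makes generic iterations available, and the iteration that aligns the two non-stationary ideals --- appears in your proposal, and these are the substance of the proof.
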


Asper\'o and Veli\v{c}kovi\`c provided the following basic counterexample to the conclusion of the theorem
if large cardinal assumptions are dropped.
\begin{remark}
Let $\phi(y)$ be the $\Delta_1$-property in $\tau_{\NS_{\omega_1}}$
\[
\exists y (y=\omega_1 \wedge L_{y+1}\models y=\omega_1).
\]
Then $L$ models this property, while the property fails in any forcing extension of $L$ which collapses 
$\omega_1^L$ 
to become countable.
\end{remark}

In order to prove the Theorem we need to recall some basic terminology and facts about iterations of countable structures.
\subsubsection{Generic iterations of countable structures}
\begin{definition}\cite[Def. 1.2]{HSTLARSON}
Let $M$ be a transitive countable 
model of $\ZFC$. 
Let $\gamma$ be an ordinal less than or equal to $\omega_1$. 
An iteration $\mathcal{J}$ of $M$ of length $\gamma$ 
consists of models $\ap{M_\alpha:\,\alpha \leq\gamma}$, sets $\ap{G_\alpha:\,\alpha< \gamma}$ 
and a commuting family of elementary embeddings 
\[
\ap{j_{\alpha\beta}: M_\alpha\to M_\beta:\, \alpha\leq\beta\leq\gamma}
\]
such that:
\begin{itemize}
\item
$M_0 = M$,
\item
each $G_\alpha$ is an $M_\alpha$-generic filter for 
$(\pow{\omega_1}/\NS_{\omega_1})^{M_\alpha}$,
\item
each $j_{\alpha\alpha}$ is the identity mapping,
\item
each $j_{\alpha\alpha+1}$ is the ultrapower embedding induced by $G_\alpha$,
\item
for each limit ordinal $\beta\leq\gamma$,
$M_\beta$ is the direct limit of the system
$\bp{M_\alpha, j_{\alpha\delta} :\, \alpha\leq\delta<\beta}$, and for each $\alpha<\beta$, $j_{\alpha\beta}$ is the induced embedding.
\end{itemize}
\end{definition}

We adopt the convention to denote an iteration $\mathcal{J}$ just
by $\ap{j_{\alpha\beta}:\, \alpha\leq\beta\leq\gamma}$, we also stipulate that
if $X$ denotes the domain of $j_{0\alpha}$, $X_\alpha$ or $j_{0\alpha}(X)$ will denote
the domain of $j_{\alpha\beta}$ for any $\alpha\leq\beta\leq\gamma$.

\begin{definition}
Let $A$ be  a universally Baire sets of reals.
$M$ is $A$-iterable if:
\begin{enumerate}
\item $M$ is transitive and such that $H_{\omega_1}^M$ is countable.
\item 
$M\models\ZFC+\NS_{\omega_1}$\emph{ is precipitous}.
\item
Any iteration 
\[
\bp{j_{\alpha\beta}:\alpha\leq\beta\leq\gamma}
\] 
of $M$ is well founded and such that 
$A\cap M_\beta=j_{\alpha\beta}(A\cap M_0)$ for all $\beta\leq\gamma$.
\end{enumerate}
\end{definition}

\subsubsection{Proof of Theorem \ref{thm:PI1invomega2} }

\begin{proof}
Let $\phi$ be a $\Pi_1$-sentence for $\tau_{\NS_{\omega_1}}\cup\UB^V$ which holds in $V$.
Assume that for some forcing notion $P$, $\phi$ fails in $V[h]$ with $h$ $V$-generic for $P$.
By forcing over $V[h]$ with the appropriate stationary set preserving (in $V[h]$) 
forcing notion (using a Woodin cardinal $\gamma$ of $V[h]$), we may assume that $V[h]$ is extended to a
generic extension $V[g]$ such that $V[g]$ models $\NS_{\omega_1}$ is 
saturated\footnote{A result of Shelah whose outline can be found in \cite[Chapter XVI]{SHEPRO}, or \cite{woodinBOOK}, or in an \href{https://ivv5hpp.uni-muenster.de/u/rds/sat_ideal_better_version.pdf}{handout} of Schindler available on his webpage.}.
Since $V[g]$ is an extension of $V[h]$ by a stationary set preserving forcing and there are in $V[h]$ class many Woodin cardinals, we get that
$V[h]\sqsubseteq V[g]$ with respect to the signature $\tau_{\NS_{\omega_1}}\cup\UB^V$.
Since $\Sigma_1$-properties are upward absolute and $\neg\phi$ holds in $V[h]$, 
$\phi$ fails in $V[g]$ as well.

Let $\delta$ be inaccessible in $V[g]$ and let $\gamma>\delta$ be a Woodin cardinal.

Let $G$ be $V$-generic for $\tow{T}^{\omega_1}_\gamma$ (the countable tower $\mathbb{Q}_{<\gamma}$ according to \cite[Section 2.7]{STATLARSON})
and such that  $g\in V[G]$.
Let $j_G:V\to\Ult(V,G)$ be the induced ultrapower embedding.

Now remark that $V_\delta[g]\in \Ult(V,G)$ is $B^{V[G]}$-iterable for all 
$B\in \mathsf{UB}^{V}$ (since 
$V_\eta[g]\in \Ult(V,G)$ for all $\eta<\gamma$, and this suffices to check that $V_\delta[g]$ 
is $B^{V[G]}$-iterable for all $B\in\mathsf{UB}^V$, see \cite[Thm. 4.10]{HSTLARSON}).

By \cite[Lemma 2.8]{HSTLARSON} applied in $\Ult(V,G)$, there exists in $\Ult(V,G)$ an iteration 
$\mathcal{J}=\bp{j_{\alpha\beta}:\alpha\leq\beta\leq\gamma=\omega_1^{\Ult(V,G)}}$ of 
$V_\delta[g]$ such that
$\NS_{\omega_1}^{X_{\gamma}}=\NS_{\omega_1}^{\Ult(V,G)}\cap X_{\gamma}$, where 
$X_\alpha=j_{0\alpha}(V_\delta[g])$ for all $\alpha\leq\gamma=\omega_1^{\Ult(V,G)}$.

This gives that $X_{\gamma}\sqsubseteq \Ult(V,G)$ for $\tau_{\NS_{\omega_1}}\cup\UB^V$.
Since $V_\delta[g]\models\neg\phi$, so does $X_{\gamma}$, by elementarity.
But $\neg\phi$ is a $\Sigma_1$-sentence, hence it is upward absolute for superstructures, therefore
$\Ult(V,G)\models\neg\phi$. This is a contradiction, since $\Ult(V,G)$ is elementarily equivalent to $V$ for
$\tau_{\NS_{\omega_1}}\cup\UB^V$, and $V\models\phi$.

\smallskip

A similar argument shows that if $V$ models a $\Sigma_1$-sentence $\phi$ for 
$\tau_{\NS_{\omega_1}}\cup\UB^V$
this will remain true in all of its generic extensions:

Assume $V[h]\models\neg\phi$
for some $h$ $V$-generic for some forcing notion $P\in V$. 
Let $\gamma>|P|$ be a Woodin cardinal, and let $g$ be $V$-generic for\footnote{$\tow{T}_\gamma$ is the full stationary tower of height $\gamma$ whose conditions are 
stationary sets in $V_\gamma$, denoted as $\mathbb{P}_{<\gamma}$ in \cite{STATLARSON},
see in particular \cite[Section 2.5]{STATLARSON}.}
 $\tow{T}_\gamma$ with $h\in V[g]$ and $\crit(j_g)=\omega_1^V$ (hence there is in $g$ some stationary set of $V_\gamma$ concentrating on countable sets). 
Then $V[g]\models\phi$ since:
\begin{itemize} 
\item
$V_\gamma\models\phi$, since $V_\gamma\prec_{1} V$ for $\tau_{\NS_{\omega_1}}\cup\UB^V$
by Lemma \ref{lem:levabsgen};
\item
$V_\gamma^{\Ult(V,g)}=V_\gamma^{V[g]}$, since $V[g]$ models that 
$\Ult(V,g)^{<\gamma}\subseteq \Ult(V,g)$;
\item
$V_\gamma^{\Ult(V,g)}\models\phi$, by elementarity of $j_g$, since 
$j_g(V_\gamma)=V_\gamma^{\Ult(V,g)}$;
\item
$V_\gamma^{V[g]}\prec_{\Sigma_1}V[g]$ with respect to $\tau_{\NS_{\omega_1}}\cup\UB^V$,
again by Lemma \ref{lem:levabsgen} applied in $V[g]$.
\end{itemize}

Now repeat the same argument as before to the $\Pi_1$-property $\neg\phi$,
with $V[h]$ in the place of $V$ and $V[g]$ in the place of $V[h]$. 
\end{proof}


\section{Model companionship versus generic absoluteness for the theory of $H_{\aleph_1}$}\label{sec:Homega1}

\subsection{Model companionship for the theory of $H_{\aleph_1}$}

\begin{notation}
Let $\tau\supseteq\tau_\ST$ be a signature.
$\ZFC_\tau$ is the theory extending $\ZFC$ with the replacement schema for all $\tau$-formulae.
Accordingly we define $\ZFC^-_\tau$.
\end{notation}

\begin{definition}

Let $S$ be a $\tau$-theory extending $\ZFC_\tau$. 

$\tau\supseteq\tau_\ST$ is a projective signature for $S$ if any $\tau$-model $\mathcal{M}$ of
$S$ interprets:
\begin{itemize}
\item 
all predicate symbols of arity $k$ of 
$\tau\setminus \tau_\ST$ as subsets of $(2^\omega)^k$ (as defined in $\mathcal{M}$), 
\item
all function symbols of arity $k$ of 
$\tau\setminus \tau_\ST$ as functions from $(2^\omega)^k$ to $2^\omega$ (as defined in $\mathcal{M}$), 
\item 
all constant symbols of 
$\tau\setminus \tau_\ST$ as elements of $2^\omega$ (as defined in $\mathcal{M}$).
\end{itemize}

Assume $\tau$ is a projective signature for $S\supseteq \ZFC_\tau$.

$A\subseteq F_{\tau}$ is $S$-projectively closed if:
\begin{enumerate}[(A)] 
\item
$A$ is closed under logical equivalence;
\item
for any $(V,\tau)$ model of $S$, any formula in $A$ defines a subset
of $((2^\omega)^V)^k$ for some $k\in\omega$;
\item 
in any model $(V,\tau)$ of $S$, 
if $B$ is a definable subset of $((2^\omega)^V)^k$ in the structure
\[
(H_{\omega_1}^V,\tau^V,R_\phi^V,f_\phi^V:\phi\in A),
\]
then $B=R_\psi^V$ for some $\psi\in A$.
\end{enumerate}
\end{definition}

\begin{example}
Given a $\tau_\ST$-theory $T$ extending $\ZFC_\ST$,
simple examples of $T$-projectively closed families for $\tau_\ST$ (which we will use) are:
\begin{enumerate}
\item The family of lightface definable projective sets of reals.
\item
$\lUB^T$, i.e. the $\in$-formulae  defining subsets of $(2^\omega)^k$ 
(as $k$ varies in the natural numbers) which $T$ proves to be the extension
of some $\in$-formula relativized to $L(\UB)$ (the smallest transitive model of $\ZF$
containing all the ordinals and the universally Baire sets).

\item
If  $(V,\tau_\ST^V)$ models the existence of class many Woodin cardinals,
$X\prec (V_\theta,\in)$ for a large enough $\theta$, and
$T_X$ is the $\tau_\ST\cup(\UB^V\cap X)$-theory of 
$V$, one also get that $\tau_\ST\cup(\UB^V\cap X)$ is a projective signature for $T_X$ and
$\UB^V\cap X$ is  $T_X$-projectively closed
(where a universally Baire subset of $(2^\omega)^k$ is considered a predicate symbol of arity $k$; note that $X=V_\theta$ --- i.e. $\UB^V\cap X=\UB^V$ --- is possible).
\end{enumerate}
\end{example}

\begin{theorem}\label{thm:modcompanHomega1}
Let $\tau\supseteq\tau_\ST$ and $S$ be a $\tau$-theory
extending $\ZFC_\tau$ such that $\tau$ is a projective signature for $S$.

Let $A\subseteq F_\tau$ be an $S$-projectively closed family for  $\tau$ and
\[
\bar{A}=A\times\bp{0,1}.
\] 
Then $S_{\bar{A}}$ has as its model companion
 in signature 
$\tau_{\bar{A}}$
\[
S^*_{\bar{A}}=\bp{\phi: (H_{\omega_1}^V,\tau_{\bar{A}}^V)\models \phi,\, (V,\in)\models S}.
\] 
\end{theorem}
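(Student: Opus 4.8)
The plan is to deduce the result from the model-companionship criterion of Lemma \ref{fac:proofthm1-2}, taking $T=S_{\bar{A}}$ and $T_0=S^*_{\bar{A}}$. Concretely I would first establish that $S^*_{\bar{A}}$ is model complete (this is the technical heart), then check that $S_{\bar{A}}$ and $S^*_{\bar{A}}$ have matching $\Pi_1$-consistency (the hypothesis of Lemma \ref{fac:proofthm1-2}) by means of Levy absoluteness, and finally observe that the resulting model companion $S^*_{\bar{A}}+(S_{\bar{A}})_\forall$ collapses to $S^*_{\bar{A}}$.

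For the model completeness of $S^*_{\bar{A}}$ I would run the argument of Theorem \ref{thm:modcompHkappa+} with $\kappa=\omega$, using the projectively closed family $A$ in place of the full family of \emph{all} definable predicates. By Robinson's test (Lemma \ref{lem:robtest}) it suffices to show that every $\tau_{\bar{A}}$-formula $\phi(\vec{x})$ is $S^*_{\bar{A}}$-equivalent to a universal formula. Since every $H_{\omega_1}^V$ models $\ZFC^-_\ST$ together with ``every set is hereditarily countable'', Theorem \ref{thm:keypropCod} makes $(H_{\omega_1},\in)$ interpretable in its reals via $\Cod_\omega\colon\WFE_\omega\to H_{\omega_1}$, with $\WFE_\omega$ universally definable and $\Cod_\omega$ possessing a $\Delta_1$ graph. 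Pulling $\phi(\vec{x})$ back along $\Cod_\omega$ yields a set $B\subseteq(2^\omega)^n$ which is definable over $(H_{\omega_1},\tau_{\bar{A}})$; condition (C) of $S$-projective closure then supplies $\psi\in A$ with $B=R_\psi$. Writing $\Cod_\omega(y_i)=x_i$ by its existential $\tau_\ST$-formula, $\phi(\vec{x})$ becomes $S^*_{\bar{A}}$-equivalent to the $\Pi_1$-formula $\forall\,\vec{y}\,\vec{G}\,[\bigwedge_i(\psi_{\Cod}(G_i,y_i)\wedge G_i(0)=x_i)\to R_\psi(\vec{y})]$, the function symbols $f_\psi$ serving as Skolem functions for the existential witnesses. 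Robinson's test then delivers model completeness.

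Next I would record Levy absoluteness in the enriched signature: since the predicates of $A$ name subsets of $(2^\omega)^k$ and its function symbols have graphs lying inside $(2^\omega)^{k+1}$, Lemma \ref{lem:levabsgen} applies and gives $(H_{\omega_1}^V,\tau_{\bar{A}}^V)\prec_1(V,\tau_{\bar{A}}^V)$ for every $V\models S$. As this is provable in $\ZFC$, every model of $S_{\bar{A}}$ validates it internally, so for each $\Sigma_1$-sentence $\chi$ one has $S_{\bar{A}}\vdash(\chi\leftrightarrow\chi^{H_{\omega_1}})$. Unwinding the definition of $S^*_{\bar{A}}$, this yields $S^*_{\bar{A}}\vdash\chi$ iff $S_{\bar{A}}\vdash\chi^{H_{\omega_1}}$ iff $S_{\bar{A}}\vdash\chi$; dualizing, $S_{\bar{A}}+\theta$ is consistent iff $S^*_{\bar{A}}+\theta$ is consistent for every $\Pi_1$-sentence $\theta$. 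This is precisely the hypothesis of Lemma \ref{fac:proofthm1-2}, so $S^*_{\bar{A}}+(S_{\bar{A}})_\forall$ is the model companion of $S_{\bar{A}}$. Applying the same Levy equivalence to universal $\theta$ shows $(S_{\bar{A}})_\forall\subseteq S^*_{\bar{A}}$, whence $S^*_{\bar{A}}+(S_{\bar{A}})_\forall=S^*_{\bar{A}}$, which is the asserted identification.

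I expect the main obstacle to be the model-completeness step, and specifically the need to produce the universal formula \emph{uniformly} over all models of $S$: one must verify that the translation $\phi\mapsto\psi$ furnished by condition (C) can be effected provably in the common theory of the $H_{\omega_1}^V$, so that the equivalence $\forall\vec{x}\,(\phi(\vec{x})\leftrightarrow\theta(\vec{x}))$ genuinely belongs to $S^*_{\bar{A}}$ rather than merely holding model-by-model. This is where the closure of $A$ under definability (together with closure under logical equivalence, condition (A)) and the definability of the coding apparatus of Theorem \ref{thm:keypropCod} must be combined with care.
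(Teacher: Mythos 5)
Your proposal is correct and follows essentially the same route as the paper's own proof: Levy absoluteness in the enriched signature to match the $\Pi_1$-theories, Lemma \ref{fac:proofthm1-2} to reduce everything to model completeness of $S^*_{\bar{A}}$, and Robinson's test combined with the $\Cod_\omega$/$\WFE_\omega$ coding of Theorem \ref{thm:keypropCod} and clause (C) of projective closure to convert each $\tau_{\bar{A}}$-formula into the universal formula $\forall\vec{y}\,[\bigwedge_i \Cod_\omega(y_i)=x_i\rightarrow R_\psi(\vec{y})]$. The uniformity concern you flag at the end is exactly the point the paper handles by noting that the graph of $\Cod_\omega$ is $\ZFC^-_\ST$-provably $\Delta_1$, so the equivalence is provable in the common theory of the $H_{\omega_1}^V$ rather than merely true model-by-model.
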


It is clear that the above theorem combined with the results of Section
\ref{sec:geninv} proves Thm. \ref{thm:mainthm1} and 
Corollary \ref{cor:maincor} for $A_1$.
More precisely:
\begin{corollary}\label{cor:modcompanHomega1}
Let $S\supseteq \ZFC+$\emph{there are class many Woodin cardinals}
be a $\in$-theory.
Then for any $A\subseteq F_\in$ projectively closed for $S$ and such that 
$\phi$ defines a universally Baire set of reals for any 
$\phi$ in $A$ not a $\Delta_0$-formula, letting $\bar{A}=A\times\bp{0,1}$,
$S+T_{\bar{A}}$ has as model companion
the $\Pi_2$-sentences $\psi$ for $\bp{\in}_{\bar{A}}$ such that
\[
S\vdash\psi^{H_{\omega_1}}.
\]
\end{corollary}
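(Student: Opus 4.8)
The plan is to obtain the statement as a specialization of Theorem \ref{thm:modcompanHomega1}, the only real work being to check that the concrete hypotheses here supply the abstract hypotheses of that theorem, and then to read off the $\Pi_2$-axiomatization of the model companion it produces.

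First I would set $\tau=\bp{\in}_{\bar A}$ and $S_{\bar A}=S+T_{\bar A}$. Treating the $\Delta_0$-formulae of $A$ as the part already coded in $\tau_\ST$ and the remaining formulae of $A$ (which by hypothesis define universally Baire sets of reals) as the genuinely new, projective, part, one checks the three inputs of Theorem \ref{thm:modcompanHomega1}. Every symbol of $\tau\setminus\tau_\ST$ is then interpreted on the reals in each model of $S$, so $\tau$ is a projective signature for $S$ in the required sense. That $A$ is $S$-projectively closed is the standing hypothesis, and it is precisely here that the proper class of Woodin cardinals carried by $S$ does its work: by Theorem \ref{thm:UBsetsgenabs}(2) the universally Baire sets form a projectively closed family in every model of $S$, so that the closure-under-definability clause of the definition survives the passage to the structures $(H_{\omega_1}^V,\tau^V)$; Lemma \ref{lem:levabsgen} then guarantees that these predicates and Skolem functions are correctly and absolutely interpreted on $H_{\omega_1}^V$.

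With the hypotheses verified, Theorem \ref{thm:modcompanHomega1} yields that $S_{\bar A}$ has model companion
\[
S^*_{\bar A}=\bp{\phi:\ (H_{\omega_1}^V,\tau^V)\models\phi,\ (V,\in)\models S}.
\]
By Theorem \ref{thm:uniqmodcompan} every model companion is axiomatized by its $\Pi_2$-consequences, so it remains only to identify the $\Pi_2$-sentences lying in $S^*_{\bar A}$. Unwinding the definition, a $\Pi_2$-sentence $\psi$ for $\tau$ belongs to $S^*_{\bar A}$ if and only if $(H_{\omega_1}^V,\tau^V)\models\psi$ for every $(V,\in)\models S$, which is exactly the assertion $S\vdash\psi^{H_{\omega_1}}$. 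Hence $S^*_{\bar A}$ is axiomatized by precisely the $\Pi_2$-sentences $\psi$ for $\bp{\in}_{\bar A}$ with $S\vdash\psi^{H_{\omega_1}}$, which is the claim.

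The main obstacle is the verification in the second step rather than the final bookkeeping: one must be certain that enriching $\in$ with the Skolem functions and the UB-predicates of $A$ really yields an $S$-projectively closed family in the exact sense of the definition preceding Theorem \ref{thm:modcompanHomega1} --- in particular that subsets of the reals definable over $(H_{\omega_1}^V,\tau^V)$, where the quantifiers range over the countable sets and hence, via coding, effectively over the reals, remain universally Baire and therefore again of the form $R_\psi^V$ for some $\psi\in A$. This is exactly the closure property of $\UB$ that fails in the absence of large cardinals, which is why the proper class of Woodin cardinals is indispensable and cannot be removed from the hypotheses.
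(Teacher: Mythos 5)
Your proposal is correct and follows essentially the same route as the paper: the corollary is obtained as a direct specialization of Theorem \ref{thm:modcompanHomega1} (whose proof rests on Levy absoluteness, Lemma \ref{fac:proofthm1-2}, and Robinson's test), with the $\Pi_2$-axiomatization $\bp{\psi: S\vdash\psi^{H_{\omega_1}}}$ read off from the definition of $S^*_{\bar A}$ together with Theorem \ref{thm:uniqmodcompan}. The only minor quibble is that the $S$-projective closedness of $A$ is a standing hypothesis rather than something the Woodin cardinals must deliver, but this does not affect the argument.
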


\begin{proof}

Let $(V,\tau^V)$ be a model of $S$. 


By Levy's absoluteness Lemma \ref{lem:levabsgen}, since $A$ includes just formulae definining
subsets of $(2^{\omega})^k$ and the same occurs for the symbols of $\tau\setminus\tau_\ST$ in models of $S$,
\[
(H_{\omega_1},\tau^V,R_\psi^V, f_\psi^V:\psi\in A )\prec_1(V,\tau^V,R_\psi^V, f_\psi^V:\psi\in A);
\]
hence 
the structures $(V,\tau^V,R_\psi^V, f_\psi^V:\psi\in A )$ and
$(H_{\omega_1},\tau^V,R_\psi^V, f_\psi^V:\psi\in A )$ share the same $\Pi_1$-theory for the signature
$\tau_{\bar{A}}$.

Therefore (by the useful characterization of model companionship given in Lemma \ref{fac:proofthm1-2}) 
it suffices to prove that $S^*$ is model complete, where $S^*$ is the $\tau_{\bar{A}}$-theory common to 
$(H_{\omega_1},\tau^V,R_\psi^V, f_\psi^V:\psi\in A )$ as $(V,\tau^V)$ range over
models of $S$.

By Robinson's test (Lemma \ref{lem:robtest}\ref{lem:robtest-4}), 
it suffices to show that any existential $\tau_{\bar{A}}$-formula is 
$S^*$-equivalent to a universal $\tau_{\bar{A}}$-formula.

Let $\psi_1,\dots,\psi_k$ be the formulae in $A$ such that some $R_{\psi_i}$ or some $f_{\psi_i}$
appears in $\phi$.

Let
$\psi(x_1,\dots,x_n)$ be the formula
$\phi(\Cod_\omega(x_1),\dots,\Cod_\omega(x_n))$. 
Since $\Cod_\omega(x)=y$ is  a $\Delta_1$-definable predicate in the structure $(H_{\omega_1},\tau_\ST)$,
we get that $\psi(x_1,\dots,x_n)$ in $A$ since its extension is a subset of $(2^\omega)^k$ in the structure
\[
(H_{\omega_1},\tau^V,R_\psi^V, f_\psi^V:\psi\in A ).
\]
Now for any $a_1,\dots,a_n\in H_{\omega_1}$:
\[
(H_{\omega_1},\tau_{\bar{A}}^V)\models \phi(a_1,\dots,a_n)
\]
\center{ if and only if }
\[
(H_{\omega_1},\tau_{\bar{A}}^V)\models\forall r_1\dots r_n \bigwedge_{i=1}^n\Cod_\omega(r_i)=a_i\rightarrow R_\psi(r_1,\dots,r_n).
\]



This yields that
\[
S^*\vdash 
\forall x_1,\dots,x_n\,(\phi(x_1,\dots,x_n)\leftrightarrow\theta_\psi(x_1,\dots,x_n)).
\]
where $\theta_\phi(x_1,\dots,x_n)$ is the $\Pi_1$-formula in the predicate $R_\psi\in\tau_{\bar{A}}$
\[
\forall y_1,\dots,y_n\,[(\bigwedge_{i=1}^n x_i=\Cod_\omega(y_i))\rightarrow R_\psi(y_1,\dots,y_n)].
\]
\end{proof}

%

It is also convenient to reformulate these notion is a more semantic way which is handy when dealing with a fixed complete first order axiomatization of set theory.

\begin{definition}\label{def:Homega1closed}
Let $\mathcal{A}\subseteq\bigcup_{n\in\omega}\pow{\omega}^n$.
$\mathcal{A}$ is $H_{\omega_1}$-closed if 
any definable subset of $\pow{\omega}^n$ for some $n\in\omega$ in the structure
\[
(H_{\omega_1},\in,U:U\in\mathcal{A})
\]
is in $\mathcal{A}$.
\end{definition}
It is immediate to check that if $T$ is the theory of $(V,\in)$ and $\mathcal{A}$ is a family of universlly Baire subsets of $V$, $\mathcal{A}$ is projectively closed for $T$  for the signature $\tau_\ST\cup\mathcal{A}$ if and only if it
is $H_{\omega_1}$-closed.

We get the following:
\begin{theorem}\label{fac:keyfacHomega1clos}
Assume $(V,\in)$ models $\ZFC+$\emph{there are class many Woodin cardinals}.
Let $\mathcal{A}\subseteq \UB^V$ be $H_{\omega_1}$-closed and $\tau_\mathcal{A}=\tau_\ST\cup\mathcal{A}$
 be the signature in which each element of $\mathcal{A}$ contained in $\pow{\omega}^k$ is a predicate symbol of arity $k$.
 Then for any $G$ $V$-generic for some forcing $P\in V$
 the $\tau_\mathcal{A}$-theory of $H_{\omega_1}^V$ is the model companion of the  
 $\tau_\mathcal{A}$-theory of $V[G]$ and $\bp{A^{V[G]}:A\in\mathcal{A}}$
 is $H_{\omega_1}^{V[G]}$-closed.
\end{theorem}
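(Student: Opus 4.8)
The plan is to derive both claims from the interplay of Woodin's generic absoluteness (Theorem \ref{thm:genabshomega1}) with the model-companionship criterion of Theorem \ref{thm:modcompanHomega1}, the bridge being the full elementarity $(H_{\omega_1}^V,\tau_{\mathcal{A}}^V)\prec(H_{\omega_1}^{V[G]},\tau_\ST^{V[G]},A^{V[G]}:A\in\mathcal{A})$ furnished by Theorem \ref{thm:genabshomega1}. I would first record three preservation facts. Since $P$ is set sized and $V$ has class many Woodin cardinals, cofinally many survive, so $V[G]\models\ZFC+$\emph{there are class many Woodin cardinals}; for each $A\in\mathcal{A}\subseteq\UB^V$ the reinterpretation $A^{V[G]}$ (Notation \ref{not:notUBsetsVG}) is again universally Baire in $V[G]$, since a pair of trees witnessing universal Baireness of $A$ for $P\ast\dot{Q}$ still projects to complements after forcing with $P$; and $V[G]\models\ZFC_{\tau_{\mathcal{A}}}$, because replacement for a $\tau_{\mathcal{A}}$-formula reduces to replacement for an $\in$-formula carrying the (set sized) tree-codes of the finitely many predicates occurring in it as parameters.

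The core step is the second assertion, that $\{A^{V[G]}:A\in\mathcal{A}\}$ is $H_{\omega_1}^{V[G]}$-closed. Let $B\subseteq(\pow{\omega}^{V[G]})^n$ be definable over $(H_{\omega_1}^{V[G]},\in,A^{V[G]}:A\in\mathcal{A})$, the definability being parameter-free as required by Definition \ref{def:Homega1closed} (using $\Cod_\omega$ and $\WFE_\omega$ from Theorem \ref{thm:keypropCod} I may take the witnessing $\tau_{\mathcal{A}}$-formula $\psi(\vec{x})$ to define a set of tuples of reals). The same $\psi$ defines over $(H_{\omega_1}^V,\tau_{\mathcal{A}}^V)$ a subset $B^{*}\subseteq(\pow{\omega}^V)^n$, and since $\mathcal{A}$ is $H_{\omega_1}$-closed in $V$ we get $B^{*}=A_0$ for some $A_0\in\mathcal{A}$. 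Now the \emph{parameter-free} $\tau_{\mathcal{A}}$-sentence $\forall\vec{x}\,(A_0(\vec{x})\leftrightarrow\psi(\vec{x}))$ holds in $(H_{\omega_1}^V,\tau_{\mathcal{A}}^V)$; by the full elementarity of Theorem \ref{thm:genabshomega1} it holds in $(H_{\omega_1}^{V[G]},\tau_\ST^{V[G]},A^{V[G]}:A\in\mathcal{A})$ as well, which says exactly that $A_0^{V[G]}=B$. Hence $B\in\{A^{V[G]}:A\in\mathcal{A}\}$, as required.

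For the first assertion I would apply Theorem \ref{thm:modcompanHomega1} inside $V[G]$. By the remark following Definition \ref{def:Homega1closed} (applied in $V[G]$, where the $A^{V[G]}$ are universally Baire by the preservation fact above), the $H_{\omega_1}^{V[G]}$-closedness just established makes $\{A^{V[G]}:A\in\mathcal{A}\}$ projectively closed for the $\tau_{\mathcal{A}}$-theory of $V[G]$, and $\tau_{\mathcal{A}}$ is trivially a projective signature for it; Theorem \ref{thm:modcompanHomega1} (equivalently Corollary \ref{cor:modcompanHomega1}) then yields that the $\tau_{\mathcal{A}}$-theory of $H_{\omega_1}^{V[G]}$ is the model companion of the $\tau_{\mathcal{A}}$-theory of $V[G]$. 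It remains to replace $H_{\omega_1}^{V[G]}$ by $H_{\omega_1}^{V}$: the elementarity of Theorem \ref{thm:genabshomega1} makes $(H_{\omega_1}^V,\tau_{\mathcal{A}}^V)$ and $(H_{\omega_1}^{V[G]},\tau_\ST^{V[G]},A^{V[G]}:A\in\mathcal{A})$ elementarily equivalent, hence they share the same $\tau_{\mathcal{A}}$-theory, so the $\tau_{\mathcal{A}}$-theory of $H_{\omega_1}^V$ is also the model companion of the $\tau_{\mathcal{A}}$-theory of $V[G]$.

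The main obstacle I anticipate is the bookkeeping in the core step, specifically the insistence that the definability witnessing $H_{\omega_1}$-closedness be read without parameters: this is what guarantees that $\forall\vec{x}\,(A_0(\vec{x})\leftrightarrow\psi(\vec{x}))$ is a genuine $\tau_{\mathcal{A}}$-sentence to which Theorem \ref{thm:genabshomega1} applies verbatim. That this reading is the intended one, and indeed necessary, is already visible for $\mathcal{A}=\UB^V$: the family $\{A^{V[G]}:A\in\UB^V\}$ is not closed under sections at reals of $V[G]\setminus V$ — the section of the diagonal $\{(x,x)\}$ at a new real $s$ is $\{s\}$, which is not $A^{V[G]}$ for any $A\in\UB^V$ — so a parameter-allowing reading would falsify the statement. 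Beyond this point the argument is routine, the only other care being that the preservation of universal Baireness and of $\ZFC_{\tau_{\mathcal{A}}}$ into $V[G]$ be spelled out, since both Theorem \ref{thm:modcompanHomega1} and the bridging remark are invoked with $V[G]$ as the ambient universe.
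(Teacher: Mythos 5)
Your proof is correct and follows essentially the same route as the paper's: Woodin's generic absoluteness (Theorem \ref{thm:genabshomega1}) gives full elementarity between $(H_{\omega_1}^V,\tau_{\mathcal{A}}^V)$ and $(H_{\omega_1}^{V[G]},\tau_\ST^{V[G]},A^{V[G]}:A\in\mathcal{A})$, and the model-companionship claim then follows from the $H_{\omega_1}$ machinery of Theorem \ref{thm:modcompanHomega1} together with Levy absoluteness in $V[G]$; the paper merely reorganizes this by quoting the model completeness of the $\tau_{\mathcal{A}}$-theory of $H_{\omega_1}^V$ already established in $V$ and invoking uniqueness of the model companion, while leaving the $H_{\omega_1}^{V[G]}$-closedness of the lifted family as "easy to check". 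Your transfer argument for that last point (lightface definability plus the parameter-free equivalence $\forall\vec{x}\,(A_0(\vec{x})\leftrightarrow\psi(\vec{x}))$ moved across the elementary embedding), and your observation that the lightface reading is forced by the counterexample of singletons of new reals, correctly supply the detail the paper omits.
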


\begin{proof}
The assumptions grant that
\[
(H_{\omega_1}^{V},\tau_\ST^{V},A:A\in\mathcal{A})\prec
(H_{\omega_1}^{V[G]},\tau_\ST^{V[G]},A:A^{V[G]}\in\mathcal{A})
\prec_1 (V[G],\tau_\ST^{V[G]},A:A^{V[G]}\in\mathcal{A})
\]
(by Thm. \ref{thm:genabshomega1} and by Lemma \ref{lem:levabsgen} applied in $V[G]$).
Now the theory of $H_{\omega_1}^{V}$ in signature $\tau_{\mathcal{A}}$ is complete and model complete,
and is also the $\tau_{\mathcal{A}}$-theory of $H_{\omega_1}^{V[G]}$.
We conclude that 
it is the model companion of the $\tau_{\mathcal{A}}$-theory of $V[G]$.
It is also easy to check that
$\bp{A^{V[G]}:A\in\mathcal{A}}$
 is $H_{\omega_1}^{V[G]}$-closed.
\end{proof}

\section{Model companionship versus generic absoluteness for the theory of $H_{\aleph_2}$}\label{sec:Homega2}

Let $\UB$ denote the family of universally Baire sets, and $L(\UB)$ denote 
the smallest transitive model of $\ZF$ which contains $\UB$ (see for details 
Section \ref{subsec:univbaire}).

Our first result shows that in models of large cardinal axioms admitting a 
strong form of sharp for $\UB$ (what is here called $\maxUB$),
a strong form of Woodin's axiom $(*)$ (what is here called $\stUB$)
can be equivalently formulated as 
the assertion that the theory of
$H_{\aleph_2}$ is the model companion of the theory of $V$ in a signature 
admitting a predicate symbol for the non-stationary ideal on $\omega_1$ and 
predicates for each universally Baire set.

\begin{Theorem}\label{Thm:mainthm-1bis}
Let $\mathcal{V}=(V,\in)$ be a model of 
\[
\ZFC+\maxUB+\emph{there is a supercompact cardinal and class many Woodin cardinals},
\]
and $\UB$ denote the family of universally Baire sets in 
$V$.

TFAE
\begin{enumerate}
\item\label{thm:char(*)-modcomp-1}
$(V,\in)$ models $\stUB$;
\item\label{thm:char(*)-modcomp-2}
$\NS_{\omega_1}$ is precipitous\footnote{See \cite[Section 1.6, pag. 41]{STATLARSON}  for a definition of precipitousness and a discussion of its properties. A key observation is that $\NS_{\omega_1}$ being precipitous is independent of $\mathsf{CH}$ (see for example \cite[Thm. 1.6.24]{STATLARSON}), while $\stUB$ entails $2^{\aleph_0}=\aleph_2$  (for example by the results of \cite[Section 6]{HSTLARSON}).

Another key point is that we stick to the formulation of $\Pmax$ as in \cite{HSTLARSON} so to 
be able in its proof to quote verbatim from \cite{HSTLARSON} all the relevant results on $\Pmax$-preconditions we will use.
It is however possible to  develop $\Pmax$ focusing on Woodin's countable tower rather than 
on the precipitousness of $\NS_{\omega_1}$ to develop the notion of $\Pmax$-precondition. Following this approach in
all its scopes, one should be able to reformulate Thm. \ref{Thm:mainthm-1bis}(\ref{thm:char(*)-modcomp-2}) 
omitting the request that
$\NS_{\omega_1}$ is precipitous. We do not explore this venue any further.} and
the $\tau_{\NS_{\omega_1}}\cup\UB$-theory of $V$ has as model companion the
$\tau_{\NS_{\omega_1}}\cup\UB$-theory of $H_{\omega_2}$.
\end{enumerate}
\end{Theorem}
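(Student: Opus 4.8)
The plan is to run the same model-companionship scheme already used for Theorem \ref{Thm:mainthm-1}, but now feeding it the genuinely set-theoretic input supplied by Woodin's $\Pmax$-analysis of $(*)$. Write $T$ for the $\tau_{\NS_{\omega_1}}\cup\UB$-theory of $V$ and $T^*$ for the $\tau_{\NS_{\omega_1}}\cup\UB$-theory of $H_{\omega_2}$. The first observation, independent of which of the two conditions we assume, is that $(H_{\omega_2},\tau_{\NS_{\omega_1}}^V,A:A\in\UB)\prec_1(V,\tau_{\NS_{\omega_1}}^V,A:A\in\UB)$: the predicates in $\UB$ range over reals (all of which lie in $H_{\omega_2}$) and the predicate $\NS_{\omega_1}(x)$ is decided inside $H_{\omega_2}$ by a club subset of $\omega_1$, so Levy absoluteness (Lemma \ref{lem:levabsgen}) applies. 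Hence $T_\forall=T^*_\forall$ unconditionally, and then by Lemma \ref{lem:keylemembed} every model of $T^*$ extends to a model of $T$ while every model of $T$ extends to a model of $T^*$. Thus conditions (1),(2) of Definition \ref{def:dmodcompship} hold automatically, and the whole statement \emph{$T^*$ is the model companion of $T$} collapses (via Theorem \ref{thm:uniqmodcompan}) to the single assertion that $T^*$ is model complete. The equivalence to be proved therefore reads: $\stUB$ holds if and only if $\NS_{\omega_1}$ is precipitous and $T^*$ is model complete.

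For the implication \ref{thm:char(*)-modcomp-1}$\Rightarrow$\ref{thm:char(*)-modcomp-2} I would first note that $\stUB$ entails that $\NS_{\omega_1}$ is saturated, hence precipitous. The substantial point is model completeness of $T^*$. Under $\stUB$ the structure on $\pow{\omega_1}$ given by $(H_{\omega_2},\tau_{\NS_{\omega_1}}^V,A:A\in\UB)$ is a $\Pmax$-generic extension of $L(\UB)$, and $\maxUB$ guarantees that $L(\UB)$ together with its $\Pmax$-theory is stable and correctly computed. I would use the homogeneity and, crucially, the $A$-iterability of $\Pmax$-conditions (Section \ref{subsec:geninvtoa}, \cite{HSTLARSON}) to show that every $\Sigma_1$-formula of $\tau_{\NS_{\omega_1}}\cup\UB$ is $T^*$-equivalent to a $\Pi_1$-formula: truth of an existential statement in the $\Pmax$-extension is forced by a single iterable precondition, and iterability converts ``some generic realizes $\dots$'' into a statement decided by the $\UB$-data shared by any two models of $T^*$, i.e. into a universal statement over the $\UB$-predicates. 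Robinson's test (Lemma \ref{lem:robtest}) then yields model completeness of $T^*$, and with $T_\forall=T^*_\forall$ in hand this makes $T^*$ the model companion of $T$.

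For the implication \ref{thm:char(*)-modcomp-2}$\Rightarrow$\ref{thm:char(*)-modcomp-1} I would exploit that, $T^*$ being the model companion, it is the Kaiser hull of $T$ (Theorem \ref{thm:uniqmodcompan}): every $\Pi_2$-sentence of $\tau_{\NS_{\omega_1}}\cup\UB$ consistent with $T_\forall$ already holds in $H_{\omega_2}$. It then suffices to present $\stUB$ as a scheme of such forcible $\Pi_2$-sentences. Given $\maxUB$, the supercompact, the class many Woodin cardinals, and $\NS_{\omega_1}$ precipitous, $\stUB$ is forcible over $V$ by a $\Pmax$-style, stationary-set-preserving forcing; the $\Pmax$-genericity over $L(\UB)$ expressing $\stUB$ is captured over $H_{\omega_2}$ by $\Pi_2$-sentences in the signature. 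By Theorem \ref{thm:PI1invomega2} the universal fragment $T_\forall$ is invariant under set-sized forcing, so these $\Pi_2$-sentences are consistent with $T_\forall$, hence belong to $T^*$ and hold in $H_{\omega_2}^V$. Since $\stUB$ is an assertion about $\pow{\omega_1}$, its truth in $H_{\omega_2}^V$ is its truth in $V$, giving $\stUB$.

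The main obstacle is the model-completeness half of the forward direction: converting $\Pmax$-homogeneity and $A$-iterability into the clean model-theoretic statement that existential $\tau_{\NS_{\omega_1}}\cup\UB$-formulae are $T^*$-equivalent to universal ones. This is precisely where the weakness of the signature (only $\NS_{\omega_1}$ and the $\UB$-predicates, in contrast with the all-encoding $\sigma_{\omega_1}$ of Theorem \ref{Thm:mainthm-5}) forces us to use the full strength of $(*)$ rather than the soft Levy-absoluteness argument sufficient for Theorem \ref{Thm:mainthm-1}. A secondary delicate point is the exact $\Pi_2$-formalisation of $\stUB$ over $H_{\omega_2}$ in the backward direction, which must be arranged so that its forcibility and the forcing-invariance of $T_\forall$ combine correctly through the Kaiser-hull characterisation.
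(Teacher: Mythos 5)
Your overall architecture is the same as the paper's: Levy absoluteness gives $T_\forall=T^*_\forall$ unconditionally, so by Theorem \ref{thm:uniqmodcompan} the model-companionship clause collapses to model completeness of the $\tau_{\NS_{\omega_1}}\cup\UB$-theory of $H_{\omega_2}$; the forward direction is then a Robinson's-test argument powered by the $\Pmax$-analysis, and the backward direction pushes a forcible $\Pi_2$-rendering of $\stUB$ through the forcing-invariant universal fragment (Theorem \ref{thm:PI1invomega2}) into the Kaiser hull. That skeleton is correct. However, the two steps you flag as ``the main obstacle'' and ``a secondary delicate point'' are not loose ends: they are the entire mathematical content of the theorem, and your proposal does not supply them.

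Concretely, for \ref{thm:char(*)-modcomp-1}$\Rightarrow$\ref{thm:char(*)-modcomp-2} the phrase ``iterability converts an existential statement into a statement decided by the $\UB$-data'' does not yet name the universal formula. The paper's recipe is: introduce the class of $(\NS_{\omega_1},\UB)$-ec structures (countable iterable $\Pmax$-preconditions $M$ such that \emph{every} iteration correct about $\NS_{\omega_1}$ is $\Sigma_1$-elementary from $H_{\omega_2}^M$ into $H_{\omega_2}$ for the expanded signature), prove via $\maxUB$ that the set of codes of such $M$ (together with witnesses containing prescribed $B_1,\dots,B_k$) is itself universally Baire — hence an available predicate $\bar E_{B_1,\dots,B_k}$ of the signature — and then take $\theta_\phi(\vec x)$ to be ``for all $M\in E_{B_1,\dots,B_k}$ and all correct iterations sending some $\vec a$ to $\vec x$, $H_{\omega_2}^M\models\phi(\vec a)$'' (Lemma \ref{lem:keylemmodcomp(*)}). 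For this to be \emph{equivalent} to $\phi(\vec x)$ one needs both an upward direction (the $\Sigma_1$-elementarity built into the definition of $(\NS_{\omega_1},\UB)$-ec, whose verification for $V_\delta$ in a collapse extension is where honest consistency and $\maxUB$ enter) and a density direction: every $\vec A\in H_{\omega_2}$ and every $B\in\UB$ are captured by a correct iteration of some such $M$. That density statement is exactly clause (\ref{thm:char(*)-3}) of Theorem \ref{thm:char(*)}, and its equivalence with $\stUB$ is proved through the Asper\'o--Schindler honest-consistency characterization of $(*)$; none of this is recoverable from homogeneity of $\Pmax$ alone. Symmetrically, your backward direction needs precisely the implication (\ref{thm:char(*)-3})$\Rightarrow$(\ref{thm:char(*)-1}) of Theorem \ref{thm:char(*)}: the $\Pi_2$-sentences you propagate into $H_{\omega_2}^V$ must be ones that provably reassemble into an $L(\UB)$-generic filter for $\Pmax$, and ``$\Pmax$-genericity is captured by $\Pi_2$-sentences'' is a theorem to be proved (again via honest consistency), not a formalisation detail. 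Until both halves of Theorem \ref{thm:char(*)} are established, neither direction of your argument closes.
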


(\ref{thm:char(*)-modcomp-1}) implies (\ref{thm:char(*)-modcomp-2}) does not 
need the supercompact cardinal.

We give rightaway the definitions of $\maxUB$ and $\stUB$.

\begin{Definition}\label{Keyprop:maxUB}
$\maxUB$: 
There are class many Woodin cardinals in $V$, and  for all 
$G$ $V$-generic for some forcing notion $P\in V$:
\begin{enumerate}
\item\label{Keyprop:maxUB-1}
Any subset of $(2^\omega)^{V[G]}$ definable in $(H_{\omega_1}^{V[G]}\cup\mathsf{UB}^{V[G]},\in)$ 
is universally Baire in $V[G]$.
\item\label{Keyprop:maxUB-2}
Let $H$ be $V[G]$-generic for some forcing notion $Q\in V[G]$. 
Then\footnote{Elementarity is witnessed via the map defined by $A\mapsto A^{V[G][H]}$ for
$A\in \UB^{V[G]}$ and the identity on
$H_{\omega_1}^{V[G]}$ (See Notation \ref{not:notUBsetsVG} 
for the definition of $A^{V[G][H]}$).}:
\[
(H_{\omega_1}^{V[G]}\cup\UB^{V[G]},\in) \prec 
(H_{\omega_1}^{V[G][H]}\cup\UB^{V[G][H]},\in).
\] 
\end{enumerate}
\end{Definition}

We observe that $\maxUB$ is a form of sharp for the family of universally Baire sets 
which holds if $V$ has class many Woodin cardinals and 
is a generic extension obtained by collapsing a supercompact cardinal to become countable 
($\maxUB$ is a weakening of the conclusion of
\cite[Thm 3.4.17]{STATLARSON}). Moreover if $\maxUB$ holds in $V$, it
remains true in all further set forcing extensions of $V$. It is open whether $\maxUB$ is a direct consequence of 
suitable large cardinal axioms.

We now turn to the definition of $\stUB$, a natural maximal strengthening of Woodin's axiom $(*)$.
Key to all results of this section is an analysis of the properties of
generic extensions by $\Pmax$ of $L(\UB)$.
In this analysis $\maxUB$ is used to argue (among other things)
that all sets of reals definable in $L(\UB)$ are universally Baire, so that most of the results established 
in \cite{HSTLARSON}
on the properties of $\Pmax$ for $L(\mathbb{R})$ can be
also asserted for $L(\UB)$.
We will use various forms of Woodin's axiom $(*)$ each stating that $\NS_{\omega_1}$ is saturated together with the existence of $\Pmax$-filters meeting certain families of dense subsets 
of $\Pmax$ definable in $L(\UB)$. 
However in this paper we do not define the $\Pmax$-forcing. 
The reason is that in the proof of all our results, we will use equivalent characterizations of the proper forms of $(*)$ which do not mention at all $\Pmax$. 
We will give at the proper stage the relevant definitions. Meanwhile we assume the reader is familiar with $\Pmax$ or can accept as a blackbox its existence as a certain forcing notion; our reference on this topic 
is \cite{HSTLARSON}.

\begin{Definition}
Let $\mathcal{A}$ be a family of dense subsets of $\Pmax$.
\begin{itemize}
\item
$\stA$ holds if $\NS_{\omega_1}$ is saturated\footnote{See \cite[Section 1.6, pag. 39]{STATLARSON} for a discussion of saturated ideals on $\omega_1$.} and 
there exists a filter $G$ on $\Pmax$ meeting all the dense sets in 
$\mathcal{A}$.
\item
$\stUB$ holds
if $\NS_{\omega_1}$ is saturated and there exists an $L(\UB)$-generic filter $G$ on $\Pmax$. 
\end{itemize}
\end{Definition}

Woodin's definition of $(*)$ \cite[Def. 7.5]{HSTLARSON}
is equivalent to $\stA+$\emph{there are class many Woodin cardinals} 
for $\mathcal{A}$ the family of 
dense subsets of $\Pmax$ existing in $L(\mathbb{R})$.

An objection to Thm. \ref{Thm:mainthm-1bis} is that it subsumes the Platonist standpoint that there exists a definite universe of sets.
At the prize of introducing another bit of notation, we can prove a version of Thm. \ref{Thm:mainthm-1bis} which makes perfect sense also
to a formalist and from which we immediately derive Thm. \ref{thm:mainthm1} and Corollary \ref{cor:maincor} for a certain recursive set of $\in$-formulae $A_2$.

\begin{Notation}
\emph{}

\begin{itemize}
\item
$\sigma_\ST$ is the signature containing a 
predicate symbol 
$S_\phi$ of arity $n$ for any $\in$-formula $\phi$ with $n$-many free variables.

\item
$T_\lUB$ is the 
$\sigma_{\ST}$-theory 
given by the axioms
\[
\forall x_1\dots x_n\,[S_\psi(x_1,\dots,x_n)\leftrightarrow 
(\bigwedge_{i=1}^n x_i\subseteq \omega^{<\omega}\wedge \psi^{L(\UB)}(x_1,\dots,x_n))]
\]
as $\psi$ ranges over the $\in$-formulae.
\item
$\ZFC^{*-}_{\lUB}$ is the $\sigma_{\omega}=\sigma_\ST\cup\tau_\ST$-theory 
\[
\ZFC^-_{\ST}\cup T_\lUB.
\]
\item
$\sigma_{\omega,\NS_{\omega_1}}$ is
the signature $\tau_{\NS_{\omega_1}}\cup\sigma_\ST$ (recall Notation \ref{not:keynotation-genabsthordar}).
\item
$\ZFC^{*-}_{\lUB,\NS_{\omega_1}}$ is the $\sigma_{\omega,\NS_{\omega_1}}$-theory 
\[
\ZFC^-_{\NS_{\omega_1}}\cup T_\lUB.
\]
\item
Accordingly we define $\ZFC^{*}_{\lUB}$, $\ZFC^*_{\lUB,\NS_{\omega_1}}$.
\end{itemize}
\end{Notation}

A key observation is that $\ZFC^-_\ST$,
$\ZFC^-_{\NS_{\omega_1}}$, $\ZFC^{*-}_{\lUB}$, $\ZFC^{*-}_{\lUB,\NS_{\omega_1}}$ are all 
\emph{definable} extension of $\ZFC^-$; more precisely: there are sets $X\subseteq F_{\bp\in}\times 2$ such that each of the above theory is of the form $\ZFC^-+T_X$ according to Def. \ref{def:compspectrum}.
The same applies to 
$\ZFC_\ST$,
$\ZFC_{\NS_{\omega_1}}$, $\ZFC^{*}_{\lUB}$, $\ZFC^{*}_{\lUB,\NS_{\omega_1}}$.

\begin{Theorem} \label{Thm:mainthm-1ter}
Let $T$ be any $\sigma_{\omega,\NS_{\omega_1}}$-theory extending 
\[
\ZFC^*_{\lUB,\NS_{\omega_1}}+\maxUB+\text{ there is a supercompact cardinal and  class many Woodin cardinals}
\]
Then $T$ has a model companion $T^*$. 

Moreover TFAE for any for any $\Pi_2$-sentence $\psi$ for 
$\sigma_{\omega,\NS_{\omega_1}}$:
\begin{enumerate}[(A)]
\item \label{Thm:mainthm-1A}
$T^*\vdash \psi$.
\item \label{Thm:mainthm-1E}
\[
(V[G],\sigma_{\omega,\NS_{\omega_1}}^{V[G]})\models\psi^{H_{\omega_2}}
\]
whenever $(V,\sigma_{\omega,\NS_{\omega_1}}^{V})\models T$, $V[G]$ is a forcing extension of $V$, 
and $V[G]\models\stUB$.
\item \label{Thm:mainthm-1Cbis}
$T$ proves\footnote{$\dot{H}_{\omega_2}$ denotes a canonical $P$-name for $H_{\omega_2}$ as computed in generic extension by $P$.} 
\[
\exists P \,(P\text{ is a \emph{stationary set preserving} partial order }\wedge 
\Vdash_P\psi^{\dot{H}_{\omega_2}}).
\]
\item \label{Thm:mainthm-1C}
$T$ proves
\[
\exists P \,(P\text{ is a partial order }\wedge 
\Vdash_P\psi^{\dot{H}_{\omega_2}}).
\]
\item \label{Thm:mainthm-1D}
$T$ proves
\[
L(\UB)\models[\Pmax\Vdash \psi^{\dot{H}_{\omega_2}}].
\]
\item \label{Thm:mainthm-1Dbis}
If $(V,\sigma_{\omega,\NS_{\omega_1}}^{V})\models T$ 
and $\psi$ is $\forall x\exists y\,\phi(x,y)$ with $\phi$ quantifier free 
$\sigma_{\omega,\NS_{\omega_1}}$-formula, then for all\footnote{See Def. \ref{def:aspschhoncons} for the notion of honest consistency.}  $a\in H_{\omega_2}^V$
\[
\exists y\phi(a,y)\text{ is \emph{honestly consistent} according to $V$}.
\]
\item \label{Thm:mainthm-1B}
For any complete theory 
\[
S\supseteq T,
\] 
$S_\forall\cup\bp{\psi}$ is consistent.

\end{enumerate}

\end{Theorem}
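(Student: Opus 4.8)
The plan is to obtain the model companion through the incomplete-theory criterion of Lemma~\ref{fac:proofthm1-2} and then close a cycle of equivalences among \ref{Thm:mainthm-1A}--\ref{Thm:mainthm-1B}, combining generic absoluteness for the universal fragment with Woodin's $\Pmax$-analysis. Set $T^*=\{\phi:\ (H_{\omega_2}^{V[G]},\sigma_{\omega,\NS_{\omega_1}}^{V[G]})\models\phi,\ (V,\in)\models T,\ V[G]\models\stUB\}$, the $\sigma_{\omega,\NS_{\omega_1}}$-theory common to all $H_{\omega_2}$ of $\stUB$-extensions of models of $T$. First I would record that Levy absoluteness applies in our signature exactly as in the proof of Thm.~\ref{Thm:mainthm-1}: both $\NS_{\omega_1}$ and each $\sigma_\ST$-predicate $S_\phi$ are predicates for subsets of $\pow{\omega_1}^k$ (the $S_\phi$ even of $\pow{\omega}^k$), so Lemma~\ref{lem:levabsgen} with $\kappa=\omega_1$ yields $(H_{\omega_2}^V,\sigma_{\omega,\NS_{\omega_1}}^V)\prec_1(V,\sigma_{\omega,\NS_{\omega_1}}^V)$ for every $(V,\in)\models T$; from this $\prec_1$ one gets $T^*_\forall=T_\forall$ verbatim as in Thm.~\ref{Thm:mainthm-1}.

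Next I would establish the two inputs Lemma~\ref{fac:proofthm1-2} needs. For model completeness of $T^*$ I would run the $\Pmax$-analysis underlying Thm.~\ref{Thm:mainthm-1bis}, which already yields model completeness of the $\stUB$-theory of $H_{\omega_2}$ in the signature $\tau_{\NS_{\omega_1}}\cup\UB$; $\maxUB$ makes the universally Baire predicates and the $L(\UB)$-relativised predicates $S_\phi$ interchangeable (every $S_\phi$ is universally Baire, and every universally Baire set is $L(\UB)$-definable), while the by-interpretability of $H_{\omega_2}$ with $\pow{\omega_1}$ from Thm.~\ref{thm:keypropCod} at $\kappa=\omega_1$ (legitimate since $\stUB$ forces $2^{\aleph_0}=\aleph_2$) reduces every $\sigma_{\omega,\NS_{\omega_1}}$-formula to the atomic predicates, so Fact~\ref{fac:proofthm1} and Robinson's test (Lemma~\ref{lem:robtest}) give that $T^*$ is model complete. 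For the consistency hypothesis I must check that, for every $\Pi_1$-sentence $\theta$, $T+\theta$ is consistent iff $T^*+\theta$ is: if $V\models T+\theta$, forcing $\stUB$ by a stationary-set-preserving poset (available from the supercompact together with $\maxUB$) gives $V[G]\models\stUB$ with $\theta$ still true by the generic invariance of the universal fragment (Thm.~\ref{thm:PI1invomega2}, transported to the $L(\UB)$-signature via $\maxUB$), whence $H_{\omega_2}^{V[G]}\sqsubseteq V[G]$ inherits $\theta$ and models $T^*$; and if $T\vdash\neg\theta$ then the same forcing keeps $\neg\theta$ true in $V[G]$, so $H_{\omega_2}^{V[G]}\models\neg\theta$ by the $\prec_1$ above, forcing $\neg\theta\in T^*$. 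Lemma~\ref{fac:proofthm1-2} then delivers that $T^*$ is the model companion of $T$ and the equivalence \ref{Thm:mainthm-1A}$\Leftrightarrow$\ref{Thm:mainthm-1B}.

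It remains to weave in the forcing characterisations. The equivalence \ref{Thm:mainthm-1A}$\Leftrightarrow$\ref{Thm:mainthm-1E} is built into the definition of $T^*$: if $\psi\notin T^*$ then by Lemma~\ref{fac:proofthm1-2}(\ref{fac:proofthm1-2-b}) some complete $S\supseteq T$ has $S_\forall\vdash\neg\psi$, and forcing $\stUB$ over a model of $S$ produces an $\stUB$-extension whose $H_{\omega_2}$, being $\prec_1$ its universe and a model of $S_\forall$, refutes $\psi$. I would then close the loop \ref{Thm:mainthm-1A}$\Rightarrow$\ref{Thm:mainthm-1Cbis}$\Rightarrow$\ref{Thm:mainthm-1C}$\Rightarrow$\ref{Thm:mainthm-1B}: over any $V\models T$ a stationary-set-preserving poset forces $\stUB$ and hence $\psi^{\dot{H}_{\omega_2}}$, giving \ref{Thm:mainthm-1Cbis}; \ref{Thm:mainthm-1Cbis}$\Rightarrow$\ref{Thm:mainthm-1C} is trivial; and from \ref{Thm:mainthm-1C} a witnessing forcing keeps $S_\forall$ true (Thm.~\ref{thm:PI1invomega2}) while making $\psi^{H_{\omega_2}}$ hold, so $S_\forall\cup\{\psi\}$ is realised in $H_{\omega_2}^{V[G]}$, giving \ref{Thm:mainthm-1B}. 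Finally \ref{Thm:mainthm-1E}$\Leftrightarrow$\ref{Thm:mainthm-1D} is the identification, from Woodin's $\Pmax$-theory run over $L(\UB)$ under $\maxUB$, of the $H_{\omega_2}$ of an $\stUB$-model with the $\Pmax$-extension of $L(\UB)$, and \ref{Thm:mainthm-1D}$\Leftrightarrow$\ref{Thm:mainthm-1Dbis} is precisely the Asper\'o--Schindler reformulation of ``$\Pmax$ forces $\exists y\,\phi(a,y)$'' as the honest consistency of $\exists y\,\phi(a,y)$ (Def.~\ref{def:aspschhoncons}), applied uniformly to the parameters $a\in H_{\omega_2}^V$.

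The hard part will be the two $\Pmax$-flavoured ingredients rather than the soft forcing bookkeeping: establishing model completeness of the $\stUB$-theory of $H_{\omega_2}$ in the austere signature $\sigma_{\omega,\NS_{\omega_1}}$ --- where, unlike the $\sigma_\kappa$ of Thm.~\ref{Thm:mainthm-1}, there are no atomic predicates for all definable subsets of $\pow{\omega_1}$, so model completeness cannot be read off Thm.~\ref{thm:modcompHkappa+} and must be imported from the $\Pmax$-analysis behind Thm.~\ref{Thm:mainthm-1bis} --- and the exact matching \ref{Thm:mainthm-1D}$\Leftrightarrow$\ref{Thm:mainthm-1Dbis} between $\Pmax$-forcing over $L(\UB)$ and honest consistency. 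By contrast the supercompact enters only through the forcibility of $\stUB$ by a stationary-set-preserving forcing and drops out of every implication that does not itself assert forcibility.
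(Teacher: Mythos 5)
Your proposal follows essentially the same route as the paper: define $T^*$ as the $\sigma_{\omega,\NS_{\omega_1}}$-theory of $H_{\omega_2}$ in $\stUB$-forcing extensions of models of $T$, verify the hypotheses of Lemma~\ref{fac:proofthm1-2} via Levy absoluteness together with the generic invariance of the universal fragment (Thm.~\ref{thm:PI1invomega2}), import model completeness from the $\Pmax$-analysis (this is exactly Thm.~\ref{thm:keythmmodcompanHomega2}, whose proof the paper also defers at this stage), and close the cycle of equivalences using Foreman--Magidor--Shelah plus Asper\'o--Schindler for forcibility of $\stUB$, the $\Pmax$-absoluteness theorem for \ref{Thm:mainthm-1D}, and the honest-consistency characterization for \ref{Thm:mainthm-1Dbis}. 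Your arrangement of the implications differs cosmetically (you go \ref{Thm:mainthm-1C}$\Rightarrow$\ref{Thm:mainthm-1B} directly and treat \ref{Thm:mainthm-1E}$\Leftrightarrow$\ref{Thm:mainthm-1D} as a separate biconditional, where the paper threads \ref{Thm:mainthm-1D} into the chain via \ref{Thm:mainthm-1C}$\Rightarrow$\ref{Thm:mainthm-1D}$\Rightarrow$\ref{Thm:mainthm-1B}), but the ingredients are identical.

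One caveat: the sentence in your second paragraph deriving model completeness from the by-interpretability of Thm.~\ref{thm:keypropCod} at $\kappa=\omega_1$ together with Fact~\ref{fac:proofthm1} does not work. That argument only produces model completeness in the richer signature $\sigma_{\omega_1}$, which has atomic predicates for \emph{all} $\in$-definable subsets of $\pow{\omega_1^{<\omega}}^n$ (this is the content of Thm.~\ref{thm:modcompHkappa+} and Thm.~\ref{Thm:mainthm-5}, and that signature is explicitly too expressive: it makes $\CH$ atomic and breaks generic invariance of the universal fragment). In $\sigma_{\omega,\NS_{\omega_1}}$ the quantifier reduction genuinely requires the universally Baire set $\bar{D}_{\UB,\NS_{\omega_1}}$ of $(\NS_{\omega_1},\UB)$-ec preconditions and the $\stUB$ hypothesis; your closing paragraph correctly retracts the shortcut and identifies this as the part that must be imported wholesale, so the net argument stands, but the intermediate derivation should be deleted rather than merely superseded.
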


Note that even if $T\models\CH$, $\neg\CH$ is in $T^*$ (for example by \ref{Thm:mainthm-1D} above).
In particular the model companion $T^*$ of $T$ may have models whose theory of $H_{\aleph_2}$ is
completely unrelated to that of models of $T$.
Moreover recall again that $\CH$ is not expressible as a $\Pi_1$-property in $\sigma_{\omega,\NS_{\omega_1}}$ for $T$: it is not preserved by forcing, while $T_\forall$ is.

The rest of this section is devoted to proof of Theorems~\ref{Thm:mainthm-1bis} and \ref{Thm:mainthm-1ter}.

Crucial to their proof is the recent breakthrough of 
Asper\'o and Schindler \cite{ASPSCH(*)} establishing that $(*)$-$\UB$
follows from $\MM^{++}$. 

First of all it is convenient to detail more on $\maxUB$ and its use in our proofs.

\subsection{$\maxUB$}

From now on we will need in several occasions that $\maxUB$ holds in $V$ (recall Def. \ref{Keyprop:maxUB}).
We will always explicitly state where this assumption is used, hence if a statement
 does not mention it in the hypothesis, 
the assumption is not needed for its thesis. 

 We will use both properties of 
$\maxUB$ crucially: (\ref{Keyprop:maxUB-1}) is used in the proof of Lemma \ref{lem:UBcorr};
(\ref{Keyprop:maxUB-2}) in the proof of Fact \ref{fac:densityUBcorrect}.
Similarly they are essentially used in Remark \ref{rmk:maxUBec}.
Specifically we will need $\maxUB$ to prove that 
certain subsets of $H_{\omega_1}$
simply definable using an existential formula quantifying over $\UB$ 
are coded by a universally Baire set, 
and that this coding is absolute between
generic extensions, i.e. if 
\[
\bp{x\in H_{\omega_1}^V: (H_{\omega_1}\cup\UB,\tau_\ST^V)\models \phi(x)}
\] 
is coded by $A\in \UB^V$,
\[
\bp{x\in H_{\omega_1}^{V[G]}: (H_{\omega_1}^{V[G]}\cup\UB^{V[G]},\tau_\ST^{V[G]}))\models \phi(x)}
\] 
is coded by $A^{V[G]}\in \UB^{V[G]}$ for $\phi$ some $\tau_\ST$-formula\footnote{Note that
the structures $(H_{\omega_1}\cup\UB,\in)$, $(H_{\omega_1}\cup\UB,\tau_\ST^V)$ have the same algebra of definable sets, hence we will use one or the other as we deem most convenient,
since any set definable by some formula in one of these structures is also defined by a possibly different formula in the other. The formulation of $\maxUB$ is unaffacted if we choose any of the two structures as the one for which we predicate it.}.

It is useful to outline what is the different expressive power of the structures
 $(H_{\omega_1},\tau_{\ST}^V,A: A\in\UB^V)$ and 
 $(H_{\omega_1}\cup\UB^V,\tau_{\ST}^V)$.
 The latter can be seen as a second order extension of $H_{\omega_1}$, where
we also allow formulae to quantify over the family of universally Baire subsets of $2^{\omega}$;
in the former quantifiers only range over elements of $H_{\omega_1}$, but we can 
use the universally Baire subsets of $H_{\omega_1}$ as parameters.
This is in exact analogy between the comprehension scheme for the Morse-Kelley axiomatization of set theory
(where formulae with quantifiers ranging over classes are allowed) and the 
comprehension scheme for G\"odel-Bernays axiomatization of set theory
(where just formulae using classes as parameters and quantifiers ranging only over sets are allowed).
To appreciate the difference between the two set-up, 
note that that the axiom of determinacy for universally Baire sets
is expressible in
\[
(H_{\omega_1}\cup\UB,\tau_{\ST}^V)
\]
by the
$\tau_\ST$-sentence
\begin{quote}
\emph{For all $A\subseteq 2^\omega$ there is a winning strategy for one of the players in the game with payoff 
$A$},
\end{quote}
while in 
\[
(H_{\omega_1},\tau_{\ST}^V,A:A\in\UB^V)
\]
it is expressed by the axiom schema of $\Sigma_1$-sentences for $\tau_\ST\cup\bp{A}$
\begin{quote}
\emph{There is a winning strategy for some player in the game with payoff 
$A$}
\end{quote}
as $A$ ranges over the universally Baire sets.

We will crucially use the stronger expressive power of the structure $(H_{\omega_1}\cup\UB,\tau_\ST)$
to define 
certain universally Baire sets as the extension in $(H_{\omega_1}\cup\UB,\tau_\ST^V)$ of lightface 
$\Sigma_1$-properties (according to the Levy hierarchy); properties which 
require an existential quantifier ranging over all universally Baire sets.

\subsection{A streamline of the proofs of Theorems \ref{Thm:mainthm-1bis}, \ref{Thm:mainthm-1ter}}
Let us give a general outline of these proofs before getting into details. From now on we assume 
the reader 
is familiar with the basic theory of $\Pmax$ as exposed in \cite{HSTLARSON}.

\begin{notation}
For a given family of universally Baire sets $\mathcal{A}$, 
$\tau_\mathcal{A}$ is the signature $\tau_{\ST}\cup\mathcal{A}$,
$\tau_{\mathcal{A},\NS_{\omega_1}}$ is the signature $\tau_{\NS_{\omega_1}}\cup\mathcal{A}$.
\end{notation}
The key point is to prove (just on the basis that $(V,\in)\models\maxUB+\stUB$) 
the model completeness of the
$\tau_{\UB,\NS_{\omega_1}}$-theory of
$H_{\omega_2}$ assuming $\stUB$. To do so we use Robinson's test and we show the following:

\begin{quote}
Assuming $\maxUB$ there is a \emph{special} universally Baire set
$\bar{D}_{\UB,\NS_{\omega_1}}$ defined by an $\in$-formula \emph{(in no parameters)} 
relativized to $L(\UB)$
coding a family of $\Pmax$-preconditions with the following fundamental property:

\emph{
For any 
$\tau_{\UB,\NS_{\omega_1}}$-formula 
$\psi(x_1,\dots,x_n)$ mentioning the universally Baire predicates $B_1,\dots,B_k$,
there is an algorithmic procedure which finds a \emph{universal} $\tau_{\UB,\NS_{\omega_1}}$-formula 
$\theta_\psi(x_1,\dots,x_n)$ mentioning just the universally Baire predicates 
$B_1,\dots,B_k,\bar{D}_{\UB,\NS_{\omega_1}}$ such that}
\[
(H_{\omega_2}^{L(\UB)[G]},\sigma_{\bp{B_1,\dots,B_k,\bar{D}_{\UB,\NS_{\omega_1}}},\NS_{\omega_1}}^{L(\UB)[G]})
\models
\forall\vec{x}\,(\psi(x_1,\dots,x_n)\leftrightarrow\theta_\psi(x_1,\dots,x_n))
\]
\emph{whenever $G$ is $L(\UB)$-generic for $\Pmax$.}
\end{quote}
Moreover the definition of $\bar{D}_{\UB,\NS_{\omega_1}}$ and the computation
of $\theta_\psi(x_1,\dots,x_n)$ from $\psi(x_1,\dots,x_n)$ are just based on the assumption that
$(V,\in)$ is a model of $\maxUB$, hence can be replicated mutatis-mutandis in any model of 
$\ZFC+\maxUB$. 
We will need that $(V,\in)$ is a model of $\maxUB+\stUB$ just to argue that
in $V$ there is an $L(\UB)$-generic filter $G$ for $\Pmax$ such that\footnote{It is this part of our argument
where the result of Asper\`o and Schindler establishing the consistency of $\stUB$ relative to a supercompact is used in an essential way. We will address again the role of Asper\`o and Schindler's result in all our proofs in  some closing remarks.}
$H_{\omega_2}^{L(\UB)[G]}=H_{\omega_2}^V$. 
Since in all our arguments we will only use that $(V,\in)$ is a model of $\maxUB$ and (in some of them 
also of $\stUB$),
we will be in the position to conclude easily for the truth of Theorem~\ref{Thm:mainthm-1bis} and~\ref{Thm:mainthm-1ter}.

We condense the above information in the following:

\begin{theorem}\label{thm:keythmmodcompanHomega2}
There is an $\in$-formula $\phi_{\UB,\NS_{\omega_1}}(x)$ in one free variable
such that:
\begin{enumerate}
\item
$\ZFC^*_{\lUB}+\maxUB$
proves that 
\emph{$S_{\phi_{\UB,\NS_{\omega_1}}}$ is universally Baire}.
\item
Given predicate symbols $B_1,\dots,B_k$, consider the theory $T_{B_1,\dots,B_k}$ in signature 
$\sigma_{\omega}\cup\bp{B_1,\dots,B_k}$
extending $\ZFC^*_{\lUB}+\maxUB$ by the axioms:
\[
B_j\text{ is universally Baire}
\]
for all predicate symbols $B_1,\dots,B_k$.

There is a recursive procedure assigning to any \emph{existential}
formula $\phi(x_1,\dots,x_k)$ for $\sigma_{\bp{B_1,\dots,B_k},\NS_{\omega_1}}$
a \emph{universal} formula $\theta_\phi(x_1,\dots,x_k)$ for 
$\sigma_{\bp{B_1,\dots,B_k, S_{\phi_{\UB,\NS_{\omega_1}}}},\NS_{\omega_1}}$
such that $T_{B_1,\dots,B_k}$ proves that
\[
\Pmax\Vdash 
[(H_{\omega_2}^{L(\UB)[\dot{G}]},\tau_{\UB,\NS_{\omega_1}}^{L(\UB)[\dot{G}]})
\models\forall\vec{x}\;(\phi(x_1,\dots,x_k)\leftrightarrow\theta_\phi(x_1,\dots,x_k))]
\]
where $\dot{G}\in L(\UB)$ is the canonical $\Pmax$-name for the generic filter.
\end{enumerate}

\end{theorem}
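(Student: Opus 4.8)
The plan is to prove both clauses by running the standard $\Pmax$-analysis of $H_{\omega_2}$ inside $L(\UB)$, and then to read off model completeness from Robinson's test (Lemma~\ref{lem:robtest}). The unifying device is a fixed definable dense set $\bar{D}_{\UB,\NS_{\omega_1}}$ of $\Pmax$-preconditions whose iterations are absolutely correct for the universally Baire sets and for $\NS_{\omega_1}$; membership of a real in (a code for) this set will turn the intrinsically $\Pi_1$ notion of iterability into an \emph{atomic} predicate, which is exactly what lets an existential formula be rewritten universally.

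For clause (1), let $\phi_{\UB,\NS_{\omega_1}}(x)$ be the $\in$-formula asserting (to be relativized to $L(\UB)$ as prescribed by $T_\lUB$) that $x$ codes a countable $\Pmax$-precondition $(M,I)$ — a countable iterable model of $\ZFC^{-}$ in which $\NS_{\omega_1}$ is precipitous — lying in the canonical dense set $\bar{D}_{\UB,\NS_{\omega_1}}$ of preconditions that are iterable with respect to every universally Baire set and that correctly decide the $\tau_{\NS_{\omega_1}}$-theory of their own $H_{\omega_2}$. Since preconditions are countable objects coded by reals, and the iterability and saturation clauses quantify only over $H_{\omega_1}$ and over $\UB$, the extension $S_{\phi_{\UB,\NS_{\omega_1}}}$ is a set of reals definable in $(H_{\omega_1}\cup\UB,\in)$. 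Hence by $\maxUB$ (Definition~\ref{Keyprop:maxUB}(\ref{Keyprop:maxUB-1})) the set $S_{\phi_{\UB,\NS_{\omega_1}}}=\bar{D}_{\UB,\NS_{\omega_1}}$ is universally Baire, which is clause (1).

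For clause (2), given an existential formula $\phi(\vec{x})\equiv\exists\vec{y}\,\chi(\vec{x},\vec{y})$ of $\sigma_{\{B_1,\dots,B_k\},\NS_{\omega_1}}$ with $\chi$ quantifier free, let $\theta_\phi(\vec{x})$ be the universal $\sigma_{\{B_1,\dots,B_k,S_{\phi_{\UB,\NS_{\omega_1}}}\},\NS_{\omega_1}}$-formula expressing: \emph{for every $q$ with $S_{\phi_{\UB,\NS_{\omega_1}}}(q)$ and every length-$\omega_1$ iteration of $q$ by its ideal $\NS_{\omega_1}^{q}$ that is correct for $\NS_{\omega_1}$ and maps preimages of $\vec{x}$ to $\vec{x}$, the resulting iterate satisfies $\exists\vec{y}\,\chi$}. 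Quantification over $q$ is a single universal real quantifier, quantification over the iteration is a universal quantifier over objects of $H_{\omega_2}$, and the clauses ``$q$ codes $\vec{x}$'', ``the iteration is $\NS_{\omega_1}$-correct'' and ``the iterate satisfies $\exists\vec{y}\,\chi$'' are bounded assertions about countable/size-$\omega_1$ structures using the atomic predicates $S_{\phi_{\UB,\NS_{\omega_1}}}$, $\NS_{\omega_1}$ and the $B_i$; so $\theta_\phi$ is genuinely universal and $\phi\mapsto\theta_\phi$ is plainly recursive. The required equivalence $\forall\vec{x}\,(\phi\leftrightarrow\theta_\phi)$ inside $H_{\omega_2}^{L(\UB)[\dot{G}]}$ is now the heart of the matter: in the $\Pmax$-extension $H_{\omega_2}$ is the direct limit of the iterations of the preconditions in the generic filter, so every $\Sigma_1$-fact over $H_{\omega_2}$ with universally Baire parameters is reflected into a precondition of $\bar{D}_{\UB,\NS_{\omega_1}}\cap\dot{G}$; the density of $\bar{D}_{\UB,\NS_{\omega_1}}$ together with the homogeneity of $\Pmax$ below its conditions forces every $\NS_{\omega_1}$-correctly iterated image of a condition of $\bar{D}_{\UB,\NS_{\omega_1}}$ coding $\vec{x}$ to agree on the truth of $\exists\vec{y}\,\chi$, which is exactly $\theta_\phi$.

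The main obstacle is this last equivalence, namely transferring the density and homogeneity lemmas for $\Pmax$-preconditions of \cite{HSTLARSON} — originally established over $L(\mathbb{R})$ under $\mathsf{AD}^{L(\mathbb{R})}$ — to $L(\UB)$. This is precisely where both clauses of $\maxUB$ are indispensable: Definition~\ref{Keyprop:maxUB}(\ref{Keyprop:maxUB-1}) guarantees that all auxiliary sets of reals produced in the iteration and density arguments are universally Baire, so that iterability really is captured by the atomic predicate $S_{\phi_{\UB,\NS_{\omega_1}}}$, while Definition~\ref{Keyprop:maxUB}(\ref{Keyprop:maxUB-2}) supplies the generic absoluteness of the $(H_{\omega_1}\cup\UB,\in)$-theory needed to show that the truth value of $\exists\vec{y}\,\chi$ is undisturbed when passing between $L(\UB)[\dot{G}]$ and the iterates of preconditions. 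Once these transfer lemmas are in place every step is internal to $L(\UB)$ and depends only on $\maxUB$, so the construction of $\phi_{\UB,\NS_{\omega_1}}$ and the procedure $\phi\mapsto\theta_\phi$ are uniform across all models of the ambient theory, as the statement demands.
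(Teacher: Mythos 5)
Your overall architecture matches the paper's: fix a universally Baire set of ``good'' $\Pmax$-preconditions, and rewrite an existential $\phi$ as a universal statement quantifying over those preconditions and their $\NS_{\omega_1}$-correct length-$\omega_1$ iterations. But the two load-bearing steps are missing. First, your defining property of the good preconditions --- that they ``correctly decide the $\tau_{\NS_{\omega_1}}$-theory of their own $H_{\omega_2}$'' --- is not, as stated, a property of a countable structure evaluable in $(H_{\omega_1}\cup\UB,\in)$: correctness is relative to the $H_{\omega_2}$ of the ambient universe, which is not available there, so $\maxUB$(\ref{Keyprop:maxUB-1}) cannot be invoked to conclude clause (1). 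The paper's definition of $(\NS_{\omega_1},\UB)$-ec is deliberately phrased in terms of $\Sigma_1$-elementarity of iteration maps into other \emph{countable} $\UB$-correct structures $N\geq M$; that reformulation is exactly what makes the set computable in $(H_{\omega_1}\cup\UB,\in)$ (Remark \ref{rmk:maxUBec}) while still being strong enough, after collapsing an inaccessible so that $V_\delta$ itself becomes $\UB$-correct (Lemma \ref{lem:UBcorr}), to yield $\Sigma_1$-elementarity of $j_{0\omega_1}\colon H_{\omega_2}^M\to H_{\omega_2}^V$.

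Second, the hard direction $\phi(\vec{A})\rightarrow\theta_\phi(\vec{A})$ is not delivered by ``density of $\bar{D}_{\UB,\NS_{\omega_1}}$ together with homogeneity of $\Pmax$.'' One needs (i) that for \emph{every} good precondition and \emph{every} $\NS_{\omega_1}$-correct iteration sending $\vec{a}$ to $\vec{A}$, the truth of the existential formula at $\vec{A}$ in $H_{\omega_2}$ pulls back to $\vec{a}$ in the precondition --- which requires the ec-property above, not homogeneity; note that a $\Sigma_1$-truth in $H_{\omega_2}$ does not automatically reflect to a size-$\omega_1$ iterate, so your choice to evaluate $\phi$ in the iterate rather than in the precondition makes this direction worse, not better --- and (ii) that such preconditions and iterations exist for every $\vec{A}\in H_{\omega_2}$ and every relevant $B\in\UB$, so that $\theta_\phi$ is not vacuous. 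Point (ii) is Theorem \ref{thm:char(*)}(\ref{thm:char(*)-3}), whose proof runs through the Asper\'o--Schindler honest-consistency characterization of $(*)$-$\UB$ and the density Fact \ref{fac:densityUBcorrect}; none of this is replaced by anything in your argument. Until the ec-condition is formulated correctly and Theorem \ref{thm:char(*)} (or an equivalent) is established, the claimed equivalence in $H_{\omega_2}^{L(\UB)[\dot{G}]}$ does not follow.
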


\subsection{Proofs of Thm.~\ref{Thm:mainthm-1ter}, and of 
(\ref{thm:char(*)-modcomp-1})$\to$(\ref{thm:char(*)-modcomp-2})
of Thm.~\ref{Thm:mainthm-1bis}}

Theorem~\ref{Thm:mainthm-1ter}, (\ref{thm:char(*)-modcomp-1})$\to$(\ref{thm:char(*)-modcomp-2})
of Theorem~\ref{Thm:mainthm-1bis} are immediate corollaries
of the above theorem combined with Asper\`o and Schindler's proof that 
$\MM^{++}$ implies $\stUB$, and with Theorem \ref{thm:PI1invomega2}.

We start with the proof of
(\ref{thm:char(*)-modcomp-1})$\to$(\ref{thm:char(*)-modcomp-2}) of Thm.~\ref{Thm:mainthm-1bis}
assuming Thm. \ref{thm:keythmmodcompanHomega2} and Thm. \ref{thm:PI1invomega2}:

\begin{proof}
Assume $(V,\in)$ models $\stUB$. Then there is a $\Pmax$-filter $G\in V$ such that
$H_{\omega_2}^{L(\UB)[G]}=H_{\omega_2}^V$.
By Thm. \ref{thm:keythmmodcompanHomega2} and Robinson's test, 
we get that the first order $\tau_{\UB,\NS_{\omega_1}}$-theory of 
$H_{\omega_2}^{L(\UB)[G]}$ is model complete.
By Levy's absoluteness (Lemma \ref{lem:levabsgen}), $H_{\omega_2}^{L(\UB)[G]}$ is a $\Sigma_1$-elementary substructure of $V$ also according to the signature $\tau_{\UB,\NS_{\omega_1}}$.
We conclude (by Thm. \ref{thm:uniqmodcompan}), since the two theories share the same $\Pi_1$-fragment.
\end{proof}

The proof of the converse implication requires more information on 
$\bar{D}_{\UB,\NS_{\omega_1}}$ then what is conveyed in Thm. \ref{thm:keythmmodcompanHomega2}.
We defer it to a later stage.

\smallskip

We now prove Thm.~\ref{Thm:mainthm-1ter}:
\begin{proof}
Let $T^*_{\lUB,\NS_{\omega_1}}$ be the theory given by the
$\Pi_2$-sentences $\psi$
for $\sigma_{\omega,\NS_{\omega_1}}$ which 
hold in $H_{\omega_2}^{V[G]}$ whenever $(V,\in)$ models
\[
\ZFC^*_{\lUB,\NS_{\omega_1}}+\maxUB+\text{ there is a supercompact cardinal and  class many Woodin cardinals}
\]
and
$V[G]$
is a generic extension of $(V,\in)$ by 
some forcing such that $V[G]\models \stUB$.

This theory is consistent: by Schindler and Asper\`o's result \cite{ASPSCH(*)}
\[
\ZFC+\maxUB+\MM^{++}+\emph{there are class many Woodin cardinals} 
\]
implies $\stUB$;
$\MM^{++}$ is forcible over a model of $\ZFC+$\emph{there is a supercompact}.

By Thm. \ref{thm:keythmmodcompanHomega2} and Robinson's test,
$T^*_{\lUB,\NS_{\omega_1}}$ is a model complete theory.

Given a $\sigma_{\omega,\NS_{\omega_1}}$-theory $T$ extending 
\[
\ZFC^*_{\lUB,\NS_{\omega_1}}+\maxUB+\emph{there is a supercompact cardinal},
\]
let 
\[
T^*=\bp{\phi: (V[G],\sigma_{\omega,\NS_{\omega_1}}^{V[G]})\models \stUB+\phi^{H_{\omega_2}^{V[G]}},\, 
(V,\sigma_{\omega,\NS_{\omega_1}})\models T}.
\]

We start showing that
$T$ and $T^*$ satisfy the assumptions of Lemma~\ref{fac:proofthm1-2}.
This immediately gives \ref{Thm:mainthm-1A}$\Longleftrightarrow$\ref{Thm:mainthm-1B}
for $T$ and $T^*$.

We must show:
\begin{itemize}
\item
$T^*$ is model complete.
\item
$T^*$ is the model companion of $T$.
\item
For any universal sentence $\theta$,
$T+\theta$ is consistent if and only if so is and $T^*+\theta$.
\end{itemize}

First of all $T^*$ is model complete, since it extends $T^*_{\lUB,\NS_{\omega_1}}$:
if $(V,\in)\models T$ and $G$ is such that $(V[G],\in)\models\MM^{++}$, then
\[
\ZFC^*_{\lUB,\NS_{\omega_1}}+\maxUB+\stUB+\text{there are class many Woodin cardinals}.
\]
holds in $V[G]$ by \cite{ASPSCH(*)}, hence 
$H_{\omega_2}^{V[G]}\models T^*_{\lUB,\NS_{\omega_1}}$.

We now show that $T^*_\forall=T_\forall$, i.e. that  $T^*$ is the model companion of $T$.
 
Fix a universal $\sigma_{\omega,\NS_{\omega_1}}$-sentence $\theta$.

Assume $T\vdash\theta$. Fix $V$ a model of $T$.
Let $G$ be $V$-generic for some forcing such that $V[G]\models\stUB$.
By Thm. \ref{thm:PI1invomega2} $V[G]\models\theta$, and by Levy absoluteness 
$H_{\omega_2}^{V[G]}\models\theta$.
Since this argument can be repeated for all models $V$ of $T$, 
we get that $\theta\in T^*$ (by definition of $T^*$).

The converse implication holds by a similar argument which appeals with the obvious variations to Levy absoluteness and to Thm. \ref{thm:PI1invomega2} (i.e. we go backward from $H_{\omega_2}^{V[G]}$ to $V$ for any model
$V$ of $T$ and any forcing extension $V[G]$ of $V$ which models $\stUB$).

Again with the same recipe described above we can prove that 
for any universal sentence $\theta$,
$T+\theta$ is consistent if and only if so is and $T^*+\theta$. We leave the details to the reader.

We are left with the proof of the remaining equivalence between \ref{Thm:mainthm-1A},
\ref{Thm:mainthm-1E}, \ref{Thm:mainthm-1Cbis}, \ref{Thm:mainthm-1C}, 
\ref{Thm:mainthm-1D}, \ref{Thm:mainthm-1Dbis}, \ref{Thm:mainthm-1B}.

\begin{description}

\item[\ref{Thm:mainthm-1A}$\Longrightarrow$\ref{Thm:mainthm-1E}]
By definition of $T^*$.

\item[\ref{Thm:mainthm-1E}$\Longrightarrow$\ref{Thm:mainthm-1Cbis}]
Given a $\sigma_{\omega,\NS_{\omega_1}}$-model 
$(V,\sigma_{\omega,\NS_{\omega_1}}^V)$ of $T$,
by the results of \cite{FORMAGSHE}, we can find a stationary set preserving 
forcing extension $V[G]$ of $V$ which models $\MM^{++}$.
By the key result of Asper\'o and Schindler \cite{ASPSCH(*)} 
$V[G]\models\stUB$. 
By 
\ref{Thm:mainthm-1E} $(V[G],\sigma_{\omega,\NS_{\omega_1}}^{V[G]})$ models $\psi^{H_{\omega_2}^{V[G]}}$,
and we are done.

\item[\ref{Thm:mainthm-1Cbis}$\Longrightarrow$\ref{Thm:mainthm-1C}]
Trivial.

\item[\ref{Thm:mainthm-1C}$\Longrightarrow$\ref{Thm:mainthm-1D}]
By\footnote{$\maxUB$ implies that the same assumption used in the cited theorem for $L(\mathbb{R})$ holds 
for $L(\UB)$.}
 \cite[Thm. 7.3]{HSTLARSON}, if some $P$ forces $\psi^{\dot{H}_{\omega_2}}$,
 we get that $L(\UB)\models \Pmax\Vdash \psi^{\dot{H}_{\omega_2}}$.

\item[\ref{Thm:mainthm-1D}$\Longleftrightarrow$\ref{Thm:mainthm-1Dbis}]
By \cite[Thm. 2.7, Thm. 2.8]{SCHASPBMM*++}.

\item[\ref{Thm:mainthm-1D}$\Longrightarrow$\ref{Thm:mainthm-1B}]
Given some complete $S\supseteq T$, and a model $\mathcal{M}$ of $S$, 
find $\mathcal{N}$ forcing extension of $\mathcal{M}$ which models $\psi^{H_{\omega_2}^{\mathcal{N}}}$.
By Thm. \ref{thm:PI1invomega2} and Levy's absoluteness Lemma \ref{lem:levabsgen}, 
$H_{\omega_2}^{\mathcal{N}}\models \psi+S_\forall$, and we are done.
\end{description}
\end{proof}

\subsection{Proof of Thm.~\ref{thm:keythmmodcompanHomega2}}
The rest of this section is devoted to the proof of Thm.~\ref{thm:keythmmodcompanHomega2}.

What we will do first is to sketch a different proof of Thm.~\ref{thm:modcompanHomega1}.
This will give us the key intuition on how to define $\bar{D}_{\UB,\NS_{\omega_1}}$.

\begin{notation}
From now on given a family of universally Baire sets $\mathcal{A}$,
we let $\tau_{\mathcal{A}}=\tau_\ST\cup\mathcal{A}$ in which allsymbols in $\mathcal{A}$
are interpreted as predicate symbols of the appropriate arity.
\end{notation}

\subsubsection{A different proof of Thm.~\ref{thm:modcompanHomega1}.}

Let $M$ be a countable transitive model of 
$\ZFC+$\emph{there are class many Woodin cardinals}.
Then it will have its own version of Thm.~\ref{thm:modcompanHomega1}.
In particular it will model that the theory of $(H_{\omega_1}^M,\tau_\ST^M,\UB^M)$ 
is model complete,
and also that $\UB^M$ is an $H_{\omega_1}$-closed\footnote{Recall Def. \ref{def:Homega1closed}.} 
family of universally Baire sets in $M$.

Now assume that there is a countable family $\UB_M$ of universally Baire sets in $V$
which is $H_{\omega_1}$-closed in $V$ and is such that
$\UB^M=\bp{B\cap M: B\in\UB_M}$.
Then 
\[
(H_{\omega_1}^M,\tau_\ST^M,\UB^M)=(H_{\omega_1}^M,\tau_\ST^M,\bp{B\cap M: B\in\UB_M})\sqsubseteq
(H_{\omega_1}^V,\tau_{\UB_M}^V)
\]
But $\UB_M$ being $H_{\omega_1}$-closed in $V$ entails that  the first order theory of
$(H_{\omega_1}^V,\tau_{\UB_M}^V)$ is model complete.
In particular if $(H_{\omega_1}^M,\tau_{\UB_M}^M)$ and $(H_{\omega_1}^V,\tau_{\UB_M}^V)$
are elementarily equivalent, then 
\[
(H_{\omega_1}^M,\tau_{\UB_M}^M)\prec
(H_{\omega_1}^V,\tau_{\UB_M}^V).
\]
The setup described above is quite easy to realize (for example $M$ could the transitive collapse of
some countable $X\prec V_\theta$ for some large enough $\theta$); in particular for any
$a\in H_{\omega_1}$ and $B_1,\dots,B_k\in \UB$, we can find $M$ countable transitive model of a suitable fragment of $\ZFC$ with $a\in H_{\omega_1}^M$
and $\UB_M\supseteq \bp{B_1,\dots,B_k}$ countable and $H_{\omega_1}$-closed family of $\UB$-sets
in $V$, such that:
\begin{itemize}
\item $\UB^M=\bp{B\cap M: B\in\UB_M}$;
\item the first order theory $T_{\UB_M}$ of $(H_{\omega_1}^V,\tau_{\UB_M}^V)$ is model complete;
\item $(H_{\omega_1}^M,\tau_\ST^M,\bp{B\cap M: B\in\UB_M})$ models $T_{\UB_M}$.
\end{itemize}
Letting $B_M=\prod\UB_M$, $(H_{\omega_1}\cup\UB,\in)$ is able to compute correctly whether 
$B_M$ encodes a set $\UB_M$ such that the pair $(\UB_M,M)$ satisfies the above list of requirements;
here we use crucially the fact that being a model complete theory is a $\Delta_0$-property, and also
that it is possible to encode
the structure
$(H_{\omega_1}^V,\tau_{\UB_M}^V)$ in a single universally Baire set\footnote{See Def. \ref{def:codkappa} for the definition of $\WFE_\omega$ and $\Cod_\omega$.} 
(for example $\WFE_\omega\times B_M$).

In particular $(H_{\omega_1}\cup\UB,\in)$ correctly computes 
the set $D_{\UB}$ of $M\in H_{\omega_1}$ such that there exists a universally Baire set
$B_M=\prod\UB_M$ with the property that the pair $(M,\UB_M)$ realizes the above set of requirements.
By $\maxUB$, $\bar{D}_{\UB}=\Cod_\omega^{-1}[D_{\UB}]$ is a universally Baire set $\bar{D}_{\UB}$.

Note moreover that $\bar{D}_{\UB}$ is defined by a $\in$-formula $\phi_\UB(x)$ 
in no extra parameters;
in particular for any model $\mathcal{W}=(W,E)$ of $\ZFC+\maxUB$, we can define 
$\bar{D}_{\UB}$ in
$\mathcal{W}$ and all its properties outlined above will hold relativized to $\mathcal{W}$.

For fixed universally Baire sets $B_1,\dots,B_k$ the set $D_{\UB,B_1,\dots,B_k}$ of 
$M\in D_\UB$ such that there is a witness $\UB_M$ of $M\in D_{\UB}$ with $B_1,\dots,B_k\in \UB_M$ is 
also definable in 
\[
(H_{\omega_1}\cup\UB,\in)
\] 
in parameters $B_1,\dots,B_k$.
Hence by $\maxUB$ $\Cod_\omega^{-1}[D_{\UB,B_1,\dots,B_k}]=\bar{D}_{\UB,B_1,\dots,B_k}$ is universally Baire (note as well that $\bar{D}_{\UB,B_1,\dots,B_k}$ belongs to any 
$L(\UB)$-closed family $\mathcal{A}$ containing $B_1,\dots,B_k$).

Now take any $\Sigma_1$-formula $\phi(\vec{x})$ for $\tau_{\UB}$ mentioning just the universally Baire
predicates $B_1,\dots,B_k$.
It doesn't take long to realize that for all $\vec{a}$ in $H_{\omega_1}$
\[
(H_{\omega_1}^V,\tau_{\UB}^V)\models\phi(\vec{a})
\]
if and only if
\[
(H_{\omega_1}^M,\tau_{\UB_M}^M)\models\phi(\vec{a})
\text{\emph{ for all $M\in D_{\UB,B_1,\dots,B_k}$ with 
$\vec{a}\in H_{\omega_1}^M$}. }
\]

But $\bar{D}_{\UB,B_1,\dots,B_k}$ is universally Baire, so the above can be formulated also as:
\[
\forall r\in\bar{D}_{\UB,B_1,\dots,B_k}[\vec{a}\in H_{\omega_1}^{\Cod(r)}\rightarrow 
(H_{\omega_1}^{\Cod(r)},\tau_{\UB_{\Cod(r)}}^{\Cod(r)})\models\phi(\vec{a})].
\]
The latter is a $\Pi_1$-sentence in the universally Baire parameter $\bar{D}_{\UB,B_1,\dots,B_k}$.

This is exactly a proof that Robinson's test applies to the $\tau_{\UB^V}$-first order theory of $H_{\omega_1}^V$ assuming
$\maxUB$; i.e. we have briefly sketched a different (and much more convoluted) 
proof of the conclusion of 
Thm.~\ref{thm:modcompanHomega1} (using as hypothesis Thm.~\ref{thm:modcompanHomega1} itself).
What we gained however is an insight on how to prove Theorem~\ref{thm:keythmmodcompanHomega2}.

We will consider the set $D_{\NS_{\omega_1},\UB}$ 
of $M\in D_{\UB}$ such that:
\begin{itemize} 
\item
$(M,\NS_{\omega_1}^M)$ is a $\Pmax$-precondition 
which is $B$-iterable for all $B\in \UB_M$ (according to \cite[Def. 4.1]{HSTLARSON});
\item
$j_{0\omega_1}$ is a $\Sigma_1$-elementary embedding of $H_{\omega_2}^M$ into $H_{\omega_2}^V$
for $\tau_{\UB_M,\NS_{\omega_1}}$ whenever
$\mathcal{J}=\bp{j_{\alpha\beta}:\alpha\leq\beta\leq\omega_1}$ is an iteration of $M$ with 
$j_{0\omega_1}(\NS_{\omega_1}^M)=\NS_{\omega_1}^V\cap j_{0\omega_1}(H_{\omega_2}^M)$.
\end{itemize}

It will take a certain effort to prove that  assuming $(*)$-$\UB$:
\begin{itemize}
\item for any $A\in H_{\omega_2}$ and
$B\in\UB$, we can find $M\in D_{\NS_{\omega_1},\UB}$ with $B\in \UB_M$, $a\in H_{\omega_2}^M$,
and an iteration $\mathcal{J}=\bp{j_{\alpha\beta}:\alpha\leq\beta\leq\omega_1}$ of $M$ with 
$j_{0\omega_1}(\NS_{\omega_1})=\NS_{\omega_1}^V\cap j_{0\omega_1}(H_{\omega_2}^M)$  such that
$j_{0\omega_1}(a)=A$.
\item
$D_{\NS_{\omega_1},\UB}$ is correctly computable in $(H_{\omega_1}\cup\UB,\in)$.
\end{itemize}
But this effort will pay off since we will then be able to prove the model completeness of the theory
\[
(H_{\omega_2},\tau_{\NS_{\omega_1}}^V\cup\UB^V)
\]
using Robinson's test with $\Cod_\omega^{-1}[D_{\NS_{\omega_1},\UB}]$ in the place of $\bar{D}_{\UB}$ 
and replicating  in the new setting
what was sketched before for $(H_{\omega_1},\tau_{\UB^V}^V)$.

We now get into the details.

\subsubsection{$\UB$-correct models} 
 
\begin{notation}
Given a countable family 
$\mathcal{A}=\bp{B_n:n\in\omega}$ of universally Baire sets with each $B_n\subseteq (2^{\omega})^{k_n}$,
we say that $B_\mathcal{A}=\prod_{n\in\omega}B_n\subseteq \prod_n(2^{\omega})^{k_n}$ is a code for 
$\bp{B_n:n\in\omega}$.

Clearly $B_\mathcal{A}$ is a universally Baire subset of the Polish space $\prod_n(2^{\omega})^{k_n}$.
\end{notation}

\begin{definition}
$T_\mathsf{UB}$ is the $\in$-theory of 
\[
(H_{\omega_1},\tau_{\UB}).
\]

A transitive model of $\ZFC$ $(M,\in)$ is $\mathsf{UB}$-correct if 
there is an  $H_{\omega_1}$-closed (in $V$) family
$\mathsf{UB}_M$ of universally Baire sets in $V$
such that:
\begin{itemize}
\item The map 
\begin{align*}
\Theta_M:&\UB_M\to M\\
&A\mapsto A\cap M
\end{align*}
is injective.
\item
$(M,\in)$ models that $\bp{A\cap M: A\in \mathsf{UB}_M}$ is the family of universally Baire subsets of $M$.

\item Letting $T_{\UB_M}$ be the theory of $(H_{\omega_1}, \tau_{\mathsf{ST}}^V,\mathsf{UB}_M)$
\[
(H_{\omega_1}^M, \tau_{\mathsf{ST}}^M,A\cap M: A\in \mathsf{UB}_M)\models T_{\UB_M}.
\]
\item If $M$ is countable,
$M$ is $A$-iterable for all $A\in \mathsf{UB}_M$.
\end{itemize}
\end{definition}

Remark (by Thm.~\ref{fac:keyfacHomega1clos}) that if $M$ is $\UB$-correct, $T_{\UB_M}$ is model complete, 
since $\UB_M$ is (in $V$) a $H_{\omega_1}$-closed family of 
universally Baire sets.

\begin{notation}
$D_{\UB}$ denotes the set of countable $\mathsf{UB}$-correct $M$; $\bar{D}_\UB=\Cod_\omega^{-1}[D_\UB]$.

For each $M$ $\UB_M$ is a witness that $M\in D_\UB$ and $B_{\UB_M}=\prod\UB_M$ is a universally Baire coding this 
witness\footnote{The Fact below shows that the map $M\mapsto (\UB_M,B_{\UB_M})$, can be chosen in $L(\UB)$.}.

For universally Baire sets $B_1,\dots,B_k$, $E_{\UB,B_1,\dots,B_k}$
denotes the set of $M\in D_\UB$ with $B_1,\dots,B_k\in \UB_M$ for some witness $\UB_M$ that $M\in D_\UB$;
$\bar{E}_{\UB,B_1,\dots,B_k}=\Cod_\omega^{-1}[E_{\UB,B_1,\dots,B_k}]$.
\end{notation}

\begin{fact}  
$(V,\in)$ models $M$\emph{ is countable and $\UB$-correct as witnessed by $\UB_M$}
if and only if so does 
$(H_{\omega_1}\cup{\UB},\in)$.

Consequently 
the set $D_{\mathsf{UB}}$ of countable $\mathsf{UB}$-correct $M$ is properly computed in
$(H_{\omega_1}\cup\UB,\in)$.

Therefore assuming $\maxUB$ 
\[
\bar{D}_\UB=\mathrm{Cod}^{-1}[D_{\mathsf{UB}}]
\] 
is universally Baire. 

Moreover there is in $L(\UB)$ a definable map $M\mapsto \UB_M$ assigning to each $M\in D_\UB$ 
a countable family $\UB_M$ witnessing it.

The same holds for $\bar{E}_{\UB,B_1,\dots,B_k}$ for given universally Baire sets $B_1,\dots,B_k$.
\end{fact}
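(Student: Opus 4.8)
The plan is to verify that the $\in$-formula expressing ``$M$ is a countable transitive set and $\UB$-correct as witnessed by $\UB_M$'' is evaluated identically in $(V,\in)$ and in $(H_{\omega_1}\cup\UB,\in)$, clause by clause. The two structures share the same first-order part $H_{\omega_1}$ and the same universally Baire sets as (second-order) elements, and the witness $\UB_M$ is a countable family, hence coded by the single universally Baire set $B_{\UB_M}=\prod\UB_M$; thus the existential quantifier ``there is a witness $\UB_M$'' is really a quantifier over $\UB$, available in both structures. Since $M\in H_{\omega_1}$ and each $A\in\UB_M$ is a shared element, the trace $A\cap M$ is a countable set, computed absolutely and lying in $H_{\omega_1}$. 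Granting the clause-by-clause equivalence, $D_\UB$ is then defined over $(H_{\omega_1}\cup\UB,\in)$ by the $\Sigma_1$-formula ``there is $B\in\UB$ coding a witness'', so it is correctly computed there.

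First I would dispatch the soft clauses. Injectivity of $\Theta_M\colon A\mapsto A\cap M$ is a statement about the $H_{\omega_1}$-elements $A\cap M$ and is absolute. The clause ``$(M,\in)$ models that $\{A\cap M:A\in\UB_M\}$ is its family of universally Baire sets'' is absolute because satisfaction in the countable set model $M$ is absolute and $\{A\cap M:A\in\UB_M\}\in H_{\omega_1}$. For the clause $(H_{\omega_1}^M,\tau_\ST^M,A\cap M:A\in\UB_M)\models T_{\UB_M}$, note that, $T_{\UB_M}$ being complete, this is exactly elementary equivalence of $(H_{\omega_1}^M,\dots)$ with $(H_{\omega_1}^V,\tau_\ST^V,\UB_M)$ for the signature $\tau_{\UB_M}$; truth in the countable structure $(H_{\omega_1}^M,\dots)$ is absolute, while a Tarski satisfaction predicate for $(H_{\omega_1}^V,\tau_\ST^V,\UB_M)$ is definable over $(H_{\omega_1}\cup\UB,\in)$ (its first-order part is precisely $H_{\omega_1}^V$ and the predicates for $\UB_M$ are shared elements, so every step of the recursion is absolute). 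Hence this clause too is evaluated identically.

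The main obstacle is the $A$-iterability clause, which as stated quantifies over all generic iterations of $M$ of length up to $\omega_1$ --- a priori an uncountable quantifier not directly available in $(H_{\omega_1}\cup\UB,\in)$. I would eliminate it as follows. Being a generic iteration of countable length is an absolute property of a countable object: each $M_\alpha$ is countable, genericity of $G_\alpha$ over $M_\alpha$ is the meeting of countably many dense sets, and all of this is coded in $H_{\omega_1}$. Wellfoundedness of all iterations of length $\le\omega_1$ reduces to wellfoundedness of all countable ones: given an $\in$-descending sequence $\langle x_n\rangle$ in an illfounded direct limit $M_{\omega_1}$, write $x_n=j_{\alpha_n\omega_1}(y_n)$ with $\alpha_n<\omega_1$ and $y_n\in M_{\alpha_n}$; at the countable ordinal $\alpha=\sup_n\alpha_n$ the elements $w_n=j_{\alpha_n\alpha}(y_n)$ of $M_\alpha$ satisfy $w_{n+1}\in w_n$ by elementarity of $j_{\alpha\omega_1}$, so $M_\alpha$ is illfounded and the length-$\alpha$ iteration is already a bad countable one. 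Likewise the correctness equation $A\cap M_\beta=j_{0\beta}(A\cap M_0)$ at $\beta=\omega_1$ reduces to the countable stages, since reals have size below the critical points and so are fixed by the iteration maps: every real of $M_{\omega_1}$ appears at some countable stage, where the equation is already determined. Here $A$ enters only as a universally Baire parameter, and $A\cap M_\beta$ for countable $M_\beta$ is a countable set computed absolutely. This renders $A$-iterability a property quantifying solely over $H_{\omega_1}$-objects with UB parameters, evaluated identically in both structures, which completes the first equivalence.

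Finally I would draw the consequences. Since $D_\UB$ is definable over $(H_{\omega_1}\cup\UB,\in)$ and $\Cod_\omega$ is $\Delta_1$, the set $\bar D_\UB=\Cod_\omega^{-1}[D_\UB]$ is a subset of $\WFE_\omega\subseteq\pow{\omega}$ definable over $(H_{\omega_1}\cup\UB,\in)$, hence universally Baire by $\maxUB$(\ref{Keyprop:maxUB-1}), applied with $G$ trivial. For the definable selection $M\mapsto\UB_M$ in $L(\UB)$: the witnessing relation $R(M,B)\equiv$``$B$ codes a family $\UB_M$ witnessing $M\in D_\UB$'' is, by the same clause-by-clause analysis relativized to $L(\UB)$ (using $\maxUB$ to guarantee $L(\UB)$ computes universal Baireness and the traces correctly), definable over $L(\UB)$; I would then uniformize $R$ by an $L(\UB)$-definable function, invoking the uniformization property available for the sets of $L(\UB)$ under the standing large cardinal and $\maxUB$ hypotheses. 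The $\bar E_{\UB,B_1,\dots,B_k}$ version is identical, adding to $R$ the further (UB-parametrized, definable) requirement $B_1,\dots,B_k\in\UB_M$. Apart from the iterability reduction above, the most delicate of these steps is the $L(\UB)$-selection, where the internal regularity of $L(\UB)$ is genuinely used; everything else is absoluteness bookkeeping.
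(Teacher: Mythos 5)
Your proposal is correct and follows essentially the same route as the paper, which simply asserts that the defining clauses of ``$M$ is countable and $\UB$-correct as witnessed by $\UB_M$'' (with the witness coded by the single universally Baire set $B_{\UB_M}=\prod\UB_M$) receive the same truth value in $(V,\in)$ and $(H_{\omega_1}\cup\UB,\in)$, then applies $\maxUB$ for universal Baireness of $\bar D_\UB$ and a universally Baire uniformization (from class many Woodin cardinals) of the witnessing relation on $\bar D_\UB\times 2^\omega$ to get the definable map $M\mapsto\UB_M$. The only difference is one of detail: you spell out the reduction of the $A$-iterability clause (wellfoundedness and correctness of length-$\le\omega_1$ iterations) to countable iterations, a point the paper dismisses as immediate from the definitions, and your closing uniformization step is the same move the paper makes, just phrased internally to $L(\UB)$.
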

\begin{proof}
The first part follows almost immediately by the definitions, since the assertion in parameters $B,M$:
\begin{quote}
\emph{
$B=\prod_{n\in\omega}B_n$ codes a $H_{\omega_1}$-closed family 
$\UB_M=\bp{B_n:n\in\omega}$ of sets such that 
\begin{itemize}
\item $M$ is $A$-iterable for all $A\in \UB_M$,
\item $M$ models that $\bp{A\cap M: A\in \UB_M}$ is its family of universally 
Baire sets and is $H_{\omega_1}$-closed,
\item
$(H_{\omega_1}^M,\tau_{\ST}^M,\bp{A\cap M: A\in \mathsf{UB}_M})$ models
$T_{\UB_M}$.
\end{itemize}
}
\end{quote}
gets the same truth value in $(V,\in)$ and in $(H_{\omega_1}\cup \UB,\in)$.

We conclude that $D_{\mathsf{UB}}$ has the same extension in 
$(V,\in)$ and in $(H_{\omega_1}\cup\UB,\in)$.
By $\maxUB$ $\bar{D}_\UB$ is universally Baire.

The existence of class many Woodin cardinals grants
that we can always find\footnote{For example by \cite[Thm. 36.9]{kechris:descriptive} and \cite[Thm. 3.3.14, Thm. 3.3.19]{STATLARSON}.} 
a universally Baire uniformization of the 
universally Baire relation 
on $\bar{D}_\UB\times 2^\omega$ given by the pairs $\ap{r,B}$ such that $B=\prod\bp{B_n:n\in\omega}$ witnesses
$\Cod_\omega(r)\in D_\UB$ .

The same argument can be replicated for  $\bar{E}_{\UB,B_1,\dots,B_k}$.
\end{proof}

\begin{lemma}\label{lem:UBcorr}
Assume $\NS_{\omega_1}$ is precipitous and there are class many 
Woodin cardinals in $V$.
Let $\delta$ be an inaccessible cardinal in $V$ and $G$ be 
$V$-generic for $\Coll(\omega,\delta)$.
Then $V_\delta$ is $\UB^{V[G]}$-correct in $V[G]$ as witnessed by $\bp{B^{V[G]}:B\in \UB^V}$.
\end{lemma}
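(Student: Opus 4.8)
The plan is to verify directly the four defining clauses of \emph{$\UB$-correctness} for the model $M=V_\delta$ and the candidate witnessing family $\UB_M=\bp{B^{V[G]}:B\in\UB^V}$, working in the ambient model $V[G]$. First I would record the two elementary consequences of $\delta$ being inaccessible: every real of $V$ has rank below $\delta$, so $(2^\omega)^{V_\delta}=(2^\omega)^V$ and $H_{\omega_1}^{V_\delta}=H_{\omega_1}^V$; and since $\Coll(\omega,\delta)$ collapses $\delta$, the transitive set $V_\delta=V_\delta^V$ is countable in $V[G]$ (and is unchanged by the forcing). Using the standard agreement of a reinterpreted universally Baire set with its ground model value on old reals (Notation~\ref{not:notUBsetsVG}), I obtain $\Theta_{V_\delta}(B^{V[G]})=B^{V[G]}\cap V_\delta=B^{V[G]}\cap(2^\omega)^V=B$ for every $B\in\UB^V$. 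This immediately yields that $\Theta_{V_\delta}$ is injective (the first clause) and, via the identity $\Theta_{V_\delta}(B^{V[G]})=B$, sets up the comparison of universally Baire families used in the remaining clauses.

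For the closure and the first-order-theory clauses I would appeal to the generic absoluteness machinery of Section~\ref{sec:geninv}. Since $V$ has class many Woodin cardinals, $\UB^V$ is projectively closed (Thm~\ref{thm:UBsetsgenabs}), and hence $H_{\omega_1}$-closed in $V$ (definability over $H_{\omega_1}$ with universally Baire parameters reduces to projection in real parameters); thus Thm~\ref{fac:keyfacHomega1clos}, applied with $\mathcal{A}=\UB^V$, shows that $\UB_M=\bp{B^{V[G]}:B\in\UB^V}$ is $H_{\omega_1}^{V[G]}$-closed and consists of universally Baire sets of $V[G]$. Next, Thm~\ref{thm:genabshomega1} supplies the full elementarity
\[
(H_{\omega_1}^V,\tau_\ST^V,B:B\in\UB^V)\prec(H_{\omega_1}^{V[G]},\tau_\ST^{V[G]},B^{V[G]}:B\in\UB^V),
\]
and combining this with $H_{\omega_1}^{V_\delta}=H_{\omega_1}^V$ and $B^{V[G]}\cap V_\delta=B$ shows that the structure $(H_{\omega_1}^{V_\delta},\tau_\ST^{V_\delta},A\cap V_\delta:A\in\UB_M)$ realizes exactly the $\in$-theory of $(H_{\omega_1}^{V[G]},\tau_\ST^{V[G]},\UB_M)$; that is, it models $T_{\UB_M}$, as the third clause requires.

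The delicate point, and the main obstacle I anticipate, is the second clause: that $V_\delta$ \emph{internally} recognises $\UB^V$ as its family of universally Baire sets, i.e. $\UB^{V_\delta}=\UB^V$ (computed in $V$, since $V_\delta$ is unaffected by the collapse). The inclusion $\UB^V\subseteq\UB^{V_\delta}$ is straightforward from the tree characterisation of universally Baire sets: given $B\in\UB^V$ and a forcing $P\in V_\delta$, the witnessing pair of trees projecting to complements after $P$ can be taken on $\omega\times\gamma$ with $\gamma<\delta$, hence lies in $V_\delta$ by inaccessibility, so $V_\delta$ itself verifies that $B$ is universally Baire. For the reverse inclusion I would use a Woodin cardinal above $\delta$ (available since $V$ has class many Woodins) together with $\maxUB$, Def~\ref{Keyprop:maxUB}(\ref{Keyprop:maxUB-1}): the robustness and canonicity of the tree representations granted by property~(\ref{Keyprop:maxUB-1}) is what forces a set witnessed to be universally Baire for all forcings of size below $\delta$ inside $V_\delta$ to be genuinely universally Baire in $V$. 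This is precisely the step where $\maxUB$ enters the argument, as flagged in the preamble to this subsection.

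Finally, for the iterability clause I would note that $V_\delta\models\ZFC$ by inaccessibility and that $\NS_{\omega_1}$ being precipitous reflects from $V$ to $V_\delta$, as the two models share $\pow{\omega_1}$ and well-foundedness of the generic ultrapower is absolute; since $V_\delta$ is countable in $V[G]$, it then remains to see that $V_\delta$ is $B^{V[G]}$-iterable for every $B\in\UB^V$. This follows exactly as in the proof of Thm~\ref{thm:PI1invomega2}: the absolutely complemented trees for each $B$ lie in $V_\delta$, and a Woodin cardinal above $\delta$ guarantees that every iteration of $V_\delta$ is well founded and transports these trees correctly, so that $B^{V[G]}\cap M_\beta=j_{0\beta}(B^{V[G]}\cap M_0)$ along any iteration (cf. \cite[Thm.~4.10]{HSTLARSON}). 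Assembling the four clauses gives that $V_\delta$ is $\UB^{V[G]}$-correct in $V[G]$ as witnessed by $\bp{B^{V[G]}:B\in\UB^V}$, the internal correctness of $V_\delta$ about its universally Baire sets being the one genuinely non-routine ingredient.
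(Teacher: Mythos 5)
Your verification of the first, third and fourth-listed clauses of $\UB$-correctness follows the same route as the paper (the identity $B^{V[G]}\cap V_\delta=B$, plus Theorem~\ref{fac:keyfacHomega1clos} for $H_{\omega_1}$-closure and for $(H_{\omega_1}^{V_\delta},\dots)\models T_{\UB_M}$), but your argument for the iterability clause --- which is the actual content of the paper's proof --- does not work as stated. You claim that ``the absolutely complemented trees for each $B$ lie in $V_\delta$'' and that a Woodin cardinal above $\delta$ then gives wellfoundedness and correctness of the iterations. The trees that define the reinterpretation $B^{V[G]}$ must project to complements after $\Coll(\omega,\delta)$, so they live on $\omega\times\gamma$ for some $\gamma>\delta$ and are \emph{not} elements of $V_\delta$; no argument carried out inside $V_\delta$ can control how $j_{0\alpha}(B)$ compares with $B^{V[G]}$ on the new reals of $V[G]$. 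The paper's proof instead extends each iteration $\mathcal{J}$ of $V_\delta$ (uniquely, by \cite[Lemma 1.5, Lemma 1.6]{HSTLARSON}) to an iteration $\bar{\mathcal{J}}$ of all of $V$, and then uses $\bar{j}_{0\alpha}[T_B]\subseteq\bar{j}_{0\alpha}(T_B)$ and $\bar{j}_{0\alpha}[S_B]\subseteq\bar{j}_{0\alpha}(S_B)$ to squeeze $B^{V[G]}\cap\bar{j}_{0\alpha}(V)$ between the projections of $\bar{j}_{0\alpha}(T_B)$ and $\bar{j}_{0\alpha}(S_B)$, which by elementarity partition the reals of $\bar{j}_{0\alpha}(V)$; this forces $B^{V[G]}\cap\bar{j}_{0\alpha}(V)=\bar{j}_{0\alpha}(B)$. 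Note also that wellfoundedness of these iterations comes from the precipitousness of $\NS_{\omega_1}$ (via the cited lemmas of \cite{HSTLARSON}), not from a Woodin cardinal above $\delta$; as written, your proof never genuinely uses the precipitousness hypothesis, which is a sign the mechanism is off.

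Separately, your treatment of the second clause (that $V_\delta$ internally identifies $\bp{B^{V[G]}\cap V_\delta:B\in\UB^V}=\UB^V$ as its own class of universally Baire sets) invokes $\maxUB$, which is not among the hypotheses of the lemma; the paper explicitly adopts the convention that $\maxUB$ is assumed only where stated, and the lemma's statement omits it. Moreover the appeal to Def.~\ref{Keyprop:maxUB}(\ref{Keyprop:maxUB-1}) is not an argument: that clause asserts that sets definable over $(H_{\omega_1}\cup\UB,\in)$ are universally Baire, and it says nothing about whether a $<\delta$-universally Baire set of $V_\delta$ is fully universally Baire in $V$. The inclusion $\UB^V\subseteq\UB^{V_\delta}$ is indeed routine from inaccessibility (trees witnessing $|P|$-universal Baireness for $P\in V_\delta$ can be found below $\delta$), and this is the only part of the clause the paper's proof actually relies on elsewhere in a form you could not recover; but your sketch neither proves the reverse inclusion nor identifies a legitimate hypothesis from which to derive it.
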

\begin{proof}
Let in $V$ 
$\bp{(T_A,S_A): A\in\UB^V}$ be an enumeration of pairs of trees $S_A,U_A$ on $\omega\times\gamma$ for 
a large enough inaccessible $\gamma>\delta$
such that $T_A,S_A$ projects to complements in $V[G]$ and $A$ is the projection of $T$.
Then $A^{V[G]}$ is correctly computed as the projection of $T_A$ in $V[G]$ for any $A\in\UB^V$.

By Thm.~\ref{fac:keyfacHomega1clos}
\[
(H_{\omega_1}^V, \tau_{\mathsf{ST}}^V,\mathsf{UB}^V)\prec (H_{\omega_1}^{V[G]}, \tau_{\mathsf{ST}}^{V[G]},A^{V[G]}: 
A\in\mathsf{UB}^{V}),
\]
$\bp{A^{V[G]}: A\in\mathsf{UB}^{V}}$ is a $H_{\omega_1}$-closed family of 
universally Baire sets in $V[G]$, and 
$T_{\UB^V}$ is also the theory of $(H_{\omega_1}^{V[G]}, \tau_{\mathsf{ST}}^{V[G]},A^{V[G]}: 
A\in\mathsf{UB}^{V})$.

To conclude that $\bp{A^{V[G]}: A\in\mathsf{UB}^{V}}$ witnesses in $V[G]$ that $V_\delta$ is $\UB^{V[G]}$-correct in 
$V[G]$ it remains to argue that 
$V_\delta$ is $B^{V[G]}$-iterable for any $B\in\UB^V$.

Let $\mathcal{J}$ be any iteration of $V_\delta$ in $V[G]$.
Then  by standard results on iterations
(see \cite[Lemma 1.5, Lemma 1.6]{HSTLARSON})
$\mathcal{J}$ extends uniquely to an iteration $\bar{\mathcal{J}}$ of $V$ in $V[G]$ such that
\begin{itemize}
\item
$\bar{j}_{\alpha\beta}$ is a proper extension of $j_{\alpha\beta}$ for all $\alpha\leq\beta\leq\gamma$
(i.e. letting $\bar{V}_\alpha=\bar{j}_{0\alpha}(V)$, we have that
$j_{0\alpha}(V_\delta)$ is the rank initial segments of elements of $\bar{V}_\alpha$ of rank less than
$\bar{j}_{0\alpha}(\delta)$).
\item
$\bar{\mathcal{J}}$ is a well defined iteration of transitive structures. 
\end{itemize}
In particular this shows that $V_\delta$ is iterable in $V[G]$.

Now fix $B\in \UB^V$. We must argue that $j_{0\alpha}(B)=B^{V[G]}\cap \bar{j}_{0\alpha}(V)$.
To simplfy notation we assume $B\subseteq 2^\omega$.
Let $(T_B,S_B)$ be the pair of trees selected in $V$ to define $B^{V[G]}$.

Then  
\[
\bar{j}_{0\alpha}(V)\models (\bar{j}_{0\alpha}(T_B), \bar{j}_{0\alpha}(S_B))
\]
projects to complements; clearly $\bar{j}_{0\alpha}[T_B]\subseteq \bar{j}_{0\alpha}(T_B)$,
$\bar{j}_{0\alpha}[S_B]\subseteq \bar{j}_{0\alpha}(S_B)$.
Let $p:(\gamma\times 2)^{\omega}\to 2^\omega$ be the projection map. 

This gives that
\[
B^{V[G]}\cap \bar{j}_{0\alpha}(V)=p[[T_B]]\cap \bar{j}_{0\alpha}(V)=p[[\bar{j}_{0\alpha}[T_B]]]\cap \bar{j}_{0\alpha}(V)\subseteq
p[[\bar{j}_{0\alpha}(T_B)]]\cap \bar{j}_{0\alpha}(V)=\bar{j}_{0\alpha}(B).
\]
Similarly
\[
((2^{\omega})^{V[G]}\setminus B^{V[G]})\cap \bar{j}_{0\alpha}(V)=p[[S_B]]\cap \bar{j}_{0\alpha}(V)\subseteq
p[[\bar{j}_{0\alpha}(S_B)]]\cap \bar{j}_{0\alpha}(V)=\bar{j}_{0\alpha}((2^\omega)^V\setminus B).
\]
By elementarity
\[
\bar{j}_{0\alpha}((2^\omega)^V\setminus B)\cup \bar{j}_{0\alpha}(B)=(2^\omega)\cap \bar{j}_{0\alpha}(V).
\]
These three conditions can be met only if
\[
B^{V[G]}\cap \bar{j}_{0\alpha}(V)=\bar{j}_{0\alpha}(B).
\]

Since $\mathcal{J}$ and $B$ were chosen arbitrarily,
we conclude that $V_\delta$ is $B^{V[G]}$-iterable in $V[G]$ for all $B\in\UB^V$.

Hence $V_\delta$ is $\UB^{V[G]}$-correct in $V[G]$ as witnessed by $\bp{A^{V[G]}: A\in \UB^V}$.
\end{proof}

\begin{definition}
Given $M,N$ iterable structures, 
$M\geq N$ if $M\in (H_{\omega_1})^N$ and there is  an iteration
\[
\mathcal{J}=\bp{j_{\alpha\beta}:\,\alpha\leq\beta\leq\gamma=(\omega_1)^N}
\]
of $M$ with $\mathcal{J}\in N$
such that
\[
\NS_{\gamma}^{M_\gamma}=
\NS_{\gamma}^N\cap M_{\gamma}.
\]
\end{definition}

\begin{fact}\label{fac:densityUBcorrect}
$(\maxUB)$
Assume $\NS_{\omega_1}$ is precipitous and $\maxUB$ holds.
Then for any iterable $M$ and $B_1,\dots,B_k\in \UB$, there is an $\UB$-correct $N\geq M$ with $B_1,\dots,B_k\in \UB_N$.
\end{fact}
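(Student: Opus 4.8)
The plan is to obtain the desired $\UB$-correct model by reflecting, through $\maxUB$, a witness produced in a collapse extension via Lemma~\ref{lem:UBcorr}. Fix the iterable $M$ (which I take to be a countable transitive $\Pmax$-precondition, so $M\in H_{\omega_1}$, $M$ models $\ZFC^-$ together with the precipitousness of $\NS_{\omega_1}$, and all iterations of $M$ are wellfounded) and $B_1,\dots,B_k\in\UB$. First I would choose an inaccessible $\delta$ with $M\in V_\delta$, with fixed trees witnessing $B_1,\dots,B_k$ lying in $V_\delta$, and large enough that $V_\delta$ itself sees $\NS_{\omega_1}$ as precipitous; this is possible since there are class many Woodin cardinals and $\NS_{\omega_1}$ is precipitous in $V$.

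Next I would collapse: let $G$ be $V$-generic for $\Coll(\omega,\delta)$. By Lemma~\ref{lem:UBcorr}, $V_\delta$ is $\UB^{V[G]}$-correct in $V[G]$, as witnessed by the $H_{\omega_1}$-closed (in $V[G]$) family $\UB_{V_\delta}=\bp{B^{V[G]}:B\in\UB^V}$; note that $V_\delta$ is countable in $V[G]$ and $B_1^{V[G]},\dots,B_k^{V[G]}\in\UB_{V_\delta}$. Since $V_\delta$ internally sees $\NS_{\omega_1}$ as precipitous and $M\in H_{\omega_1}^{V_\delta}$ remains iterable inside $V_\delta$ (wellfoundedness of iterations being absolute), the iteration lemma \cite[Lemma 2.8]{HSTLARSON} applied inside $V_\delta$ yields an iteration $\mathcal{J}=\bp{j_{\alpha\beta}:\alpha\leq\beta\leq\omega_1^{V_\delta}}$ of $M$ with $\mathcal{J}\in V_\delta$ and $\NS_{\omega_1}^{M_{\omega_1^{V_\delta}}}=\NS_{\omega_1}^{V_\delta}\cap M_{\omega_1^{V_\delta}}$. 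Thus in $V[G]$ there is a countable $\UB$-correct $N$ (namely $V_\delta$, coded by the universally Baire set $\prod\UB_{V_\delta}$) with $M\in H_{\omega_1}^N$, with $B_1,\dots,B_k$ (read in $V[G]$) in $\UB_N$, and realizing the required comparison of the preceding definition, i.e. an iteration of $M$ of length $\omega_1^N$ inside $N$ with $\NS_{\omega_1}$ computed correctly.

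The heart of the argument is then to see that this is a first-order, indeed $\Sigma_1$, assertion $\varphi(M,B_1,\dots,B_k)$ over the structure $(H_{\omega_1}\cup\UB,\in)$: the model $N$ and the iteration $\mathcal{J}$ are countable objects, hence elements of $H_{\omega_1}$; the code $\prod\UB_N$ ranges over $\UB$; $\UB$-correctness and $A$-iterability refer only to $H_{\omega_1}$ and the predicate $\UB$; and the model-completeness of $T_{\UB_N}$ built into $\UB$-correctness is a $\Delta_0$ property (as noted after Robinson's test, Lemma~\ref{lem:robtest}), hence absolute between $V$ and $V[G]$, while that $T_{\UB_N}$ is genuinely model complete for the relevant witnesses is guaranteed by Theorem~\ref{thm:genabshomega1} and Fact~\ref{fac:keyfacHomega1clos}. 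Since the parameters $M\in H_{\omega_1}^V$ and $B_1,\dots,B_k\in\UB^V$ lie in the ground structure, I would finish by invoking clause~(\ref{Keyprop:maxUB-2}) of $\maxUB$, which gives
\[
(H_{\omega_1}^V\cup\UB^V,\in)\prec(H_{\omega_1}^{V[G]}\cup\UB^{V[G]},\in)
\]
under the identification $A\leftrightarrow A^{V[G]}$: as $\varphi(M,B_1,\dots,B_k)$ holds in $V[G]$, it holds in $V$, producing in $V$ a countable $\UB$-correct $N$ with $B_1,\dots,B_k\in\UB_N$ and the required $\geq$-relation to $M$.

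The main obstacle I anticipate is precisely this expressibility/reflection step: one must verify carefully that the entire $\UB$-correctness package for $N$ — the $H_{\omega_1}$-closure of $\UB_N$, the injectivity of $\Theta_N$, $N$'s own internal computation of its universally Baire sets, the $A$-iterability clause, and the model-completeness of $T_{\UB_N}$ — together with the $\NS_{\omega_1}$-correct iteration of $M$, is captured by a single formula interpreted in $(H_{\omega_1}\cup\UB,\in)$ and is absolute for the collapse, so that $\maxUB$(\ref{Keyprop:maxUB-2}) legitimately transfers the witness from $V[G]$ back to $V$. By contrast the iteration step is comparatively routine, being a direct application inside $V_\delta$ of the same $\Pmax$-iteration lemma already used in the proof of Theorem~\ref{thm:PI1invomega2}.
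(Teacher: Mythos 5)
Your proposal is correct and follows essentially the same route as the paper: collapse an inaccessible $\delta$, use Lemma~\ref{lem:UBcorr} to see that $V_\delta$ is a $\UB^{V[G]}$-correct structure containing $M$ with $B_1,\dots,B_k$ among its witnessing family, invoke \cite[Lemma 2.8]{HSTLARSON} for the $\NS_{\omega_1}$-correct iteration of $M$, and reflect the resulting $\Sigma_1$ assertion back to $V$ via $\maxUB$. The only cosmetic difference is that the paper packages the reflection through the universally Baire predicate $\bar{E}_{\UB,B_1,\dots,B_k}$ and the elementarity of $(H_{\omega_1},\tau_{\UB^V})$ in generic extensions, whereas you quantify directly in $(H_{\omega_1}\cup\UB,\in)$; these amount to the same use of $\maxUB$, and the expressibility concern you flag is exactly what the paper's preceding Fact on the correct computability of $D_{\UB}$ settles.
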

\begin{proof}
The assumptions grant that whenever $G$ is $\Coll(\omega,\delta)$-generic for $V$,
in $V[G]$ $V_\delta$ is $\UB^{V[G]}$-correct in $V[G]$ (i.e. Lemma \ref{lem:UBcorr}).

By \cite[Lemma 2.8]{HSTLARSON}, for any iterable $M\in H_{\omega_1}^V$ there is in $V$ an iteration
 $\mathcal{J}=\bp{j_{\alpha\beta}:\alpha\leq\beta\leq \omega_1^V}$ of $M$ such that
 $\NS_{\omega_1}^V\cap M_{\omega_1}=\NS_{\omega_1}^{M_{\omega_1}}$.
 
By $\maxUB$
\[
(H_{\omega_1}^{V}\cup\UB^{V},\in) \prec 
(H_{\omega_1}^{V[G]}\cup\UB^{V[G]},\in).
\]
 Therefore  we have that in $V[G]$ $\bar{E}_{\UB,B_1,\dots,B_k}^{V[G]}$ is exactly 
 $\bar{E}_{\UB,B_1^{V[G]},\dots,B_k^{V[G]}}$.
 
Hence for each iterable $M\in H_{\omega_1}^V$ and $B\in \UB^V$
 \[
 (H_{\omega_1}^{V[G]},\tau_{\UB^V}^{V[G]})\models\exists \,N\geq M\text{ $\UB^{V[G]}$-correct with $B^{V[G]}$ in $\UB_N$},
 \]
 as witnessed by $N=V_\delta$, i.e. 
 \[
 (H_{\omega_1}^{V[G]},\tau_{\UB^V}^{V[G]})\models\exists \,N\geq M \,(\bar{E}_{\UB,B_1,\dots,B_k}^{V[G]}(N)).
 \]

Since
  \[
 (H_{\omega_1}^{V},\tau_{\UB^V}^{V})\prec (H_{\omega_1}^{V[G]},\tau_{\UB^V}^{V[G]}),
 \]
we get that for every iterable $M\in H_{\omega_1}$ and $B\in \UB^V$
 \[
 (H_{\omega_1}^{V},\tau_{\UB^V}^{V})\models\exists \,N\geq M\,(\bar{E}_{\UB,B_1,\dots,B_k}(N)).
 \]
 The conclusion follows.
\end{proof}

\begin{lemma}
$(\maxUB)$

Let $M\geq N$ be both $\UB$-correct structures, with
$\UB_N$ a witness of $N$ being $\UB$-correct  such that $\bar{D}_{\mathsf{UB}}\in\UB_N$.
Then
\[
(H_{\omega_1}^{M},\tau_{\mathsf{ST}}^M,A\cap M: A\in\UB_M)\prec
(H_{\omega_1}^{N},\tau_{\mathsf{ST}}^N,A\cap N: A\in\UB_M).
\]
\end{lemma}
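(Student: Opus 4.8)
The plan is to derive the displayed full elementarity from the \emph{model completeness} of $T_{\UB_M}$ together with a substructure relation between the two structures, in the signature $\tau_{\UB_M}=\tau_\ST\cup\UB_M$. Since $M$ is $\UB$-correct with witness the family $\UB_M$, which is $H_{\omega_1}$-closed in $V$, the theory $T_{\UB_M}$ is model complete (this is noted right after the definition of $\UB$-correctness, via Theorem~\ref{fac:keyfacHomega1clos}); moreover it is complete, being the $\tau_{\UB_M}$-theory of the single structure $(H_{\omega_1},\tau_\ST^V,\UB_M)$. By the very definition of model completeness (Definition~\ref{def:modcompl}, or Lemma~\ref{lem:robtest}) it therefore suffices to show that both sides of the displayed equation are models of $T_{\UB_M}$ and that the left-hand structure is a $\tau_{\UB_M}$-substructure of the right-hand one; model completeness then promotes $\sqsubseteq$ to $\prec$.

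That the left-hand structure $(H_{\omega_1}^{M},\tau_\ST^M,A\cap M:A\in\UB_M)$ models $T_{\UB_M}$ is exactly the defining clause of $M$ being $\UB$-correct via $\UB_M$. For the right-hand structure the key point is to arrange $\UB_M\subseteq\UB_N$, and this is where the hypotheses $\bar D_\UB\in\UB_N$ and $\maxUB$ enter. Because $\bar D_\UB\in\UB_N$ and $N$ is $\UB$-correct, $N$ computes the predicate $\bar D_\UB$ correctly (by the Fact preceding Lemma~\ref{lem:UBcorr}), so it recognises the countable $M\in H_{\omega_1}^N$ as a $\UB$-correct structure; feeding $M$ to the $L(\UB)$-definable assignment $M\mapsto\UB_M$ supplied by that Fact, and using that $\UB_N$ is $H_{\omega_1}$-closed and contains $\bar D_\UB$, one sees that the members of $\UB_M$ are themselves definable over $(H_{\omega_1},\in,\bar D_\UB)$ from the parameter $M$, hence lie in $\UB_N$. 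Granting $\UB_M\subseteq\UB_N$, the structure $(H_{\omega_1}^{N},\tau_\ST^N,A\cap N:A\in\UB_N)$ is a model of the complete theory $T_{\UB_N}$ and so is elementarily equivalent to $(H_{\omega_1},\tau_\ST^V,\UB_N)$; reducing both to the signature $\tau_{\UB_M}$ preserves elementary equivalence, and the $V$-reduct is by definition a model of $T_{\UB_M}$. Thus $(H_{\omega_1}^{N},\tau_\ST^N,A\cap N:A\in\UB_M)\models T_{\UB_M}$.

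For the substructure relation, note first that $M\in H_{\omega_1}^N$ and the transitivity of $M$ give $H_{\omega_1}^M\subseteq M\subseteq N$, with every element of $H_{\omega_1}^M$ hereditarily countable in $N$, so $H_{\omega_1}^M\subseteq H_{\omega_1}^N$. The $\tau_\ST$-symbols are interpreted by $\Delta_0$-formulae, which are absolute between the transitive structures $M$ and $N$, so $\tau_\ST^M$ and $\tau_\ST^N$ agree on $H_{\omega_1}^M$. Finally, for $A\in\UB_M$ the set $A$ is fixed in $V$ and the reals of $M$ are genuine reals, whence $(A\cap N)\cap H_{\omega_1}^M=A\cap H_{\omega_1}^M=(A\cap M)\cap H_{\omega_1}^M$, so the two interpretations of $A$ agree on the smaller domain. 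This yields the required $\tau_{\UB_M}$-substructure relation, and the lemma follows by model completeness as above.

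The main obstacle is the witness alignment in the second paragraph, namely securing $\UB_M\subseteq\UB_N$ (equivalently, that the $N$-side is genuinely a model of $T_{\UB_M}$); this is precisely where $\bar D_\UB\in\UB_N$ and $\maxUB$ are indispensable, $\maxUB$ guaranteeing that $\bar D_\UB$ and the universally Baire code $\prod\UB_M$ are universally Baire and absolutely computed. Once this is in place the substructure verification is routine $\Delta_0$- and universally-Baire-absoluteness, and the conclusion is a direct application of model completeness.
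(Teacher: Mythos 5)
Your overall architecture coincides with the paper's: reduce the lemma to the inclusion $\UB_M\subseteq\UB_N$, observe that then both displayed structures model the complete and model complete theory $T_{\UB_M}$, check the $\tau_\ST\cup\UB_M$-substructure relation, and invoke model completeness to upgrade $\sqsubseteq$ to $\prec$. The substructure verification and the final step are fine. The problem is your justification of the pivotal inclusion $\UB_M\subseteq\UB_N$, which is exactly the step you yourself flag as the main obstacle.

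You claim that the members of $\UB_M$ are ``definable over $(H_{\omega_1},\in,\bar{D}_{\UB})$ from the parameter $M$'' and hence lie in $\UB_N$ by $H_{\omega_1}$-closure. This does not follow. The assignment $M\mapsto\UB_M$ supplied by the Fact is only definable in $L(\UB)$, and its construction goes through a universally Baire \emph{uniformization} of the relation given by the pairs $\ap{r,B}$ with $B$ coding a witness that $\Cod_\omega(r)\in D_\UB$; verifying that relation quantifies over all of $\UB$, and selecting a witness uses the tree/uniformization machinery, not first-order definability over $(H_{\omega_1},\in,\bar{D}_{\UB})$. $H_{\omega_1}$-closure of $\UB_N$ only gives closure under definability in the structure $(H_{\omega_1},\in,U:U\in\UB_N)$; it gives no closure under $L(\UB)$-definability or under uniformization, so the inference breaks. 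The paper's route avoids this: since $N$ is $\UB$-correct with $\bar{D}_{\UB}\in\UB_N$, the structure $(H_{\omega_1}^N,\tau_\ST^N,A\cap N:A\in\UB_N)$ is \emph{elementary} in $(H_{\omega_1}^V,\tau_\ST^V,A:A\in\UB_N)$, hence $N$ correctly sees that $M\in\Cod_\omega[\bar{D}_{\UB}\cap N]$; therefore \emph{inside $N$} there is a single universally-Baire-in-$N$ code for a witness, necessarily of the form $B\cap N$ for some $B\in\UB_N$; pushing this back up by the same elementarity, $B$ codes in $V$ a genuine witness $\UB_M$ for $M$, and now each member of $\UB_M$ is definable from the single parameter $B\in\UB_N$ over $(H_{\omega_1},\in,U:U\in\UB_N)$, so $H_{\omega_1}$-closure does apply and yields $\UB_M\subseteq\UB_N$. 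Note that this argument produces a \emph{particular} witness $\UB_M$ contained in $\UB_N$, so the lemma should be read for that witness rather than for an arbitrary one; your write-up glosses over this point as well.
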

\begin{proof}
Since $N\leq M$, and $N$ is $\UB$-correct with $\bar{D}_{\mathsf{UB}}\in\UB_N$
we get that 
\[
(H_{\omega_1}^N,\tau_{\UB_N}^N)\models M\in D_{\mathsf{UB}}\cap N=\Cod[\bar{D}_{\mathsf{UB}}\cap N],
\]
since
\[
(H_{\omega_1}^N,\tau_{\UB_N}^N)\prec (H_{\omega_1}^V,\tau_{\UB_N}^V)
\]
and
\[
(H_{\omega_1}^V,\tau_{\UB_N}^V)\models M\in D_{\mathsf{UB}}=\Cod[\bar{D}_{\mathsf{UB}}].
\]

Therefore $N$ models that there is a countable set $\UB_M^N=\bp{B_n^N:n\in\omega}\in N$ 
coded by the universally Baire set in $N$ $B_{\UB_M}^N=\prod_{n\in\omega}B_n^N$ such that 
$\bp{A\cap M:A\in \UB_M^N}\in M$ defines the family of universally Baire sets according to $M$, and such that $N$ models that
$M$ is $B^N$ iterable for all $B^N\in\UB_M^N$.
Now $N$ models that 
\[
\prod_{n\in\omega}B_n^N
\]
is a universally Baire set on the appropriate product space. 
Therefore there is $B\in \UB_N$ such that
$B\cap N=\prod_{n\in\omega}B_n^N$.
Clearly $\UB_M^N$ is computable from $B\cap N$.
Since
\[
(H_{\omega_1}^N,\tau_{\UB_N}^N)\prec (H_{\omega_1}^V,\tau_{\UB_N}^V).
\]
we conclude that in $V$ $B=\prod_{n\in\omega}B_n$ codes a set $\UB_M=\bp{B_n:n\in\omega}$ 
witnessing that $M$ is $\UB$-correct.

This gives that $\UB_M\subseteq\UB_N$.

Therefore 
$(H_{\omega_1}^N,\tau_{\UB_M}^N)$
is also a model of
$T_{\UB_M}$.
By model completeness of $T_{\UB_M}$ we conclude that
\[
(H_{\omega_1}^M,\tau_{\UB_M}^M)\prec (H_{\omega_1}^N,\tau_{\UB_M}^N),
\]
as was to be shown.
\end{proof}

\subsection{Three characterizations of $(*)$-$\UB$}

Recall that
for a family $\mathcal{A}$ of universally Baire sets
$\tau_{\mathcal{A},\NS_{\omega_1}}=\tau_{\omega_1}\cup\mathcal{A}$.

\begin{definition}
For a $\UB$-correct $M$ with witness $\UB_M$, 
$T_{\NS_{\omega_1},\UB_M}$ is the 
$\tau_{\UB_M,\NS_{\omega_1}}$-theory of $H_{\omega_2}^M$.

\smallskip

A $\UB$-correct $M$ is \emph{$(\NS_{\omega_1},\UB)$-ec}
if $(M,\in)$ models that $\NS_{\omega_1}$ is precipitous and 
there is a witness $\UB_M$ that $M$ is $\UB$-correct with the following property:   

\begin{quote}
Assume an iterable $N\geq M$ is $\UB$-correct with witness $\UB_N$ such that 
$B_{\UB_M}\in \UB_N$ (so that $\UB_M\subseteq \UB_N$).

Then for all iterations
\[
\mathcal{J}=\bp{j_{\alpha\beta}:\alpha\leq\beta\leq\gamma=\omega_1^N}
\] 
in $N$ witnessing
$M\geq N$, we have that $j_{0\gamma}$ defines a $\Sigma_1$-elementary embedding of
\[
(H_{\omega_2}^{M},\tau_{\mathsf{ST}}^{M},B\cap M: B\in\UB_M,\NS_{\omega_1}^{M})
\]
into 
\[
(H_{\omega_2}^N,\tau_{\mathsf{ST}}^N,B\cap N: B\in\UB_M,\NS_{\omega_1}^N).
\]
\end{quote}

\end{definition}

\begin{remark}\label{rmk:maxUBec}
A crucial observation is that
\emph{``$x$ is $(\NS_{\omega_1},\UB)$-ec''}
is a property correctly definable in $(H_{\omega_1}\cup\UB,\in)$.
Therefore (assuming $\maxUB$)
\[
D_{\NS_{\omega_1},\UB}=\bp{M\in H_{\omega_1}: \, M\text{ is $(\NS_{\omega_1},\UB)$-ec}}
\]
is such that $\bar{D}_{\NS_{\omega_1},\UB}=\Cod_\omega^{-1}[D_{\NS_{\omega_1},\UB}]$ 
is a universally Baire set in $V$.
Moreover letting for $V[G]$ a generic extension of $V$
\[
D_{\NS_{\omega_1},\UB^{V[G]}}=\bp{M\in H_{\omega_1}^{V[G]}: \, M\text{ is $(\NS_{\omega_1},\UB^{V[G]})$-ec}},
\]
we have that 
\[
\bar{D}_{\NS_{\omega_1},\UB}^{V[G]}=\Cod_\omega^{-1}[D_{\NS_{\omega_1},\UB^{V[G]}}].
\]
\end{remark}

\begin{theorem}\label{thm:char(*)}
Assume $V$ models $\maxUB$.
The following are equivalent:
\begin{enumerate}
\item\label{thm:char(*)-1}
Woodin's axiom $(*)$-$\mathsf{UB}$ holds
(i.e. 
$\NS_{\omega_1}$ is saturated,
and there is an $L(\mathsf{UB})$-generic filter $G$ for $\mathbb{P}_{\mathrm{max}}$
such that $L(\mathsf{UB})[G]\supseteq\pow{\omega_1}^V$).
\item\label{thm:char(*)-2}
Let $\delta$ be inaccessible.
Whenever $G$ is $V$-generic for $\Coll(\omega,\delta)$,
$V_\delta$ is $(\NS_{\omega_1},\UB^{V[G]})$-ec in $V[G]$.

\item \label{thm:char(*)-3}
$\NS_{\omega_1}$ is precipitous and
for all $\vec{A}\in H_{\omega_2}$, $B\in\UB$, there is an $(\NS_{\omega_1},\UB)$-ec $M$
with witness $\UB_M$,
and an iteration $\mathcal{J}=\bp{j_{\alpha\beta}:\,\alpha\leq\beta\leq\omega_1}$ of $M$ 
such that:
\begin{itemize}
\item $A\in M_{\omega_1}$,
\item $B\in\UB_M$,
\item $\NS_{\omega_1}^{M_{\omega_1}}=\NS_{\omega_1}\cap M_{\omega_1}$.
\end{itemize}
\end{enumerate}
\end{theorem}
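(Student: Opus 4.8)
The plan is to prove the three conditions equivalent by the cycle $(\ref{thm:char(*)-1})\Rightarrow(\ref{thm:char(*)-3})\Rightarrow(\ref{thm:char(*)-2})\Rightarrow(\ref{thm:char(*)-1})$, using $\maxUB$ throughout to transport the $\Pmax$-analysis of \cite{HSTLARSON} from $L(\mathbb{R})$ to $L(\UB)$. The point of $\maxUB$ is precisely that every set of reals definable in $L(\UB)$ is universally Baire (Def.~\ref{Keyprop:maxUB}(\ref{Keyprop:maxUB-1})) and that this coding is stable under further forcing (Def.~\ref{Keyprop:maxUB}(\ref{Keyprop:maxUB-2})); hence the notions of precondition, $A$-iterability and the density theorems of \cite{HSTLARSON} hold verbatim with $L(\UB)$ in place of $L(\mathbb{R})$. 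Under this translation the $(\NS_{\omega_1},\UB)$-ec structures are exactly the $\Pmax$-preconditions existentially closed for the $\tau_{\UB,\NS_{\omega_1}}$-theory of $H_{\omega_2}$, and by Remark~\ref{rmk:maxUBec} the code set $\bar{D}_{\NS_{\omega_1},\UB}=\Cod_\omega^{-1}[D_{\NS_{\omega_1},\UB}]$ is universally Baire, so that all the density and closure statements below are expressible, and generically absolute, over $(H_{\omega_1}\cup\UB,\in)$.

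\textbf{$(\ref{thm:char(*)-1})\Rightarrow(\ref{thm:char(*)-3})$.} Fix an $L(\UB)$-generic filter $G$ for $\Pmax$ with $\pow{\omega_1}^V\subseteq L(\UB)[G]$; saturation of $\NS_{\omega_1}$ gives precipitousness. The standard $\Pmax$-analysis yields $H_{\omega_2}^{L(\UB)[G]}=H_{\omega_2}^V$ and presents $H_{\omega_2}^V$ as the direct limit of the generic iteration determined by $G$. Thus, given $A\in H_{\omega_2}$ and $B\in\UB$, the condition of $G$ capturing a code of $A$ provides a precondition $M$ and an iteration $\bp{j_{\alpha\beta}:\alpha\leq\beta\leq\omega_1}$ with $A\in M_{\omega_1}$ and $\NS_{\omega_1}^{M_{\omega_1}}=\NS_{\omega_1}\cap M_{\omega_1}$. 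Invoking the density of $(\NS_{\omega_1},\UB)$-ec preconditions below any condition (the existential-closure density argument of \cite{HSTLARSON}, legitimate here by $\maxUB$ together with Fact~\ref{fac:densityUBcorrect}), I may take $M$ to be $(\NS_{\omega_1},\UB)$-ec with $B$ in its witness $\UB_M$. This is precisely~(\ref{thm:char(*)-3}).

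\textbf{$(\ref{thm:char(*)-3})\Rightarrow(\ref{thm:char(*)-2})$.} Let $\delta$ be inaccessible and $G$ be $\Coll(\omega,\delta)$-generic. By Lemma~\ref{lem:UBcorr} the structure $V_\delta$ is $\UB^{V[G]}$-correct in $V[G]$, and $H_{\omega_2}^{V_\delta}=H_{\omega_2}^V$. Since being $(\NS_{\omega_1},\UB)$-ec is computed correctly in $(H_{\omega_1}\cup\UB,\in)$ (Remark~\ref{rmk:maxUBec}) and $\maxUB$ delivers $(H_{\omega_1}^V\cup\UB^V,\in)\prec(H_{\omega_1}^{V[G]}\cup\UB^{V[G]},\in)$, the density clause~(\ref{thm:char(*)-3}) reflects into $V[G]$. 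Feeding this into the elementarity-of-iterates lemma proved above for pairs of $\UB$-correct structures, applied to $V_\delta$ and to an arbitrary $\UB^{V[G]}$-correct $N$ iterating $V_\delta$ into itself, one checks that every such iteration is $\Sigma_1$-elementary on $H_{\omega_2}$ for the relevant $\tau_{\UB,\NS_{\omega_1}}$-reduct; this is exactly $(\NS_{\omega_1},\UB^{V[G]})$-ec-ness of $V_\delta$ in $V[G]$.

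\textbf{$(\ref{thm:char(*)-2})\Rightarrow(\ref{thm:char(*)-1})$, the main obstacle.} Here I must manufacture a genuine $L(\UB)$-generic filter $G$ for $\Pmax$ with $\pow{\omega_1}^V\subseteq L(\UB)[G]$ and upgrade precipitousness to saturation. First pull~(\ref{thm:char(*)-2}) back to $V$ by the same $\maxUB$-elementarity as above, recovering the density statement~(\ref{thm:char(*)-3}). Then run the $\Pmax$-reconstruction of \cite{HSTLARSON} over $L(\UB)$ in place of $L(\mathbb{R})$ (licensed by $\maxUB$): enumerating the $L(\UB)$-definable dense subsets of $\Pmax$ and threading them through iterations of the $(\NS_{\omega_1},\UB)$-ec preconditions supplied by~(\ref{thm:char(*)-3}) produces a filter $G$ meeting them all, with $A_G=\pow{\omega_1}^V$. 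The two delicate points are that the reconstructed $G$ is genuinely $L(\UB)$-generic (and not merely that it meets the $V$-definable dense sets), and the promotion of precipitousness to saturation of $\NS_{\omega_1}$; both are obtained by quoting the corresponding $\Pmax$-theorems of \cite{HSTLARSON}, whose hypotheses $\maxUB$ guarantees for $L(\UB)$, using crucially that $\bar{D}_{\NS_{\omega_1},\UB}$ is universally Baire so that the relevant dense sets are themselves captured by universally Baire predicates.
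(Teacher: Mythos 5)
Your cycle has the right outer shape, and you correctly identify the supporting cast (Lemma~\ref{lem:UBcorr}, Fact~\ref{fac:densityUBcorrect}, Remark~\ref{rmk:maxUBec}, the transport of the $\Pmax$-analysis to $L(\UB)$ via $\maxUB$), but at each of the three places where the real work happens you appeal to facts that are not available off the shelf, and the missing ingredient is the same every time: the Asper\'o--Schindler characterization of $(*)$-$\UB$ in terms of \emph{honest consistency} (Def.~\ref{def:aspschhoncons} and \cite[Thm.~2.7, Thm.~2.8]{SCHASPBMM*++}), which is the engine of the paper's proof and never appears in yours. Concretely: in $(\ref{thm:char(*)-1})\Rightarrow(\ref{thm:char(*)-3})$ you invoke ``density of $(\NS_{\omega_1},\UB)$-ec preconditions'' as if it were part of the standard $\Pmax$-analysis; it is not --- Fact~\ref{fac:densityUBcorrect} only gives density of $\UB$-\emph{correct} structures, and $(\NS_{\omega_1},\UB)$-ec-ness is a far stronger property (it demands $\Sigma_1$-elementarity \emph{on $H_{\omega_2}$, with the $\NS_{\omega_1}$ predicate}, of every iteration into every suitable $\UB$-correct superstructure), a notion introduced in this paper whose density is essentially the content of the theorem. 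In $(\ref{thm:char(*)-3})\Rightarrow(\ref{thm:char(*)-2})$ the ``elementarity-of-iterates lemma'' you cite only gives elementarity at the level of $H_{\omega_1}$; upgrading to $\Sigma_1$-elementarity on $H_{\omega_2}$ for $\tau_{\UB,\NS_{\omega_1}}$ is exactly the hard step, and ``one checks'' does not bridge it. And in $(\ref{thm:char(*)-2})\Rightarrow(\ref{thm:char(*)-1})$ you propose to manufacture an $L(\UB)$-generic filter by enumerating dense sets and threading them through iterations; producing a filter meeting \emph{all} dense sets lying in $L(\UB)$ (not just countably many) from a density statement like $(\ref{thm:char(*)-3})$ is precisely the difficulty that \cite{SCHASPBMM*++} was written to circumvent, and there is no theorem in \cite{HSTLARSON} that does it for you under these hypotheses.

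The paper instead proves $(\ref{thm:char(*)-1})\Rightarrow(\ref{thm:char(*)-2})\Rightarrow(\ref{thm:char(*)-3})\Rightarrow(\ref{thm:char(*)-1})$ and routes both nontrivial implications through honest consistency. For $(\ref{thm:char(*)-1})\Rightarrow(\ref{thm:char(*)-2})$: given a $\Sigma_1$-fact $\phi(j_{0\gamma}(a))$ holding in an $\UB^{V[G]}$-correct $N\leq V_\delta$, one extends the iteration of $V_\delta$ to an iteration $\bar{j}_{0\gamma}$ of all of $V$, verifies by a tree argument that $\bar{j}_{0\gamma}(B)$ and $B^{V[H]}$ agree on $\bar{M}[H]$, uses the model completeness of $T_{\UB^V}$ to transfer the existence of the witness $N$ down to $H_{\omega_1}^{\bar{M}[H]}$, applies homogeneity of $\Coll(\omega,\eta)$ and elementarity of $\bar{j}_{0\gamma}$ to conclude that $\phi(a)$ is honestly consistent in $V$, and only then quotes the Asper\'o--Schindler theorem to place $\phi(a)$ in $H_{\omega_2}^V$; the implication $(\ref{thm:char(*)-3})\Rightarrow(\ref{thm:char(*)-1})$ runs the same machine in the other direction, and $(\ref{thm:char(*)-2})\Rightarrow(\ref{thm:char(*)-3})$ then gets the required ec witnesses for free by reflecting the single structure $V_\delta$. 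To salvage your cycle you would have to re-prove the content of that honest-consistency argument inside each of your three steps; as written, the proposal assumes it.
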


Theorem \ref{thm:char(*)} is the key to the proofs of Theorem~\ref{thm:keythmmodcompanHomega2}
and to the missing implication in the proof of Theorem~\ref{Thm:mainthm-1bis}.

\subsubsection{Proof of Theorem~\ref{thm:keythmmodcompanHomega2}}

The theorem is an immediate corollary of the following:

\begin{lemma}\label{lem:keylemmodcomp(*)}

Let $B_1,\dots,B_k$ be new predicate symbols and 
$T_{B_1,\dots,B_k,\NS_{\omega_1}}$ 
be the $\tau_{\NS_{\omega_1}}\cup\bp{B_1,\dots,B_k}$-theory 
$\ZFC^*_{\NS_{\omega_1}}+\maxUB$ enriched with
the sentences asserting that $B_1,\dots,B_k$ are universally Baire sets.

Let $E_{B_1,\dots,B_k}$ consists of the set of
$M\in D_{\NS_{\omega_1},\UB}$ such that:
\begin{itemize}
\item
$M$ is $B_j$-iterable for all $j=1,\dots,k$;
\item 
there is $\UB_M$ witnessing $M\in D_{\NS_{\omega_1},\UB}$ with 
$B_j\in\UB_M$ for all $j$.
\end{itemize}
Let also $\bar{E}_{B_1,\dots,B_k}=\Cod_\omega^{-1}[E_{B_1,\dots,B_k}]$. 

Then $T_{B_1,\dots,B_k,\NS_{\omega_1}}$ proves that
$\bar{E}_{B_1,\dots,B_k}$ is universally Baire. 

Moreover let $T_{B_1,\dots,B_k,\bar{E}_{B_1,\dots,B_k},\NS_{\omega_1}}$ be the  natural extension of  
$T_{B_1,\dots,B_k,\NS_{\omega_1}}$
adding a predicate symbol for $\bar{E}_{B_1,\dots,B_k}$ 
and the axiom forcing its intepretation to be its definition.

Then $T_{B_1,\dots,B_k,\bar{E}_{B_1,\dots,B_k},\NS_{\omega_1}}$
models that every $\Sigma_1$-formula $\phi(\vec{x})$ for the signature
$\tau_{\NS_{\omega_1}}\cup\bp{B_1,\dots,B_k}$ is equivalent
to a $\Pi_1$-formula $\psi(\vec{x})$ in the signature
$\tau_{\NS_{\omega_1}}\cup\bp{B_1,\dots,B_k, \bar{E}_{B_1,\dots,B_k}}$.
\end{lemma}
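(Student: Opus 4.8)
The plan is to run Robinson's test (Lemma~\ref{lem:robtest}, clause~\ref{lem:robtest-4}) for the $\tau_{\NS_{\omega_1}}\cup\{B_1,\dots,B_k\}$-theory of $H_{\omega_2}$, using $\bar{E}_{B_1,\dots,B_k}$ as the single extra universally Baire parameter that converts an existential statement about $H_{\omega_2}$ into a universal quantification over countable iterable approximations. This is the third-order analogue of the second-order argument sketched for Thm.~\ref{thm:modcompanHomega1}, with $(\NS_{\omega_1},\UB)$-ec models replacing $\UB$-correct models and with iteration maps replacing the inclusion $H_{\omega_1}^M\sqsubseteq H_{\omega_1}^V$. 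I would first dispatch the universal Baireness of $\bar{E}_{B_1,\dots,B_k}$: by Remark~\ref{rmk:maxUBec} the predicate \emph{``$x$ is $(\NS_{\omega_1},\UB)$-ec''} is correctly computed in $(H_{\omega_1}\cup\UB,\in)$, and the extra clauses defining $E_{B_1,\dots,B_k}$ (that $\Cod_\omega(x)$ is $B_j$-iterable and carries a witness $\UB_M\ni B_1,\dots,B_k$) are likewise definable there from the parameters $B_1,\dots,B_k$. Hence $E_{B_1,\dots,B_k}$ is definable in $(H_{\omega_1}\cup\UB,\in)$, and clause~(\ref{Keyprop:maxUB-1}) of $\maxUB$ (Def.~\ref{Keyprop:maxUB}) makes its $\Cod_\omega$-preimage universally Baire; as every step uses only $\maxUB$ and ``$B_j$ is universally Baire'', this is provable in $T_{B_1,\dots,B_k,\NS_{\omega_1}}$.

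Next I would write down the translation $\phi\mapsto\theta_\phi$. Given a $\Sigma_1$-formula $\phi(\vec{x})$ over $\tau_{\NS_{\omega_1}}\cup\{B_1,\dots,B_k\}$, let $\theta_\phi(\vec{x})$ be the sentence over $\tau_{\NS_{\omega_1}}\cup\{B_1,\dots,B_k,\bar{E}_{B_1,\dots,B_k}\}$
\[
\forall r\,\forall\mathcal{J}\,\Big[\big(\bar{E}_{B_1,\dots,B_k}(r)\wedge\Phi(r,\mathcal{J},\vec{x})\big)\rightarrow\big(H_{\omega_2}^{\Cod_\omega(r)}\models\phi(j_{0\omega_1}^{-1}(\vec{x}))\big)\Big],
\]
where $\Phi(r,\mathcal{J},\vec{x})$ abbreviates ``$\mathcal{J}=\langle j_{\alpha\beta}\rangle_{\alpha\le\beta\le\omega_1}$ is an iteration of $M=\Cod_\omega(r)$ with $\NS_{\omega_1}^{M_{\omega_1}}=\NS_{\omega_1}\cap M_{\omega_1}$ and $\vec{x}\in\ran(j_{0\omega_1})$''. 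Two observations make this legitimate: an iteration of a countable structure of length $\omega_1$ is itself an element of $H_{\omega_2}$, so $\forall\mathcal{J}$ is a genuine first-order quantifier of the structure; and the matrix $\Phi$ together with the internal truth assertion ``$H_{\omega_2}^{\Cod_\omega(r)}\models\phi(\cdots)$'' is expressible with bounded quantifiers in the parameters $\NS_{\omega_1}$, the $B_j$ (which $M$ computes correctly by $B_j$-iterability and $\UB$-correctness), and $\WFE_\omega/\Cod_\omega$. Consequently $\theta_\phi$ is $\Pi_1$ over the enlarged signature, and $\phi\mapsto\theta_\phi$ is visibly recursive.

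Finally I would verify $H_{\omega_2}\models\forall\vec{x}\,(\phi(\vec{x})\leftrightarrow\theta_\phi(\vec{x}))$ in the relevant model, namely after forcing $\Pmax$ over $L(\UB)$, where $(*)$-$\UB$ holds and Thm.~\ref{thm:char(*)} applies. The leverage in both directions is that for an $(\NS_{\omega_1},\UB)$-ec $M$ coded by $r\in\bar{E}_{B_1,\dots,B_k}$ and an iteration $\mathcal{J}$ with $\NS_{\omega_1}^{M_{\omega_1}}=\NS_{\omega_1}\cap M_{\omega_1}$, the map $j_{0\omega_1}$ is $\Sigma_1$-elementary from $H_{\omega_2}^M$ into the ambient $H_{\omega_2}$ for $\tau_{\UB_M,\NS_{\omega_1}}$; this is exactly the ec-clause, applied with the ambient model in the role of the $\UB$-correct target via the collapse of Lemma~\ref{lem:UBcorr}. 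For $\phi(\vec{a})\Rightarrow\theta_\phi(\vec{a})$: any admissible $r,\mathcal{J}$ with $\vec{a}=j_{0\omega_1}(\vec{a}^{*})$ gives, by the biconditional form of $\Sigma_1$-elementarity, that $\phi(\vec{a})$ forces $\phi(\vec{a}^{*})$ in $H_{\omega_2}^M$. For $\theta_\phi(\vec{a})\Rightarrow\phi(\vec{a})$: Thm.~\ref{thm:char(*)}(\ref{thm:char(*)-3}) supplies at least one $(\NS_{\omega_1},\UB)$-ec $M$ with $B_1,\dots,B_k\in\UB_M$ and an iteration whose limit has $\vec{a}$ in its range, whence $\theta_\phi(\vec{a})$ yields $\phi(\vec{a}^{*})$ in $H_{\omega_2}^M$, which $\Sigma_1$-elementarity pushes up to $\phi(\vec{a})$.

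I expect the hard direction to be $\theta_\phi\Rightarrow\phi$, since it rests on the density statement Thm.~\ref{thm:char(*)}(\ref{thm:char(*)-3}) — the full strength of $(*)$-$\UB$ — that an arbitrary $\vec{a}\in H_{\omega_2}$ together with finitely many prescribed universally Baire sets can be captured by a single $(\NS_{\omega_1},\UB)$-ec model through an $\NS_{\omega_1}$-correct $\omega_1$-iteration; the forward direction, by contrast, needs only $\maxUB$. Two subsidiary points require care: confirming that $M$'s internal interpretation of each $B_j$ is $B_j\cap M$ and is preserved along the iteration (so the internal and external evaluations of $\phi$ agree), which is precisely where $B_j$-iterability and $\UB$-correctness enter, and checking that the coding of an $\omega_1$-length iteration keeps $\Phi$ genuinely bounded so that $\theta_\phi$ remains $\Pi_1$.
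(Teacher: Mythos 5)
Your proposal is correct and follows essentially the same route as the paper's proof: the same $\Pi_1$ translate $\theta_\phi$ quantifying over coded members of $\bar{E}_{B_1,\dots,B_k}$ and their $\NS_{\omega_1}$-correct $\omega_1$-iterations, universal Baireness via definability in $(H_{\omega_1}\cup\UB,\in)$ plus $\maxUB$, the $(\NS_{\omega_1},\UB)$-ec property together with Lemma~\ref{lem:UBcorr} for one direction, and Thm.~\ref{thm:char(*)}(\ref{thm:char(*)-3}) supplying the needed witness for the other. You also correctly locate where $B_j$-iterability and the density clause of $(*)$-$\UB$ enter, matching the paper's argument.
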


\begin{proof}
$\bar{E}_{B_1,\dots,B_k}$ is universally Baire by $\maxUB$,
since $E_{B_1,\dots,B_k}$
is definable in $(H_{\omega_1}\cup\mathsf{UB},\in)$ with parameters the universally Baire sets
$B_1,\dots,B_k,\bar{D}_{\NS_{\omega_1},\UB}$.

Given any $\Sigma_1$-formula $\phi(\vec{x})$ for $\tau_{\NS_{\omega_1}}\cup\bp{B_1,\dots,B_k}$
mentioning the universally Baire predicates $B_1,\dots,B_k$, we want
to find a universal formula $\psi(\vec{x})$ such that
\[
T_{\bp{B_1,\dots,B_k, \bar{E}_{B_1,\dots,B_k}},\NS_{\omega_1}}\models \forall\vec{x}(\phi(\vec{x})\leftrightarrow \psi(\vec{x})).
\]

Let $\psi(\vec{x})$ be the formula
asserting:
 
\begin{quote}
\emph{For all $M\in E_{B_1,\dots,B_k}$, for all iterations 
$\mathcal{J}=\bp{j_\alpha\beta:\alpha\leq\beta\leq\omega_1}$ of $M$ such that:}
\begin{itemize}
\item
$\vec{x}=j_{0\omega_1}(\vec{a})$\emph{ for some }$\vec{a}\in M$,
\item
$\NS_{\omega_1}^{j_{0\omega_1}(M)}=\NS_{\omega_1}\cap j_{0\omega_1}(M)$,
\end{itemize}
\[
(H_{\omega_2}^{M},\tau_{\UB_{M},\NS_{\omega_1}}^{M})\models\phi(\vec{a}).
\]
\end{quote}

More formally:
\begin{align*}
\forall r\, \forall \mathcal{J}&\{&\\
&[&\\
&(r\in \bar{E}_{B_1,\dots,B_k})\wedge&\\
&\wedge\mathcal{J}=\bp{j_\alpha\beta:\alpha\leq\beta\leq\omega_1} \text{ is an iteration of }\Cod(r)\wedge&\\
&\wedge\NS_{\omega_1}^{j_{0\omega_1}(\Cod(r))}=\NS_{\omega_1}\cap j_{0\omega_1}(\Cod(r))\wedge&\\
&\wedge \exists\vec{a}\in\Cod(r)\,(\vec{x}=j_{0\omega_1}(\vec{a}))&\\
&]&\\
&\rightarrow&\\
&(H_{\omega_2}^{\Cod(r)},\tau_{\UB_{\Cod(r)},\NS_{\omega_1}}^{\Cod(r)})\models\phi(\vec{a})&\\
&\}.&
\end{align*}
The above is a $\Pi_1$-formula  for 
$\tau_{\NS_{\omega_1}}\cup\bp{B_1,\dots,B_k, \bar{E}_{B_1,\dots,B_k}}$.

(We leave to the reader to check that the property 
\begin{quote}
\emph{$\mathcal{J}=\bp{j_\alpha\beta:\alpha\leq\beta\leq\omega_1}$ is an iteration of $M$ such that 
$\NS_{\omega_1}^{j_{0\omega_1}(M)}=\NS_{\omega_1}\cap j_{0\omega_1}(M)$}
\end{quote}
is definable by a $\Delta_1$-property in parameters $M,\mathcal{J}$ in the signature
$\tau_{\NS_{\omega_1}}$).

Now it is not hard to check that:
\begin{claim}
For all $\vec{A}\in H_{\omega_2}$
\[
(H_{\omega_2}^V,\tau_{\NS_{\omega_1}}^V,B_1,\dots,B_k)\models\phi(\vec{A})
\]
if and only if 
\[
(H_{\omega_2},\tau_{\NS_{\omega_1}}^V,B_1,\dots,B_k, \bar{E}_{B_1,\dots,B_k})
\models\psi(\vec{A}).
\]
\end{claim}
\begin{proof}
\emph{}

\begin{description}
\item[$\psi(\vec{A})\rightarrow \phi(\vec{A})$]
Take any
$M$ and $\mathcal{J}$ satisfying the premises of the implication in $\psi(\vec{A})$, 
Then $(H_{\omega_2}^M,\tau_{\NS_{\omega_1},\UB^M}^M)\models\phi(\vec{a})$
for some $\vec{a}$ such that $j_{0,\omega_1}(\vec{a})=\vec{A}$ and
$B_j\cap M_{\omega_1}=j_{0\omega_1}(B_j\cap M)$ for all $j=1,\dots,k$.

Since $\Sigma_1$-properties are upward absolute and
$(M_{\omega_1},\tau_{\NS_{\omega_1}}^{M_{\omega_1}},B_j\cap M_{\omega_1}:j=1,\dots,k)$
is a $\tau_{\NS_{\omega_1}}\cup\bp{B_1,\dots,B_k}$-substructure 
of  $(H_{\omega_2},\tau_{\NS_{\omega_1}}^V,B_j:j=1,\dots,k)$ which models $\phi(\vec{A})$, we get that
$\phi(\vec{A})$ holds for $(H_{\omega_2},\tau_{\NS_{\omega_1}}^V,B_1,\dots,B_k)$.

\item[$\phi(\vec{A})\rightarrow \psi(\vec{A})$]
Assume
\[
(H_{\omega_2},\tau_{\NS_{\omega_1}}^V,B_1,\dots,B_k)\models\phi(\vec{A}).
\]
Take any $(\NS_{\omega_1},\UB)$-ec $M\in V$ and any iteration 
$\mathcal{J}=\bp{j_\alpha\beta:\alpha\leq\beta\leq\omega_1}$  of  $M$ witnessing the premises
of the implication in 
$\psi(\vec{A})$, in particular such that:
\begin{itemize}
\item
$\vec{A}=j_{0\omega_1}(\vec{a})\in M_{\omega_1}$ for some $\vec{a}\in M$, 
\item
$\NS_{\omega_1}^{M_{\omega_1}}=\NS_{\omega_1}\cap M_{\omega_1}$,
\item
$M$ is $B_j$-iterable for $j=1,\dots,k$.
\end{itemize}

Such $M$ and $\mathcal{J}$ exists by Thm.~\ref{thm:char(*)}(\ref{thm:char(*)-3}) applied to 
$\bar{E}_{B_1,\dots,B_k}$ and $\vec{A}$.

Let $G$ be $V$-generic for $\Coll(\omega,\delta)$ with $\delta$ inaccessible.
Then in $V[G]$, $V_\delta$ is $\UB^{V[G]}$-correct, by Lemma \ref{lem:UBcorr}.
 
Therefore (since $M$ is $(\NS_{\omega_1},\UB^{V[G]})$-ec also in $V[G]$ by $\maxUB$),
$V[G]$ models that
$j_{0\omega_1^V}$ is a $\Sigma_1$-elementary embedding of
\[
(H_{\omega_2}^{M},\tau_{\NS_{\omega_1}}^{M},B\cap M:B\in\mathsf{UB}_M)
\]
into 
\[
(H_{\omega_2}^V,\tau_{\NS_{\omega_1}}^V,B:B\in\UB_M).
\]
This grants that
\[
(H_{\omega_2}^M,\tau_{\NS_{\omega_1}}^M,B\cap M:B\in\mathsf{UB}_M)\models\phi(\vec{a}),
\]
as was to be shown.
\end{description}
\end{proof}

The Lemma is proved.

\end{proof}

\subsubsection{Proof of (\ref{thm:char(*)-modcomp-2})$\to$(\ref{thm:char(*)-modcomp-1})
of Theorem~\ref{Thm:mainthm-1bis}}

\begin{proof}
Assume $\delta$ is supercompact, $P$ is a standard forcing notion to force $\MM^{++}$ of size $\delta$ (such as the one introduced in 
\cite{FORMAGSHE} to prove the consistency of Martin's maximum), and $G$ is $V$-generic for $P$; then
$(*)$-$\UB$ holds in $V[G]$ by Asper\'o and Schindler's recent breakthrough \cite{ASPSCH(*)}.
By Thm. \ref{thm:PI1invomega2}
$V$ and $V[G]$ agree on the $\Pi_1$-fragment of their $\tau_{\UB^V,\NS_{\omega_1}}$-theory, therefore so do 
$H_{\omega_2}^V$ and $H_{\omega_2}^{V[G]}$ (by Lemma \ref{lem:levabsgen}
applied in $V$ and $V[G]$ respectively).

Since $P\in\SSP$
\[
(H_{\omega_2}^V,\tau_{\NS_{\omega_1}}^V,A:A\in \UB^V)\sqsubseteq
(H_{\omega_2}^{V[G]},\tau_{\NS_{\omega_1}}^{V[G]},A^{V[G]}: A\in\UB^V).
\]

Now the model completeness of $T_{\NS_{\omega_1},\UB}$-grants that any of its models (among which $H_{\omega_2}^V$)
is $(T_{\NS_{\omega_1},\UB})_\forall$-ec. This gives that:
\[
(H_{\omega_2}^V,\tau_{\NS_{\omega_1}}^V,\UB^V)\prec_{\Sigma_1}
(H_{\omega_2}^{V[G]},\tau_{\NS_{\omega_1}}^{V[G]},A^{V[G]}: A\in\UB^V).
\]

Therefore any $\Pi_2$-property for $\tau_{\UB,\NS_{\omega_1}}$ with parameters in 
$H_{\omega_2}^V$ which holds in
\[
(H_{\omega_2}^{V[G]},\tau_{\NS_{\omega_1}}^{V[G]},A^{V[G]}: A\in\UB)
\]
also holds in $(H_{\omega_2}^V,\tau_{\NS_{\omega_1}}^V,\UB^V)$.

Hence in $H_{\omega_2}^V$ it holds characterization (\ref{thm:char(*)-3}) of $(*)$-$\UB$  given by Thm.~\ref{thm:char(*)} and we are done.
\end{proof}

\subsubsection{Proof of Theorem \ref{thm:char(*)}}

\begin{proof}
Schindler and Asper\'o \cite[Def. 2.1]{SCHASPBMM*++} introduced the following:
\begin{definition} \label{def:aspschhoncons}
Let $\phi(\vec{x})$ be a $\tau_{\UB,\NS_{\omega_1}}$-formula in free variables $\vec{x}$,
and $\vec{A}\in H_{\omega_2}^V$.
$\phi(\vec{A})$ is \emph{honestly consistent} if for all universally Baire sets 
$U\in \mathsf{UB}^V$,
there is some large enough cardinal $\kappa\in V$ such that 
whenever $G$ is $V$-generic for $\Coll(\omega,\kappa)$, in $V[G]$ there is 
a $\tau_{\UB,\NS_{\omega_1}}$-structure $\mathcal{M}=(M,\dots)$ such that 
\begin{itemize}
\item
$M$ is transitive and $U^{V[G]}$-iterable in $V[G]$,
\item
$\mathcal{M}\models \phi(\vec{A})$,
\item
$\NS_{\omega_1}^M\cap V=\NS_{\omega_1}^V$.
\end{itemize}
\end{definition}
They also proved the following Theorem \cite[Thm. 2.7, Thm. 2.8]{SCHASPBMM*++}:
\begin{theorem}
Assume $V$ models $\NS_{\omega_1}$ is precipitous and 
$\maxUB$ holds.

TFAE:
\begin{itemize}
\item $(*)$-$\UB$ holds in $V$.
\item Whenever $\phi(\vec{x})$ is a $\Sigma_1$-formula for $\tau_{\UB,\NS_{\omega_1}}$ in free variables $\vec{x}$, and $\vec{A}\in H_{\omega_2}^V$,
$\phi(\vec{A})$ is honestly consistent if and only if it is true in $H_{\omega_2}^V$.
\end{itemize}
\end{theorem}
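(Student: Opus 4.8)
The plan is to prove the two implications of the TFAE separately, and to dispatch first the trivial half of the inner biconditional, which needs neither \stUB\ nor any substantial \Pmax-analysis. Throughout, ``honest consistency'' of a $\Sigma_1$-sentence plays the role of a forcibility notion, and the content of the theorem is that under \stUB\ this notion is absolutely faithful to truth in $H_{\omega_2}^V$.

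\emph{Truth implies honest consistency (always).} Suppose $H_{\omega_2}^V\models\phi(\vec A)$ with $\phi$ a $\Sigma_1$-formula for $\tau_{\UB,\NS_{\omega_1}}$ and $\vec A\in H_{\omega_2}^V$. By Levy absoluteness (Lemma~\ref{lem:levabsgen}) $H_{\omega_2}^V\prec_1 V$ for $\tau_{\UB,\NS_{\omega_1}}$, so $\phi(\vec A)$ holds in $V$, hence in $V_\delta$ (suitably expanded) for any inaccessible $\delta$ with $\vec A\in V_\delta$ and $V_\delta\prec_1 V$. Fix such a $\delta$ and put $\kappa=\delta$. By Lemma~\ref{lem:UBcorr}, after forcing with $\Coll(\omega,\delta)$ the structure $V_\delta$ is $\UB^{V[G]}$-correct, hence $U^{V[G]}$-iterable for every $U\in\UB^V$ at once; moreover $\NS_{\omega_1}^{V_\delta}\cap V=\NS_{\omega_1}^V$ since stationarity of subsets of $\omega_1$ is computed from clubs lying in $V_\delta$. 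By $\Sigma_1$-upward absoluteness $V_\delta$ still models $\phi(\vec A)$, so it witnesses honest consistency. This uses only $\maxUB$ and precipitousness, not \stUB.

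\emph{$\stUB$ converts honest consistency into truth.} Assume \stUB: $\NS_{\omega_1}$ is saturated and there is an $L(\UB)$-generic filter $g$ for \Pmax\ with $\pow{\omega_1}^V\subseteq L(\UB)[g]$; a standard consequence is $H_{\omega_2}^V=H_{\omega_2}^{L(\UB)[g]}$, and $(H_{\omega_2}^V,\tau_{\UB,\NS_{\omega_1}}^V)$ is the direct limit of the canonical generic iterations of the \Pmax-conditions in $g$. Suppose $\phi(\vec A)$ is honestly consistent. Applying honest consistency with $U\in\UB^V$ chosen to code $\bar D_{\NS_{\omega_1},\UB}$ together with a universally Baire enumeration of the \Pmax-data, we obtain in a collapse extension a transitive $U^{V[G]}$-iterable $\mathcal M\models\phi(\vec A)$ with $\NS_{\omega_1}^M\cap V=\NS_{\omega_1}^V$. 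Since $\vec A\in H_{\omega_2}^{L(\UB)[g]}$, fix a condition $(N,\dots)\in g$ with canonical iteration $j\colon N\to N^\ast$ and $\vec A=j(\vec a)$ for some $\vec a\in N$. Now compare $\mathcal M$ and $N$: both are iterable, both compute $\NS_{\omega_1}$ correctly relative to $V$, and both cohere with $V$'s universally Baire sets through $U$-iterability, so by the iteration and comparison lemmas of \Pmax-theory from \cite{HSTLARSON} (whose hypotheses for $L(\UB)$ are supplied by $\maxUB$) they admit a common iterate in which the two parameters are aligned. The iterate of $\mathcal M$ still satisfies $\phi$ at the aligned parameter (as $\phi$ is $\Sigma_1$, hence upward absolute along iterations), while the iterate of $N$ embeds $\Sigma_1$-elementarily into $(H_{\omega_2}^V,\tau_{\UB,\NS_{\omega_1}}^V)$ with image $\vec A$; upward absoluteness of $\Sigma_1$-formulae then gives $H_{\omega_2}^V\models\phi(\vec A)$.

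\emph{The maximality principle implies \stUB.} Assume the honest-consistency characterization. The idea is to encode each genericity and capture requirement for building $g$ as a $\Sigma_1$-statement over $\tau_{\UB,\NS_{\omega_1}}$ whose honest consistency is granted by the \Pmax-machinery (using $\maxUB$ to treat the relevant dense sets of $L(\UB)$ as universally Baire), so that (b) upgrades honest consistency to truth in $H_{\omega_2}^V$, i.e. to genuine witnesses living in $V$. Concretely, for dense $D\in L(\UB)$ and any $\vec A\in\pow{\omega_1}^V$, the assertion ``there is a \Pmax-precondition meeting $D$ whose canonical iteration computes $\NS_{\omega_1}$ correctly and realizes $\vec A$'' is $\Sigma_1$ for $\tau_{\UB,\NS_{\omega_1}}$ and is honestly consistent (witnessed by a collapse of a suitable $V_\delta$ as in Lemma~\ref{lem:UBcorr} together with the density analysis of \Pmax), hence holds in $H_{\omega_2}^V$ by the principle. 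Threading these truths along the countable closure of \Pmax\ builds a filter $g$ meeting every $D\in L(\UB)$ and capturing every $\vec A\in\pow{\omega_1}^V$, while the same witnesses seal $\NS_{\omega_1}^V$, delivering saturation; thus \stUB\ holds. The main obstacle is the comparison/alignment step common to both nontrivial directions: matching an honest-consistency witness with the \Pmax-generic iteration so that $\vec A$ is pinned down on both sides while preserving $\Sigma_1$-elementarity for $\tau_{\UB,\NS_{\omega_1}}$. I would isolate this as a single lemma --- that any two $U$-iterable structures correctly computing $\NS_{\omega_1}^V$ admit a common iterate $\Sigma_1$-elementarily embedded into $(H_{\omega_2}^V,\tau_{\UB,\NS_{\omega_1}}^V)$ --- whose proof forces the essential use of $U$-iterability (to keep the universally Baire predicates coherent) and of saturation (to run the generic iterations of length $\omega_1$), exactly as in \cite{ASPSCH(*),SCHASPBMM*++}.
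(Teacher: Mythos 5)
You should first know how this statement sits in the paper: the paper never proves it. It is quoted verbatim as a theorem of Asper\'o and Schindler (\cite[Thm.~2.7, Thm.~2.8]{SCHASPBMM*++}, following their definition of honest consistency, reproduced as Def.~\ref{def:aspschhoncons}) and is used as a black box inside the proof of Theorem~\ref{thm:char(*)}. So your proposal must stand as a self-contained argument, and judged that way it gets the genuinely easy part right --- ``truth implies honest consistency'' needs only precipitousness of $\NS_{\omega_1}$, $\maxUB$, Levy absoluteness (Lemma~\ref{lem:levabsgen}) and Lemma~\ref{lem:UBcorr}, which exhibits $V_\delta$ in the collapse extension as a single witness that is $U^{V[G]}$-iterable for \emph{all} $U\in\UB^V$ simultaneously and computes $\NS_{\omega_1}$ correctly --- but it collapses exactly where the content of the Asper\'o--Schindler theorem lies.

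The gap is the ``comparison/alignment lemma'' on which you rest both nontrivial directions. As you state it --- \emph{any} two $U$-iterable structures correctly computing $\NS_{\omega_1}^V$ admit a common iterate $\Sigma_1$-elementarily embedded into $(H_{\omega_2}^V,\tau_{\UB,\NS_{\omega_1}}^V)$ --- it mentions neither $\stUB$ nor the maximality principle, so it would have to follow from precipitousness $+$ $\maxUB$ alone. It cannot: combined with your first step it would make honest consistency coincide with truth in $H_{\omega_2}^V$ in every such model, hence (by the hard implication of the very equivalence you are proving, which is a true theorem) $\stUB$ would follow from precipitousness $+$ $\maxUB$; but these hypotheses are compatible with $\CH$ (precipitousness is independent of $\CH$ and $\maxUB$ survives all further set forcing, cf.\ the footnote to Theorem~\ref{Thm:mainthm-1bis}), whereas $\stUB$ entails $2^{\aleph_0}=\aleph_2$. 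The structural reason is that $\Pmax$-style iterable structures admit no symmetric comparison: the relevant relation is the asymmetric $N\leq M$ of the paper (one structure is iterated so as to become, with correct $\NS_{\omega_1}$, an element of the other's $H_{\omega_1}$), and $\Sigma_1$-elementarity of a length-$\omega_1$ iteration into $(H_{\omega_2}^V,\tau_{\UB,\NS_{\omega_1}}^V)$ is not a property of arbitrary $\NS$-correct iterable structures --- it is precisely the content of being $(\NS_{\omega_1},\UB)$-ec, equivalently of belonging to the generic filter, i.e.\ of $(*)$ itself. Deferring the lemma's proof to \cite{ASPSCH(*),SCHASPBMM*++} is therefore circular (those are the papers whose theorem you are reproving), and doubly so in the direction from the maximality principle to $\stUB$, since you say the lemma needs saturation of $\NS_{\omega_1}$, which in that direction is part of the conclusion, not a hypothesis. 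Two further holes in that same direction: ``threading along the countable closure of $\Pmax$'' can meet only countably many dense sets, while $L(\UB)$-genericity requires meeting all dense sets of $L(\UB)$ (the correct mechanism is the canonical filter determined by uniqueness of $\NS$-correct iterations, whose genericity is then checked dense-set-by-dense-set); and the saturation of $\NS_{\omega_1}$ itself, a conjunct of $\stUB$ absent from the hypotheses, is asserted (``the same witnesses seal $\NS_{\omega_1}^V$'') with no argument at all.
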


We use Schindler and Asper\'o characterization of $(*)$-$\UB$ to prove the equivalences
of the three items of Thm. \ref{thm:char(*)}
(the proofs of these implications import key ideas
from \cite[Lemma 3.2]{ASPSCH(*)}).

\begin{description}
\item[(\ref{thm:char(*)-1})
implies (\ref{thm:char(*)-2})]
Let $G$ be $V$-generic for $\Coll(\omega,\delta)$.
By Lemma \ref{lem:UBcorr},
$V_\delta$ is $\UB^{V[G]}$-correct in $V[G]$ as witnessed by 
$\bp{B^{V[G]}:B\in \UB^V}=\UB_V=\bp{B_n^{V[G]}:n\in\omega}$.

\begin{claim}
$V_\delta$ is $(\NS_{\omega_1},\UB^{V[G]})$-ec as witnessed by $\UB_V$.
\end{claim}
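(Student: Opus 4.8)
The plan is to verify directly the two clauses in the definition of $(\NS_{\omega_1},\UB^{V[G]})$-ec for $M=V_\delta$, using the witness $\UB_V=\bp{B^{V[G]}:B\in\UB^V}$ supplied by Lemma~\ref{lem:UBcorr}. The precipitousness clause is the easy one: since $(V,\in)\models\stUB$, $\NS_{\omega_1}$ is saturated, hence precipitous, in $V$, and for $\delta$ inaccessible this reflects to $V_\delta$ (as $\pow{\pow{\omega_1}}\subseteq V_\delta$, so $\NS_{\omega_1}^{V_\delta}=\NS_{\omega_1}^V$ and the generic ultrapower is computed in the same way); being an internal assertion about the countable model $V_\delta$, it persists to $V[G]$. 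Along the way I would record that $\maxUB$ and $\stUB$ themselves reflect to $V_\delta$, so that $V_\delta\models\maxUB+\stUB$; this will be needed below, and is the first place to be careful, since $\maxUB$ is a forcing scheme and $\stUB$ mentions $L(\UB)$, so $\delta$ must be chosen large enough (inaccessible with unboundedly many Woodins below it) for $V_\delta$ to see class many Woodins and to compute $\UB$ and $L(\UB)$ correctly.

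For the second clause fix a $\UB^{V[G]}$-correct $N$ with $V_\delta\in H_{\omega_1}^N$, $B_{\UB_V}\in\UB_N$, and an iteration $\mathcal{J}=\bp{j_{\alpha\beta}:\alpha\leq\beta\leq\gamma=\omega_1^N}$ of $V_\delta$ inside $N$ with $\NS_{\omega_1}^{M_\gamma}=\NS_{\omega_1}^N\cap M_\gamma$, where $M_\gamma=j_{0\gamma}(V_\delta)$. I first observe that $j_{0\gamma}$ is fully elementary as a map of the expanded structures $(H_{\omega_2}^{V_\delta},\tau_\ST^{V_\delta},B\cap V_\delta,\NS_{\omega_1}^{V_\delta}:B\in\UB_V)$ into $(H_{\omega_2}^{M_\gamma},\tau_\ST^{M_\gamma},B\cap M_\gamma,\NS_{\omega_1}^{M_\gamma}:B\in\UB_V)$: elementarity for $\in$-formulae is built into iterations, $A$-iterability of $V_\delta$ (part of its $\UB^{V[G]}$-correctness) gives $j_{0\gamma}(B^{V[G]}\cap V_\delta)=B^{V[G]}\cap M_\gamma$ for $B\in\UB^V$, and $\NS_{\omega_1}$ is moved correctly since it is internally definable. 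The hypothesis $\NS_{\omega_1}^{M_\gamma}=\NS_{\omega_1}^N\cap M_\gamma$ together with the $\UB$-correctness of $N$ then makes $(H_{\omega_2}^{M_\gamma},\dots)$ a $\tau_{\UB_V,\NS_{\omega_1}}$-substructure of $(H_{\omega_2}^N,\dots)$, so $\Sigma_1$-formulae pass upward from $H_{\omega_2}^{M_\gamma}$ to $H_{\omega_2}^N$. Composing this with the full elementarity of $j_{0\gamma}$ already yields the forward half of $\Sigma_1$-elementarity of $j_{0\gamma}\colon H_{\omega_2}^{V_\delta}\to H_{\omega_2}^N$, using nothing beyond correctness and iterability.

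The substantial half is the downward reflection, and this is where $\stUB$ enters. Suppose $(H_{\omega_2}^N,\dots)\models\exists y\,\psi(y,j_{0\gamma}(\vec a))$ with $\psi$ quantifier free and $\vec a\in H_{\omega_2}^{V_\delta}$. By the reflection noted above, transported along the elementary $j_{0\gamma}$, the iterate $M_\gamma$ also models $\maxUB+\stUB$. I would then argue that $N$ witnesses the \emph{honest consistency} (Def.~\ref{def:aspschhoncons}, computed inside $M_\gamma$) of the $\Sigma_1$-statement $\exists y\,\psi(y,j_{0\gamma}(\vec a))$: indeed $N$ is transitive, $\NS_{\omega_1}^N\cap M_\gamma=\NS_{\omega_1}^{M_\gamma}$, it satisfies the statement, and $B_{\UB_V}\in\UB_N$ together with the $\UB^{V[G]}$-correctness of $N$ lets one produce, for each relevant $U\in\UB^{M_\gamma}$, a $U$-iterable witness in a $\Coll(\omega,\kappa)$-extension by suitably reorganizing $N$ over $M_\gamma$. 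By Schindler and Asperó's characterization of $\stUB$ through honest consistency, applied inside $M_\gamma$, the statement then holds in $H_{\omega_2}^{M_\gamma}$; pulling it back through the full elementarity of $j_{0\gamma}$ gives $H_{\omega_2}^{V_\delta}\models\exists y\,\psi(y,\vec a)$. This completes $\Sigma_1$-elementarity, and hence the Claim.

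The main obstacle is precisely this honest-consistency step: matching the universal quantifier over $U\in\UB^{M_\gamma}$ in Def.~\ref{def:aspschhoncons} with the single code $B_{\UB_V}$ carried by $N$, and arranging the collapse so that $N$ presents itself as a genuinely $U$-iterable witness in the required generic extension of $M_\gamma$ (this is where the ideas of the Asperó--Schindler argument are imported). A secondary but genuine point to pin down is the reflection of $\maxUB$ and $\stUB$ to $V_\delta$, and thereby to $M_\gamma$, which constrains the choice of $\delta$; here one leans on the fact that, under class many Woodin cardinals, $\UB$ and $L(\UB)$ are computed absolutely in sufficiently closed rank-initial segments.
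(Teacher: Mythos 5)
Your overall shape is right --- the easy half is the upward $\Sigma_1$-transfer through $H_{\omega_2}^{M_\gamma}\sqsubseteq H_{\omega_2}^N$, and the hard half is converting the single witness $N$ into an application of the Asper\'o--Schindler honest-consistency characterization --- but the step you yourself flag as ``the main obstacle'' is exactly where the paper's proof does all of its work, and your plan for it does not go through as stated. Honest consistency \emph{according to $M_\gamma$} is a statement about what is forced over $M_\gamma$ by $\Coll(\omega,\kappa)$ for every relevant $U\in\UB^{M_\gamma}$; the structure $N$ lives in $V[G]$, which is not a collapse extension of $M_\gamma$, so ``reorganizing $N$ over $M_\gamma$'' is not an argument --- one must actually produce, densely in the forcing sense, an iterable witness inside collapse extensions of the relevant model. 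Moreover, running Asper\'o--Schindler \emph{inside} $M_\gamma$ requires $M_\gamma\models\maxUB+\stUB+\NS_{\omega_1}$ precipitous, hence $V_\delta\models\maxUB+\stUB$; this fails for an arbitrary inaccessible $\delta$ (the claim is stated for any inaccessible, e.g.\ the least one, below which there need be no Woodin cardinals), and it is also unnecessary.

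The paper avoids both problems by verifying honest consistency of $\phi(a)$ \emph{in $V$}, where $\stUB$ is the standing hypothesis, and by iterating the whole universe rather than just $V_\delta$: the iteration $\mathcal{J}\in N$ extends uniquely to an iteration $\bar{\mathcal{J}}$ of $V$ with last model $\bar{M}=\bar{j}_{0\gamma}(V)\supseteq M_\gamma$. Passing to a further collapse $V[H]$ with $G\in V[H]$, one checks by a tree argument (the Subclaim) that $\bar{M}[H]$ computes every $B^{V[H]}$ for $B\in\UB^V$ correctly, so that $H_{\omega_1}^{\bar{M}[H]}$ and $H_{\omega_1}^{V[H]}$ are both models of the \emph{model complete} theory $T_{\UB^V}$ with one a substructure of the other; model completeness then reflects the $\Sigma_1$-statement ``there is an iterable superstructure of $j_{0\gamma}(V_\delta)$ realizing $\phi(j_{0\gamma}(a))$'' (witnessed in $V[H]$ by $N$) down into $H_{\omega_1}^{\bar{M}[H]}$. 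Homogeneity of $\Coll(\omega,\eta)$ turns this into a forcing statement of $\bar{M}$, and elementarity of $\bar{j}_{0\gamma}:V\to\bar{M}$ pulls it back to $V$, yielding honest consistency of $\phi(a)$ there; only then is the Asper\'o--Schindler equivalence invoked, in $V$. The missing ingredients in your proposal are precisely this detour through $\bar{\mathcal{J}}$, the correctness Subclaim, and the use of the model completeness of $T_{\UB^V}$ plus homogeneity of the collapse to manufacture forced witnesses; without them the honest-consistency step is a genuine gap.
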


\begin{proof}
Let in $V[G]$ $B_V=B_{\UB_V}=\prod_{n\in\omega}B_n^{V[G]}$ be the universally Baire set coding $\UB_V$.

Let $N\leq V_\delta$ in $V[G]$ be $\UB^{V[G]}$-correct with $B_V\in \UB_N$ for some $\UB_N$ witnessing that 
$N$ is $\UB^{V[G]}$-correct.
Then we already observed that $\bp{B^{V[G]}\cap N: B^{V[G]}\in \UB_V}\subseteq \bp{B\cap N: \, B\in \UB_N}$.
Therefore
\[
(H_{\omega_1}^V,\tau_{\UB_V}^V)=(H_{\omega_1}^V,\tau_{\UB^V}^V)
\prec (H_{\omega_1}^N,\tau_{\ST}^N, B^{V[G]}\cap N: B\in \UB^V).
\]

Let 
\[
\mathcal{J}=\bp{j_{\alpha,\beta}:\alpha\leq\beta\leq\gamma=(\omega_1)^N}\in N
\] 
be an iteration witnessing
$V_\delta\geq N$ in $V[G]$.

We must show that 
\[
j_{0\gamma}:H_{\omega_2}^V\to H_{\omega_2}^N
\]
is $\Sigma_1$-elementary for $\tau_{\NS_{\omega_1},\UB^V}$ between
\[
(H_{\omega_2}^V,\tau_{\ST}^V,\UB^V,\NS_{\omega_1}^V)
\] 
and 
\[
(H_{\omega_2}^N,\tau_{\ST}^N,B^{V[G]}\cap N: B\in \UB^V,\NS_{\omega_1}^N).
\] 

Let $\phi(a)$ be a $\Sigma_1$-formula for $\tau_{\NS_{\omega_1},\UB^V}$ in parameter 
$a\in H_{\omega_2}^V$ with $B_1,\dots,B_k\in\UB^V$ the universally Baire predicates  occurring in $\phi$
such that 
\[
(N,\tau_{\ST}^N, B^{V[G]}\cap N: B\in \UB^V, \NS_{\omega_1}^N)\models\phi(j_{0\gamma}(a)).
\]
We must show that
\[
(H_{\omega_2}^V,\tau_{\ST}^V, \UB^V, \NS_{\omega_1}^V)\models\phi(a).
\]

Remark that the iteration $\mathcal{J}$ extends to an iteration 
$\bar{\mathcal{J}}=\bp{\bar{j}_{\alpha,\beta}:\alpha\leq\beta\leq\gamma=(\omega_1)^N}$ 
of $V$ exactly as already done in the proof of Lemma \ref{lem:UBcorr}.

Using this observation, let
$\bar{M}=\bar{j}_{0\gamma}(V)$;
then $\NS_{\omega_1}^{\bar{M}}=\NS_{\omega_1}^N\cap \bar{M}$.

Now let $H$ be $V$-generic for $\Coll(\omega,\eta)$ with $G\in V[H]$
for some $\eta>\delta$ inaccessible in $V[G]$.

By $\maxUB$
$N$ is $\UB^{V[H]}$-correct in $V[H]$:
on the one hand 
\[
D_{\UB^{V[H]}}=\Cod[\bar{D}_{\UB^{V[G]}}^{V[H]}],
\]
on the other hand
\[
N\in \Cod[\bar{D}_{\UB^{V[G]}}]\subseteq \Cod[\bar{D}_{\UB^{V[G]}}^{V[H]}].
\]
In particular for any $B\in \UB_V$, $N$ is $B^{V[H]}$-iterable in $V[H]$.

Therefore in $H_{\omega_1}^{V[H]}$ for any $B\in\mathsf{UB}^V$, the statement 
\begin{quote}
\emph{There exists a $\tau_{\NS_{\omega_1}}\cup\bp{B,B_1,\dots,B_k}$-super-structure 
$\bar{N}$ of
$j_{0\gamma}(V_\delta)$
which is
$\bp{B^{V[H]},B_1^{V[H]},\dots,B_k^{V[H]}}$-iterable and which realizes 
$\phi(j_{0\gamma}(a))$}
\end{quote}
holds true as witnessed by $N$.

The following is a key observation:
\begin{subclaim}
For any $s\in (2^{\omega})^{\bar{M}[H]}$ and $B\in \UB^V$
\[
s\in j_{0\gamma}(B)^{\bar{M}[H]}\text{ if and only if } s\in B^{V[H]}\cap \bar{M}[H].
\]
\end{subclaim}
\begin{proof}
For each $B\in \UB^V$ find in $V$ trees $(T_B,S_B)$ which project to complement in $V[H]$ and such that 
$B=p[T_B]$.
Now since $\bar{j}_{0,\gamma}[T_B]\subseteq \bar{j}_{0,\gamma}(T_B)$ and 
$\bar{j}_{0,\gamma}[S_B]\subseteq \bar{j}_{0,\gamma}(S_B)$, we get that 
\begin{itemize}
\item
$(2^{\omega})^{V[H]}=p[[\bar{j}_{0,\gamma}(T_B)]]\cup p[[\bar{j}_{0,\gamma}(S_B)]]$ (since $(2^{\omega})^{V[H]}$ is already covered by $p[[\bar{j}_{0,\gamma}[T_B]]]\cup p[[\bar{j}_{0,\gamma}[S_B]]]$).
\item
$\emptyset=p[[\bar{j}_{0,\gamma}(T_B)]]\cap p[[\bar{j}_{0,\gamma}(S_B)]]$ by elementarity of $\bar{j}_{0,\gamma}$.
\end{itemize}
Hence $B^{V[H]}$ is also the projection of $\bar{j}_{0,\gamma}(T_B)$ 
and the pair $(\bar{j}_{0,\gamma}(T_B), \bar{j}_{0,\gamma}(S_B))$ projects to complement in $V[H]$.

But this pair belongs to $\bar{M}$, and 
(by elementarity of $\bar{j}_{0\gamma}$)
\[
\bar{M}\models(\bar{j}_{0,\gamma}(T_B), \bar{j}_{0,\gamma}(S_B))\text{ projects to complements for
$\Coll(\omega,\bar{j}_{0,\gamma}(\eta))$.}
\]
Since $\eta\leq \bar{j}_{0,\gamma}(\eta)$ we get that 
\[
\bar{M}\models(\bar{j}_{0,\gamma}(T_B), \bar{j}_{0,\gamma}(S_B))\text{ projects to complements for 
$\Coll(\omega,\eta)$.}
\]

Therefore in 
$V[H]$
$s\in j_{0\gamma}(B)^{\bar{M}[H]}$ if and only if $s\in p[[\bar{j}_{0,\gamma}(T_B)]^{V[H]}]\cap M[H]$
if and only if $s\in p[[T_B]^{V[H]}]\cap \bar{M}[H]$ if and only if $s\in B^{V[H]}\cap \bar{M}[H]$.
\end{proof}

This shows that
\[
(\bar{M}[H],\tau_{\UB^V}^{\bar{M}[H]})\sqsubseteq (V[H],\tau_{\UB^V}^{V[H]}).
\]

Moreover $H_{\omega_1}^{\bar{M}[H]}$ and  $H_{\omega_1}^{V[H]}$
both realize the theory $T_{\UB^V}$ of $H_{\omega_1}^V$ in this language:
on the one hand 
\[
(H_{\omega_1}^V,\tau_{\UB^V}^V)\prec (H_{\omega_1}^{\bar{M}},\tau_{\UB^V}^{\bar{M}})
\prec (H_{\omega_1}^{\bar{M}[H]},\tau_{\UB^V}^{\bar{M}[H]})
\]
(the leftmost $\prec$ holds since $j_{0,\gamma}:V\to \bar{M}$ is elementary, 
the rightmost $\prec$ holds since $\bar{M}$ models $\maxUB$);
on the other hand
\[
(H_{\omega_1}^V,\tau_{\UB^V}^V)\prec (H_{\omega_1}^{V[H]},\tau_{\UB^V}^{V[H]})
\]
(applying  $\maxUB$ in $V$).

Since $T_{\UB^V}$ is model complete, we get that
$H_{\omega_1}^{\bar{M}[H]}$ is an elementary $\tau_{\UB^V}$-substructure of $H_{\omega_1}^{V[H]}$;
therefore $H_{\omega_1}^{\bar{M}[H]}$ models
\begin{quote}
\emph{There exists a $\tau_{\NS_{\omega_1},B,B_1,\dots,B_k}$-super-structure $\bar{N}$ of
$j_{0\gamma}(V_\delta)$
which is \\
$\bp{\bar{j}_{0\gamma}(B)^{\bar{M}[H]},\bar{j}_{0\gamma}(B_1)^{\bar{M}[H]},\dots,\bar{j}_{0\gamma}(B_k)^{\bar{M}[H]}}$-iterable and which realizes 
$\phi(j_{0\gamma}(a))$.}
\end{quote}

By homogeneity of $\Coll(\omega,\eta)$, in $\bar{M}$ we get that
any condition in $\Coll(\omega,\eta)$ forces:
\begin{quote}
\emph{There exists a $\tau_{\NS_{\omega_1},B,B_1,\dots,B_k}$-super-structure $\bar{N}$ of
$j_{0\gamma}(V_\delta)$
which is \\
$\bp{\bar{j}_{0\gamma}(B)^{\bar{M}[\dot{H}]},\bar{j}_{0\gamma}(B_1)^{\bar{M}[\dot{H}]},\dots,\bar{j}_{0\gamma}(B_k)^{\bar{M}[\dot{H}]}}$-iterable and which realizes 
$\phi(j_{0\gamma}(a))$.}
\end{quote}
By elementarity of $\bar{j}_{0\gamma}$ we get that in $V$ it holds that:
\begin{quote}
There exists an $\eta>\delta$ such that
any condition in $\Coll(\omega,\eta)$ forces:
\begin{quote}
\emph{``There exists a countable super structure $\bar{N}$ of
$V_\delta$ with respect to $\tau_{\NS_{\omega_1},\bp{B,B_1,\dots,B_k}}$
which is $\bp{B^{V[\dot{H}]},B_1^{V[\dot{H}]},\dots,B_k^{V[\dot{H}]}}$-iterable 
and which realizes $\phi(a)$''}
\end{quote}
\end{quote}

This procedure can be repeated for any $B\in\UB^V$, 
showing that $\phi(a)$ is honestly consistent in $V$.

By Schindler and Asper\'o characterization of $(*)$ we obtain that $\phi(a)$ holds in $H_{\omega_2}^V$.
\end{proof}

\item[(\ref{thm:char(*)-2})
implies (\ref{thm:char(*)-3})]
Our assumptions grants that the set 
\[
D_{\UB}=
\bp{M\in H_{\omega_1}^V: M\text{ is $\UB^{V}$-correct}}
\]
is coded by a universally Baire set $\bar{D}_\UB$ in $V$. 
Moreover we also get that whenever $G$ is $V$-generic for 
$\Coll(\omega,\delta)$,
the lift $\bar{D}_{\UB}^{V[G]}$ of $\bar{D}_\UB$ to $V[G]$ codes
\[
D_{\UB^{V[G]}}^{V[G]}=\bp{M\in H_{\omega_1}^{V[G]}: M\text{ is $\UB^{V[G]}$-correct}}.
\]

By (\ref{thm:char(*)-2}) we get that $V_\delta\in D_{\NS_{\omega_1},\UB^{V[G]}}^{V[G]}$.

By Fact \ref{fac:densityUBcorrect}
\[
(H_{\omega_1}^V,\tau_{\mathsf{ST}}^V,\UB^V)\models \text{ for all iterable $M$ 
there exists an $\UB$-correct structure 
$\bar{M}\geq M$}.
\]
Again since 
\[
(H_{\omega_1}^V,\tau_{\mathsf{ST}}^V,\UB^V)\prec (H_{\omega_1}^{V[G]},\tau_{\mathsf{ST}}^{V[G]},\UB^V),
\]
and the latter is first order expressible in the predicate $\bar{D}_\UB\in \UB^V$, we get that
\[
(H_{\omega_1}^{V[G]},\tau_{\mathsf{ST}}^{V[G]},\UB^V)\models \text{ for all iterable $M$ 
there exists an $\UB^{V[G]}$-correct structure 
$\bar{M}\geq M$}.
\]
So
let $N\leq V_\delta$ be in $V[G]$ an $\UB^{V[G]}$-correct structure with
$V_\delta\in H_{\omega_1}^N$.

Let $\mathcal{J}=\bp{j_{\alpha\beta}:\,\alpha\leq\beta\leq\gamma=\omega_1^N}\in H_{\omega_2}^N$ be an iteration witnessing $N\leq V_\delta$.

Now for any $A\in \pow{\omega_1}^V$ and $B\in\UB^V$
\[
(H_{\omega_2}^{N},\tau_{\mathsf{ST}}^{N},\NS_{\gamma}^{N},B^{V[G]}\cap N: B\in\UB^V)
\]
models

\begin{quote}
\emph{There exists
an $(\NS_{\omega_1},\UB^{V[G]})$-ec structure $M$ with $B^{V[G]}\cap N\in\UB_M$ and an iteration 
$\bar{\mathcal{J}}=\bp{\bar{j}_{\alpha\beta}:\,\alpha\leq\beta\leq\gamma}$ of $M$ such that
$\bar{j}_{0\gamma}(A)=j_{0\gamma}(A)$}.
\end{quote}
This statement is witnessed exactly by $V_\delta$ in the place of $M$ (since $B=B^{V[G]}\cap V_\delta\in\UB^V$ and 
$\UB^{V[G]}_{V_\delta}=\bp{B^{V[G]}:\, B\in\UB^V}$),
and $\mathcal{J}$ in the place of $\bar{\mathcal{J}}$.

Since $V_\delta$ is $(\NS_{\omega_1},\UB^{V[G]})$-ec in $V[G]$ we get that
$j_{0\gamma}\restriction H_{\omega_2}^V$ is $\Sigma_1$-elementary between $H_{\omega_2}^V$ and $H_{\omega_2}^N$
for $\tau_{\NS_{\omega_1},\UB^V}$.

Hence
\[
(H_{\omega_2}^{V},\tau_{\mathsf{ST}}^{V},\NS_{\gamma}^{V},\UB^V)
\]
models 
\begin{quote}
\emph{There exists 
an $(\NS_{\omega_1}^V,\UB^{V})$-ec structure $M$ with $B\in\UB_M$ and an iteration 
$\bar{\mathcal{J}}=\bp{\bar{j}_{\alpha\beta}:\,\alpha\leq\beta\leq(\omega_1)^V}$ of $M$ such that
$\bar{j}_{0\omega_1}(a)=A$ and 
$\NS_{\omega_1}^{\bar{j}_{0\omega_1}(M)}=\NS_{\omega_1}^V\cap \bar{j}_{0\omega_1}(M)$}.
\end{quote}

\item[(\ref{thm:char(*)-3})
implies (\ref{thm:char(*)-1})]
We use again Schindler and Asper\'o characterization of $(*)$.

Assume $\phi(A)$ is honestly consistent for some $\Sigma_1$-property $\phi(x)$ in the language 
$\tau_{\UB,\NS_{\omega_1}}$
and $A\in\pow{\omega_1}^V$.
Let $B_1,\dots,B_k$ be the universally Baire predicates in $\UB$ mentioned in $\phi(x)$.

By (\ref{thm:char(*)-3}) there is in 
$V$ an $(\NS_{\omega_1},\UB)$-ec $M$ with $B_1,\dots,B_k\in \UB_M$ 
and $a\in \pow{\omega_1}^M$,
and an iteration $\mathcal{J}=\bp{j_{\alpha\beta}:\,\alpha\leq\beta\leq\omega_1}$ of $M$ such that
$j_{0\omega_1}(a)=A$ and $\NS_{\omega_1}^{j_{0\omega_1}(M)}=\NS_{\omega_1}^V\cap j_{0\omega_1}(M)$.

Let $G$ be $V$-generic for $\Coll(\omega,\delta)$. 
Find $N\in V[G]$ such that $N\models \phi(A)$, $N$ is 
$B_1^{V[G]},\dots,B_k^{V[G]}$-iterable in 
$V[G]$ and 
$\NS_{\omega_1}^{N}\cap V=\NS_{\omega_1}^V$ (this $N$ exists by the honest consistency of $\phi(x)$).

Notice that $\mathcal{J}\in V_\delta\subseteq N$ witnesses that $M\geq N$ as well.

Let $\bar{N}\leq N$ in $V[G]$ be a $\UB^{V[G]}$-correct structure with $B_{\UB_V}\in \UB_{\bar{N}}$
($\bar{N}$ exists by Fact \ref{fac:densityUBcorrect} applied in $V[G]$ to $N$ and $B_{\UB_V}$). 
Let $\mathcal{K}=\bp{k_{\alpha\beta}:\alpha\leq\beta\leq\bar{\gamma}=\omega_1^{\bar{N}}}\in\bar{N}$ be an
iteration witnessing that $\bar{N}\leq N$.

Remark that $H_{\omega_2}^{\bar{N}}\models \phi(k_{0\bar{\gamma}}(A))$, since $\Sigma_1$-properties are
upward absolute and $k_{0\bar{\gamma}}(N)$ is a 
$\tau_{\NS_{\omega_1}}\cup\bp{B_1,\dots,B_k}$-substructure of $H_{\omega_2}^{\bar{N}}$.

Also $\bp{B^{V[G]}:B\in\UB_V}\subseteq \UB_{\bar{N}}$ entail that
$B_{\UB_M}^{V[G]}\in \UB_{\bar{N}}$.

Letting 
\[
\bar{\mathcal{J}}=\bp{\bar{j}_{\alpha\beta}:\alpha\leq\beta\leq\bar{\gamma}}=k_{0\bar{\gamma}}(\mathcal{J}),
\]
we get that $\bar{j}_{0\bar{\gamma}}(a)=k_{0\gamma}(j_{0\bar{\gamma}}(a))=k_{0\gamma}(A)$, and
$\bar{\mathcal{J}}$ is such that $B_j^{V[G]}\in \UB_{\bar{N}}$ for all $j=1,\dots,k$ since
$B_{\UB_M}^{V[G]}$ in $\UB_{\bar{N}}$. 

Since $M$ is $(\NS_{\omega_1},\UB^{V[G]})$-ec in $V[G]$ by $\maxUB$, we get that
$\bar{j}_{0\bar{\gamma}}$ defines a $\Sigma_1$-elementary embedding of
\[
(H_{\omega_2}^M,\tau_{\UB_M,\NS_{\omega_1}}^M)
\]
into 
\[
(H_{\omega_2}^{\bar{N}},\tau_{\UB_M,\NS_{\omega_1}}^{\bar{N}}).
\]

Hence 
\[
(H_{\omega_2}^M,\tau_{\UB_M,\NS_{\omega_1}}^M)\models\phi(a).
\] 

This gives that
\[
(H_{\omega_2}^{M_{\omega_1}},\tau_{\UB_M,\NS_{\omega_1}}^{M_{\omega_1}})\models\phi(A)
\] 
(since $j_{0\omega_1}(a)=A$), 
and therefore that
\[
(H_{\omega_2}^V,\tau_{\UB_M,\NS_{\omega_1}}^V)\models\phi(A),
\]
since $M_{\omega_1}$ is a substructure of $H_{\omega_2}^V$ for $\tau_{\UB_M,\NS_{\omega_1}}$.

\end{description}
\end{proof}

\section{Some questions and comments}

\subsection*{Do we really need $\maxUB$ to establish Thm. \ref{thm:mainthm1}?}

It is not at all clear whether the chain 
of equivalences for $(*)$-$\UB$ given in Thm. \ref{Thm:mainthm-1ter} could be proved without appealing to $\maxUB$. 
What we can for sure say is that the equivalence between forcibility and consistency as given by items \ref{Thm:mainthm-1C} and \ref{Thm:mainthm-1B} of Thm.
\ref{Thm:mainthm-1ter}
holds for the signature $\tau_{\omega_1}$ and its $\Pi_2$-sentences
$\psi$.

More precisely: 
\begin{Theorem}
Consider any $\tau_{\omega_1}$-theory $S$
extending
\[
\ZFC_\ST+\omega_1\emph{ is the first uncountable cardinal $+$ there are class many supercompact cardinals}
\]
and which \emph{is preserved by any forcing} (e.g. $S$ itself
or $S+T_\forall$ for
 any $T$ extending $S$). Then the Kaiser hull of
 $S$ is equivalently given by those 
$\Pi_2$-sentences $\psi$ for $\tau_{\omega_1}$ satysfying items \ref{Thm:mainthm-1C} or \ref{Thm:mainthm-1B} of Thm.
\ref{Thm:mainthm-1ter}.
\end{Theorem}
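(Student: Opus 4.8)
The plan is to identify three sets of $\Pi_2$-sentences for $\tau_{\omega_1}$ and prove they coincide: the Kaiser hull $\mathrm{KH}(S)$; the set $\mathcal{B}$ of $\psi$ satisfying item \ref{Thm:mainthm-1B} (i.e. $\bar{S}_\forall\cup\{\psi\}$ consistent for every complete $\bar{S}\supseteq S$); and the set $\mathcal{C}$ of $\psi$ satisfying item \ref{Thm:mainthm-1C} (i.e. $S\vdash\exists P\,[\Vdash_P\psi^{\dot{H}_{\omega_2}}]$). I would establish the cycle $\mathcal{B}\subseteq\mathrm{KH}(S)\subseteq\mathcal{C}\subseteq\mathcal{B}$. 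Note that, unlike in Thm.~\ref{Thm:mainthm-1ter}, I would \emph{not} route this through the existence of a model companion and Lemma~\ref{fac:proofthm1-2}: in $\tau_{\omega_1}$ the Robinson-test reduction of $\Sigma_1$ to $\Pi_1$ seems to require a universally Baire predicate (coding correct $\Pmax$-preconditions) which is unavailable in $\tau_{\omega_1}$, so the theory of $H_{\omega_2}$ is presumably not model complete here and one must argue directly at the level of $\mathrm{KH}(S)$ and $S$-ec models.

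The inclusion $\mathcal{B}\subseteq\mathrm{KH}(S)$ is immediate from Fact~\ref{fac:charkaihullnonpi1comp} applied to $T=S$, and needs none of the hypotheses. For $\mathcal{C}\subseteq\mathcal{B}$, fix a complete $\bar{S}\supseteq S$ and a model $V\models\bar{S}$; since $S\vdash\exists P\,[\Vdash_P\psi^{\dot{H}_{\omega_2}}]$, force with such a $P$ to obtain $V[G]\models\psi^{H_{\omega_2}}$, so $H_{\omega_2}^{V[G]}\models\psi$. The crucial point is that $\bar{S}_\forall$, being the $\Pi_1$-theory of $V$ for $\tau_{\omega_1}\subseteq\tau_{\NS_{\omega_1}}\cup\UB^V$, is invariant under set forcing by Thm.~\ref{thm:PI1invomega2} (class many Woodin cardinals are available, since $S$ proves there are class many supercompacts); hence $V[G]\models\bar{S}_\forall$, and Levy absoluteness (Lemma~\ref{lem:levabsgen}, which gives $H_{\omega_2}^{V[G]}\prec_1 V[G]$ for $\tau_{\omega_1}$) pushes this down, so $H_{\omega_2}^{V[G]}\models\bar{S}_\forall\cup\{\psi\}$. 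Thus $\bar{S}_\forall\cup\{\psi\}$ is consistent.

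The heart of the argument is $\mathrm{KH}(S)\subseteq\mathcal{C}$. Given $V\models S$, I would force $\MM^{++}$ over $V$ (possible by \cite{FORMAGSHE} using a supercompact), obtaining $V[G]\models\MM^{++}$, whence Woodin's axiom $(*)$ holds in $V[G]$ by Asper\`o--Schindler \cite{ASPSCH(*)}; by the forcing-preservation hypothesis $V[G]\models S$ as well. The key lemma to isolate is that, under $(*)$, the $\tau_{\omega_1}$-reduct of $H_{\omega_2}^{V[G]}$ is an $S$-existentially closed structure. Granting this, any $\psi\in\mathrm{KH}(S)$ holds in $H_{\omega_2}^{V[G]}$, so $V[G]\models\psi^{H_{\omega_2}}$; since $V\models S$ was arbitrary this yields item \ref{Thm:mainthm-1C}. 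To prove the key lemma I would use that $H_{\omega_2}^{V[G]}\models S_\forall$ (from $H_{\omega_2}^{V[G]}\prec_1 V[G]\models S$, so it embeds into a model of $S$) and then show it reflects every $\Sigma_1$-$\tau_{\omega_1}$-fact in parameters from $H_{\omega_2}^{V[G]}$ that holds in some $S$-model extension: this is exactly the $(*)$-maximality of $H_{\omega_2}$, which I would extract from the honest-consistency characterization of $(*)$ (Def.~\ref{def:aspschhoncons} and the Asper\`o--Schindler theorem used in the proof of Thm.~\ref{thm:char(*)}), combining it with Prop.~\ref{prop:pi2satKaiserhull} and Fact~\ref{fac:presECmod}.

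The main obstacle is precisely this last step, and it is also where the gain over Thm.~\ref{Thm:mainthm-1ter} lies: the honest-consistency/$(*)$-maximality machinery, as developed for the rich signature $\tau_{\UB,\NS_{\omega_1}}$, was forced to invoke $\maxUB$ in order to control the universally Baire predicates $B_1,\dots,B_k$ occurring in the formulas (their iterability and correctness). Since a $\tau_{\omega_1}$-sentence mentions no universally Baire predicate, I expect the witnessing $\Pmax$-preconditions to need only plain iterability and the relevant ``special'' set of correct preconditions to be definable over $L(\mathbb{R})$, hence universally Baire already from the existence of class many Woodin cardinals (Thm.~\ref{thm:UBsetsgenabs}) rather than from $\maxUB$. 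Verifying carefully that the $L(\mathbb{R})$-version of Woodin's $\Pmax$-analysis suffices to reflect $\Sigma_1$-$\tau_{\omega_1}$-facts into $H_{\omega_2}$ under $(*)$, with no appeal to $\maxUB$, is the technical crux; the forcing-preservation of $S$ (ensuring $V[G]\models S$) and the generic invariance of the $\tau_{\omega_1}$-$\Pi_1$-theory (Thm.~\ref{thm:PI1invomega2}) are what make the two outer inclusions of the cycle go through cleanly.
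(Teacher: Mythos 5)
Your two outer inclusions are fine: $\mathcal{B}\subseteq\mathrm{KH}(S)$ is Fact~\ref{fac:charkaihullnonpi1comp}, and your argument for $\mathcal{C}\subseteq\mathcal{B}$ via Thm.~\ref{thm:PI1invomega2} and Lemma~\ref{lem:levabsgen} is essentially the paper's first direction (the paper first collapses a supercompact to arrange $\maxUB$, which is harmless but not needed for that implication). The problem is the key lemma on which your inclusion $\mathrm{KH}(S)\subseteq\mathcal{C}$ rests: it is \emph{false} that the $\tau_{\omega_1}$-reduct of $H_{\omega_2}^{V[G]}$ is $S$-ec under $(*)$ (or under any hypothesis). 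Take $A\in H_{\omega_2}^{V[G]}$ stationary and costationary in $V[G]$, and let $H$ be $V[G]$-generic for the forcing shooting a club through $\omega_1\setminus A$ with closed bounded conditions; this preserves $\omega_1$, so $H_{\omega_2}^{V[G]}\sqsubseteq V[G][H]$ as $\tau_{\omega_1}$-structures ($\Delta_0$-predicates and the constant $\omega_1$ are preserved), and $V[G][H]\models S$ by the forcing-preservation hypothesis. The $\Sigma_1$-formula $\exists C\,(C\text{ is a club subset of }\omega_1\wedge C\cap A=\emptyset)$ for $\tau_{\omega_1}$ holds in $V[G][H]$ but fails in $H_{\omega_2}^{V[G]}$, which computes stationarity correctly by Levy absoluteness. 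So $H_{\omega_2}^{V[G]}$ is not $\Sigma_1$-elementary in this $S$-model superstructure, hence not $S$-ec, and you cannot conclude that sentences of $\mathrm{KH}(S)$ hold in it. The obstruction is exactly that a $\tau_{\omega_1}$-embedding need not preserve non-stationarity. This is why the paper does not argue at the level of $\tau_{\omega_1}$-ec structures at all: it forces $\stUB+\maxUB$ over an arbitrary model of $S$ (preserving the universal $\tau_{\omega_1}$-theory by Thm.~\ref{thm:PI1invomega2}) and transfers the question to the signature $\sigma_{\omega,\NS_{\omega_1}}$, where $\NS_{\omega_1}$ is a predicate, so the offending superstructures are no longer substructure extensions; the conclusion for the $\Pi_2$-sentence $\psi$ of $\tau_{\omega_1}\subseteq\sigma_{\omega,\NS_{\omega_1}}$ is then read off from the equivalence of items \ref{Thm:mainthm-1E} and \ref{Thm:mainthm-1B} of Thm.~\ref{Thm:mainthm-1ter}, and the Kaiser hull identification goes through Lemma~\ref{fac:proofthm1-2} applied to the canonical $\sigma_{\omega,\NS_{\omega_1}}$-extension of $S$, not through ec-ness of $H_{\omega_2}$ for $\tau_{\omega_1}$.

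Relatedly, your plan to redo the honest-consistency analysis over $L(\mathbb{R})$ so as to avoid $\maxUB$ attacks a non-problem: the hypotheses that there are class many supercompact cardinals and that $S$ is preserved by any forcing are there precisely so that one may \emph{force} $\maxUB$ (by collapsing a supercompact, cf.\ the remark following Def.~\ref{Keyprop:maxUB}) while remaining inside the models of $S$ and, by Thm.~\ref{thm:PI1invomega2}, without altering the universal $\tau_{\omega_1}$-theory. And even if the $L(\mathbb{R})$-version of the $\Pmax$ machinery went through, honest consistency only reflects statements witnessed in structures computing $\NS_{\omega_1}$ correctly, which is strictly weaker than what $S$-ec-ness for $\tau_{\omega_1}$ would demand; the club-shooting example above lives exactly in that gap.
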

\begin{proof}
First assume that $S$ proves that
$\psi^{H_{\omega_2}}$ is forcible; given a model 
$V$ of $S$, by collapsing a supercompact of $V$ to countable one gets some $V[G]$ which
models $S+\maxUB$ and satisfies the same universal sentence for $\tau_{\omega_1}$ as $V$ (by Thm. \ref{thm:PI1invomega2}).
Hence by forcing over $V[G]$ (which is still a model of $S$), we get to some $V[H]$ which models $\psi^{H_{\omega_2}}+\maxUB+S$
and satisfies the same universal sentence for $\tau_{\omega_1}$ as $V[G]$. Hence we get that
$\psi$ is consistent with the universal fragment of any $\tau_{\omega_1}$-completion of $S$.

Now assume $\psi$ is 
consistent with the universal fragment of any completion of $S$: 
Any $\tau_{\omega_1}$-model $V$ of 
$S$ can be extended  (using forcing) to a 
$\tau_{\omega_1}$-model $V[G]$ of $S+\maxUB+\stUB$ which satisfies the same $\tau_{\omega_1}$-universal sentences of $V$ (again by Thm. \ref{thm:PI1invomega2}).
Since $\tau_{\omega_1}\subseteq\sigma_{\lUB,\NS_{\omega_1}}$ 
and any $\tau_{\omega_1}$-model $W$
of $S$ admits a unique extension to 
$\sigma_{\lUB,\NS_{\omega_1}}$-model which interprets correctly the new predicate symbols,
we get that $\psi$ is in the model companion of the $\sigma_{\lUB,\NS_{\omega_1}}$-theory of $V[G]$,
and also that this model companion is the $\sigma_{\lUB,\NS_{\omega_1}}$-theory of $H_{\omega_2}^{V[G]}$.
By the equivalence of \ref{Thm:mainthm-1E} and \ref{Thm:mainthm-1B} of Thm.
\ref{Thm:mainthm-1ter}
we get that $H_{\omega_2}^{V[G]}\models \psi$.

Using a similar argument (and appealing to Lemma \ref{fac:proofthm1-2} for the unique extension of $S$ to $\sigma_{\lUB,\NS_{\omega_1}}$ which inteprets correctly the new predicate symbols) one can also prove that these $\Pi_2$-sentences $\psi$ for $\tau_{\omega_1}$ axiomatize the Kaiser hull of $S$. We leave the details to the reader.
\end{proof}

The above argument is not restricted to $\tau_{\omega_1}$ and $S$, but holds mutatis mutandis for many 
other signatures contained in $\sigma_{\omega,\NS_{\omega_1}}$ and theories extending $\ZFC$ 
with large cardinals; we leave the details to the reader.

Let us also note that for $S$ as above 
$\mathsf{CH}$ cannot be 
$S$-equivalent to a
$\Sigma_1$-sentence for $\tau_{\omega_1}$, because $\CH$ is a statement which can change its truth value across
forcing extensions, while the universal $\tau_{\omega_1}$-sentences maintain the same truth value across all forcing extensions of a model of $T$ by Thm. \ref{thm:PI1invomega2}.

\subsection*{Can we prove model companionship results coupled with generic absoluteness
for the theory of $H_{\aleph_3}$?}
We can also argue that we cannot hope to find a signature 
$\sigma\supseteq \tau_\ST\cup\bp{\omega_1,\omega_2}$ 
such that the universal theory of $V$ in signature $\sigma$
is invariant across forcing extension of $V$. In particular we cannot hope to get a
signature $\sigma$ which makes the theory of $H_{\aleph_3}$ the model companion of the theory of $V$ in this signature and such that
it suffices to use forcing to compute which $\Pi_2$-sentences
fall into this model companion theory of $V$ (as we argued to be the case for the  theory of $H_{\aleph_2}$ in signature $\bp{\in}_{\bar{A}_2}\supseteq \tau_\ST\cup\bp{\omega_1}$).
This observation is due to Boban Veli\v{c}kovi\`c.

\begin{Remark}\label{rem:limbobangeninv}
$\Box_{\omega_2}$ is a $\Sigma_1$-statement for $\tau_{\omega_2}=\tau_{\ST}\cup\bp{\omega_1,\omega_2}$:
\begin{align*}
\exists\bp{C_\alpha:\alpha<\omega_2}&[\\
&\forall \alpha\in\omega_2\, (C_\alpha\text{ is a club subset of }\alpha)\wedge\\
&\wedge\forall\alpha\in\beta\in\omega_2 \,(\alpha\in \lim(C_\beta)\rightarrow C_\alpha=C_\beta\cap\alpha)\wedge\\
&\wedge\forall\alpha\in\omega_2\,( \otp(C_\alpha)\leq\omega_1)\\
&].
\end{align*}

$\Box_{\omega_2}$ is forcible by very nice forcings 
(countably directed and $<\omega_1$-strategically closed), 
and its negation is forcible by $\Coll(\omega_1,<\delta)$ whenever $\delta$ is Mahlo.

In particular the $\Pi_1$-theory for $\tau_{\omega_2}$ of any forcing extension $V[G]$ of $V$ can be 
destroyed in a further forcing extension $V[G][H]$  assuming mild large cardinals.
\end{Remark}
Suppose now we want to find 
$A_3\subseteq F_\in$ so to be able
to extend Thm. \ref{thm:mainthm1} by:
\begin{itemize}
\item
assuming as base theory 
$\ZFC+$\emph{suitable large cardinal axioms}
\item
replacing $H_{\aleph_2}$ with 
$H_{\aleph_3}$ in all statements of the theorem pertaining to $A_3$,
\item requiring that 
$\tau_{\omega_2}\subseteq\bp{\in}_{\bar{A}_3}$.
\end{itemize}
In this case the best we can hope for is to 
replace clause \ref{thm:mainthm1.4} of Thm. \ref{thm:mainthm1} with a weaker 
clause
asserting that we consider just forcing notions which do not change the
universal $\bp{\in}_{\bar{A}_3}$-theory of $H_{\aleph_3}$ (which means restricting our 
attention to a narrow class of forcings).

\bibliographystyle{plain}
	\bibliography{Biblio}

\end{document}